\documentclass{amsart}

\usepackage{amsmath,amscd}
\usepackage{amssymb}

\usepackage{times}

\usepackage[T1]{fontenc}
\usepackage{graphicx}
\usepackage[svgnames]{xcolor}
\usepackage{framed}

\usepackage[all]{xy}

\usepackage{enumitem,kantlipsum}

\usepackage{hyperref}
\hypersetup{urlcolor=blue, citecolor=blue, linkcolor=blue,hypertexnames=false}

\author{Liran Shaul}

\address{Fakult\"at f\"ur Mathematik\\ 
Universit\"at Bielefeld\\ 
33501 Bielefeld\\ 
Germany.}

\curraddr{Department of Mathematics, Ben Gurion University of the Negev, Beer Sheva 84105, Israel}

\email{shlir@post.bgu.ac.il}
%
%

\newtheorem{thm}[equation]{Theorem}
\newtheorem{cor}[equation]{Corollary}
\newtheorem{prop}[equation]{Proposition}
\newtheorem{lem}[equation]{Lemma}

\theoremstyle{definition}
\newtheorem{dfn}[equation]{Definition}
\newtheorem{rem}[equation]{Remark}
\newtheorem{exa}[equation]{Example}

\newtheorem{notation}[equation]{Notation}

\newtheorem{defx}{Definition}


\newcommand{\inj}{\hookrightarrow}

\newcommand{\opn}{\operatorname}
\newcommand{\cat}[1]{\operatorname{\mathsf{#1}}}

\newcommand{\mfrak}[1]{\mathfrak{#1}}

\newcommand{\mrm}[1]{\mathrm{#1}}

\renewcommand{\k}{\Bbbk}

\renewcommand{\a}{\mfrak{a}}

\newcommand{\injdim}{\operatorname{inj\,dim}}

\newcommand{\amp}{\operatorname{amp}}
\newcommand{\Ext}{\operatorname{Ext}}
\newcommand{\Tor}{\operatorname{Tor}}
\newcommand{\RHom}{\mrm{R}\opn{Hom}}

\newcommand{\INJ}{\cat{Inj}}
\newcommand{\bflatdim}{\operatorname{bfl\,dim}}
\newcommand{\uflatdim}{\operatorname{ufl\,dim}}

\newcommand{\p}{\bar{\mfrak{p}}}
\newcommand{\m}{\bar{\mfrak{m}}}
\renewcommand{\a}{\bar{\mfrak{a}}}
\setcounter{section}{-1}

\numberwithin{equation}{section} 

\thanks{{\em Mathematics Subject Classification} 2010:
13C11, 13D45, 16E45, 16D50\\
Keywords: Injective modules, DG-algebras, Bass-Papp Theorem, Local duality}

\begin{document}

\title{Injective DG-modules over non-positive DG-rings}

\begin{abstract}
Let $A$ be an associative non-positive differential graded ring.
In this paper we make a detailed study of a category $\INJ(A)$ of left DG-modules over $A$ which generalizes the category of injective modules over a ring.
We give many characterizations of this category, generalizing the theory of injective modules, and 
prove a derived version of the Bass-Papp theorem: the category $\INJ(A)$ is closed in the derived category $\cat{D}(A)$ under arbitrary direct sums if and only if 
the ring $\mrm{H}^0(A)$ is left noetherian and for every $i<0$ the left $\mrm{H}^0(A)$-module $\mrm{H}^i(A)$ is finitely generated.
Specializing further to the case of commutative noetherian DG-rings, 
we generalize the Matlis structure theory of injectives to this context.
As an application, we obtain a concrete version of Grothendieck's local duality theorem over commutative noetherian local DG-rings.
\end{abstract}
\maketitle

\setcounter{tocdepth}{1}

\tableofcontents

\section{Introduction}

The notion of an injective module is one of the most fundamental notions in homological algebra.
In the theory of noetherian rings it became particularly important ever since Matlis foundational paper \cite{Ma},
which completely classified the injective modules in the noetherian case. 
The aim of this paper is to study this notion in higher algebra.

Our model for higher rings is given by non-positive DG-rings $A = \bigoplus_{n=-\infty}^0 A^n$,
with differentials of degree $+1$. 
This is a model for higher rings because by \cite[Theorem 1.1]{SS},
the category of non-positive DG-rings is Quillen equivalent to the category of simplicial rings.
Given a such non-positive DG-ring $A$,
attached to it is its derived category $\cat{D}(A)$ of left DG-modules over $A$.
This is a triangulated category, and the basic philosophy of study in the theory of DG-rings is that one should study them by studying $\cat{D}(A)$.

The first task of this paper is to identify the class of left DG-modules in $\cat{D}(A)$ which generalizes the class of left injective modules over an ordinary ring.
Over rings there are many different characterizations of injective modules,
all of which are equivalent.

That the same equivalence holds over non-positive DG-rings is the first main result of this paper.
Thus, for various characterizations of injectivity,
we explain how to restate them in the language of triangulated categories, and show that they are in fact equivalent over a non-positive DG-ring.
The result states:

\begin{thm}\label{thmI}
Let $A$ be a non-positive DG-ring.
Then the following are equivalent for a DG-module $I \in \cat{D}(A)$:
\begin{enumerate}[wide, labelwidth=!, labelindent=0pt]
\item Either $I = 0$ or $\inf\{i \mid \mrm{H}^i(I) \ne 0\} = 0$, and
\[
\inf \{ n \in \mathbb{Z} \mid \Ext^i_A(N,I) = 0 \text{ for any } N \in \cat{D}^{\mrm{b}}(A) \text{ and any } i>n-\inf N\} = 0.
\]
That is, the injective dimension of $I$ with respect to bounded DG-modules is $0$.
\item Either $I = 0$ or $\inf\{i \mid \mrm{H}^i(I) \ne 0\} = 0$, and
\[
\inf \{ n \in \mathbb{Z} \mid \Ext^i_A(N,I) = 0 \text{ for any } N \in \cat{D}^{+}(A) \text{ and any } i>n-\inf N\} = 0.
\]
That is, the injective dimension of $I$ with respect to unbounded DG-modules is $0$.
\item $I \in \cat{D}^{+}(A)$, and for any ring $B$ and any map of DG-rings $A \to B$,
there is an injective $B$-module $J$ and an isomorphism
\[
\mrm{R}\opn{Hom}_A(B,I) \cong J
\]
in $\cat{D}(B)$.
\item For any $n \in \mathbb{Z}$ and any $M \in \cat{D}(A)$ there is an isomorphism
\[
\mrm{H}^n \left(\mrm{R}\opn{Hom}_A(M,I)\right) \cong \opn{Hom}_{\mrm{H}^0(A)}(\mrm{H}^{-n}(M),\mrm{H}^0(I))
\]
in $\opn{Mod}\left(\mrm{H}^0(Z(A))\right)$ which is functorial in $M$. Here, $Z(A)$ is the center of $A$.
\item For any morphism $f:M \to N$ in $\cat{D}(A)$ such that $\mrm{H}^0(f):\mrm{H}^0(M) \to \mrm{H}^0(N)$ is an injective map,
the map 
\[
\opn{Hom}_{\cat{D}(A)}(f,I): \opn{Hom}_{\cat{D}(A)}(N,I) \to \opn{Hom}_{\cat{D}(A)}(M,I)
\]
is surjective.
\item For any morphism $f:I \to M$ in $\cat{D}(A)$ such that $\mrm{H}^0(f): \mrm{H}^0(I) \to \mrm{H}^0(M)$ is an injective map,
the morphism $f$ is a split monomorphism.
\end{enumerate}
\end{thm}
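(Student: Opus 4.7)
The strategy is to cycle through condition (4), which is the most explicit and serves as a master characterization. The implications $(4) \Rightarrow (1), (2), (3), (5)$ are largely formal, $(5) \Leftrightarrow (6)$ is a standard triangulated-category argument, and the substantial work lies in the converse $(1) \Rightarrow (4)$.

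For the easy side: the cohomology formula in (4) forces $\mrm{H}^{-n}(M) = 0$ as soon as $n > -\inf M$, so (1) and (2) follow immediately; testing (4) on a short exact sequence of $\mrm{H}^0(A)$-modules, the $\Ext^1$ vanishes (because $\mrm{H}^{-1}$ of a module in degree $0$ is zero), showing that $\mrm{H}^0(I)$ is an injective $\mrm{H}^0(A)$-module, which together with (4) at $n=0$ yields (5). For $(5) \Leftrightarrow (6)$: given $f:M \to N$ with $\mrm{H}^0(f)$ injective and $g:M \to I$, form the homotopy pushout of $f$ and $g$ to obtain $I \to P$; a long exact sequence shows its $\mrm{H}^0$ is still injective, so a splitting provided by (6) extends $g$ to $N$; conversely (5) applied to $\mrm{id}_I$ immediately gives (6). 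For $(4) \Leftrightarrow (3)$: take $M = B$ in (4) to produce an injective $B$-module $J = \RHom_A(B, I)$; conversely, specialising (3) to $B = \mrm{H}^0(A)$ and using Hom-tensor adjunction
\[
\RHom_A(M, I) \cong \RHom_{\mrm{H}^0(A)}\bigl(M \otimes^L_A \mrm{H}^0(A), J\bigr)
\]
together with the injectivity of $J$ recovers (4). Finally $(1) \Leftrightarrow (2)$ is obtained by approximating a bounded-below $N$ by its truncations and observing that the resulting Milnor sequence for $\RHom_A(-, I)$ degenerates because of the finite injective dimension.

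The heart of the argument is $(1) \Rightarrow (4)$, which I would prove in three bootstrap stages. \emph{Stage A} establishes (4) when $M = N$ is an $\mrm{H}^0(A)$-module in degree $0$: using a semifree resolution $P \to N$ concentrated in non-positive degrees (which exists because $A$ is non-positive) and combining with $\mrm{H}^{<0}(I) = 0$ and the Ext-vanishing provided by (1), one checks that $\opn{Hom}_A(P, I)$ is acyclic outside degree $0$, with $\mrm{H}^0$ equal to $\opn{Hom}_{\mrm{H}^0(A)}(N, \mrm{H}^0(I))$. \emph{Stage B} extends this to bounded $M$ by induction on amplitude via the truncation triangles $\tau^{\le k}M \to M \to \tau^{>k}M$, applying $\RHom_A(-, I)$ and reducing in each step to Stage A on a single cohomology module. \emph{Stage C} extends to $\cat{D}^+(A)$ and then to $\cat{D}(A)$ by presenting $M$ as a homotopy limit of its truncations and invoking (2) to control convergence. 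To close the cycle, $(6) \Rightarrow (1)$ is handled by applying (6) to the canonical map $I \to \tau^{\ge 0}I$ (whose $\mrm{H}^0$ is the identity) to force $\mrm{H}^{<0}(I) = 0$, and to test maps built from truncations of an arbitrary $N \in \cat{D}^{\mrm{b}}(A)$ to extract the Ext-vanishing bound.

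The main obstacle is \emph{Stage A}. Over a general non-positive DG-ring, $\opn{Hom}_{\cat{D}(A)}(N, I)$ need not agree with $\opn{Hom}_{\mrm{H}^0(A)}(N, \mrm{H}^0(I))$, and simultaneously killing the higher Ext requires the combined use of non-positivity of $A$, the vanishing $\mrm{H}^{<0}(I) = 0$, and injective dimension $0$ in a tightly coordinated way — one has to rule out contributions coming from the negative part of the semifree resolution by a delicate balance of these three inputs. Once this anchor is in place, Stages B and C are essentially formal bookkeeping via long exact sequences and homotopy limits, and the overall cycle closes.
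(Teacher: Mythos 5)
Your plan pivots on (4) as the master characterization and closes the cycle with $(5)\Leftrightarrow(6)$ proved directly, which is genuinely different from the paper's route. The paper never proves $(5)\Leftrightarrow(6)$ directly: instead it reduces everything to the functor $\RHom_A(\mrm{H}^0(A),-)$, shows $I\in\INJ(A)$ iff $\RHom_A(\mrm{H}^0(A),I)$ is an injective $\mrm{H}^0(A)$-module (Prop.~\ref{prop:INJCHAR}), develops the ``contravariant functor of cohomological dimension $0$'' formalism (Thm.~\ref{thm:mainZero}), and then derives (4), (5), (6) from this. Your homotopy-pushout argument for $(6)\Rightarrow(5)$ is a nice elementary alternative to the paper's route through the equivalence $\mrm{H}^0:\INJ(A)\to\INJ(\mrm{H}^0(A))$ (which requires Brown representability or the cogenerator construction). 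Your Stage~A is sound as stated --- with $P\to N$ a K-projective resolution in non-positive degrees and $I$ replaced by a semi-injective resolution in non-negative degrees, $\opn{Hom}_A(P,I)$ vanishes in negative degrees for degree reasons and $\mrm{H}^0$ is directly computed to be $\opn{Hom}_{\mrm{H}^0(A)}(N,\mrm{H}^0(I))$ --- but the paper avoids this computation entirely: for $N$ an $\mrm{H}^0(A)$-module, adjunction gives $\RHom_A(N,I)\cong\RHom_{\mrm{H}^0(A)}(N,\RHom_A(\mrm{H}^0(A),I))$, and $\RHom_A(\mrm{H}^0(A),I)\cong\mrm{H}^0(I)$ is concentrated in degree $0$ and injective, so Stage~A is immediate.

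Two of your steps have genuine gaps. First, Stage~C's extension of (4) to all of $\cat{D}(A)$ ``by presenting $M$ as a homotopy limit of its truncations'' does not work for $M$ unbounded below: $\RHom_A(-,I)$ does not take homotopy limits of such filtrations to anything tractable. The correct move (used in Thm.~\ref{thm:mainZero}) is to first establish that $\RHom_A(-,I)$ has cohomological dimension $0$ (which needs (2) for the right-hand bound) and then perform a \emph{two-step truncation}: the map $\mrm{H}^0(\RHom_A(M,I))\to\mrm{H}^0(\RHom_A(\opn{smt}^{\le 0}M,I))$ is an isomorphism because $\RHom_A(\opn{smt}^{>0}M,I)$ lives in degrees $\le -1$, and then the natural morphism $\opn{smt}^{\le 0}M\to\mrm{H}^0(M)$ has cofiber shifted into degrees $\le -1$, whose $\RHom_A(-,I)$ lives in degrees $\ge 1$. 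This replaces the homotopy-limit step and handles unbounded $M$ in one pass. Second, your $(1)\Rightarrow(2)$ claim that ``the Milnor sequence degenerates because of the finite injective dimension'' is optimistic: the $\varprojlim^1$ term in Weibel's sequence sits in cohomological degree one lower, so the vanishing you obtain from $(1)$ only kills $\mrm{H}^i(\RHom_A(N,I))$ for $i>(m+1)-\inf N$; the boundary case $i=(m+1)-\inf N$ survives and must be ruled out by a supplementary truncation $N=\opn{cone}(\opn{smt}^{\le\inf N}N\to N)$ and a second application of the bounded-below estimate, exactly as in the proof of Theorem~\ref{thm:buINJ}. With these two repairs your route closes correctly.
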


\noindent
We prove this result at the end of Section \ref{sec:eqv}.

For a non-positive DG-ring $A$,
we denote by $\INJ(A)$ the full subcategory of $\cat{D}(A)$ consisting of DG-modules satisfying the equivalent conditions of Theorem \ref{thmI}.
Note that if $A$ is an ordinary ring then $\INJ(A)$ is equivalent to the full subcategory of $\opn{Mod}(A)$ of injective left $A$-modules. Our second main result shows that $\INJ(A)$ is always equivalent to the category of injective modules over some ring. Precisely, we show:

\begin{thm}
Let $A$ be a non-positive DG-ring.
For any $I \in \INJ(A)$,
the left $\mrm{H}^0(A)$-module $\mrm{H}^0(I)$ is injective,
and the functor
\[
\mrm{H}^0(-):\INJ(A) \to \INJ(\mrm{H}^0(A))
\]
is an equivalence of categories.
\end{thm}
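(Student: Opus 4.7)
The plan is to leverage the equivalent characterizations of Theorem \ref{thmI} for both directions of the claimed equivalence.

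First I would establish that $\mrm{H}^0(I)$ is injective over $B := \mrm{H}^0(A)$, and that $\mrm{H}^0(-)$ is fully faithful on $\INJ(A)$. For injectivity, apply condition (3) of Theorem \ref{thmI} to the canonical DG-ring map $A \to B$, producing an isomorphism $\mrm{R}\opn{Hom}_A(B, I) \cong J'$ in $\cat{D}(B)$ with $J'$ an injective $B$-module; condition (4) applied with $M = B$ and $n = 0$ then identifies the left side with $\opn{Hom}_B(B, \mrm{H}^0(I)) \cong \mrm{H}^0(I)$, so $\mrm{H}^0(I) \cong J'$ is injective. For fully faithfulness, condition (4) with $M = I$ and $n = 0$ gives
\[
\opn{Hom}_{\cat{D}(A)}(I, I') \;=\; \mrm{H}^0(\mrm{R}\opn{Hom}_A(I, I')) \;\cong\; \opn{Hom}_B(\mrm{H}^0(I), \mrm{H}^0(I')),
\]
and I would check, by tracing through the construction of the isomorphism in (4), that this identification agrees with the map induced by the functor $\mrm{H}^0$.

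For essential surjectivity --- the substantive part --- the plan is to construct, from a given $J \in \INJ(B)$, an $I \in \INJ(A)$ with $\mrm{H}^0(I) \cong J$ by Brown representability. Consider the contravariant functor
\[
F_J \colon \cat{D}(A)^{\mrm{op}} \to \opn{Mod}(\mrm{H}^0(Z(A))), \qquad F_J(M) := \opn{Hom}_B(\mrm{H}^0(M), J).
\]
Since $J$ is $B$-injective the functor $\opn{Hom}_B(-, J)$ is exact, which combined with the long exact sequence of $\mrm{H}^\ast$ coming from a distinguished triangle shows $F_J$ is cohomological. Because $\mrm{H}^0$ commutes with coproducts in $\cat{D}(A)$, $F_J$ also converts coproducts to products. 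The derived category $\cat{D}(A)$ is compactly generated with compact generator $A$, so Brown representability supplies $I \in \cat{D}(A)$ together with a natural isomorphism $\opn{Hom}_{\cat{D}(A)}(M, I) \cong \opn{Hom}_B(\mrm{H}^0(M), J)$. Setting $M = A$ gives $\mrm{H}^0(I) \cong J$, and the shifted identities
\[
\mrm{H}^n(\mrm{R}\opn{Hom}_A(M, I)) \;=\; \opn{Hom}_{\cat{D}(A)}(M[-n], I) \;\cong\; \opn{Hom}_B(\mrm{H}^{-n}(M), J) \;\cong\; \opn{Hom}_B(\mrm{H}^{-n}(M), \mrm{H}^0(I))
\]
verify condition (4) of Theorem \ref{thmI}, placing $I$ in $\INJ(A)$.

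The main obstacle is precisely essential surjectivity. The naive attempt of restricting $J$ along $A \to B$ produces a DG-module whose cohomology vanishes outside degree $0$, whereas condition (4) with $M = A$ forces any $I \in \INJ(A)$ lifting $J$ to satisfy $\mrm{H}^n(I) \cong \opn{Hom}_B(\mrm{H}^{-n}(A), J)$ for every $n \geq 0$, which is generally nonzero in positive degrees. An abstract existence argument is therefore required, and Brown representability provides one cleanly; the remaining subtlety of verifying the $\mrm{H}^0(Z(A))$-equivariance of the resulting isomorphism follows from the naturality built into the Brown construction.
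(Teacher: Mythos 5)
Your proof is correct and essentially reproduces the paper's second proof of essential surjectivity: the paper likewise invokes Neeman's Brown representability theorem for the homological functor $F(M) = \opn{Hom}_{\mrm{H}^0(A)}(\mrm{H}^0(M),\bar I)$, represents it by an object $I$, and then verifies $I \in \INJ(A)$ and $\mrm{H}^0(I)\cong\bar I$. The paper also gives a first, alternative proof of essential surjectivity (embedding $\bar I$ into $\opn{Hom}_{\mathbb Z}(\mrm{H}^0(A),\prod_\mu\mathbb Q/\mathbb Z)$, which lifts to $\opn{Hom}_{\mathbb Z}(A,\prod_\mu\mathbb Q/\mathbb Z)\in\INJ(A)$, and splitting the resulting idempotent via B\"okstedt--Neeman) which your proposal does not pursue.
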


\noindent
This is repeated as Theorem \ref{thm:eqv} in the body of the paper.

Having established a category of DG-modules analogues to the category of injective modules over an ordinary ring,
we wish to follow Matlis and study it in the noetherian case.
There are various noetherian conditions imposed in the literature on non-positive DG-rings.
Instead of choosing one of these conditions,
we have a better alternative: recall that the Bass-Papp Theorem states that
a ring $A$ is left-noetherian if and only if the category of injective left $A$-modules is closed under arbitrary direct sums.
Analogously, we prove in Theorem \ref{thm:bass-papp}:

\begin{thm}\label{mnthm:noeth}
Let $A$ be a non-positive DG-ring.
Then the category $\INJ(A)$ is closed under arbitrary direct sums if and only if
the ring $\mrm{H}^0(A)$ is a left noetherian ring and for each $i < 0$, 
the left $\mrm{H}^0(A)$-module $\mrm{H}^i(A)$ is finitely generated.
\end{thm}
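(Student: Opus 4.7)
The plan is to combine the equivalence $\mrm{H}^0 \colon \INJ(A) \iso \INJ(\bar{A})$, $\bar{A} := \mrm{H}^0(A)$, of the preceding theorem with characterization (4) of Theorem \ref{thmI}. Specialized to $M = A$, that characterization supplies, for each $I \in \INJ(A)$, a canonical (in $I$) isomorphism
\[
\mrm{H}^n(I) \;\cong\; \opn{Hom}_{\bar{A}}\bigl(\mrm{H}^{-n}(A), \mrm{H}^0(I)\bigr),
\]
identifying the higher cohomologies of $I$ from the injective $\bar{A}$-module $\mrm{H}^0(I)$. Alongside it we need two classical inputs: the original Bass--Papp theorem over $\bar{A}$, and the standard fact that an $R$-module $N$ is finitely generated if and only if $\opn{Hom}_R(N,-)$ commutes with arbitrary direct sums (and, in the presence of injective envelopes, equivalently with direct sums of injectives, via the embedding $M_\alpha \hookrightarrow E(M_\alpha)$).

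For the direction $(\Rightarrow)$, suppose $\INJ(A)$ is closed under arbitrary direct sums in $\cat{D}(A)$. Since $\mrm{H}^0$ preserves direct sums and is essentially surjective onto $\INJ(\bar{A})$, any family $\{J_\alpha\}$ of injective $\bar{A}$-modules writes as $J_\alpha = \mrm{H}^0(I_\alpha)$ with $I_\alpha \in \INJ(A)$, and then $\boplus_\alpha J_\alpha = \mrm{H}^0(\boplus_\alpha I_\alpha) \in \INJ(\bar{A})$ is injective. Classical Bass--Papp yields that $\bar{A}$ is left noetherian. Now apply the displayed canonical isomorphism to $\boplus_\alpha I_\alpha \in \INJ(A)$: for each $n \geq 1$,
\[
\boplus_\alpha \opn{Hom}_{\bar{A}}\bigl(\mrm{H}^{-n}(A), J_\alpha\bigr) \;=\; \boplus_\alpha \mrm{H}^n(I_\alpha) \;=\; \mrm{H}^n\!\bigl(\boplus_\alpha I_\alpha\bigr) \;\cong\; \opn{Hom}_{\bar{A}}\bigl(\mrm{H}^{-n}(A), \boplus_\alpha J_\alpha\bigr).
\]
As this holds for every family of injective $\bar{A}$-modules, the classical criterion above forces $\mrm{H}^{-n}(A)$ to be finitely generated over $\bar{A}$ for every $n \geq 1$, i.e., $\mrm{H}^i(A)$ is finitely generated for every $i < 0$.

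For the direction $(\Leftarrow)$, take $\{I_\alpha\} \subseteq \INJ(A)$ and set $J_\alpha := \mrm{H}^0(I_\alpha)$. Bass--Papp gives that $\boplus_\alpha J_\alpha$ is injective, so essential surjectivity of the equivalence produces some $\tilde{I} \in \INJ(A)$ with $\mrm{H}^0(\tilde{I}) \cong \boplus_\alpha J_\alpha$. By fullness of the equivalence, each coproduct inclusion $J_\alpha \hookrightarrow \boplus_\beta J_\beta$ lifts to a unique morphism $\tilde{\phi}_\alpha \colon I_\alpha \to \tilde{I}$ in $\cat{D}(A)$, and these assemble by the universal property of the coproduct in $\cat{D}(A)$ into a morphism $\phi \colon \boplus_\alpha I_\alpha \to \tilde{I}$. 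It suffices to verify $\phi$ is a quasi-isomorphism. On $\mrm{H}^0$ it is, by construction, the identity of $\boplus_\alpha J_\alpha$. For $n \geq 1$, naturality in $I$ of the canonical isomorphism of characterization (4) identifies $\mrm{H}^n(\phi)$ with the canonical map $\boplus_\alpha \opn{Hom}_{\bar{A}}(\mrm{H}^{-n}(A), J_\alpha) \to \opn{Hom}_{\bar{A}}(\mrm{H}^{-n}(A), \boplus_\alpha J_\alpha)$, which is an isomorphism because $\mrm{H}^{-n}(A)$ is finitely generated over the noetherian ring $\bar{A}$; for $n < 0$ both sides vanish. Hence $\boplus_\alpha I_\alpha \cong \tilde{I}$ lies in $\INJ(A)$. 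The principal delicacy in both arguments is to confirm that the isomorphism of characterization (4) is genuinely the canonical cap-product map $\eta \mapsto \mrm{H}^{-n}(\eta)$, so that its naturality in $I$ pins down $\mrm{H}^n(\phi)$ as claimed; this should be established as part of the proof of Theorem \ref{thmI}.
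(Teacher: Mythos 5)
Your argument is correct and follows essentially the same route as the paper: pass to $\mrm{H}^0(A)$ via the equivalence, invoke classical Bass--Papp, and use the functorial isomorphism $\mrm{H}^n(I) \cong \opn{Hom}_{\mrm{H}^0(A)}(\mrm{H}^{-n}(A),\mrm{H}^0(I))$ applied to coproduct inclusions to reduce both directions to the classical characterization of finitely generated modules. The ``delicacy'' you flag at the end is in fact already taken care of: the functoriality in $I$ (not just in $M$) is stated explicitly in Theorem \ref{thm:coho-of-rhom}(2) and Corollary \ref{cor:coho-of-INJ}, and the paper's longer diagram chases (in particular (\ref{eqn:combined-diag}) and (\ref{diag-with-eps})) are precisely the verification, via that naturality applied to the inclusions $h_\alpha$, that the composite isomorphism is the canonical comparison map (\ref{eqn:compact}) --- so citing Corollary \ref{cor:coho-of-INJ} rather than Theorem \ref{thmI}(4) closes the gap you were worried about.
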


As far as we know, this is the first non-trivial characterization of noetherianity in higher algebra.
In view of this result, we say that a non-positive DG-ring is left noetherian if it satisfies the equivalent conditions of Theorem \ref{mnthm:noeth}.

As a corollary of Theorem \ref{mnthm:noeth}, 
we construct in Corollary \ref{cor:cogen} over a non-positive left noetherian DG-ring, a DG-module generalizing the minimal injective cogenerator over a left noetherian ring.  
We show that it cogenerates $\cat{D}(A)$.

In the final Section \ref{sec:comm} we specialize our study further to the category of non-positive commutative noetherian DG-rings.
Our first main result of that section states:
\begin{thm}
Let $A$ be a commutative noetherian DG-ring.
\begin{enumerate}
\item There is a bijection between the indecomposable elements of $\INJ(A)$ and elements $\p \in \opn{Spec}(\mrm{H}^0(A))$.
We denote by $E(A,\p)$ the indecomposable element of $\INJ(A)$ corresponding to $\p \in \opn{Spec}(\mrm{H}^0(A))$.
\item Every element of $\INJ(A)$ is a direct sum of DG-modules of the form $E(A,\p)$ for $\p \in \opn{Spec}(\mrm{H}^0(A))$.
\item Given $\p \in \opn{Spec}(\mrm{H}^0(A))$, denoting by $A_{\p}$ the localized DG-ring of $A$ at $\p$,
the DG-module $E(A,\p)$ has a structure of a DG-module over $A_{\p}$.
\item Given $\p \in \opn{Spec}(\mrm{H}^0(A))$, denoting by $\widehat{A_{\p}}$ the derived $\p$-adic completion of $A_{\p}$,
the DG-module $E(A,\p)$ has a structure of a DG-module over $\widehat{A_{\p}}$.
\item There is an isomorphism $\mrm{R}\opn{Hom}_A(E(A,\p),E(A,\p)) \cong \widehat{A_{\p}}$ in the homotopy category of DG-rings.
\end{enumerate}
\end{thm}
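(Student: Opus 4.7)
The plan is to leverage the equivalence $\mrm{H}^0(-) : \INJ(A) \iso \INJ(\mrm{H}^0(A))$ of Theorem \ref{thm:eqv} to transport the classical Matlis structure theorems to the DG setting. Since $\mrm{H}^0(A)$ is a noetherian commutative ring by Theorem \ref{mnthm:noeth}, Matlis's classification supplies the bijection between $\opn{Spec}(\mrm{H}^0(A))$ and isomorphism classes of indecomposables in $\INJ(\mrm{H}^0(A))$, together with the direct-sum decomposition of every object there. I define $E(A, \p)$ to be the essentially unique preimage of $E(\mrm{H}^0(A), \p)$ under the equivalence. Parts (1) and (2) follow at once, since the equivalence preserves indecomposability and commutes with the arbitrary direct sums that exist in $\INJ(A)$ by Theorem \ref{mnthm:noeth}.

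For part (3), I would show that the adjunction unit $E(A, \p) \to \mrm{R}\opn{Hom}_A(A_\p, E(A, \p))$ is an isomorphism in $\cat{D}(A)$; since the target is naturally an object of $\cat{D}(A_\p)$, this transfers an $A_\p$-structure to $E(A,\p)$. To see the target lies in $\INJ(A)$, I would use the adjunction
\[
\opn{Hom}_{\cat{D}(A)}(M, \mrm{R}\opn{Hom}_A(A_\p, E(A,\p))) \cong \opn{Hom}_{\cat{D}(A)}(M \otimes^{\mrm{L}}_A A_\p, E(A,\p)),
\]
the flatness of the localization $\mrm{H}^0(A) \to \mrm{H}^0(A)_\p$, and characterization (5) of Theorem \ref{thmI}. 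Then Theorem \ref{thmI}(4) with $M = A_\p$ computes the $\mrm{H}^0$ of the target as $\opn{Hom}_{\mrm{H}^0(A)}(\mrm{H}^0(A)_\p, E(\mrm{H}^0(A), \p)) = E(\mrm{H}^0(A), \p)$, using that $E(\mrm{H}^0(A), \p)$ is already $\p$-local. Theorem \ref{thm:eqv} then forces the unit to be an isomorphism. Part (4) is parallel, with $\widehat{A_\p}$ in place of $A_\p$: the same argument works because $E(\mrm{H}^0(A), \p)$ is $\p$-torsion, so any map out of $\widehat{\mrm{H}^0(A)_\p}$ factors through $\mrm{H}^0(A)_\p$, giving once again $\mrm{H}^0$ isomorphic to $E(\mrm{H}^0(A), \p)$.

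For part (5), I apply Theorem \ref{thmI}(4) twice. With $M = A$ it gives $\mrm{H}^{-n}(E(A,\p)) \cong \opn{Hom}_{\mrm{H}^0(A)}(\mrm{H}^n(A), E(\mrm{H}^0(A), \p))$; with $M = E(A,\p)$ it then yields
\[
\mrm{H}^n(\mrm{R}\opn{Hom}_A(E(A,\p), E(A,\p))) \cong \opn{Hom}_{\mrm{H}^0(A)}\bigl(\opn{Hom}_{\mrm{H}^0(A)}(\mrm{H}^n(A), E(\mrm{H}^0(A), \p)), E(\mrm{H}^0(A), \p)\bigr).
\]
For $n > 0$ this vanishes since $A$ is non-positive; for $n \le 0$, $\mrm{H}^n(A)$ is finitely generated over $\mrm{H}^0(A)$ by Theorem \ref{mnthm:noeth}, so its iterated Matlis dual at $\p$ is the classical $\p$-adic completion $\widehat{\mrm{H}^n(A)_\p}$. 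Since $\mrm{H}^n(A)_\p$ is a finitely generated $\mrm{H}^0(A)_\p$-module, this agrees with $\mrm{H}^n(\widehat{A_\p})$ by the standard compatibility of derived $\p$-adic completion with cohomology for finitely generated cohomology modules.

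The main obstacle is upgrading this cohomological identification to an isomorphism in the homotopy category of DG-rings, which requires a multiplicative comparison. To do this, I would construct an explicit DG-ring map $\widehat{A_\p} \to \mrm{R}\opn{Hom}_A(E(A,\p), E(A,\p))$: the canonical action of $A$ on $E(A,\p)$ gives a DG-ring map $A \to \mrm{R}\opn{Hom}_A(E(A,\p), E(A,\p))$, and by parts (3) and (4) it factors through $\widehat{A_\p}$ in the homotopy category of DG-rings. Verifying that this factorization realizes the cohomological isomorphism computed above shows that it is a quasi-isomorphism, hence an isomorphism in the homotopy category of DG-rings.
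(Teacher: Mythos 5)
Your treatment of parts (1) and (2) via Theorem \ref{thm:eqv}, Matlis's classification, and Theorem \ref{mnthm:noeth} is exactly the paper's approach, and your argument for part (3) via the co-unit $\mrm{R}\opn{Hom}_A(A_\p,E(A,\p))\to E(A,\p)$ is a valid alternative to the paper's, which instead proves that the localization map $E(A,\p)\to E(A,\p)\otimes_A A_\p$ is an isomorphism using the tensor-evaluation morphism (Proposition \ref{prop:eval}) and Proposition \ref{prop:RHomReflect}. The colocalization variant works in (3) only because localization is exact and $E(\mrm{H}^0(A),\p)$ is $\p$-local, so $\opn{Hom}_{\mrm{H}^0(A)}(\mrm{H}^0(A)_\p,E(\mrm{H}^0(A),\p))\cong E(\mrm{H}^0(A),\p)$.

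Part (4) has a genuine gap. The claim that ``$E(\mrm{H}^0(A),\p)$ is $\p$-torsion, so any map out of $\widehat{\mrm{H}^0(A)_\p}$ factors through $\mrm{H}^0(A)_\p$'' is false. Take $A=k[x]_{(x)}$, $\p=\mfrak{m}=(x)$, $E=k(x)/A$. Then $\widehat{A}=k[[x]]$ and $\widehat{A}/A$ is a nonzero, torsion-free, divisible $A$-module, hence a nonzero $k(x)$-vector space, and the quotient map $k(x)\to k(x)/A=E$ is a nonzero $A$-linear map. Thus $\opn{Hom}_A(\widehat{A}/A,E)\ne 0$, and from the exact sequence $0\to\opn{Hom}_A(\widehat{A}/A,E)\to\opn{Hom}_A(\widehat{A},E)\to E\to 0$ (exact since $E$ is injective) we see that the counit $\opn{Hom}_A(\widehat{A},E)\to E$, $f\mapsto f(1)$, is not injective. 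So $\mrm{R}\opn{Hom}_A(\widehat{A_\p},E(A,\p))$ is strictly larger than $E(A,\p)$ in degree $0$, and the colocalization route cannot produce the $\widehat{A_\p}$-structure. The correct mechanism --- and what the paper uses in Proposition \ref{prop:EisOverHat} --- is local cohomology/derived torsion: since $E(A,\p)\cong\mrm{R}\Gamma_{\p}(E(A,\p))$ by Proposition \ref{prop:lcofinj}, one applies the lifted functor $\mrm{R}\widehat{\Gamma}_{\p}$ taking values in $\cat{D}(\widehat{A_\p})$ (equivalently, $E(A,\p)\otimes^{\mrm{L}}_{A_\p}\widehat{A_\p}$), and checks via Proposition \ref{prop:INJCHAR} that the result is $E(\widehat{A_\p},\widehat{\p})$. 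The asymmetry between (3) and (4) is precisely because torsion objects are compatible with tensoring by the completion, not with Hom from it.

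Part (5) is likewise under-justified. Your cohomology computation via Theorem \ref{thm:coho-of-rhom} and Corollary \ref{cor:coho-of-INJ} is correct and does show $\mrm{H}^n(\mrm{R}\opn{Hom}_A(E,E))\cong\mrm{H}^n(\widehat{A_\p})$, but the statement that the DG-ring map $A\to\mrm{R}\opn{Hom}_A(E,E)$ ``factors through $\widehat{A_\p}$ in the homotopy category'' and that this factorization realizes the cohomological isomorphism is precisely the hard multiplicative content; as stated it is not a proof but a restatement of the goal. What one actually needs is that the restriction map $\mrm{R}\opn{Hom}_{\widehat{A_\p}}(E,E)\to\mrm{R}\opn{Hom}_A(E,E)$ is a quasi-isomorphism (which the paper obtains in Proposition \ref{prop:EisOverHat}(2) via the isomorphism $E(A,\p)\otimes^{\mrm{L}}_A\widehat{A_\p}\cong E(\widehat{A_\p},\widehat{\p})$, or in the unbounded case via Greenlees--May and the MGM equivalence), together with the fact that $\widehat{A_\p}\to\mrm{R}\opn{Hom}_{\widehat{A_\p}}(E(\widehat{A_\p},\widehat{\p}),E(\widehat{A_\p},\widehat{\p}))$ is a quasi-isomorphism. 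The paper proves the latter by passing through Greenlees--May duality to identify it with the double-dual map $\widehat{A_\p}\to\mrm{R}\opn{Hom}_{\widehat{A_\p}}(R,R)$ for the dualizing DG-module $R=\mrm{L}\Lambda_{\widehat{\p}}(E(\widehat{A_\p},\widehat{\p}))$ produced by Proposition \ref{prop:hasDual}. That structured argument is what upgrades the cohomological matching to an isomorphism of DG-rings; a purely cohomological computation plus the existence of a module structure does not suffice.
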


The last main result of this paper specializes further to local DG-rings.
A commutative noetherian DG-ring $A$ is called local if $\mrm{H}^0(A)$ is a local ring.
In that case, if $\m$ is the maximal ideal of $\mrm{H}^0(A)$ and $\k$ is the residue field of $\mrm{H}^0(A)$,
one says that $(A,\m,\k)$ is a local noetherian DG-ring. 
We denote by $\mrm{H}_{\m}^n(-)$ the $n$-th local cohomology over $A$ functor with respect to $\m$.

Our final result generalizes Grothendieck's local duality to the commutative noetherian local DG-setting.
Its statement does not mention the category $\INJ(A)$, but the theory explained above is used in its proof.
The result states:

\begin{thm}\label{mnthm:locdual}
Let $(A,\m,\k)$ be a local noetherian DG-ring,
and let $R$ be a normalized dualizing DG-module over $A$.
Let $\bar{E} := E(\mrm{H}^0(A),\m) \in \opn{Mod}(\mrm{H}^0(A))$ be the injective $\mrm{H}^0(A)$-module which is the injective hull of the residue field $\k$.
Then for every $M \in \cat{D}^{+}_{\mrm{f}}(A)$, and any $n \in \mathbb{Z}$, there is an isomorphism
\[
\mrm{H}_{\m}^n(M) \cong \opn{Hom}_{\mrm{H}^0(A)}\left(\Ext^{-n}_A(M,R),\bar{E}\right)
\]
in $\opn{Mod}(\mrm{H}^0(A))$ which is functorial in $M$.
\end{thm}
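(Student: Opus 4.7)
The plan is to follow the classical Grothendieck--Hartshorne strategy, adapted to the DG setting by appealing to the characterizations of $\INJ(A)$ developed earlier in the paper. First I would establish, for every $M \in \cat{D}^{+}_{\mrm{f}}(A)$, the identity
\[
\RGamma_{\m}(M) \cong \RHom_A\bigl(\RHom_A(M,R),\, \RGamma_{\m}(R)\bigr).
\]
Since $R$ is dualizing, reflexivity gives $M \iso \RHom_A(\RHom_A(M,R), R)$; applying $\RGamma_{\m}$ to both sides and commuting it past $\RHom_A(N,-)$ with $N := \RHom_A(M,R)$ produces the displayed formula. The commutation is a standard telescope/Koszul-complex computation on a finite generating set of $\m$, reducing to the trivial case $N = A[k]$ and then bootstrapping using that $N \in \cat{D}^{-}_{\mrm{f}}(A)$.

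The central step is then the identification
\[
\RGamma_{\m}(R) \cong E(A,\m)
\]
in $\cat{D}(A)$, where $E(A,\m)$ is the indecomposable element of $\INJ(A)$ corresponding to $\m$. The normalization assumption on $R$ is precisely the shift condition that places this isomorphism in degree $0$. To show $\RGamma_{\m}(R) \in \INJ(A)$, I would verify characterization (3) of Theorem \ref{thmI}: for any DG-ring map $A \to B$ with $B$ an ordinary ring, one has $\RHom_A(B, \RGamma_{\m}(R)) \cong \RGamma_{\m B}(\RHom_A(B,R))$, and $\RHom_A(B,R)$ is a dualizing complex over $B$, so its local cohomology at $\m B$ is a classical injective $B$-module by ordinary local duality. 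Once membership in $\INJ(A)$ is secured, the equivalence $\mrm{H}^0(-):\INJ(A) \iso \INJ(\mrm{H}^0(A))$ from Theorem \ref{thm:eqv} reduces the identification to the statement $\mrm{H}^0(\RGamma_{\m}(R)) \cong \bar{E}$, which is exactly the content of the normalization.

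Combining the two steps, the theorem follows by applying $\mrm{H}^n$ to the isomorphism $\RGamma_{\m}(M) \cong \RHom_A(\RHom_A(M,R), E(A,\m))$ and invoking characterization (4) of Theorem \ref{thmI} for the injective DG-module $E(A,\m)$:
\[
\mrm{H}_{\m}^n(M) \cong \opn{Hom}_{\mrm{H}^0(A)}\bigl(\Ext^{-n}_A(M,R),\, \mrm{H}^0(E(A,\m))\bigr) = \opn{Hom}_{\mrm{H}^0(A)}\bigl(\Ext^{-n}_A(M,R),\, \bar{E}\bigr),
\]
with functoriality in $M$ inherited from each step. The main obstacle is the second step: in classical commutative algebra, the injectivity of $\RGamma_{\m}(R)$ is typically derived as a consequence of local duality itself, so here we must instead establish it intrinsically, and the most direct path is via base-change to ordinary rings along characterization (3), followed by the injection into $\INJ(A)$ provided by $\mrm{H}^0(-)$.
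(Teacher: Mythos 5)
Your overall strategy is the same as the paper's: reduce to the computation $\RGamma_{\m}(R) \cong E(A,\m)$, and combine it with the identity $\RGamma_{\m}(M) \cong \RHom_A(\RHom_A(M,R),\RGamma_{\m}(R))$ and characterization (4) of Theorem \ref{thmI}. Steps 1 and 3 of your plan essentially reproduce the paper's Proposition \ref{prop:rgamhomfinite} and the final application of Theorem \ref{thm:coho-of-rhom}.

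The gap is in your central step. You propose to establish $\RGamma_{\m}(R)\in\INJ(A)$ by verifying characterization (3) of Theorem \ref{thmI} for \emph{every} DG-ring map $A \to B$ with $B$ an ordinary ring, via the chain of claims that $\RHom_A(B,\RGamma_{\m}(R)) \cong \RGamma_{\m B}(\RHom_A(B,R))$ and that $\RHom_A(B,R)$ is a dualizing complex over $B$. The latter claim is false for general $B$: a dualizing complex is only stable under restriction along \emph{finite} ring maps, so e.g.\ for $B = \mrm{H}^0(A)[x]$ the complex $\RHom_A(B,R)$ is not dualizing over $B$ and classical local duality does not apply. (You would also need to establish the commutation isomorphism $\RHom_A(B,-)\circ\RGamma_{\m} \cong \RGamma_{\m B}\circ\RHom_A(B,-)$ for all such $B$, which the paper only proves for $B = \mrm{H}^0(A)$.) The right way to close this, which the paper takes, is Proposition \ref{prop:INJCHAR}: it is enough to check the \emph{single} case $B = \mrm{H}^0(A)$, where $\RHom_A(\mrm{H}^0(A),R)$ genuinely is a dualizing complex over $\mrm{H}^0(A)$ (by \cite[Proposition 7.5]{Ye2}), and Proposition \ref{lem:rgammaofrhom} gives the required commutation. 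From there, $\RHom_A(\mrm{H}^0(A),\RGamma_{\m}(R)) \cong \RGamma_{\m}(\RHom_A(\mrm{H}^0(A),R)) \cong E(\mrm{H}^0(A),\m)$ by the classical normalized local duality \cite[Proposition V.6.1]{RD}; this is also what justifies your closing remark that $\mrm{H}^0(\RGamma_{\m}(R)) \cong \bar E$ is ``exactly the content of the normalization'' — it is a citation to the classical theorem, not a tautology.
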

This is repeated as Theorem \ref{thm:localdual} in the body of the paper.

\section{Preliminaries}

In this section we recall some basics about DG-rings that will be used throughout this paper.
A good reference for DG-rings and their derived categories is the book \cite{Ye}.
More preliminaries specifically about commutative DG-rings will be given in Section \ref{sec:prelcomm}.

By default all rings in this paper are associative and unital (but not commutative),
and all modules are left modules. Given a ring $A$, the category of left $A$-modules is denoted by $\opn{Mod}(A)$.

An associative differential graded-ring (abbreviated DG-ring) $A$ is a $\mathbb{Z}$-graded ring 
\[
A = \bigoplus_{n = -\infty}^{\infty} A^n
\]
equipped with a $\mathbb{Z}$-linear map $d:A \to A$ of degree $+1$ such that $d \circ d = 0$,
and such that
\[
d(a\cdot b) = d(a)\cdot b + (-1)^{i}\cdot a \cdot d(b)
\]
for all $a \in A^i$ and $b \in A^j$.

A DG-ring $A$ is called non-positive if $A^i = 0$ for all $i>0$.
In this case, $\mrm{H}^0(A)$ is a ring, and there is a canonical map of DG-rings $A \to \mrm{H}^0(A)$.

Associated to any DG-ring $A$ is its center DG-ring $Z(A)$,
The center $Z(A)$ is a DG subring of $A$ which is commutative.
See \cite[Definition 1.6]{YeSQ} for the precise definition.
If $A$ is a non-positive DG-ring then $Z(A)$ is a commutative non-positive DG-ring.

We denote by $\opn{DGMod}(A)$ the category of left DG-modules over $A$.
It is an abelian category. 
The category obtained from $\opn{DGMod}(A)$ by identifying homotopic morphisms is called the homotopy category of left DG-modules over $A$,
and is denoted by $\cat{K}(A)$.
The category obtained from $\opn{DGMod}(A)$ by formally inverting quasi-isomorphisms is called the derived category of left DG-modules over $A$,
and is denoted by $\cat{D}(A)$. 
Both $\cat{K}(A)$ and $\cat{D}(A)$ are are triangulated categories.
For any $M \in \cat{D}(A)$ and any $n \in \mathbb{Z}$,
cohomology defines a functor $\mrm{H}^n:\cat{D}(A) \to \opn{Mod}(\mrm{H}^0(A))$.
A left DG-module $M$ is called bounded above if $\mrm{H}^n(M) = 0$ for all $n>>0$,
bounded below if $\mrm{H}^n(M) = 0$ for all $n<<0$, and bounded if it both bounded above and bounded below.
The full triangulated subcategories consisting of bounded-above, bounded below and bounded DG-modules are denoted by $\cat{D}^{-}(A), \cat{D}^{+}(A)$ and $\cat{D}^{\mrm{b}}(A)$.
For $M \in \cat{D}(A)$, we set
\[
\inf(M) := \inf\{n \in \mathbb{Z} \mid \mrm{H}^n(M) \ne 0\},\quad \sup(M) := \sup\{n \in \mathbb{Z} \mid \mrm{H}^n(M) \ne 0\},
\]
and $\amp(M) := \sup(M) - \inf(M)$. We say that $\mrm{H}(M) \subseteq[a,b]$ if $a\le \inf(M)$ and $b \ge \sup(M)$.
A DG-module $M$ is called acyclic if $\mrm{H}^n(M) = 0$ for all $n \in \mathbb{Z}$.
A DG-module $M$ is called K-injective (respectively K-projective) if for every acyclic DG-module $X$,
the complex of abelian groups $\opn{Hom}_A(X,M)$ (respectively $\opn{Hom}_A(M,X)$) is acyclic.

A DG-module $I$ is called semi-injective if it is K-injective and the functor $\opn{Hom}_A(-,I)$ transforms injective maps to surjective maps.
Every DG-module has semi-injective and K-projective resolutions.

Given a non-positive DG-ring $A$ and $n \in \mathbb{Z}$,
there are two functors 
\[
\opn{smt}^{\le n}, \opn{smt}^{> n}: \cat{D}(A) \to \cat{D}(A),
\]
such that for any $M \in \cat{D}(A)$
\[
\mrm{H}^i(\opn{smt}^{\le n}(M)) =  \begin{cases}
                                          \mrm{H}^i(M) & \text{ if } i \le n \\
                                          0       & \text{ otherwise.}
                                         \end{cases}
\]
and
\[
\mrm{H}^i(\opn{smt}^{> n}(M)) =  \begin{cases}
                                          \mrm{H}^i(M) & \text{ if } i > n \\
                                          0       & \text{ otherwise.}
                                         \end{cases}
\]
Moreover, there are natural transformations
\[
\opn{smt}^{\le n} \to 1_{\cat{D}(A)}, \quad 1_{\cat{D}(A)} \to \opn{smt}^{> n}
\]
which induce a distinguished triangle
\[
\opn{smt}^{\le n}(M) \to M \to \opn{smt}^{> n}(M) \to \opn{smt}^{\le n}(M)[1]
\]
in $\cat{D}(A)$.

\section{Definitions of injective dimension over DG-rings}\label{sec:injdims}

The purpose of this section is to discuss the definition of the injective dimension of a left DG-module over a DG-ring.
Specifically, we would like to point out the following delicate point.
Following \cite[Section 2.I]{AF}, 
there are at least two sensible definitions for the injective dimension:

\begin{defx}\label{defA}
Let $A$ be a DG-ring, and let $M \in \cat{D}(A)$.
We define the injective dimension of $M$ to be the number
\[
\inf \{ n \in \mathbb{Z} \mid \Ext^i_A(N,M) = 0 \text{ for any } N \in \cat{D}^{\mrm{b}}(A) \text{ and any } i>n-\inf N\}
\]
\end{defx}

\begin{defx}\label{defB}
Let $A$ be a DG-ring, and let $M \in \cat{D}(A)$.
We define the injective dimension of $M$ to be the number
\[
\inf \{ n \in \mathbb{Z} \mid \Ext^i_A(N,M) = 0 \text{ for any } N \in \cat{D}^{+}(A) \text{ and any } i>n-\inf N\}
\]
\end{defx}

\noindent
In both definitions, we have set 
\[
\Ext^i_A(N,M) := \mrm{H}^i\left( \mrm{R}\opn{Hom}_A(N,M) \right).
\]

The difference between the two definitions is that in Definition \ref{defA} we test injective dimension on bounded DG-modules $N$,
while in Definition \ref{defB}, we test injective dimension on bounded below DG-modules.
Note that there is no need to test on DG-modules which are not bounded below, as in that case $\inf(N) = -\infty$, 
so the vanishing of $\Ext$ condition is always satisfied.

If $A$ is a ring, it is well known that both definitions coincide.
Most papers in the literature that impose injective dimension conditions over DG-rings use Definition \ref{defA} (see for example \cite{FIJ,FJ,Sh3}).
For a general DG-ring $A$, we do not know if the two definitions agree. However, assuming $A$ is non-positive, we have:

\begin{thm}\label{thm:buINJ}
Let $A$ be a non-positive DG-ring,
and let $M \in \cat{D}(A)$. 
Then the injective dimensions of $M$ as defined in Definition \ref{defA} and Definition \ref{defB} coincide.
\end{thm}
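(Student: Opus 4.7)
The inequality saying that the number in Definition~\ref{defA} is bounded above by the number in Definition~\ref{defB} is immediate, since the latter tests Ext against the strictly larger class $\cat{D}^{+}(A)$. For the converse, I assume that the value in Definition~\ref{defA} is a finite integer $n$, fix $N \in \cat{D}^{+}(A)$ with $s := \inf N$ (the case $N = 0$ being trivial), and fix an integer $i > n - s$; the goal is to prove $\Ext^i_A(N, M) = 0$.

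The plan is to approximate $N$ from below by its smart truncations. The canonical maps $\opn{smt}^{\le k}(N) \to N$ for $k \ge s$ exhibit $N$ as a homotopy colimit $N \simeq \opn{hocolim}_{k \ge s} \opn{smt}^{\le k}(N)$ in $\cat{D}(A)$. Each truncation $\opn{smt}^{\le k}(N)$ is bounded with $\inf = s$, so Definition~\ref{defA} yields $\Ext^i_A(\opn{smt}^{\le k}(N), M) = 0$ for the chosen $i$. Applying $\RHom_A(-, M)$ converts this homotopy colimit into a homotopy limit, and the associated Milnor short exact sequence
\[
0 \to \lim\nolimits^1_k \Ext^{i-1}_A(\opn{smt}^{\le k}(N), M) \to \Ext^i_A(N, M) \to \lim\nolimits_k \Ext^i_A(\opn{smt}^{\le k}(N), M) \to 0
\]
reduces the desired vanishing to showing that the $\lim^{1}$ term on the left is zero, since the right-hand term already is.

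To analyse the tower $\{\Ext^{i-1}_A(\opn{smt}^{\le k}(N), M)\}_{k \ge s}$, I would use the distinguished triangles $\opn{smt}^{\le k}(N) \to \opn{smt}^{\le k+1}(N) \to \mrm{H}^{k+1}(N)[-(k+1)]$ provided by the preliminaries, whose third terms are the $\mrm{H}^0(A)$-modules $\mrm{H}^{k+1}(N)$ placed in cohomological degree $k+1$. Applying $\RHom_A(-, M)$, the pieces of the resulting long exact sequence that bracket the transition map $\Ext^{i-1}_A(\opn{smt}^{\le k+1}(N), M) \to \Ext^{i-1}_A(\opn{smt}^{\le k}(N), M)$ are $\Ext^{i+k}_A(\mrm{H}^{k+1}(N), M)$ and $\Ext^{i+k+1}_A(\mrm{H}^{k+1}(N), M)$. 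Since $\mrm{H}^{k+1}(N)$ is a bounded DG-module over $A$ with $\inf = 0$, Definition~\ref{defA} forces both Ext groups to vanish as soon as $i + k > n$, and the inequalities $i \ge n - s + 1$ and $k \ge s$ already give $i + k \ge n + 1$. Hence every transition map in the tower is an isomorphism, the tower is essentially constant, and its $\lim^{1}$ vanishes; feeding this back into the Milnor sequence completes the argument.

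The main subtlety lies precisely in handling the $\lim^{1}$ term: the vanishing of $\Ext^i$ on each bounded truncation is not by itself enough, because the Milnor sequence also reads off degree $i-1$. What rescues the argument is the large cohomological shift appearing in the cones $\mrm{H}^{k+1}(N)[-(k+1)]$, which translates the potentially dangerous degree $i-1$ into the safely vanishing regime $i + k \gg n$. The non-positivity of $A$ enters implicitly throughout, both through the existence of the smart truncation functors and through the fact that each cohomology $\mrm{H}^{k+1}(N)$ can legitimately be regarded as a bounded DG-module over $A$ via the canonical map $A \to \mrm{H}^0(A)$.
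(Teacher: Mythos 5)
Your argument is correct, and it takes a genuinely different route from the paper's. Both proofs approximate $N \in \cat{D}^{+}(A)$ by the bounded truncations $\opn{smt}^{\le k}(N)$ and are confronted with a $\lim^1$ obstruction. The paper realizes $\opn{smt}^{\le k}(N)$ as a tower of actual DG-submodules of $N$, passes to a semi-injective resolution $I$ of $M$, uses semi-injectivity to verify the Mittag--Leffler condition on the tower of Hom-complexes $\opn{Hom}_A(N_k,I)$, and then invokes the $\lim$--$\lim^1$ sequence of \cite[Theorem 3.5.8]{We}. That route only gives vanishing of $\Ext^i_A(N,M)$ one degree too late, namely for $i > (m+1) - \inf N$, so the paper is forced into a second step: split $N$ by a distinguished triangle into $\opn{smt}^{\le \inf N}(N)$ and $\opn{smt}^{> \inf N}(N)$ and treat the single boundary degree $i = (m+1)-\inf N$ separately. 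Your approach instead uses the abstract Milnor sequence for $N \cong \opn{hocolim}_k \opn{smt}^{\le k}(N)$ in $\cat{D}(A)$, and then, rather than relying on Mittag--Leffler at the complex level, analyzes the tower $\Ext^{i-1}_A(\opn{smt}^{\le k}(N),M)$ directly through the cone triangles $\opn{smt}^{\le k}(N) \to \opn{smt}^{\le k+1}(N) \to \mrm{H}^{k+1}(N)[-(k+1)]$. The degree shift $[-(k+1)]$ places the relevant Ext groups $\Ext^{i+k}_A(\mrm{H}^{k+1}(N),M)$ and $\Ext^{i+k+1}_A(\mrm{H}^{k+1}(N),M)$ deep in the vanishing range of Definition~\ref{defA} for all $k \ge \inf N$ once $i > n - \inf N$, so every transition map is an isomorphism, the tower is essentially constant, and $\lim^1 = 0$ at the desired degree. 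This bypasses the paper's extra boundary-case argument entirely. The only ingredients you use that the paper does not are (i) the identification of $N$ with the homotopy colimit of its truncations and (ii) the Milnor sequence in $\cat{D}(A)$; both are standard, but they are extra machinery compared to the paper's more elementary (if longer) route via a concrete semi-injective resolution.
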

\begin{proof}
Denote the number
\[
\inf \{ n \in \mathbb{Z} \mid \Ext^i_A(N,M) = 0 \text{ for any } N \in \cat{D}^{\mrm{b}}(A) \text{ and any } i>n-\inf N\}
\]
by $m$. We may assume that $-\infty < m < \infty$, as otherwise there is nothing to prove.
Let $N \in \cat{D}^{+}(A)$. We must show that $\Ext^i_A(N,M) = 0$ for all $i>m-\inf N$.
For each $n \ge \inf(N)$, let $N_n := \opn{smt}^{\le n}(N)$.
Because $A$ is non-positive, $N_n$ is a DG-module, 
and is moreover a sub DG-module of $N$. 
We obtain a directed system of DG-modules
\[
N_n \subseteq N_{n+1} \subseteq N_{n+2} \subseteq \dots 
\]
such that $\varinjlim N_n = N$,
and such that $N_n \in \cat{D}^{\mrm{b}}(A)$.

Let $M \cong I$ be a semi-injective resolution. 
Then we have
\[
\mrm{R}\opn{Hom}_A(N,M) = \opn{Hom}_A(N,I) = \opn{Hom}_A(\varinjlim N_n,I) = \varprojlim \opn{Hom}_A(N_n,I).
\]
For each $n$, since the map $N_n \to N_{n+1}$ is injective (being an inclusion map),
semi-injectivity of $I$ implies that the map 
\[
\opn{Hom}_A(N_{n+1},I) \to \opn{Hom}_A(N_n,I)
\]
is surjective. 
Hence, the inverse system of complexes of abelian groups
\[
\left(\opn{Hom}_A(N_n,I)\right)_{n \in \mathbb{Z}}
\]
satisfies the Mittag-Leffler condition (in the sense of \cite[Definition 3.5.6]{We}).
It follows by \cite[Theorem 3.5.8]{We} that for each $i\in \mathbb{Z}$, 
there is a short exact sequence
\begin{equation}\label{eqn:ML-seq}
0 \to \varprojlim \phantom{ }^1 \mrm{H}^{i-1}\left(\opn{Hom}_A(N_n,I)\right) \to \mrm{H}^i \left(\opn{Hom}_A(N,I)\right) \to \varprojlim \mrm{H}^i\left(\opn{Hom}_A(N_n,I)\right) \to 0
\end{equation}
Since each $N_n \in \cat{D}^{\mrm{b}}(A)$ and moreover $\inf(N_n) = \inf(N)$,
by assumption we have that for each $n$,
\[
\mrm{H}^i\left(\opn{Hom}_A(N_n,I)\right) = 0
\]
for all $i>m - \inf(N_n) = m-\inf(N)$.
It now follows from (\ref{eqn:ML-seq}) that
\[
\mrm{H}^i \left(\opn{Hom}_A(N,I)\right) = 0
\]
for all $i> (m+1)-\inf(N)$.
It is thus remains to prove that
\[
\mrm{H}^i \left(\opn{Hom}_A(N,I)\right) = 0 
\]
for $i = (m+1)-\inf(N)$.

To show this,
let $N' := \opn{smt}^{\le \inf(N)}(N)$ and
$N'' := \opn{smt}^{> \inf(N)}(N)$.
Consider the distinguished triangle
\[
N' \to N \to N'' \to N'[1]
\]
in $\cat{D}(A)$. 
Applying the triangulated functor $\mrm{R}\opn{Hom}_A(-,M)$ we obtain a distinguished triangle
\[
\mrm{R}\opn{Hom}_A(N'',M) \to \mrm{R}\opn{Hom}_A(N,M) \to \mrm{R}\opn{Hom}_A(N',M) \to \mrm{R}\opn{Hom}_A(N'',M)[1]
\]
in $\cat{D}(\mathbb{Z})$.
Hence, for each $i$ there is an exact sequence
\begin{equation}\label{eqn:seq-of-ext}
\Ext^i_A(N'',M) \to \Ext^i_A(N,M) \to \Ext^i_A(N',M)
\end{equation}
Since $N'$ is a bounded DG-module with $\inf(N) = \inf(N')$, 
we have by assumption 
\[
\mrm{H}^{(m+1)-\inf(N)} \left(\mrm{R}\opn{Hom}_A(N',M)\right) = 0
\]
On the other hand, applying the first part of this proof to $N''$, we deduce that
\[
\mrm{H}^i\left(\mrm{R}\opn{Hom}_A(N'',M)\right) = 0
\]
for all $i> (m+1)-\inf(N'')$. 
But $\inf(N'') > \inf(N)$, so we deduce that
\[
\mrm{H}^{(m+1)-\inf(N)} \left(\mrm{R}\opn{Hom}_A(N'',M)\right) = 0
\]
Hence, we deduce from (\ref{eqn:seq-of-ext}) that
\[
\mrm{H}^{(m+1)-\inf(N)} \left(\mrm{R}\opn{Hom}_A(N,M)\right) = 0,
\]
and this completes the proof.
\end{proof}

\begin{notation}
In view of this result, we have one single definition of the injective dimension of a left DG-module over a non-positive DG-ring.
Given a non-positive DG-ring $A$, and $M \in \cat{D}(A)$,
we denote by $\injdim_A(M)$ the injective dimension of $M$ over $A$.
Thus, $\injdim_A(M)$ is the number (or $\pm \infty$)
\[
\inf \{ n \in \mathbb{Z} \mid \Ext^i_A(N,M) = 0 \text{ for any } N \in \cat{D}^{\mrm{b}}(A) \text{ and any } i>n-\inf N\},
\]
which is equal to the number
\[
\inf \{ n \in \mathbb{Z} \mid \Ext^i_A(N,M) = 0 \text{ for any } N \in \cat{D}^{+}(A) \text{ and any } i>n-\inf N\}.
\]
\end{notation}

We finish this section with the next result which is a useful tool for computing the injective dimension of a DG-module over a non-positive DG-ring.

\begin{thm}\label{thm:injDG}
Let $A$ be a non-positive DG-ring,
and let $M \in \cat{D}(A)$. 
Then there is an equality
\begin{equation*}
\injdim_A (M) = \injdim_{\mrm{H}^0(A)} \left(\RHom_A(\mrm{H}^0(A),M)\right)
\end{equation*}
\end{thm}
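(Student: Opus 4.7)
The plan is to prove the two inequalities separately. Both directions will hinge on the adjunction along the augmentation $A \to \mrm{H}^0(A)$: for any $L \in \cat{D}(\mrm{H}^0(A))$ viewed as an object of $\cat{D}(A)$ by restriction of scalars, there is a natural isomorphism
\[
\RHom_A(L,M) \cong \RHom_{\mrm{H}^0(A)}\!\left(L,\RHom_A(\mrm{H}^0(A),M)\right)
\]
in $\cat{D}(\mathbb{Z})$, and crucially $\inf_A L = \inf_{\mrm{H}^0(A)} L$ since cohomology is computed identically in either category.

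For the inequality $\injdim_{\mrm{H}^0(A)}(\RHom_A(\mrm{H}^0(A),M)) \le \injdim_A(M)$, set $n := \injdim_A(M)$ and take any $L \in \cat{D}^{+}(\mrm{H}^0(A))$. Restricting $L$ to $\cat{D}^{+}(A)$ and applying Definition \ref{defB} together with the displayed adjunction immediately yields the vanishing of $\Ext^i_{\mrm{H}^0(A)}(L,\RHom_A(\mrm{H}^0(A),M))$ for $i > n - \inf L$, which is the desired bound.

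For the reverse inequality, set $n := \injdim_{\mrm{H}^0(A)}(\RHom_A(\mrm{H}^0(A),M))$. By Theorem \ref{thm:buINJ} it suffices to verify the vanishing condition of Definition \ref{defA}, so I test on $N \in \cat{D}^{\mrm{b}}(A)$ and induct on $\amp(N)$. When $\amp(N)=0$, the fact that $A$ is non-positive forces $N \cong \mrm{H}^k(N)[-k]$ in $\cat{D}(A)$ with $k = \inf N$ and $\mrm{H}^k(N)$ regarded as an $\mrm{H}^0(A)$-module; the adjunction then converts the $A$-$\Ext$ into an $\mrm{H}^0(A)$-$\Ext$ whose vanishing is built into the choice of $n$ (with the shifts matching since $\inf_{\mrm{H}^0(A)} \mrm{H}^k(N) = 0$). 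For the inductive step, apply $\RHom_A(-,M)$ to the smart truncation triangle
\[
\opn{smt}^{\le k}(N) \to N \to \opn{smt}^{> k}(N) \to \opn{smt}^{\le k}(N)[1]
\]
with $k = \inf N$: the left term is handled by the base case and the right term has strictly smaller amplitude and strictly larger $\inf$, so both $\Ext^i$ groups vanish for $i > n - \inf N$, and the long exact sequence concludes.

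The only place requiring genuine care is making sure the shift bookkeeping is consistent: specifically, that when one passes from $N$ to its cohomology module in degree $k$, the bound $i > n - \inf N$ on $A$ corresponds exactly to the bound $i - k > n$ on $\mrm{H}^0(A)$, and that when the smart truncation raises $\inf$ it only tightens the vanishing range. The reduction to bounded $N$ via Theorem \ref{thm:buINJ} is what makes the induction on amplitude possible in the first place, so this theorem is the essential engine driving the argument. The degenerate cases where either dimension is $\pm\infty$ follow from the same inequalities by contrapositive.
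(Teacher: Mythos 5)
Your proof is correct, and it supplies an argument the paper itself omits: the paper simply cites \cite[Theorem 3.2]{Sh3} and notes that the commutativity hypothesis made there is not needed. The structure you use — adjunction along $A \to \mrm{H}^0(A)$ to transfer vanishing of $\Ext$, then an induction on amplitude via smart truncation in the reverse direction, with Theorem \ref{thm:buINJ} used in both directions to pass freely between the $\cat{D}^{\mrm{b}}$ and $\cat{D}^{+}$ test sets — is exactly the natural route, and the shift accounting checks out. In the base case, with $N \cong \mrm{H}^k(N)[-k]$ and $k = \inf N$, one has $\Ext^i_A(N,M) \cong \Ext^{i+k}_{\mrm{H}^0(A)}\bigl(\mrm{H}^k(N),\RHom_A(\mrm{H}^0(A),M)\bigr)$ with $\inf_{\mrm{H}^0(A)}\mrm{H}^k(N)=0$, and the required vanishing range $i > n - k$ matches $i+k > n$. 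Note only a small typo in your closing paragraph: the condition should read $i + k > n$, not $i - k > n$. This does not affect the argument, since the preceding paragraph where you actually run the base case is phrased correctly in terms of $\inf_{\mrm{H}^0(A)}\mrm{H}^k(N)=0$.
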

\begin{proof}
This was shown in \cite[Theorem 3.2]{Sh3}.
In that paper we assumed in addition that $A$ is commutative, 
but this assumption was not used in the proof of this result.
\end{proof}

\section{The category \texorpdfstring{$\INJ(A)$}{Inj(A)}}

In this section we begin investigating the main hero of this paper,
the category $\INJ(A)$.

\begin{dfn}
Let $A$ be a non-positive DG-ring.
We define the category $\INJ(A)$ to be the full subcategory of $\cat{D}(A)$ consisting of objects $M$ such that either $M \cong 0$ or
\begin{enumerate}
\item $\inf(M) = 0$, and
\item $\injdim_A(M) = 0$.
\end{enumerate}
\end{dfn}

\begin{exa}
If $A$ is a ring, then $M \in \INJ(A)$ if and only if there is an injective left $A$-module $I$ such that $M \cong I$ in $\cat{D}(A)$.
Thus, in this case, $\INJ(A)$ is equivalent to the full subcategory of $\opn{Mod}(A)$ consisting of injective left $A$-modules.
\end{exa}

For any non-positive DG-ring $A$, 
there is a DG-ring homomorphism $A \to \mrm{H}^0(A)$.
It induces a functor
\[
\mrm{R}\opn{Hom}_A(\mrm{H}^0(A),-) : \cat{D}(A) \to \cat{D}(\mrm{H}^0(A)),
\]
which will be extremely useful in the study of $\INJ(A)$.
One reason for its effectiveness is the following fact:

\begin{prop}\label{prop:RHomInf}
Let $A$ be a non-positive DG-ring.
Then for any $M \in \cat{D}^{+}(A)$,
we have that 
\[
\inf(\mrm{R}\opn{Hom}_A(\mrm{H}^0(A),M)) = \inf(M),
\]
and 
\[
\mrm{H}^{\inf(M)} (\mrm{R}\opn{Hom}_A(\mrm{H}^0(A),M)) \cong \mrm{H}^{\inf(M)}(M).
\]
\end{prop}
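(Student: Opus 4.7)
The plan is to choose a semi-injective resolution of $M$ that is bounded below at $\inf(M)$, and then apply $\RHom_A(-,M)$ to the canonical truncation triangle of $A$. Let $s := \inf(M) \in \mathbb{Z}$, which is finite since $M \in \cat{D}^{+}(A)$. I will use the standard sharpening of the existence of semi-injective resolutions: every bounded-below DG-module admits a semi-injective resolution $M \iso I$ with $I^i = 0$ for all $i < s$, obtained by the usual inductive construction starting from degree $s$.

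Non-positivity of $A$ provides the canonical distinguished triangle
\[
\opn{smt}^{\le -1}(A) \to A \to \mrm{H}^0(A) \to \opn{smt}^{\le -1}(A)[1]
\]
in $\cat{D}(A)$. Setting $N := \opn{smt}^{\le -1}(A)$ and applying the triangulated functor $\RHom_A(-,M) \cong \opn{Hom}_A(-,I)$ yields a distinguished triangle
\[
\RHom_A(\mrm{H}^0(A),M) \to M \to \opn{Hom}_A(N,I) \to \RHom_A(\mrm{H}^0(A),M)[1].
\]
The associated long exact cohomology sequence reduces the proposition to showing that $\opn{Hom}_A(N,I)$ has vanishing cohomology in all degrees $\le s$.

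This follows from a direct degree count on the underlying graded object of $\opn{Hom}_A(N,I)$. A degree-$k$ graded $A$-linear map $N \to I$ decomposes into components $N^i \to I^{i+k}$, so it can be nonzero only when both $N^i \ne 0$ (forcing $i \le -1$, by construction of $N$) and $I^{i+k} \ne 0$ (forcing $i+k \ge s$, by choice of $I$). These force $k \ge s - i \ge s+1$. Hence $\opn{Hom}_A(N,I)^k = 0$ for every $k \le s$, so its cohomology vanishes in that range. Feeding this into the long exact sequence yields $\mrm{H}^i(\RHom_A(\mrm{H}^0(A),M)) \cong \mrm{H}^i(M)$ for all $i \le s$, which simultaneously gives $\inf(\RHom_A(\mrm{H}^0(A),M)) = s$ (the right-hand side is nonzero precisely at $i = s$) and the claimed isomorphism on the bottom cohomology.

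The one genuine input is the existence of a semi-injective resolution concentrated in degrees $\ge \inf(M)$; the rest is a formal consequence of the truncation triangle for $A$ and a single degree count. If I were worried about citing that refinement, the alternative is to take any semi-injective resolution, pass to its smart truncation $\opn{smt}^{\ge s}(I)$ (still quasi-isomorphic to $M$, since $I$ has no cohomology below $s$), resolve the truncation once more by something semi-injective, and run the same argument — the upshot being the same bound on $\opn{Hom}_A(N,I)$.
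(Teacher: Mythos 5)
Your proof is correct, and it takes a genuinely different route from the paper's. The paper reduces to $\inf(M)=0$, takes a K-injective $I \cong M$ with $I^j=0$ for $j<0$, observes that $\opn{Hom}_A(\mrm{H}^0(A),I)$ is already concentrated in degrees $\ge 0$ (so only $\mrm{H}^0$ needs computing), and then identifies $\mrm{H}^0(\opn{Hom}_A(\mrm{H}^0(A),I))$ explicitly by unwinding $\opn{Hom}_{\cat{K}(A)}(\mrm{H}^0(A),I)=\opn{Hom}_{\opn{DGMod}(A)}(\mrm{H}^0(A),I)=\opn{Hom}_{A^0}(\mrm{H}^0(A),\ker d^0_I)=\mrm{H}^0(I)$. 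You instead apply $\mrm{R}\opn{Hom}_A(-,M)$ to the truncation triangle $\opn{smt}^{\le -1}(A) \to A \to \mrm{H}^0(A) \to$, reducing everything to a vanishing statement for $\opn{Hom}_A(\opn{smt}^{\le -1}(A),I)$ in degrees $\le \inf(M)$, which you get by a degree count, and then read off the conclusion from the long exact sequence. The two degree counts are different: the paper's is on $\opn{Hom}_A(\mrm{H}^0(A),I)$ directly, yours is on the "complementary" piece $\opn{Hom}_A(\opn{smt}^{\le -1}(A),I)$. Your triangulated approach is a bit slicker and avoids the explicit diagram unwinding; the paper's has the mild advantage of producing the isomorphism $\mrm{H}^{\inf M}(\RHom_A(\mrm{H}^0(A),M))\cong\mrm{H}^{\inf M}(M)$ by an explicitly described identification rather than via an unnamed connecting map in a long exact sequence. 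Two small remarks: for the degree count to work literally as stated you must (and implicitly do) take the subcomplex model of $\opn{smt}^{\le -1}(A)$, namely $\dots\to A^{-2}\to\ker(d^{-1})\to 0$, which is concentrated in degrees $\le -1$; and the closing ``alternative'' you sketch (truncate an arbitrary semi-injective resolution and re-resolve) does not obviously reproduce the degree bound on the new resolution, so the refined existence statement you cite is the cleaner hypothesis — it is indeed standard and is exactly what the paper uses too.
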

\begin{proof}
By shifting if necessary, we may assume without loss of generality that $\inf(M) = 0$.
Let $M \to I$ be a K-injective resolution such that $I^j = 0$ for $j<0$.
By definition,
\[
\mrm{R}\opn{Hom}_A(\mrm{H}^0(A),M) = \opn{Hom}_A(\mrm{H}^0(A),I)
\]
is concentrated in degrees $\ge 0$.
It is thus enough to calculate $\mrm{H}^0(\opn{Hom}_A(\mrm{H}^0(A),I))$.
Since $I$ is K-injective, we have 
\[
\mrm{H}^0(\opn{Hom}_A(\mrm{H}^0(A),I)) = \opn{Hom}_{\cat{K}(A)}(\mrm{H}^0(A),I).
\]
But $\mrm{H}^0(A)$ is concentrated in degree $0$, and $I$ in degrees $\ge 0$,
so we see that no two degree $0$ maps $f,g:\mrm{H}^0(A) \to I$ can be homotopic.
We deduce that
\[
\opn{Hom}_{\cat{K}(A)}(\mrm{H}^0(A),I) = \opn{Hom}_{\opn{DGMod}(A)}(\mrm{H}^0(A),I).
\]
The latter is by definition the collection of $A^0$-linear maps $f:\mrm{H}^0(A) \to I^0$ making the diagram
\[
\xymatrix{
0 \ar[r]\ar[d] & \mrm{H}^0(A) \ar[r]\ar[d]^f & 0 \ar[r]\ar[d] & 0 \ar[r]\ar[d] & \dots\\
0 \ar[r]       &     I^0      \ar[r]^{d^0}   & I^1 \ar[r]^{d^1}   & I^2 \ar[r] & \dots
}
\]
commutative. Commutativity of this diagram is equivalent to 
\[
\opn{Im}(f) \subseteq \ker(d^0) = \mrm{H}^0(I).
\]
Hence, we see that 
\[
\opn{Hom}_{\opn{DGMod}(A)}(\mrm{H}^0(A),I) = \opn{Hom}_{A^0}(\mrm{H}^0(A),\mrm{H}^0(I)) = \opn{Hom}_{\mrm{H}^0(A)}(\mrm{H}^0(A),\mrm{H}^0(I))
\]
which is equal to $\mrm{H}^0(I)$, as claimed.
\end{proof}

\begin{prop}\label{prop:RHomReflect}
Let $A$ be a non-positive DG-ring,
let $M, N \in \cat{D}^{+}(A)$,
and let $f:M \to N$ be a morphism in $\cat{D}(A)$.
If the morphism
\[
\mrm{R}\opn{Hom}_A(\mrm{H}^0(A),f) :\mrm{R}\opn{Hom}_A(\mrm{H}^0(A),M) \to \mrm{R}\opn{Hom}_A(\mrm{H}^0(A),N)
\]
in $\cat{D}(\mrm{H}^0(A))$ is an isomorphism, then $f$ is an isomorphism.
\end{prop}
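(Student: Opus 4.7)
The plan is to reduce the statement to a non-vanishing principle and then invoke Proposition \ref{prop:RHomInf}. First, I would complete $f$ to a distinguished triangle
\[
M \xrightarrow{f} N \to C \to M[1]
\]
in $\cat{D}(A)$, where $C$ is a cone of $f$. Since $M, N \in \cat{D}^{+}(A)$, the long exact sequence in cohomology immediately gives $C \in \cat{D}^{+}(A)$.

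Next, I would apply the triangulated functor $\mrm{R}\opn{Hom}_A(\mrm{H}^0(A), -) : \cat{D}(A) \to \cat{D}(\mrm{H}^0(A))$ to this triangle, obtaining a distinguished triangle
\[
\mrm{R}\opn{Hom}_A(\mrm{H}^0(A), M) \to \mrm{R}\opn{Hom}_A(\mrm{H}^0(A), N) \to \mrm{R}\opn{Hom}_A(\mrm{H}^0(A), C) \to \mrm{R}\opn{Hom}_A(\mrm{H}^0(A), M)[1]
\]
in $\cat{D}(\mrm{H}^0(A))$. The hypothesis that the first map is an isomorphism forces the third term $\mrm{R}\opn{Hom}_A(\mrm{H}^0(A), C)$ to be zero in $\cat{D}(\mrm{H}^0(A))$.

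It then suffices to show that $C \cong 0$ in $\cat{D}(A)$. Suppose for contradiction that $C \not\cong 0$. Since $C \in \cat{D}^{+}(A)$, Proposition \ref{prop:RHomInf} applies and yields
\[
\mrm{H}^{\inf(C)}\bigl( \mrm{R}\opn{Hom}_A(\mrm{H}^0(A), C) \bigr) \cong \mrm{H}^{\inf(C)}(C) \ne 0,
\]
which contradicts the vanishing established in the previous step. Hence $C \cong 0$, and therefore $f$ is an isomorphism.

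There is no real obstacle in this argument; all the substantive work has already been done in Proposition \ref{prop:RHomInf}, where the bounded-below hypothesis and the non-positivity of $A$ are used to ensure that $\mrm{R}\opn{Hom}_A(\mrm{H}^0(A), -)$ detects the lowest nonzero cohomology of objects in $\cat{D}^{+}(A)$. The present proposition is essentially a formal consequence of that fact, obtained by passing to the cone.
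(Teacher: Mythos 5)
Your proof is correct and follows essentially the same approach as the paper: complete $f$ to a distinguished triangle, apply $\mrm{R}\opn{Hom}_A(\mrm{H}^0(A),-)$, deduce that the cone has vanishing $\mrm{R}\opn{Hom}_A(\mrm{H}^0(A),-)$, and conclude via Proposition \ref{prop:RHomInf} that the cone itself vanishes.
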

\begin{proof}
Complete $f$ to a distinguished triangle
\[
M \xrightarrow{f} N \to K \to M[1]
\]
in $\cat{D}^{+}(A)$.
Applying the triangulated functor $\mrm{R}\opn{Hom}_A(\mrm{H}^0(A),-)$, we obtain a distinguished triangle
\begin{eqnarray}
\mrm{R}\opn{Hom}_A(\mrm{H}^0(A),M) \xrightarrow{\mrm{R}\opn{Hom}_A(\mrm{H}^0(A),f)} \mrm{R}\opn{Hom}_A(\mrm{H}^0(A),N) \to\nonumber\\
\mrm{R}\opn{Hom}_A(\mrm{H}^0(A),K) \to \mrm{R}\opn{Hom}_A(\mrm{H}^0(A),M)[1] \nonumber
\end{eqnarray}
in $\cat{D}(\mrm{H}^0(A))$.
Since we assumed that $\mrm{R}\opn{Hom}_A(\mrm{H}^0(A),f)$ is an isomorphism, we deduce that 
\[
\mrm{R}\opn{Hom}_A(\mrm{H}^0(A),K) \cong 0,
\]
so by Proposition \ref{prop:RHomInf} we deduce that $K \cong 0$, which implies that $f$ is an isomorphism because $K$ is the cone of $f$. 
\end{proof}

\begin{cor}\label{cor:HZGEN}
Let $A$ be a non-positive DG-ring.
Then $\mrm{H}^0(A)$ generates $\cat{D}^{+}(A)$.
\end{cor}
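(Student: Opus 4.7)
The plan is to deduce the corollary directly from Proposition \ref{prop:RHomInf} (or, equivalently, from Proposition \ref{prop:RHomReflect}). Recall that $\mrm{H}^0(A)$ generates $\cat{D}^{+}(A)$ means: whenever $M \in \cat{D}^{+}(A)$ satisfies $\opn{Hom}_{\cat{D}(A)}(\mrm{H}^0(A)[n], M) = 0$ for every $n \in \mathbb{Z}$, one must have $M \cong 0$.

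So I would start by translating this vanishing hypothesis into a statement about $\mrm{R}\opn{Hom}$. For each $n$ the group $\opn{Hom}_{\cat{D}(A)}(\mrm{H}^0(A)[n], M)$ is exactly $\mrm{H}^{-n}(\mrm{R}\opn{Hom}_A(\mrm{H}^0(A), M))$, so the hypothesis is equivalent to the single statement $\mrm{R}\opn{Hom}_A(\mrm{H}^0(A), M) \cong 0$ in $\cat{D}(\mrm{H}^0(A))$.

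Next I would apply Proposition \ref{prop:RHomInf}: since $M$ lies in $\cat{D}^{+}(A)$, the proposition asserts $\inf(\mrm{R}\opn{Hom}_A(\mrm{H}^0(A), M)) = \inf(M)$. If $M$ were not zero then $\inf(M)$ would be a finite integer, forcing $\mrm{R}\opn{Hom}_A(\mrm{H}^0(A), M)$ to have nonzero cohomology in degree $\inf(M)$, contradicting the vanishing just deduced. Therefore $M \cong 0$, which is the desired conclusion. As an alternative packaging, one may instead apply Proposition \ref{prop:RHomReflect} to the zero morphism $0 \to M$: the vanishing hypothesis says $\mrm{R}\opn{Hom}_A(\mrm{H}^0(A), 0) \to \mrm{R}\opn{Hom}_A(\mrm{H}^0(A), M)$ is an isomorphism, so Proposition \ref{prop:RHomReflect} yields $0 \cong M$.

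There is no substantial obstacle here: all of the real work has already been done in Proposition \ref{prop:RHomInf}, which ensures that $\mrm{R}\opn{Hom}_A(\mrm{H}^0(A),-)$ detects nonvanishing on $\cat{D}^{+}(A)$. The only thing to be careful about is the restriction to bounded-below objects, which is exactly what makes $\inf(M)$ a finite integer when $M \neq 0$ and so allows the contradiction to go through.
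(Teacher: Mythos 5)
Your proof is correct and takes essentially the same approach as the paper: both hinge on Proposition \ref{prop:RHomInf}, which forces $\mrm{R}\opn{Hom}_A(\mrm{H}^0(A),M)$ to be nonzero in degree $\inf(M)$ whenever $M \in \cat{D}^{+}(A)$ is nonzero; you merely phrase it as a contrapositive while the paper argues directly. The alternative packaging via Proposition \ref{prop:RHomReflect} is a pleasant variant but relies on the same underlying fact.
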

\begin{proof}
Given $M \in \cat{D}^{+}(A)$ such that $M \ncong 0$,
let $n = \inf(M)$. 
Then $\mrm{H}^n(M) \ne 0$.
By Proposition \ref{prop:RHomInf},
we have that
\[
\opn{Hom}_{\cat{D}(A)}(\mrm{H}^0(A),M[n]) = \mrm{H}^n(\mrm{R}\opn{Hom}_A(\mrm{H}^0(A),M)) \cong \mrm{H}^n(M) \ne 0,
\]
which proves the claim.
\end{proof}

Using Proposition \ref{prop:RHomInf}, we deduce the following characterization of $\INJ(A)$:
\begin{prop}\label{prop:INJCHAR}
Let $A$ be a non-positive DG-ring,
and let $M \in \cat{D}^{+}(A)$.
Then $M \in \INJ(A)$ if and only if
\[
\mrm{R}\opn{Hom}_A(\mrm{H}^0(A),M) \in \INJ(\mrm{H}^0(A)).
\]
In other words, $M \in \INJ(A)$ if and only if there is an injective left $\mrm{H}^0(A)$-module $\bar{J}$
and an isomorphism
\[
\mrm{R}\opn{Hom}_A(\mrm{H}^0(A),M) \cong \bar{J}
\]
in $\cat{D}(\mrm{H}^0(A))$.
\end{prop}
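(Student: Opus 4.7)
The plan is to deduce this proposition directly from the two main results of the preceding section, namely Proposition \ref{prop:RHomInf} and Theorem \ref{thm:injDG}, together with the example identifying $\INJ$ of a ring with its usual category of injective modules. Essentially the proposition is a repackaging: the functor $\mrm{R}\opn{Hom}_A(\mrm{H}^0(A),-)$ is designed to preserve both pieces of data that define membership in $\INJ(A)$, namely the infimum and the injective dimension.

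First I would unwind the definitions on both sides. On the left, $M \in \INJ(A)$ means either $M \cong 0$, or $\inf(M) = 0$ together with $\injdim_A(M) = 0$. On the right, since $\mrm{H}^0(A)$ is an ordinary ring, the example following the definition of $\INJ$ identifies $\INJ(\mrm{H}^0(A))$ with the full subcategory of $\cat{D}(\mrm{H}^0(A))$ whose objects are quasi-isomorphic to an injective $\mrm{H}^0(A)$-module placed in degree $0$. Hence the second formulation of the proposition follows tautologically from the first.

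Next I would handle the nonzero case. Since $M \in \cat{D}^{+}(A)$, Proposition \ref{prop:RHomInf} gives
\[
\inf\bigl(\mrm{R}\opn{Hom}_A(\mrm{H}^0(A),M)\bigr) = \inf(M)
\]
together with an isomorphism on the lowest cohomology; in particular $\mrm{R}\opn{Hom}_A(\mrm{H}^0(A),M) \cong 0$ if and only if $M \cong 0$, so the zero case on the two sides match. Assuming $M \not\cong 0$, the same equality shows that $\inf(M) = 0$ is equivalent to $\inf\bigl(\mrm{R}\opn{Hom}_A(\mrm{H}^0(A),M)\bigr) = 0$. Theorem \ref{thm:injDG} provides the other equivalence,
\[
\injdim_A(M) = \injdim_{\mrm{H}^0(A)}\bigl(\mrm{R}\opn{Hom}_A(\mrm{H}^0(A),M)\bigr),
\]
so the condition $\injdim_A(M) = 0$ transfers to the injective-dimension condition on the right-hand object. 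Combining the two equivalences proves both directions of the iff.

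I do not anticipate any genuine obstacle here, since the substantive work has already been carried out in Proposition \ref{prop:RHomInf} and Theorem \ref{thm:injDG}; the only thing to watch is to record the $M \cong 0$ case carefully and to invoke the example to pass between ``lies in $\INJ(\mrm{H}^0(A))$'' and ``is isomorphic in $\cat{D}(\mrm{H}^0(A))$ to an injective module $\bar{J}$''.
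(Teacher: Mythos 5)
Your proposal is correct and follows essentially the same route as the paper's own proof: both directions are derived by combining Proposition~\ref{prop:RHomInf} (which transfers the conditions $\inf = 0$ and $M \cong 0$) with Theorem~\ref{thm:injDG} (which transfers the condition $\injdim = 0$). The paper organizes it slightly differently (treating the two implications separately, and in the forward direction deducing the degree-$0$ concentration of $\mrm{R}\opn{Hom}_A(\mrm{H}^0(A),M)$ from $\injdim_A(M)=0$ rather than from Proposition~\ref{prop:RHomInf}), but the content is the same.
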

\begin{proof}
Suppose first $0 \ncong M \in \INJ(A)$.
Since the DG $A$-module $\mrm{H}^0(A)$ is concentrated in degree $0$,
the injective dimension condition on $M$ implies that
\[
\mrm{H}^i \left( \mrm{R}\opn{Hom}_A(\mrm{H}^0(A),M) \right) = 0
\]
for all $i > 0$. This implies that there is an $\mrm{H}^0(A)$-module $\bar{J}$
and an isomorphism
\[
\mrm{R}\opn{Hom}_A(\mrm{H}^0(A),M) \cong \bar{J}
\]
in $\cat{D}(\mrm{H}^0(A))$.
Moreover, by Theorem \ref{thm:injDG}, we have that
\[
\injdim_{\mrm{H}^0(A)}(\bar{J}) = \injdim_A(M) = 0,
\]
which implies that $\bar{J}$ is an injective module.

Conversely, let $0 \ncong M \in \cat{D}^{+}(A)$,
and assume there is an injective $\mrm{H}^0(A)$-module $\bar{J}$
and an isomorphism
\[
\mrm{R}\opn{Hom}_A(\mrm{H}^0(A),M) \cong \bar{J}
\]
in $\cat{D}(\mrm{H}^0(A))$.
By Proposition \ref{prop:RHomInf}, we deduce that $\inf(M) = 0$,
and by Theorem \ref{thm:injDG}, we see that $\injdim_A(M) = 0$.
Hence, $M \in \INJ(A)$.
\end{proof}

\begin{prop}\label{prop:HzOfINJ}
Let $A$ be a non-positive DG-ring,
and let $M \in \INJ(A)$.
Then the left $\mrm{H}^0(A)$-module $\mrm{H}^0(M)$ is injective.
\end{prop}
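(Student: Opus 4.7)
The plan is to deduce this immediately from the characterization in Proposition \ref{prop:INJCHAR} combined with the computation in Proposition \ref{prop:RHomInf}.

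First, since $M \in \INJ(A)$, I may assume $M \ncong 0$ (the zero case is trivial, as $\mrm{H}^0(M) = 0$ is injective by convention). By Proposition \ref{prop:INJCHAR}, there exists an injective left $\mrm{H}^0(A)$-module $\bar{J}$ and an isomorphism
\[
\mrm{R}\opn{Hom}_A(\mrm{H}^0(A),M) \cong \bar{J}
\]
in $\cat{D}(\mrm{H}^0(A))$. Applying the cohomology functor $\mrm{H}^0 : \cat{D}(\mrm{H}^0(A)) \to \opn{Mod}(\mrm{H}^0(A))$ to both sides yields an isomorphism of left $\mrm{H}^0(A)$-modules
\[
\mrm{H}^0\left(\mrm{R}\opn{Hom}_A(\mrm{H}^0(A),M)\right) \cong \bar{J}.
\]

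Next, I invoke Proposition \ref{prop:RHomInf}: since $\inf(M) = 0$ (as $M \in \INJ(A)$), it gives an isomorphism
\[
\mrm{H}^0\left(\mrm{R}\opn{Hom}_A(\mrm{H}^0(A),M)\right) \cong \mrm{H}^0(M).
\]
Here the one delicate point to check is that this identification is $\mrm{H}^0(A)$-linear, not merely an isomorphism of abelian groups. Inspecting the proof of Proposition \ref{prop:RHomInf}, the isomorphism is realized as the evaluation-at-$1$ map taking a DG-homomorphism $f : \mrm{H}^0(A) \to I$ (where $M \cong I$ is a K-injective resolution with $I^j = 0$ for $j<0$) to $f(1) \in \mrm{H}^0(I)$. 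This is manifestly left $\mrm{H}^0(A)$-linear with respect to the standard bimodule structure on $\mrm{H}^0(A)$ used to endow $\mrm{R}\opn{Hom}_A(\mrm{H}^0(A),-)$ with a left $\mrm{H}^0(A)$-action, so nothing more is needed.

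Combining the two displayed isomorphisms gives $\mrm{H}^0(M) \cong \bar{J}$ as left $\mrm{H}^0(A)$-modules, and since $\bar{J}$ is injective, so is $\mrm{H}^0(M)$. The only potential obstacle is the module-linearity verification in the previous paragraph, but this is routine once one unwinds the bimodule conventions, so the proof is essentially a one-line combination of Propositions \ref{prop:INJCHAR} and \ref{prop:RHomInf}.
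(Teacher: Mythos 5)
Your proof is correct and follows essentially the same route as the paper's: both deduce $\mrm{H}^0(M) \cong \mrm{H}^0\left(\mrm{R}\opn{Hom}_A(\mrm{H}^0(A),M)\right)$ from Proposition \ref{prop:RHomInf} and then conclude from Proposition \ref{prop:INJCHAR}. Your added remark verifying $\mrm{H}^0(A)$-linearity of the identification is a reasonable clarification of a step the paper treats as an identity, but it does not change the substance of the argument.
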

\begin{proof}
By Proposition \ref{prop:RHomInf}, we have that
\[
\mrm{H}^0(M) = \mrm{H}^0\left(\mrm{R}\opn{Hom}_A(\mrm{H}^0(A),M)\right),
\]
so the claim follows from Proposition \ref{prop:INJCHAR}.
\end{proof}

Propositions \ref{prop:INJCHAR} and \ref{prop:HzOfINJ} imply that both
$\mrm{R}\opn{Hom}_A(\mrm{H}^0(A),-)$ and $\mrm{H}^0(-)$ define functors $\INJ(A) \to \INJ(\mrm{H}^0(A))$.
We finish this section with the observation that these two functors coincide.

\begin{prop}\label{prop:RHomisHZ}
Let $A$ be a non-positive DG-ring.
Then the functors
\[
\mrm{R}\opn{Hom}_A(\mrm{H}^0(A),-) : \INJ(A) \to \INJ\left(\mrm{H}^0(A)\right)
\]
and 
\[
\mrm{H}^0(-) : \INJ(A) \to \INJ\left(\mrm{H}^0(A)\right)
\]
are naturally isomorphic.
\end{prop}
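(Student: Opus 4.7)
The plan is to construct a canonical natural isomorphism between the two functors using the truncation natural transformation, and then show it is a natural isomorphism on $\INJ(A)$ by invoking Propositions \ref{prop:RHomInf} and \ref{prop:INJCHAR}.

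First, I would observe that for any $M \in \INJ(A)$, the complex $\mrm{R}\opn{Hom}_A(\mrm{H}^0(A),M)$ has cohomology concentrated in degree $0$. Indeed, Proposition \ref{prop:RHomInf} yields $\mrm{R}\opn{Hom}_A(\mrm{H}^0(A),M) \in \cat{D}^{\ge 0}(\mrm{H}^0(A))$ since $\inf(M) = 0$, while Proposition \ref{prop:INJCHAR} identifies it (non-canonically) with an injective $\mrm{H}^0(A)$-module, forcing $\mrm{H}^i(\mrm{R}\opn{Hom}_A(\mrm{H}^0(A),M)) = 0$ for $i > 0$.

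Next, I would invoke the standard truncation natural transformation $\opn{smt}^{\le 0} \to 1_{\cat{D}(\mrm{H}^0(A))}$ from the preliminaries. For any $X \in \cat{D}^{\ge 0}(\mrm{H}^0(A))$, the object $\opn{smt}^{\le 0}(X)$ is concentrated in degree $0$ with underlying module canonically equal to $\mrm{H}^0(X)$, and the map $\opn{smt}^{\le 0}(X) \to X$ in $\cat{D}(\mrm{H}^0(A))$ is a quasi-isomorphism precisely when $\mrm{H}^i(X) = 0$ for every $i > 0$. Applying this functorially with $X = \mrm{R}\opn{Hom}_A(\mrm{H}^0(A), M)$ for $M \in \INJ(A)$, I obtain a natural isomorphism
\[
\mrm{H}^0\bigl(\mrm{R}\opn{Hom}_A(\mrm{H}^0(A),M)\bigr) \iso \mrm{R}\opn{Hom}_A(\mrm{H}^0(A),M)
\]
in $\cat{D}(\mrm{H}^0(A))$. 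Finally, the functorial isomorphism $\mrm{H}^0(\mrm{R}\opn{Hom}_A(\mrm{H}^0(A),M)) \cong \mrm{H}^0(M)$ supplied by Proposition \ref{prop:RHomInf} (the proof of which produces an explicit natural identification via evaluation at $1 \in \mrm{H}^0(A)$) composes with the above to yield the desired natural isomorphism $\mrm{H}^0(M) \iso \mrm{R}\opn{Hom}_A(\mrm{H}^0(A),M)$.

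The main potential obstacle is bookkeeping of naturality: I must verify that the isomorphism in Proposition \ref{prop:RHomInf} is genuinely functorial in $M$ — which requires tracking it through a functorial choice of K-injective resolution — and that the truncation map is natural with respect to morphisms in $\cat{D}$. Both are standard but must be stated carefully, since the composition of two natural transformations is the content of the theorem. Once this is verified, the proof reduces to citing the two earlier propositions.
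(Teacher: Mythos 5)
Your argument is essentially the same as the paper's. Both proofs turn on the observation that for $M \in \INJ(A)$ the complex $X := \mrm{R}\opn{Hom}_A(\mrm{H}^0(A),M)$ has cohomology concentrated in degree $0$ (via Propositions \ref{prop:RHomInf} and \ref{prop:INJCHAR}), and both then produce a \emph{natural} isomorphism $X \cong \mrm{H}^0(X)$ in $\cat{D}(\mrm{H}^0(A))$ via truncation, followed by the natural identification $\mrm{H}^0(X)\cong \mrm{H}^0(M)$ from Proposition \ref{prop:RHomInf}. The only real difference is the truncation device: the paper uses the composite $\opn{smt}^{\le 0}\circ\opn{smt}^{\ge 0}$, which for \emph{any} $X$ is \emph{literally} equal to $\mrm{H}^0(X)$ as a functor on the nose — this sidesteps the bookkeeping about functorial resolutions that you flag. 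Your route, using $\opn{smt}^{\le 0}$ alone, needs a little more care: for $X\in\cat{D}^{\ge 0}$ the complex $\opn{smt}^{\le 0}(X)$ is not literally a module in degree $0$ (it is supported in degrees $\le 0$); rather, one needs the additional natural transformation $\opn{smt}^{\le 0}(X)\to\mrm{H}^0(\opn{smt}^{\le 0}(X))$ (the same mechanism as in Lemma \ref{lem:MtoHM}) to obtain a natural isomorphism onto $\mrm{H}^0(X)$. With that fix your proof is correct, and otherwise follows the same route as the paper.
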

\begin{proof}
Given $M \in \INJ(A)$,
Proposition \ref{prop:INJCHAR} implies that the natural morphisms
\[
\mrm{R}\opn{Hom}_A(\mrm{H}^0(A),M) \to \opn{smt}^{\ge 0}\left(\mrm{R}\opn{Hom}_A(\mrm{H}^0(A),M)\right)
\]
and
\[
\opn{smt}^{\le 0}\left( \opn{smt}^{\ge 0}\left(\mrm{R}\opn{Hom}_A(\mrm{H}^0(A),M)\right) \right) \to \opn{smt}^{\ge 0}\left(\mrm{R}\opn{Hom}_A(\mrm{H}^0(A),M)\right)
\]
are isomorphisms.
Since
\[
\opn{smt}^{\le 0}\left( \opn{smt}^{\ge 0}\left(\mrm{R}\opn{Hom}_A(\mrm{H}^0(A),M)\right) \right) = \mrm{H}^0 \left( \mrm{R}\opn{Hom}_A(\mrm{H}^0(A),M) \right),
\]
the result follows from Proposition \ref{prop:RHomInf}.
\end{proof}

We will later show that these two isomorphic functors are natural equivalences.

\section{Contravariant functors of cohomological dimensional zero}

If $A$ is a ring, $I$ is an injective left $A$-module,
$M$ is a complex of left $A$-modules and $n \in \mathbb{Z}$,
then it is known that there is an isomorphism
\[
\mrm{H}^n\left(\opn{Hom}_A(M,I)\right) \cong \opn{Hom}_A\left(\mrm{H}^{-n}(M),I\right)
\]
which is functorial in $M,I$. See for example \cite[Corollary 2.12]{PSY1} for a proof of this classical fact.
In this section we will show that the same holds for $\INJ(A)$. 
To do this, it would be convenient to slightly generalize:

\begin{dfn}
Given two non-positive DG-rings $A,B$,
a contravariant triangulated functor $F:\cat{D}(A)\to \cat{D}(B)$
is said to have cohomological dimension $0$ 
if for any DG-module $M \in \cat{D}(A)$ such that $\mrm{H}(M)\subseteq [a,b]$,
where $-\infty \le a \le b \le \infty$,
we have that $\mrm{H}(F(M)) \subseteq [-b,-a]$.
\end{dfn}

This notion is related to $\INJ(A)$ via the following result:
\begin{prop}\label{prop:INJisZeroFunc}
Let $A$ be a non-positive DG-ring, 
and let $0 \ncong M \in \cat{D}^{+}(A)$.
Then $M \in \INJ(A)$ if and only if the functor
\[
F(-) := \mrm{R}\opn{Hom}_A(-,M) : \cat{D}(A) \to \cat{D}(\mathbb{Z})
\]
has cohomological dimension $0$.
\end{prop}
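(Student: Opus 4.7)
The plan is to prove both directions by separately controlling the upper and lower ends of the cohomological range of $\mrm{R}\opn{Hom}_A(N,M)$, using the injective-dimension condition for the upper bound and the concentration of $M$ in non-negative degrees (via a K-injective model) for the lower bound.

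For the forward direction, I fix $N \in \cat{D}(A)$ with $\mrm{H}(N) \subseteq [a,b]$ and aim to show $\mrm{H}(F(N)) \subseteq [-b,-a]$. If $a > -\infty$, then $N \in \cat{D}^{+}(A)$ with $\inf N \ge a$, so Theorem \ref{thm:buINJ} combined with $\injdim_A(M) = 0$ yields $\Ext^i_A(N,M) = 0$ whenever $i > -\inf N$, and a fortiori whenever $i > -a$; this handles the upper bound. If $b < \infty$, the distinguished triangle coming from the smart truncations together with $\opn{smt}^{>b}(N) \cong 0$ identifies $N$ with $\opn{smt}^{\le b}(N)$ in $\cat{D}(A)$, and the latter is a genuine DG-module concentrated in degrees $\le b$. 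Using $\inf(M) = 0$, I pick a K-injective resolution $M \cong I$ with $I^j = 0$ for $j<0$ (as in the proof of Proposition \ref{prop:RHomInf}). Then $F(N) \cong \opn{Hom}_A(\opn{smt}^{\le b}(N), I)$, and a direct inspection of the degrees that can contribute to $\opn{Hom}_A(-,-)^i$ shows that every component with $i < -b$ vanishes, delivering the lower bound.

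For the converse, suppose $F$ has cohomological dimension $0$. Since $\mrm{H}(\mrm{H}^0(A)) \subseteq [0,0]$, the hypothesis forces $\mrm{H}(F(\mrm{H}^0(A))) \subseteq [0,0]$. By Proposition \ref{prop:RHomInf}, $\inf F(\mrm{H}^0(A)) = \inf M$ and $\mrm{H}^{\inf M}(F(\mrm{H}^0(A))) \cong \mrm{H}^{\inf M}(M) \ne 0$, so the concentration in degree $0$ forces $\inf M = 0$. Next, for any $N \in \cat{D}^{+}(A)$ we have $\mrm{H}(N) \subseteq [\inf N, \infty]$, so cohomological dimension $0$ gives $\Ext^i_A(N,M) = 0$ for all $i > -\inf N$, which is exactly $\injdim_A(M) \le 0$; since $\Ext^0_A(\mrm{H}^0(A),M) \cong \mrm{H}^0(M) \ne 0$, one cannot have $\injdim_A(M) < 0$, hence $\injdim_A(M) = 0$ and $M \in \INJ(A)$.

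The mildly delicate step is the lower bound in the forward direction: the abstract injective-dimension hypothesis only gives vanishing on one side, so the lower bound must instead be extracted from the explicit degree layout of a K-injective resolution living in non-negative degrees together with a truncation of $N$. Once one has the right model at hand this is bookkeeping, but it is the only place where the explicit shape of $M$ (rather than an Ext-vanishing statement) is genuinely needed.
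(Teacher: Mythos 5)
Your proof is correct and follows essentially the same route as the paper's: the upper bound of the cohomological range of $F(N)$ from the injective-dimension condition (via Theorem \ref{thm:buINJ}), the lower bound from the degree layout of a K-injective resolution of $M$ in non-negative degrees, and the converse via $\mrm{R}\opn{Hom}_A(\mrm{H}^0(A),M)$ and Proposition \ref{prop:RHomInf}. You are slightly more explicit than the paper in two spots (the degree bookkeeping for the lower bound and ruling out $\injdim_A(M)<0$), but the argument is the same.
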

\begin{proof}
If $0 \ncong M \in \INJ(A)$, then $\inf(M) = 0$,
which implies that if $N \in \cat{D}^{-}(A)$ has $\sup(N) = b$,
then the definition of the $\opn{Hom}_A$ functor implies that
\[
\inf\left( \mrm{R}\opn{Hom}_A(N,M) \right) \ge -b.
\]
On the other hand, since $\injdim_A(M) = 0$,
we know that if $N \in \cat{D}^{+}(A)$ has $\inf(N) = a$,
then
\[
\sup\left( \mrm{R}\opn{Hom}_A(N,M) \right) \le -a.
\]
Hence, $F$ has cohomological dimension $0$.
Conversely, if $0 \ncong M \in \cat{D}^{+}(A)$ is a DG-module such that
$\mrm{R}\opn{Hom}_A(-,M)$ has cohomological dimension $0$,
then 
\[
\mrm{R}\opn{Hom}_A(\mrm{H}^0(A),M)
\]
is concentrated in degree $0$.
In particular, by Proposition \ref{prop:RHomInf}, we have that $\inf(M) = 0$.
Also, the cohomological dimension $0$ assumption clearly implies that $\injdim_A(M) = 0$.
Hence, $M \in \INJ(A)$, as claimed.
\end{proof}

For a non-positive DG-ring $A$, we denote by $\cat{D}^{\le 0}(A)$ the full subcategory of DG-modules $M$ with $\mrm{H}^i(M) = 0$ for all $i>0$.

\begin{lem}\label{lem:MtoHM}
Let $A$ be a non-positive DG-ring.
Then there is a natural morphism
\[
\alpha: 1_{\cat{D}^{\le 0}(A)} \to \mrm{H}^0(-)
\]
in $\cat{D}(A)$ such that for any $M \in \cat{D}^{\le 0}(A)$,
the map $\mrm{H}^0(\alpha_M)$ is an isomorphism.
Moreover, given $M \in \cat{D}^{\le 0}(A)$,
if one completes the map $\alpha_M$ into a distinguished triangle
\[
N \to M \xrightarrow{\alpha_M} \mrm{H}^0(M) \to N[1]
\]
in $\cat{D}(A)$, then $\sup(N) \le -1$.
\end{lem}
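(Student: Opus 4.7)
The plan is to use the smart truncation functors recalled in the preliminaries. For $M \in \cat{D}^{\le 0}(A)$, we already have a natural morphism $M \to \opn{smt}^{>-1}(M)$ and a distinguished triangle
\[
\opn{smt}^{\le -1}(M) \to M \to \opn{smt}^{>-1}(M) \to \opn{smt}^{\le -1}(M)[1]
\]
in $\cat{D}(A)$. Since $\mrm{H}^i(M)=0$ for all $i>0$, the cohomology of $\opn{smt}^{>-1}(M)$ is concentrated in degree $0$ and equals $\mrm{H}^0(M)$. The strategy is therefore to produce a \emph{natural} isomorphism $\opn{smt}^{>-1}(M) \cong \mrm{H}^0(M)$ in $\cat{D}(A)$ (viewing $\mrm{H}^0(M)$ as a DG $A$-module concentrated in degree $0$ via the augmentation $A \to \mrm{H}^0(A)$), and then define $\alpha_M$ as the composition $M \to \opn{smt}^{>-1}(M) \cong \mrm{H}^0(M)$. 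Granting this, $\mrm{H}^0(\alpha_M)$ is automatically an isomorphism, and the triangle of the lemma is just the truncation triangle above with $N := \opn{smt}^{\le -1}(M)$, whose $\sup$ is $\le -1$ by construction.

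To realize $\mrm{H}^0(M) \iso \opn{smt}^{>-1}(M)$ concretely, I would pick a DG-module representative and use the model $\opn{smt}^{>-1}(M) = (\cdots \to 0 \to M^0/\opn{Im}(d^{-1}_M) \to M^1 \to \cdots)$. The inclusion of $\ker(d^0_M)/\opn{Im}(d^{-1}_M) = \mrm{H}^0(M)$ into $M^0/\opn{Im}(d^{-1}_M)$, placed in degree $0$, gives a chain map $\mrm{H}^0(M)[0] \to \opn{smt}^{>-1}(M)$, and it is a quasi-isomorphism since both sides have cohomology concentrated in degree $0$ equal to $\mrm{H}^0(M)$. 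The non-positivity of $A$ is crucial for this map to be DG $A$-linear: $A^i=0$ for $i>0$ forces $d=0$ on $A^0$, so the Leibniz rule gives $a\cdot d(x) = d(a\cdot x)$ for $a \in A^0$, $x \in M^{-1}$, making both $\ker(d^0_M)$ and $\opn{Im}(d^{-1}_M)$ stable under the $A^0$-action; and the action of $A^i$ for $i<0$ on degree-$0$ elements lands in negative degrees, which are zero in both source and target.

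The main obstacle, and the one requiring some care, is verifying that this construction descends to a genuinely well-defined natural transformation of functors on $\cat{D}^{\le 0}(A)$, i.e.\ that different DG representatives of $M$ and of morphisms between them produce the same map $\alpha_M$ in the derived category. I would handle this by phrasing everything in terms of the derived functor $\opn{smt}^{>-1} \colon \cat{D}(A) \to \cat{D}(A)$ from the preliminaries, whose naturality is given, and combining it with the natural quasi-isomorphism constructed above, whose naturality in the DG representative is immediate from the construction and whose well-definedness in $\cat{D}(A)$ follows because quasi-isomorphisms are isomorphisms there. Once $\alpha$ is confirmed to be a natural transformation, all three claimed properties (being an isomorphism on $\mrm{H}^0$, the existence of the triangle, and the bound $\sup(N)\le -1$) fall out of the smart truncation triangle recalled from the preliminaries.
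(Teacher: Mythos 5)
Your proof is correct, and it takes a genuinely different route from the paper's. The paper chooses a K-projective resolution $P_M$ with $P_M^i = 0$ for $i>0$, for which $\mrm{H}^0(P_M) = P_M^0/\opn{Im}(d^{-1})$ is a quotient of $P_M^0$, so the desired map is the literal projection $P_M \surj \mrm{H}^0(P_M)[0]$ of DG-modules; naturality is then delegated to the K-projective resolution functor, and the bound $\sup(N) \le -1$ is extracted afterwards from the long exact cohomology sequence of the cone. You instead keep a general DG representative and use the canonical model for $\opn{smt}^{>-1}(M)$, building the isomorphism $\mrm{H}^0(M)[0] \iso \opn{smt}^{>-1}(M)$ as a DG-linear quasi-isomorphism (correctly noting that non-positivity of $A$ is what makes $\ker d^0$ and $\opn{Im}\,d^{-1}$ into $A^0$-submodules) and then inverting it in $\cat{D}(A)$; naturality is delegated to the truncation functors of the preliminaries, and the cone of $\alpha_M$ is identified on the nose with $\opn{smt}^{\le -1}(M)$, making the bound on $\sup(N)$ completely free. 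Your version avoids both the choice of resolution and the long-exact-sequence step, at the small cost of needing to invert a quasi-isomorphism rather than writing down a one-way arrow; it is arguably the cleaner of the two. One cosmetic remark: if you additionally took $M$ to be a representative concentrated in degrees $\le 0$ (as one may, since $M \in \cat{D}^{\le 0}(A)$), your inclusion $\mrm{H}^0(M) \inj M^0/\opn{Im}(d^{-1})$ would become an equality and the inverse step could be dropped entirely, bringing the two proofs nearly into alignment.
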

\begin{proof}
For any $M \in \cat{D}^{\le 0}(A)$,
let $P_M \cong M$ be a K-projective resolution such that $P_M^i = 0$ for all $i>0$.
Such exists since $\sup(M) \le 0$. 
K-projectivity of $P_M$ implies that this isomorphism is natural.
Let $\alpha_{P_M}$ be the natural surjection
\[
\xymatrixcolsep{2pc}
\xymatrixrowsep{2pc}
\xymatrix{
\dots \ar[r] &  P_M^{-2} \ar[r]\ar[d] & P_M^{-1} \ar[r]\ar[d] & P_M^{0} \ar[r]\ar[d] & 0 \ar[r]\ar[d] & \dots\\
\dots \ar[r] & 0      \ar[r]       & 0      \ar[r]       & \mrm{H}^0(P_M) \ar[r]& 0 \ar[r] & \dots
}
\]
and define $\alpha_M:M \to \mrm{H}^0(M)$ to be the composition 
\[
M \cong P_M \xrightarrow{\alpha_{P_M}} \mrm{H}^0(P_M) = \mrm{H}^0(M).
\]
It is clear from this definition that $\mrm{H}^0(\alpha_M)$ is an isomorphism.
Finally, letting 
\[
N \to M \xrightarrow{\alpha_M} \mrm{H}^0(M) \to N[1]
\]
be a distinguished triangle in $\cat{D}(A)$,
there is an exact sequence
\[
\mrm{H}^0(N) \to \mrm{H}^0(M) \xrightarrow{\mrm{H}^0(\alpha_M)} \mrm{H}^0(M) \to \mrm{H}^1(N) \to 0 \to 0 \to \mrm{H}^2(N) \to 0 \to \dots
\]
It is immediate from this sequence that $\mrm{H}^i(N) = 0$ for $i\ge 2$, and the fact that $\mrm{H}^0(\alpha_M)$ is an isomorphism implies that $\mrm{H}^0(N) = \mrm{H}^1(N) = 0$. Hence, $\sup(N)\le -1$, as claimed.
\end{proof}

\begin{thm}\label{thm:mainZero}
Let $A,B$ be non-positive DG-rings,
and let $F:\cat{D}(A) \to \cat{D}(B)$ be a contravariant triangulated functor of cohomological dimension $0$.
\begin{enumerate}
\item
There is an isomorphism 
\[
\Phi_F:\mrm{H}^0(F(-)) \to \mrm{H}^0\left(F(\mrm{H}^0(-))\right) 
\]
of functors $\cat{D}(A) \to \opn{Mod}(\mrm{H}^0(B))$.
\item If $G:\cat{D}(A) \to \cat{D}(B)$ is another contravariant triangulated functor of cohomological dimension $0$,
and $\Psi:F \to G$ is a morphism of triangulated functors,
for any $M \in \cat{D}(A)$ the diagram
\[
\xymatrixcolsep{4pc}
\xymatrixrowsep{3pc}
\xymatrix{
\mrm{H}^0(F(M)) \ar[r]^{\Phi_F(M)}\ar[d]_{\mrm{H}^0(\Psi_M)} & \mrm{H}^0\left(F(\mrm{H}^0(M))\right) \ar[d]^{\mrm{H}^0\left(\Psi_{\mrm{H}^0(M)}\right)} \\
\mrm{H}^0(G(M)) \ar[r]_{\Phi_G(M)}                           & \mrm{H}^0\left(G(\mrm{H}^0(M))\right) 
}
\]
is commutative.
\end{enumerate}
\end{thm}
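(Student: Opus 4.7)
The plan is to construct $\Phi_F(M)$ via a two-stage reduction: first pass from $M$ to $\opn{smt}^{\le 0}(M)$, then from $\opn{smt}^{\le 0}(M)$ to $\mrm{H}^0(M) = \mrm{H}^0(\opn{smt}^{\le 0}(M))$. At each stage the ``error term'' in the relevant distinguished triangle has cohomology supported far from degree~$0$, and the cohomological-dimension-zero hypothesis then forces $\mrm{H}^0 \circ F$ to be insensitive to the change. The first reduction is necessary because the natural morphism $\alpha$ of Lemma \ref{lem:MtoHM} is defined only on $\cat{D}^{\le 0}(A)$.

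For the first stage, apply the contravariant triangulated functor $F$ to the smart-truncation triangle $\opn{smt}^{\le 0}(M) \to M \to \opn{smt}^{>0}(M) \to \opn{smt}^{\le 0}(M)[1]$ and take the long exact cohomology sequence. Since $\mrm{H}(\opn{smt}^{>0}(M)) \subseteq [1,\infty]$, the hypothesis yields $\mrm{H}(F(\opn{smt}^{>0}(M))) \subseteq [-\infty,-1]$, so its $\mrm{H}^0$ and $\mrm{H}^1$ both vanish and the natural map $\mrm{H}^0(F(M)) \to \mrm{H}^0(F(\opn{smt}^{\le 0}(M)))$ is an isomorphism. For the second stage, apply $F$ to the triangle $N \to \opn{smt}^{\le 0}(M) \xrightarrow{\alpha} \mrm{H}^0(M) \to N[1]$ provided by Lemma \ref{lem:MtoHM}; since $\sup(N) \le -1$, the hypothesis gives $\mrm{H}(F(N)) \subseteq [1,\infty]$, hence $\mrm{H}^{-1}(F(N)) = \mrm{H}^0(F(N)) = 0$ and $\mrm{H}^0(F(\alpha))$ is an isomorphism. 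Define $\Phi_F(M)$ as the composite of the first isomorphism with the inverse of $\mrm{H}^0(F(\alpha_{\opn{smt}^{\le 0}(M)}))$.

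Naturality in $M$ is the main technical point. Both the smart-truncation triangle and the triangle of Lemma \ref{lem:MtoHM} are natural in $M$, so the associated long exact sequences are natural and each of the two constituent maps underlying $\Phi_F$ is a natural transformation; the mild subtlety is that the underlying zig-zag $M \leftarrow \opn{smt}^{\le 0}(M) \xrightarrow{\alpha} \mrm{H}^0(M)$ is not a single arrow in $\cat{D}(A)$, but both halves become honest isomorphisms after applying $\mrm{H}^0 \circ F$, so their composite (with one leg inverted) is a bona fide natural transformation. For part~(2), a morphism $\Psi: F \to G$ of triangulated functors intertwines the distinguished triangles obtained by applying either functor to the two triangles above, and hence commutes with the connecting isomorphisms used to define $\Phi_F$ and $\Phi_G$; the desired commutative square then follows from an immediate diagram chase. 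The main obstacle is the bookkeeping for naturality in part~(1); once $\Phi_F$ is established as a genuine natural transformation rather than just a pointwise family of isomorphisms, part~(2) is essentially formal.
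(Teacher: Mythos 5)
Your proposal matches the paper's proof essentially step for step: the same two-stage reduction via the smart-truncation triangle and Lemma \ref{lem:MtoHM}, the same vanishing arguments from the cohomological-dimension-zero hypothesis, and the same definition of $\Phi_F$ as a composite of the two resulting natural isomorphisms, with part (2) following by applying $\Psi$ to the two triangles and chasing the combined diagram. The argument is correct.
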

\begin{proof}
We will prove these two statements simultaneously in three steps.

Step 1:
Let $M \in \cat{D}(A)$,
and set $M' = \opn{smt}^{\le 0}(M)$ and $M'' = \opn{smt}^{>0}(M)$.
Then there is a distinguished triangle 
\[
M' \to M \to M'' \to M'[1]
\]
in $\cat{D}(A)$, and hence a distinguished triangle 
\[
F(M'') \to F(M) \to F(M') \to F(M'')[1]
\]
in $\cat{D}(B)$. This triangle induces an exact sequence of $\mrm{H}^0(B)$-modules:
\begin{equation}\label{eqn:seq-fm}
\mrm{H}^0(F(M'')) \to \mrm{H}^0(F(M)) \to \mrm{H}^0(F(M')) \to \mrm{H}^1(F(M'')).
\end{equation}
Since $\inf(M'') \ge 1$, the fact that $F$ has cohomological dimension $0$ implies that 
\[
\sup(F(M'')) \le -1,
\]
so in particular 
\[
\mrm{H}^0(F(M'')) = \mrm{H}^1(F(M'')) = 0.
\]
Hence, the exact sequence (\ref{eqn:seq-fm}) implies that the map $\mrm{H}^0\left(F(M)\right) \to \mrm{H}^0\left(F(M')\right)$ is an isomorphism.
Since the morphism $M' = \opn{smt}^{\le 0}(M) \to M$ is natural in $M$, 
we deduce that the isomorphism 
\[
\beta^F_M: \mrm{H}^0\left(F(M)\right) \to \mrm{H}^0\left(F(\opn{smt}^{\le 0}(M))\right)
\]
is also natural in $M$. Note that we have denoted this natural isomorphism by $\beta^F_M$.
Naturality of $\Psi$ and the map $M' \to M$ imply that the diagram
\[
\xymatrix{
F(M) \ar[r]\ar[d]_{\Psi_M} & F(M')\ar[d]^{\Psi_{M'}}\\
G(M) \ar[r]       & G(M')
}
\]
is commutative, so applying the functor $\mrm{H}^0$ to it, we deduce that the diagram
\begin{equation}\label{eqn:bigd1}
\xymatrix{
\mrm{H}^0(F(M)) \ar[r]^{\beta^F_M}\ar[d]_{\mrm{H}^0(\Psi_M)} & \mrm{H}^0(F(M'))\ar[d]^{\mrm{H}^0(\Psi_{M'})}\\
\mrm{H}^0(G(M)) \ar[r]_{\beta^G_M}       & \mrm{H}^0(G(M'))
}
\end{equation}
is commutative.

Step 2: Let $M \in \cat{D}^{\le 0}(M)$,
and consider the distinguished triangle
\[
N \to M \xrightarrow{\alpha_M} \mrm{H}^0(M) \to N[1]
\]
in $\cat{D}(A)$ constructed in Lemma \ref{lem:MtoHM}.
Applying $F$, we obtain a distinguished triangle
\[
F\left(\mrm{H}^0(M)\right) \xrightarrow{F(\alpha_M)} F(M) \to F(N) \to F\left(\mrm{H}^0(M)\right)[1]
\]
in $\cat{D}(B)$. 
We obtain an exact sequence of $\mrm{H}^0(B)$-modules:
\begin{equation}\label{eqn:secondex}
\mrm{H}^{-1}\left(F(N)\right) \to \mrm{H}^0\left( F(\mrm{H}^0(M)) \right) \xrightarrow{\mrm{H}^0\left(F(\alpha_M)\right)} \mrm{H}^0\left(F(M)\right) \to \mrm{H}^0\left(F(N)\right).
\end{equation}
Since $\sup(N) \le -1$, the fact that $F$ has cohomological dimension $0$ implies that 
\[
\inf\left(F(N)\right) \ge 1.
\]
Hence, 
\[
\mrm{H}^{-1}\left(F(N)\right) = \mrm{H}^0\left(F(N)\right) = 0,
\]
so exactness of (\ref{eqn:secondex}) implies that 
\[
\mrm{H}^0\left( F(\mrm{H}^0(M)) \right) \xrightarrow{\mrm{H}^0\left(F(\alpha_M)\right)} \mrm{H}^0\left(F(M)\right)
\]
is an isomorphism. 
For such an $M \in \cat{D}^{\le 0}(M)$, let us set 
\[
\gamma^F_M := \mrm{H}^0\left(F(\alpha_M)\right).
\]
Since $\alpha_M$ was natural in $M$, we see that $\gamma^F$ is a natural isomorphism.
Applying the naturality of $\Psi$ to the map $\alpha_M$ implies that the diagram
\[
\xymatrixcolsep{4pc}
\xymatrix{
F(\mrm{H}^0(M)) \ar[r]^{F(\alpha_M)}\ar[d]_{\Psi_{\mrm{H}^0(M)}} & F(M)\ar[d]^{\Psi_M}\\
G(\mrm{H}^0(M)) \ar[r]_{G(\alpha_M)}       & G(M)
}
\]
is commutative, so applying $\mrm{H}^0$ to this diagram, we see that the diagram
\begin{equation}\label{eqn:bigd2}
\xymatrixcolsep{4pc}
\xymatrix{
\mrm{H}^0\left(F(\mrm{H}^0(M)\right) \ar[d]_{\mrm{H}^0\left(\Psi_{\mrm{H}^0(M)}\right)}  \ar[r]^{\gamma^F_M} & \mrm{H}^0(F(M)) \ar[d]^{\mrm{H}^0(\Psi_M)} \\
\mrm{H}^0\left(G(\mrm{H}^0(M)\right) \ar[r]_{\gamma^G_M}                                                     & \mrm{H}^0(G(M))
}
\end{equation}
is also commutative.

Step 3: Given $M \in \cat{D}(A)$,
let 
\[
\Phi_F(M) := (\gamma^F_{\opn{smt}^{\le 0}(M)})^{-1} \circ \beta^F_M : \mrm{H}^0(F(M)) \to \mrm{H}^0\left(F(\mrm{H}^0(\opn{smt}^{\le 0}(M))) \right)
\]
Being the composition of two natural isomorphisms, 
we see that $\Phi_F$ is a natural isomorphism.
Since by definition
\[
\mrm{H}^0\left(\opn{smt}^{\le 0}(M)\right) = \mrm{H}^0(M),
\]
we see that $\Phi_F$ is a natural isomorphism from $\mrm{H}^0(F(-))$ to $\mrm{H}^0(F(\mrm{H}^0(-)))$, proving the first claim of the theorem.
Combining the two commutative diagrams (\ref{eqn:bigd1}) and (\ref{eqn:bigd2}), 
we obtain a commutative diagram
\[
\xymatrixcolsep{4pc}
\xymatrixrowsep{3pc}
\xymatrix{
\mrm{H}^0\left(F(M)\right) \ar[r]^{\beta^F_M}\ar[d]_{\mrm{H}^0(\Psi_M)} & \mrm{H}^0\left(F(\opn{smt}^{\le 0}(M))\right) \ar[r]^{(\gamma^F_{\opn{smt}^{\le 0}(M)})^{-1}}\ar[d]^{\mrm{H}^0\left(\Psi_{\opn{smt}^{\le 0}(M)}\right)}   & \mrm{H}^0\left(F(\mrm{H}^0(M))\right) \ar[d]^{\mrm{H}^0\left(\Psi_{\mrm{H}^0(M)}\right)}\\
\mrm{H}^0\left(G(M)\right) \ar[r]_{\beta^G_M} & \mrm{H}^0\left(G(\opn{smt}^{\le 0}(M))\right) \ar[r]_{(\gamma^G_{\opn{smt}^{\le 0}(M)})^{-1}} & \mrm{H}^0\left(G(\mrm{H}^0(M))\right)
}
\]
where in the last column we have used the identification 
\[
\mrm{H}^0\left(M\right) = \mrm{H}^0\left(\opn{smt}^{\le 0}(M)\right).
\]
By the definition of $\Phi$, the composition of the two maps in the top row is equal to $\Phi_F(M)$,
and the composition of the two maps in the bottom row is equal to $\Phi_G(M)$, 
so commutativity of this diagram proves the second claim of the theorem.
\end{proof}

The converse of the above result is also true:
\begin{prop}\label{prop:convOfHZ}
Let $A,B$ be non-positive DG-rings,
and let $F:\cat{D}(A) \to \cat{D}(B)$ be a contravariant triangulated functor.
If there is an isomorphism 
\[
\mrm{H}^0(F(-)) \to \mrm{H}^0\left(F(\mrm{H}^0(-))\right) 
\]
of functors $\cat{D}(A) \to \opn{Mod}(\mrm{H}^0(B))$,
then $F$ is of cohomological dimension $0$.
\end{prop}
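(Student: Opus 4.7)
The plan is to use the contravariant triangulated structure to reduce the general vanishing to the single degree $n=0$, where the hypothesis directly applies. The observation is that for any $M \in \cat{D}(A)$ and any $n \in \mathbb{Z}$, contravariance gives a natural isomorphism $F(M[-n]) \cong F(M)[n]$, so that
\[
\mrm{H}^n(F(M)) \cong \mrm{H}^0\left(F(M[-n])\right).
\]
Applying the hypothesis to $M[-n]$ then yields a natural isomorphism
\[
\mrm{H}^n(F(M)) \cong \mrm{H}^0\left(F(\mrm{H}^0(M[-n]))\right) = \mrm{H}^0\left(F(\mrm{H}^{-n}(M))\right).
\]

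First, I would record this chain of isomorphisms as the key lemma, checking carefully only that the hypothesis on $F$ (the existence of a natural isomorphism $\mrm{H}^0(F(-)) \cong \mrm{H}^0(F(\mrm{H}^0(-)))$) is used as stated, applied at the DG-module $M[-n]$ rather than at $M$. I would also note that because $F$ is a triangulated additive functor, $F$ sends the zero object of $\cat{D}(A)$ to the zero object of $\cat{D}(B)$, hence $\mrm{H}^0(F(0)) = 0$.

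Second, assume $\mrm{H}(M) \subseteq [a,b]$ with $-\infty \le a \le b \le \infty$, and fix $n \in \mathbb{Z}$ with $n > -a$ or $n < -b$. In either case $\mrm{H}^{-n}(M) = 0$, so viewing $\mrm{H}^{-n}(M)$ as an object of $\cat{D}(A)$ concentrated in degree $0$ we have $F(\mrm{H}^{-n}(M)) \cong 0$ in $\cat{D}(B)$, and therefore
\[
\mrm{H}^n(F(M)) \cong \mrm{H}^0\left(F(\mrm{H}^{-n}(M))\right) = 0.
\]
This shows $\mrm{H}(F(M)) \subseteq [-b,-a]$, i.e.\ $F$ has cohomological dimension $0$.

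I do not expect a genuine obstacle here: the entire content is the shift trick, which converts the degree-zero hypothesis into vanishing in every degree. The only point requiring a little care is the direction of the shift for a contravariant triangulated functor, so that $\mrm{H}^n(F(M))$ is indeed matched with $\mrm{H}^0$ of $F$ applied to something whose $\mrm{H}^0$ equals $\mrm{H}^{-n}(M)$.
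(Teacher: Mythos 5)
Your proof is correct and takes essentially the same route as the paper: both use the shift identity $\mrm{H}^n(F(M)) \cong \mrm{H}^0(F(M[-n]))$ for a contravariant triangulated functor, apply the hypothesis at $M[-n]$ to rewrite this as $\mrm{H}^0(F(\mrm{H}^{-n}(M)))$, and conclude vanishing for $n \notin [-b,-a]$ from $\mrm{H}^{-n}(M)=0$. Your extra remark that $F(0)\cong 0$ (so the conclusion is legitimate) is implicit in the paper but harmless to make explicit.
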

\begin{proof}
Let $M \in \cat{D}(A)$ such that $H(M) \subseteq [a,b]$.
For any $n \in \mathbb{Z}$,
we have by assumption:
\begin{eqnarray}
\mrm{H}^n(F(M)) = \mrm{H}^0(F(M)[n]) = \mrm{H}^0(F(M[-n])) \cong\nonumber\\
\mrm{H}^0(F(\mrm{H}^0(M[-n]))) = \mrm{H}^0(F(\mrm{H}^{-n}(M)).\nonumber
\end{eqnarray}
Hence, if $n \notin [-b,-a]$, so that $-n \notin [a,b]$,
we have that $\mrm{H}^{-n}(M) = 0$, so 
\[
\mrm{H}^n(F(M)) = 0,
\]
proving the claim.
\end{proof}

Given a DG-ring $A$, recall that we denote by $Z(A)$ the center of $A$.

\begin{thm}\label{thm:coho-of-rhom}
Let $A$ be a non-positive DG-ring, and let $n \in \mathbb{Z}$.
\begin{enumerate}
\item For any $M \in \cat{D}(A)$ and any $I \in \INJ(A)$ there is an isomorphism
\[
\mrm{H}^n \left(\mrm{R}\opn{Hom}_A(M,I)\right) \cong \opn{Hom}_{\mrm{H}^0(A)}(\mrm{H}^{-n}(M),\mrm{H}^0(I))
\]
in $\opn{Mod}\left(\mrm{H}^0(Z(A))\right)$ which is functorial in both $M$ and $I$.
\item For any $M \in \cat{D}(A\otimes_Z A^{\opn{op}})$ and any $I \in \INJ(A)$ there is an isomorphism
\[
\mrm{H}^n \left(\mrm{R}\opn{Hom}_A(M,I)\right) \cong \opn{Hom}_{\mrm{H}^0(A)}(\mrm{H}^{-n}(M),\mrm{H}^0(I))
\]
in $\opn{Mod}\left(\mrm{H}^0(A)\right)$ which is functorial in both $M$ and $I$.
\end{enumerate}
\end{thm}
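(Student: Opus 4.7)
The plan is to reduce everything to the case $M = \mrm{H}^0(M)$ using Theorem \ref{thm:mainZero}, and then compute directly. By Proposition \ref{prop:INJisZeroFunc}, for any $I \in \INJ(A)$ the contravariant functor $F := \mrm{R}\opn{Hom}_A(-,I)$ has cohomological dimension $0$. Since $I$ carries a natural $Z(A)$-module structure and $Z(A)$ is itself a non-positive DG-ring, $F$ can be viewed as landing in $\cat{D}(Z(A))$, so Theorem \ref{thm:mainZero}(1) supplies a natural isomorphism
\[
\mrm{H}^0(\mrm{R}\opn{Hom}_A(M,I)) \cong \mrm{H}^0(\mrm{R}\opn{Hom}_A(\mrm{H}^0(M),I))
\]
of $\mrm{H}^0(Z(A))$-modules. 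For arbitrary $n$, I apply this with $M$ replaced by $M[-n]$ and use $\mrm{H}^n(\mrm{R}\opn{Hom}_A(M,I)) \cong \mrm{H}^0(\mrm{R}\opn{Hom}_A(M[-n],I))$ together with $\mrm{H}^0(M[-n]) = \mrm{H}^{-n}(M)$, obtaining a natural isomorphism $\mrm{H}^n(\mrm{R}\opn{Hom}_A(M,I)) \cong \mrm{H}^0(\mrm{R}\opn{Hom}_A(\mrm{H}^{-n}(M),I))$.

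Next I evaluate the right-hand side explicitly. Restriction-of-scalars adjunction along the DG-ring map $A \to \mrm{H}^0(A)$ gives, for any left $\mrm{H}^0(A)$-module $\bar{N}$,
\[
\mrm{R}\opn{Hom}_A(\bar{N},I) \cong \mrm{R}\opn{Hom}_{\mrm{H}^0(A)}(\bar{N},\mrm{R}\opn{Hom}_A(\mrm{H}^0(A),I)).
\]
By Proposition \ref{prop:INJCHAR} combined with Proposition \ref{prop:RHomisHZ}, the inner object is isomorphic in $\cat{D}(\mrm{H}^0(A))$ to the injective $\mrm{H}^0(A)$-module $\mrm{H}^0(I)$, so the whole expression reduces to $\opn{Hom}_{\mrm{H}^0(A)}(\bar{N},\mrm{H}^0(I))$ sitting in degree $0$. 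Setting $\bar{N} = \mrm{H}^{-n}(M)$ and combining with the previous paragraph produces the isomorphism in part (1). Functoriality in $I$ follows from part (2) of Theorem \ref{thm:mainZero} applied to the natural transformation $\mrm{R}\opn{Hom}_A(-,I) \to \mrm{R}\opn{Hom}_A(-,I')$ induced by a morphism $I \to I'$ in $\INJ(A)$, together with the evident naturality of the adjunction and of the isomorphism $\mrm{R}\opn{Hom}_A(\mrm{H}^0(A),I) \cong \mrm{H}^0(I)$.

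For part (2), if $M \in \cat{D}(A \otimes_Z A^{\opn{op}})$ then $\mrm{R}\opn{Hom}_A(M,I)$ inherits a natural left $A$-action through the right $A$-action on $M$, so $F$ may now be regarded as a functor landing in $\cat{D}(A)$ itself. The proof of Theorem \ref{thm:mainZero} uses only distinguished triangles and natural transformations in the codomain, so it transports verbatim to give the analogue of part (1) as an isomorphism of $\mrm{H}^0(A)$-modules (not merely of $\mrm{H}^0(Z(A))$-modules). The adjunction computation above is also $\mrm{H}^0(A)$-linear: the left action on $\opn{Hom}_{\mrm{H}^0(A)}(\mrm{H}^{-n}(M),\mrm{H}^0(I))$ is the one induced by the right $\mrm{H}^0(A)$-action on $\mrm{H}^{-n}(M)$, which matches the left action on $\mrm{R}\opn{Hom}_A(\mrm{H}^{-n}(M),I)$ coming from the bimodule structure.

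The main obstacle I anticipate is the bookkeeping in part (2): one must verify that the isomorphisms $\beta^F_M$ and $\gamma^F_M$ produced in the proof of Theorem \ref{thm:mainZero}, as well as the adjunction isomorphism, all respect the additional $\mrm{H}^0(A)$-module structure. Each individual check is routine because every map in sight is induced by a morphism in $\cat{D}(A)$, but the argument requires carefully distinguishing the left action (which is being tracked on the Hom) from the right action on $M$ (which provides that left action). No essentially new ingredient is needed beyond the results already proved.
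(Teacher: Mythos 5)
Your proposal is correct and follows essentially the same route as the paper: invoke Proposition \ref{prop:INJisZeroFunc} to see $\mrm{R}\opn{Hom}_A(-,I)$ has cohomological dimension $0$, shift by $-n$ and apply Theorem \ref{thm:mainZero}(1) to reduce to $\mrm{H}^{-n}(M)$, then use adjunction along $A\to\mrm{H}^0(A)$ together with Proposition \ref{prop:RHomisHZ} to evaluate, with Theorem \ref{thm:mainZero}(2) handling functoriality in $I$. The treatment of part (2), viewing the functor as landing in $\cat{D}(A)$ so that Theorem \ref{thm:mainZero} yields $\mrm{H}^0(A)$-linearity, is also exactly the paper's device.
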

\begin{proof}
For $M \in \cat{D}(A)$, let us consider $\mrm{R}\opn{Hom}_A(-,I)$ as a functor
$\cat{D}(A) \to \cat{D}(Z(A))$,
while for $M \in \cat{D}(A\otimes_Z A^{\opn{op}})$ we consider $\mrm{R}\opn{Hom}_A(-,I)$ as a functor
$\cat{D}(A\otimes_{\mathbb{Z}} A^{\opn{op}}) \to \cat{D}(A)$.

By Proposition \ref{prop:INJisZeroFunc}, 
the functor $\mrm{R}\opn{Hom}_A(-,I)$ has cohomological dimension $0$.
Since it is a contravariant triangulated functor, we have
\[
\mrm{H}^n \left(\mrm{R}\opn{Hom}_A(M,I)\right) = \mrm{H}^0\left(\mrm{R}\opn{Hom}_A(M[-n],I)\right).
\]
Hence, by Theorem \ref{thm:mainZero}(1), there is an isomorphism
\begin{eqnarray}\label{eqn:basic-func-hz}
 \mrm{H}^0\left(\mrm{R}\opn{Hom}_A(M[-n],I)\right) \cong\\
 \mrm{H}^0\left(\mrm{R}\opn{Hom}_A(\mrm{H}^0(M[-n]),I)\right) = \nonumber\\
\mrm{H}^0\left(\mrm{R}\opn{Hom}_A(\mrm{H}^{-n}(M),I)\right)\nonumber
\end{eqnarray}
which is functorial in $M$. 
By the theorem, for $M \in \cat{D}(A)$,
this isomorphism is $\mrm{H}^0(Z(A))$-linear,
while if $M \in \cat{D}(A\otimes_{\mathbb{Z}} A^{\opn{op}})$,
it is $\mrm{H}^0(A)$-linear.

If $I \to J$ is a morphism in $\INJ(A)$,
it induces a morphism of functors
\[
\mrm{R}\opn{Hom}_A(-,I) \to \mrm{R}\opn{Hom}_A(-,J),
\]
and hence by Theorem \ref{thm:mainZero}(2), the diagram
\[
\xymatrix{
\mrm{H}^0\left(\mrm{R}\opn{Hom}_A(M[-n],I)\right) \ar[r]^{\cong}\ar[d] & \mrm{H}^0\left(\mrm{R}\opn{Hom}_A(\mrm{H}^{-n}(M),I)\right)\ar[d]\\
\mrm{H}^0\left(\mrm{R}\opn{Hom}_A(M[-n],J)\right) \ar[r]^{\cong}       & \mrm{H}^0\left(\mrm{R}\opn{Hom}_A(\mrm{H}^{-n}(M),J)\right)
}
\]
is commutative. In other words, (\ref{eqn:basic-func-hz}) is functorial also in $I$.
By adjunction there is an isomorphism
\[
\mrm{H}^0\left(\mrm{R}\opn{Hom}_A(\mrm{H}^{-n}(M),I)\right) \cong
\mrm{H}^0\left(\mrm{R}\opn{Hom}_{\mrm{H}^0(A)}(\mrm{H}^{-n}(M),\mrm{R}\opn{Hom}_A(\mrm{H}^0(A),I)\right),
\]
functorial in $M,I$, and by Proposition \ref{prop:RHomisHZ},
\begin{eqnarray}
\mrm{H}^0\left(\mrm{R}\opn{Hom}_{\mrm{H}^0(A)}(\mrm{H}^{-n}(M),\mrm{R}\opn{Hom}_A(\mrm{H}^0(A),I))\right) \cong \nonumber\\
\mrm{H}^0\left(\mrm{R}\opn{Hom}_{\mrm{H}^0(A)}(\mrm{H}^{-n}(M),\mrm{H}^0(I))\right) =\nonumber\\
\opn{Hom}_{\mrm{H}^0(A)}(\mrm{H}^{-n}(M),\mrm{H}^0(I)) \nonumber
\end{eqnarray}
proving the claim.
\end{proof}

As an immediate corollary of this result, we may compute the cohomologies of elements of $\INJ(A)$:

\begin{cor}\label{cor:coho-of-INJ}
Let $A$ be a non-positive DG-ring, and let $I \in \INJ(A)$.
Then for any $n \in \mathbb{N}$, there is an isomorphism
\[
\mrm{H}^n(I) \cong \opn{Hom}_{\mrm{H}^0(A)}\left(\mrm{H}^{-n}(A),\mrm{H}^0(I)\right).
\]
in $\opn{Mod}(\mrm{H}^0(A))$ which is functorial in $I$.
\end{cor}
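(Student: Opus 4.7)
The plan is to derive this as a direct specialization of Theorem \ref{thm:coho-of-rhom}, which has already been established. The key observation is that $A$ carries a natural structure of a DG-module over $A\otimes_{\mathbb{Z}} A^{\opn{op}}$ (i.e., it is a DG-bimodule over itself), and there is a canonical isomorphism $\mrm{R}\opn{Hom}_A(A, I) \cong I$ in $\cat{D}(A)$, valid for any $I \in \cat{D}(A)$ since $A$ is a free (in particular K-projective) DG-module over itself.

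First I would apply Theorem \ref{thm:coho-of-rhom}(2) with $M = A$, regarded as an object of $\cat{D}(A\otimes_{\mathbb{Z}} A^{\opn{op}})$. This yields, for every $n \in \mathbb{Z}$, an isomorphism
\[
\mrm{H}^n\!\left(\mrm{R}\opn{Hom}_A(A,I)\right) \cong \opn{Hom}_{\mrm{H}^0(A)}\!\left(\mrm{H}^{-n}(A),\mrm{H}^0(I)\right)
\]
in $\opn{Mod}(\mrm{H}^0(A))$, functorial in $I$. Next I would rewrite the left-hand side using $\mrm{R}\opn{Hom}_A(A,I) \cong I$, which gives $\mrm{H}^n(\mrm{R}\opn{Hom}_A(A,I)) \cong \mrm{H}^n(I)$ as $\mrm{H}^0(A)$-modules. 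Composing these two isomorphisms produces the stated formula. Functoriality in $I$ is inherited directly from the corresponding functoriality in Theorem \ref{thm:coho-of-rhom}(2), since the identification $\mrm{R}\opn{Hom}_A(A,-) \cong 1_{\cat{D}(A)}$ is a natural isomorphism of functors on $\cat{D}(A)$.

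There is essentially no obstacle here; the only minor point worth noting is why the resulting isomorphism is $\mrm{H}^0(A)$-linear rather than merely $\mrm{H}^0(Z(A))$-linear, and this is precisely the content of using part (2) instead of part (1) of Theorem \ref{thm:coho-of-rhom}: the right $A$-action on $A$ induces the full $\mrm{H}^0(A)$-module structure on both sides.
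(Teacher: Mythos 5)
Your proposal is correct and matches the paper's own proof exactly: the paper likewise applies Theorem \ref{thm:coho-of-rhom}(2) with $M = A \in \cat{D}(A\otimes_{\mathbb{Z}} A^{\opn{op}})$ and uses the identification $\mrm{R}\opn{Hom}_A(A,I)\cong I$. You simply spell out the details the paper leaves implicit.
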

\begin{proof}
Apply Theorem \ref{thm:coho-of-rhom}(2) to $M = A \in \cat{D}(A\otimes_{\mathbb{Z}} A^{\opn{op}})$.
\end{proof}

We finish this section with the following categorical characterization of $\INJ(A)$:

\begin{cor}\label{cor:catCharI}
Let $A$ be a non-positive DG-ring,
and let $I \in \cat{D}(A)$.
Then the following are equivalent:
\begin{enumerate}
\item $I \in \INJ(A)$.
\item For any morphism $f:M \to N$ in $\cat{D}(A)$ such that $\mrm{H}^0(f):\mrm{H}^0(M) \to \mrm{H}^0(N)$ is an injective map,
the map 
\[
\opn{Hom}_{\cat{D}(A)}(f,I): \opn{Hom}_{\cat{D}(A)}(N,I) \to \opn{Hom}_{\cat{D}(A)}(M,I)
\]
is surjective.
\end{enumerate}
\end{cor}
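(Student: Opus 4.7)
The plan is to prove the two implications separately. For $(1) \Rightarrow (2)$, I complete $f$ to a distinguished triangle $M \xar{f} N \to C \to M[1]$ in $\cat{D}(A)$ and apply $\opn{Hom}_{\cat{D}(A)}(-,I)$ to obtain the long exact sequence
\[
\opn{Hom}_{\cat{D}(A)}(N,I) \to \opn{Hom}_{\cat{D}(A)}(M,I) \to \Ext^1_A(C,I).
\]
Surjectivity of the first arrow is equivalent to vanishing of the second. Theorem \ref{thm:coho-of-rhom}(1) supplies natural isomorphisms $\opn{Hom}_{\cat{D}(A)}(M,I) \cong \opn{Hom}_{\mrm{H}^0(A)}(\mrm{H}^0(M),\mrm{H}^0(I))$ and $\Ext^1_A(C,I) \cong \opn{Hom}_{\mrm{H}^0(A)}(\mrm{H}^{-1}(C),\mrm{H}^0(I))$. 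By naturality applied to the rotated edge $C[-1] \to M$ of the triangle, the connecting Ext-map gets identified with precomposition by the cohomology connecting homomorphism $\mrm{H}^{-1}(C) \to \mrm{H}^0(M)$. The cohomology long exact sequence, together with injectivity of $\mrm{H}^0(f)$, forces this homomorphism to vanish, and hence so does the Ext-map.

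For $(2) \Rightarrow (1)$, I verify the defining conditions of $\INJ(A)$ in three steps using only elementary applications of the lifting property. First, if $\mrm{H}^0(I) = 0$, then $\mrm{H}^0$ of the zero morphism $I \to 0$ is trivially injective, so by (2) the map $\opn{Hom}_{\cat{D}(A)}(0,I) = 0 \to \opn{Hom}_{\cat{D}(A)}(I,I)$ is surjective, forcing $\opn{id}_I = 0$ and hence $I \cong 0$. From now on assume $I \ncong 0$, so $\mrm{H}^0(I) \ne 0$. Second, the truncation morphism $\pi: I \to \opn{smt}^{> -1}(I)$ induces an isomorphism on $\mrm{H}^0$, so (2) produces a retraction of $\pi$, making the truncation triangle
\[
\opn{smt}^{\le -1}(I) \to I \xar{\pi} \opn{smt}^{> -1}(I) \to \opn{smt}^{\le -1}(I)[1]
\]
split and exhibiting $J := \opn{smt}^{\le -1}(I)$ as a direct summand of $I$. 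Since $\mrm{H}^0(J) = 0$, applying (2) to $J \to 0$ as in the first step yields $\opn{Hom}_{\cat{D}(A)}(J, I) = 0$, killing the split inclusion $J \hookrightarrow I$ and hence $J$ itself. Therefore $\inf(I) = 0$.

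Third, to show $\injdim_A(I) = 0$, observe that the same reasoning yields $\opn{Hom}_{\cat{D}(A)}(M, I) = 0$ for every $M \in \cat{D}(A)$ with $\mrm{H}^0(M) = 0$. Given $N \in \cat{D}^{+}(A)$ and $i > -\inf(N)$, the shift $N[-i]$ satisfies $\mrm{H}^0(N[-i]) = \mrm{H}^{-i}(N) = 0$, so $\Ext^i_A(N,I) = \opn{Hom}_{\cat{D}(A)}(N[-i], I) = 0$; this yields $\injdim_A(I) \le 0$, and $\injdim_A(I) = 0$ then follows from testing on $N = \mrm{H}^0(A)$ using Proposition \ref{prop:RHomInf}. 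The main conceptual step is the splitting argument of Step 2: once one notices that (2) forces every truncation of $I$ to split off a non-negative-degree piece, the rest of the $(2) \Rightarrow (1)$ direction is formal. The only subtle point in $(1) \Rightarrow (2)$ is the identification of the Ext-level connecting map with the Hom-level one, which is a direct application of the naturality in Theorem \ref{thm:coho-of-rhom}.
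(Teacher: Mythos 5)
Your argument for $(1) \Rightarrow (2)$ is correct and takes a slightly different route from the paper: instead of combining the naturality square from Theorem \ref{thm:coho-of-rhom} with injectivity of the $\mrm{H}^0(A)$-module $\mrm{H}^0(I)$ (Proposition \ref{prop:HzOfINJ}), you show that the connecting map in the $\opn{Hom}(-,I)$ long exact sequence vanishes, by using the naturality of Theorem \ref{thm:coho-of-rhom} to identify it with precomposition by $\mrm{H}^{-1}(C) \to \mrm{H}^0(M)$, which is zero because $\mrm{H}^0(f)$ is injective. Both arguments work; yours has the minor merit of not needing to know that $\mrm{H}^0(I)$ is injective.

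For $(2) \Rightarrow (1)$, your Steps 1 and 3 reproduce the paper's proof, but Step 2 contains a genuine error. Applying (2) to $\pi: I \to \opn{smt}^{> -1}(I)$ does give a morphism $r$ with $r\pi = \opn{id}_I$, so $\pi$ is a split monomorphism; but this does \emph{not} split the truncation triangle $J \xar{j} I \xar{\pi} \opn{smt}^{> -1}(I) \to J[1]$ in a way that exhibits $J := \opn{smt}^{\le -1}(I)$ as a direct summand of $I$. That would require the other one-sided inverse, namely a section $\sigma$ with $\pi\sigma = \opn{id}$. What actually follows from $r\pi = \opn{id}_I$ and $\pi j = 0$ is that $j = (r\pi)j = r(\pi j) = 0$; since $\mrm{H}^n(j)$ is an isomorphism for every $n \le -1$, this forces $\mrm{H}^n(J) = 0$ for all $n$, so $J \cong 0$ and $\inf(I) \ge 0$. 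Thus your conclusion is reachable from the same starting point, just not by the "direct summand" reasoning you wrote, which is false as stated. Step 2 is also dispensable: the observation at the start of your Step 3 — that $\opn{Hom}_{\cat{D}(A)}(M,I) = 0$ whenever $\mrm{H}^0(M) = 0$ — applied to $M = A[n]$ for $n > 0$ (where $\mrm{H}^0(A[n]) = \mrm{H}^n(A) = 0$ by non-positivity) gives $\mrm{H}^{-n}(I) = 0$ directly, which together with Step 1 yields $\inf(I) = 0$. This is precisely the paper's route.
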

\begin{proof}
(1) $\implies$ (2):
Let $f:M \to N$ be a morphism in $\cat{D}(A)$ such that $\mrm{H}^0(f)$ is an injective map.
By Theorem \ref{thm:coho-of-rhom} there is a commutative diagram
\[
\xymatrix{
\mrm{H}^0\left(\mrm{R}\opn{Hom}_A(N,I)\right) \ar[r]\ar[d] & \opn{Hom}_{\mrm{H}^0(A)}\left(\mrm{H}^0(N),\mrm{H}^0(I)\right)\ar[d]\\
\mrm{H}^0\left(\mrm{R}\opn{Hom}_A(M,I)\right) \ar[r] & \opn{Hom}_{\mrm{H}^0(A)}\left(\mrm{H}^0(M),\mrm{H}^0(I)\right)
}
\]
such that the horizontal maps are isomorphisms.
The leftmost vertical map is exactly 
\[
\opn{Hom}_{\cat{D}(A)}(f,I): \opn{Hom}_{\cat{D}(A)}(N,I) \to \opn{Hom}_{\cat{D}(A)}(M,I)
\]
so it is enough to show that 
\[
\opn{Hom}_{\mrm{H}^0(A)}\left(\mrm{H}^0(N),\mrm{H}^0(I)\right) \to \opn{Hom}_{\mrm{H}^0(A)}\left(\mrm{H}^0(M),\mrm{H}^0(I)\right)
\]
is surjective, and this is true because by Proposition \ref{prop:HzOfINJ}, the $\mrm{H}^0(A)$-module $\mrm{H}^0(I)$ is an injective module.

(2) $\implies$ (1): Let $I \in \cat{D}(A)$ be a DG-module that satisfies (2).
For any $n > 0$, non-positivity of $A$ implies that $\mrm{H}^0(A[n]) = 0$.
Hence, the zero map $f:A[n] \to 0$ satisfies that $\mrm{H}^0(f)$ is injective.
Hence, by assumption, the map
\[
0 = \opn{Hom}_{\cat{D}(A)}(0,I) \to \opn{Hom}_{\cat{D}(A)}(A[n],I) = \mrm{H}^{-n}(I)
\]
is surjective, so that $\inf(I) \ge 0$. In particular, $I \in \cat{D}^{+}(A)$.
Given $N \in \cat{D}^{+}(A)$,
let $i < \inf(N)$. Then $\mrm{H}^0(N[i]) = 0$, so the zero map 
$g:N[i] \to 0$ satisfies that $\mrm{H}^0(g)$ is injective,
and hence by assumption, 
\[
0 = \opn{Hom}_{\cat{D}(A)}(0,I) \to \opn{Hom}_{\cat{D}(A)}(N[i],I)
\]
is surjective, so that
\begin{equation}\label{eqn:INJdimZHC}
\mrm{H}^{-i}\left(\mrm{R}\opn{Hom}_A(N,I)\right) = 0.
\end{equation}
Applying this to $N = \mrm{H}^0(A)$ and using the fact that $\inf(I) \ge 0$ implies by Proposition \ref{prop:RHomInf} that
the cohomology of $\mrm{R}\opn{Hom}_A(\mrm{H}^0(A),I)$ is concentrated in degree $0$.
Since $I \in \cat{D}^{+}(A)$,
if $\mrm{H}^0(I) = 0$ then by Proposition \ref{prop:RHomInf} 
\[
\mrm{H}^0\left(\mrm{R}\opn{Hom}_A(\mrm{H}^0(A),I)\right) = 0
\]
so that $\mrm{R}\opn{Hom}_A(\mrm{H}^0(A),I) = 0$, and hence by Corollary \ref{cor:HZGEN}, $I \cong 0$, so that $I \in \INJ(A)$.
We may hence assume that $\mrm{H}^0(I) \ne 0$, and then $\inf(I) = 0$,
and (\ref{eqn:INJdimZHC}) implies that $\injdim_A(I) = 0$, so again $I \in \INJ(A)$.
\end{proof}

\begin{rem}
Let $A$ be a non-positive DG-ring,
and let $\mathcal{I}$ be the collection of morphisms $f:M \to N$ in $\cat{D}(A)$ such that $\mrm{H}^0(f)$ is an injective map.
Then the above corollary may be stated as: the elements of $\INJ(A)$ are exactly the $\mathcal{I}$-injective objects of $\cat{D}(A)$.
\end{rem}

\section{An equivalence of categories of injectives}\label{sec:eqv}
 
In this section we will show that for a non-positive DG-ring $A$,
the functor 
\[
\mrm{H}^0(-):\INJ(A) \to \INJ(\mrm{H}^0(A))
\]
is an equivalence of categories.

\begin{prop}\label{prop:HZff}
Let $A$ be a non-positive DG-ring.
Then the functor
\[
\mrm{H}^0(-) : \INJ(A) \to \INJ(\mrm{H}^0(A))
\]
is fully faithful.
\end{prop}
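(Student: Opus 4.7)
The plan is to fix $I, J \in \INJ(A)$ and produce a chain of natural isomorphisms connecting $\opn{Hom}_{\cat{D}(A)}(I,J)$ to $\opn{Hom}_{\mrm{H}^0(A)}(\mrm{H}^0(I),\mrm{H}^0(J))$, then verify by unwinding each step that the composite is induced by the functor $\mrm{H}^0(-)$. This will exhibit the map in question as a bijection.

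Step one: apply $\mrm{R}\opn{Hom}_A(-,J)$ to the smart truncation triangle $\opn{smt}^{\le 0}(I) \xrightarrow{\iota} I \to \opn{smt}^{>0}(I) \to \opn{smt}^{\le 0}(I)[1]$. Since $\inf(\opn{smt}^{>0}(I)) \ge 1$ and $\injdim_A(J) = 0$, we get $\opn{Hom}_{\cat{D}(A)}(\opn{smt}^{>0}(I)[k],J) = 0$ for $k = 0, 1$, so the long exact sequence collapses to a bijection $\iota^* : \opn{Hom}_{\cat{D}(A)}(I,J) \xrightarrow{\sim} \opn{Hom}_{\cat{D}(A)}(\opn{smt}^{\le 0}(I),J)$. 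Step two: since $\inf(I)=0$, the DG-module $\opn{smt}^{\le 0}(I)$ has cohomology concentrated in degree $0$, so in the triangle from Lemma \ref{lem:MtoHM} applied to $\opn{smt}^{\le 0}(I)$ the cone has trivial cohomology in all degrees (using the exact sequence there together with $\mrm{H}^i(I) = 0$ for $i < 0$); hence $\alpha : \opn{smt}^{\le 0}(I) \to \mrm{H}^0(I)$ is an isomorphism in $\cat{D}(A)$ and gives $\opn{Hom}_{\cat{D}(A)}(\opn{smt}^{\le 0}(I),J) \cong \opn{Hom}_{\cat{D}(A)}(\mrm{H}^0(I),J)$. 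Step three: combine the restriction-coextension adjunction along $A \to \mrm{H}^0(A)$ with Proposition \ref{prop:RHomisHZ} to obtain
\[
\opn{Hom}_{\cat{D}(A)}(\mrm{H}^0(I),J) \cong \opn{Hom}_{\cat{D}(\mrm{H}^0(A))}(\mrm{H}^0(I), \mrm{R}\opn{Hom}_A(\mrm{H}^0(A),J)) \cong \opn{Hom}_{\mrm{H}^0(A)}(\mrm{H}^0(I),\mrm{H}^0(J)),
\]
where the last identification uses that $\mrm{H}^0(J)$ is an injective $\mrm{H}^0(A)$-module by Proposition \ref{prop:HzOfINJ} together with the fact that both source and target live in degree $0$.

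To show the composite equals $\mrm{H}^0(-)$, track an arbitrary $f : I \to J$. Steps one and two send $f$ to $f \circ \iota \circ \alpha^{-1} : \mrm{H}^0(I) \to J$ in $\cat{D}(A)$; applying $\mrm{H}^0$ yields $\mrm{H}^0(f) \circ \mrm{H}^0(\iota) \circ \mrm{H}^0(\alpha)^{-1} = \mrm{H}^0(f)$, since both $\mrm{H}^0(\iota)$ (from the construction of smart truncation) and $\mrm{H}^0(\alpha)$ (by Lemma \ref{lem:MtoHM}) are the identity of $\mrm{H}^0(I)$. The main obstacle is verifying that Step three itself coincides with $\mrm{H}^0(-)$ on morphisms: this amounts to checking that the adjoint $g^\flat : \mrm{H}^0(I) \to \mrm{R}\opn{Hom}_A(\mrm{H}^0(A),J)$ of a morphism $g : \mrm{H}^0(I) \to J$, followed by the Proposition \ref{prop:RHomisHZ} identification, produces $\mrm{H}^0(g)$. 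This is a direct unwinding using the explicit form of the adjunction unit ($g^\flat(x)$ is represented by $a \mapsto a \cdot g(x)$) together with the description of the iso $\mrm{H}^0(\mrm{R}\opn{Hom}_A(\mrm{H}^0(A),J)) \xrightarrow{\sim} \mrm{H}^0(J)$ from Proposition \ref{prop:RHomInf} as evaluating a class $[\phi : \mrm{H}^0(A) \to J]$ at $1 \in \mrm{H}^0(A)$.
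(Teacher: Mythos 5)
Your proposal is correct and follows essentially the same route as the paper, which invokes Theorem~\ref{thm:coho-of-rhom} (itself built from the truncation arguments you reproduce as Steps one and two via Theorem~\ref{thm:mainZero} and Lemma~\ref{lem:MtoHM}) and then asserts that the resulting isomorphism is induced by $\mrm{H}^0(-)$; you have simply unrolled that citation and spelled out the verification the paper leaves implicit. One cosmetic slip: in Step one the two needed vanishings are $\Ext^0_A(\opn{smt}^{>0}(I),J)$ and $\Ext^1_A(\opn{smt}^{>0}(I),J)$, i.e.\ $\opn{Hom}_{\cat{D}(A)}(\opn{smt}^{>0}(I)[k],J)$ for $k=0,-1$ rather than $k=0,1$, but since $\injdim_A(J)=0$ and $\inf(\opn{smt}^{>0}(I))\ge 1$ kill every $\Ext^i$ with $i\ge 0$, the argument is unaffected.
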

\begin{proof}
Given $I,J \in \INJ(A)$, we have by definition:
\[
\opn{Hom}_{\INJ(A)}(I,J) = \opn{Hom}_{\cat{D}(A)}(I,J) =  \mrm{H}^0\left(\mrm{R}\opn{Hom}_A(I,J)\right).
\]
By Theorem \ref{thm:coho-of-rhom}, there is an isomorphism
\[
\mrm{H}^0\left(\mrm{R}\opn{Hom}_A(I,J)\right) \cong \opn{Hom}_{\mrm{H}^0(A)}\left(\mrm{H}^0(I),\mrm{H}^0(J)\right) = \opn{Hom}_{\INJ\left(\mrm{H}^0(A)\right)}\left(\mrm{H}^0(I),\mrm{H}^0(J)\right),
\]
and by the proof of that theorem, this isomorphism is induced by $\mrm{H}^0(-)$, proving the claim.
\end{proof}

\begin{prop}\label{prop:summands}
Let $A$ be a non-positive DG-ring.
Then $\INJ(A)$ is closed under direct summands.
\end{prop}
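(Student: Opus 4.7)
The plan is to derive this immediately from the lifting characterization of $\INJ(A)$ recorded in Corollary \ref{cor:catCharI}. That corollary says that $I \in \INJ(A)$ if and only if $I$ is injective with respect to the class $\mathcal{I}$ of morphisms $f$ in $\cat{D}(A)$ for which $\mrm{H}^0(f)$ is injective. In any category, the class of objects injective with respect to a fixed class of morphisms is closed under retracts by a formal diagram chase, and in the additive category $\cat{D}(A)$ direct summands are the same as retracts; so the statement should follow with no new DG-theoretic input.

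Explicitly, I would take $I \in \INJ(A)$ with a direct sum decomposition $I \cong J \oplus K$ in $\cat{D}(A)$, and denote by $\iota : J \to I$ and $\pi : I \to J$ the canonical inclusion and projection, so that $\pi \circ \iota = 1_J$. To verify condition (2) of Corollary \ref{cor:catCharI} for $J$, I let $f : M \to N$ be any morphism in $\cat{D}(A)$ with $\mrm{H}^0(f)$ injective, and $g : M \to J$ any morphism. Since $I$ is $\mathcal{I}$-injective, the composite $\iota \circ g : M \to I$ factors through $f$ as some $h : N \to I$; then $\pi \circ h : N \to J$ satisfies $(\pi \circ h) \circ f = \pi \circ \iota \circ g = g$, providing the required extension of $g$. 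Hence $J \in \INJ(A)$, and symmetrically $K \in \INJ(A)$.

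The only thing one must check to avoid circularity is that Corollary \ref{cor:catCharI} itself was proved without invoking closure under direct summands, which is the case in the excerpt. There is essentially no substantive obstacle here. As an alternative route that bypasses the $\mathcal{I}$-injective characterization, one could instead combine Proposition \ref{prop:INJCHAR} with additivity of $\mrm{R}\opn{Hom}_A(\mrm{H}^0(A),-)$: the decomposition $I \cong J \oplus K$ induces a decomposition of $\mrm{R}\opn{Hom}_A(\mrm{H}^0(A),I)$, which lies in $\INJ(\mrm{H}^0(A))$, and one is reduced to the classical fact that injective modules over a ring are closed under summands. The two proofs are essentially equivalent.
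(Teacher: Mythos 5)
Your primary argument is correct and differs in spirit from the paper's. The paper applies Proposition \ref{prop:INJCHAR} directly: since $I \cong I_1 \oplus I_2$, for each $n$ one has $\Ext^n_A(\mrm{H}^0(A),I) \cong \Ext^n_A(\mrm{H}^0(A),I_1) \oplus \Ext^n_A(\mrm{H}^0(A),I_2)$; as the left side vanishes for $n \neq 0$ and is an injective $\mrm{H}^0(A)$-module for $n = 0$, the summand $\Ext^n_A(\mrm{H}^0(A),I_1)$ vanishes for $n \neq 0$ and is an injective module (being a summand of one) for $n = 0$, and $I_1 \in \cat{D}^{+}(A)$ because cohomology is additive; so Proposition \ref{prop:INJCHAR} applies. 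This is precisely your ``alternative route''; your sketch of it elides the small step of checking $I_1 \in \cat{D}^{+}(A)$, which is needed to invoke Proposition \ref{prop:INJCHAR}, but that is immediate. Your main argument, via Corollary \ref{cor:catCharI} and the standard fact that the class of objects injective with respect to a fixed collection of morphisms is closed under retracts, is a more formal route: it uses only the easy direction that a direct summand is a retract, and requires no computation of $\Ext$ groups. Your concern about circularity is justified but resolves cleanly: Corollary \ref{cor:catCharI} lives in the section on contravariant functors of cohomological dimension zero and is proved before Section \ref{sec:eqv}, and its proof traces back through Theorem \ref{thm:coho-of-rhom}, Theorem \ref{thm:mainZero}, and Propositions \ref{prop:RHomisHZ}, \ref{prop:HzOfINJ}, \ref{prop:RHomInf}, \ref{prop:INJCHAR}, none of which invoke closure under summands. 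What the paper's proof buys is self-containment within the earlier, lighter-weight machinery of Section 3; what yours buys is the realization that closure under summands is a purely formal consequence of the $\mathcal{I}$-injective characterization, independent of any DG-specific input.
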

\begin{proof}
Suppose $J = I_1 \oplus I_2$, and that $J \in \INJ(A)$.
For each $n \in \mathbb{Z}$,
this implies that $\mrm{H}^n(J) = \mrm{H}^n(I_1) \oplus \mrm{H}^n(I_2)$,
so that $I_1 \in \cat{D}^{+}(A)$.
Moreover, for each $n \in \mathbb{Z}$,
\[
\Ext^n_A(\mrm{H}^0(A),I) = \Ext^n_A(\mrm{H}^0(A),I_1) \oplus \Ext^n_A(\mrm{H}^0(A),I_2),
\]
so by Proposition \ref{prop:INJCHAR} one has $\Ext^n_A(\mrm{H}^0(A),I_1) = 0$ for all $n \ne 0$,
and $\Ext^0_A(\mrm{H}^0(A),I_1)$ is a direct summand of an injective $\mrm{H}^0(A)$-module, so it is also an injective $\mrm{H}^0(A)$-module.
Hence, by Proposition \ref{prop:INJCHAR}, $I_1 \in \INJ(A)$.
\end{proof}

Recall that if $\mathcal{C}$ is a category,
a morphism $e:M \to M$ in $\mathcal{C}$ is an idempotent if $e\circ e = e$.
An idempotent $e$ is called a split idempotent if
there is an object $N \in \mathcal{C}$
and $\mathcal{C}$-morphisms $r:M \to N$ and $s:N \to M$ such that $s \circ r = e$ and $r \circ s = 1_N$.
In this case one says that $r,s$ splits $e$. 
Moreover, in this case the morphism $s$ is a monomorphism, 
and one says that $s$ is a split monomorphism.

The following is well known:
\begin{prop}\label{prop:uniqsplit}
Let $\mathcal{C}$ be a category,
and let $e:M \to M$ be an idempotent morphism. 
Then any two splittings of $e$ are isomorphic.
Explicitly, if $r:M \to N$, $s:N \to M$ and $r':M \to N'$, $s':N' \to M$ both split $e$ then $N$ and $N'$ are isomorphic.
\end{prop}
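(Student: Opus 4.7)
The plan is to exhibit an explicit isomorphism between the two objects $N$ and $N'$ built from the given splitting data, and then verify by direct composition that it is indeed an isomorphism using the defining equations of the two splittings.

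Concretely, given splittings $r \colon M \to N$, $s \colon N \to M$ of $e$ with $s \circ r = e$ and $r \circ s = 1_N$, and a second splitting $r' \colon M \to N'$, $s' \colon N' \to M$ with $s' \circ r' = e$ and $r' \circ s' = 1_{N'}$, I define
\[
\phi := r' \circ s \colon N \to N', \qquad \psi := r \circ s' \colon N' \to N.
\]
First I would compute $\psi \circ \phi$: by associativity,
\[
\psi \circ \phi = r \circ s' \circ r' \circ s = r \circ e \circ s = r \circ s \circ r \circ s = 1_N \circ 1_N = 1_N,
\]
where the second equality uses $s' \circ r' = e$, the third uses $e = s \circ r$, and the last uses $r \circ s = 1_N$. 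Then I would symmetrically compute $\phi \circ \psi = r' \circ s \circ r \circ s' = r' \circ e \circ s' = r' \circ s' \circ r' \circ s' = 1_{N'}$, using $s \circ r = e = s' \circ r'$ and $r' \circ s' = 1_{N'}$.

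This shows that $\phi$ and $\psi$ are mutually inverse, hence $N \cong N'$ in $\mathcal{C}$. There is no real obstacle here: the statement is a purely formal consequence of the splitting identities, and the only thing to be careful about is bookkeeping of the order of composition. In particular, the proof goes through in any category without any additivity or abelian-category hypothesis, which is important since it will later be applied inside the triangulated category $\cat{D}(A)$ (or its subcategory $\INJ(A)$) in order to identify summands obtained from idempotents up to canonical isomorphism.
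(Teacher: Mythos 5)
Your proof is correct and is essentially identical to the paper's: you define the same two maps ($\phi = r'\circ s$, $\psi = r\circ s'$, called $f$ and $g$ in the paper) and verify the same composite computations to show they are mutually inverse.
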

\begin{proof}
Let $f = r'\circ s :N \to N'$ and $g = r\circ s': N' \to N$.
Then 
\[
f \circ g = r' \circ s \circ  r \circ s' = r' \circ e \circ s' = r' \circ s' \circ r' \circ s' = 1_{N'} \circ 1_{N'} = 1_{N'},
\]
and similarly
\[
g \circ f = r \circ s' \circ r' \circ s = r \circ e \circ s = r \circ s \circ r \circ s = 1_N \circ 1_N  = 1_N,
\]
proving the claim.
\end{proof}

\begin{lem}\label{lem:lift-inclusion}
Let $A$ be non-positive DG-ring,
and let $\bar{I}$ be an injective $\mrm{H}^0(A)$-module.
Suppose that there is an injective $\mrm{H}^0(A)$-module $\bar{J}$,
such that $\bar{I} \subseteq \bar{J}$,
and such that there is some $J \in \INJ(A)$ with $\mrm{H}^0(J) = \bar{J}$.
Then there exists $I \in \INJ(A)$ such that $\mrm{H}^0(I) \cong \bar{I}$.
\end{lem}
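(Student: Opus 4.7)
The plan is to reduce the problem to splitting an idempotent in $\cat{D}(A)$ and then use the closure of $\INJ(A)$ under direct summands. First, since $\bar{I}$ is an injective $\mrm{H}^0(A)$-module contained in $\bar{J}$, the inclusion $\bar{s}:\bar{I}\hookrightarrow\bar{J}$ admits a retraction $\bar{r}:\bar{J}\to\bar{I}$, and $\bar{e}:=\bar{s}\circ\bar{r}:\bar{J}\to\bar{J}$ is an idempotent in $\INJ(\mrm{H}^0(A))$ whose canonical splitting has image $\bar{I}$.

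Next, I would lift $\bar{e}$ to a morphism on $J$. By Proposition \ref{prop:HZff}, the functor $\mrm{H}^0(-):\INJ(A)\to\INJ(\mrm{H}^0(A))$ is fully faithful, so using the identification $\mrm{H}^0(J)=\bar{J}$, there is a unique $e:J\to J$ in $\cat{D}(A)$ with $\mrm{H}^0(e)=\bar{e}$. Faithfulness of $\mrm{H}^0$ combined with $\mrm{H}^0(e\circ e)=\bar{e}\circ\bar{e}=\bar{e}=\mrm{H}^0(e)$ forces $e\circ e=e$, so $e$ is an idempotent in $\cat{D}(A)$.

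The key step is now to split $e$ inside $\cat{D}(A)$. Since $\cat{D}(A)$ is a triangulated category with countable (indeed arbitrary) coproducts, it is idempotent-complete, and thus $e$ admits a splitting: there exist $I\in\cat{D}(A)$ and morphisms $r:J\to I$, $s:I\to J$ with $s\circ r=e$ and $r\circ s=1_I$. Then $I$ is a direct summand of $J\in\INJ(A)$, so by Proposition \ref{prop:summands} we conclude $I\in\INJ(A)$. This is the step I expect to be the main technical obstacle, since idempotent completeness of derived categories of DG-modules needs to be invoked (or one can argue directly by taking a homotopy colimit of the telescope $J\xrightarrow{e}J\xrightarrow{e}J\to\cdots$).

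Finally, applying $\mrm{H}^0(-)$ to the splitting gives morphisms $\mrm{H}^0(r):\bar{J}\to\mrm{H}^0(I)$ and $\mrm{H}^0(s):\mrm{H}^0(I)\to\bar{J}$ splitting $\bar{e}$ in $\INJ(\mrm{H}^0(A))$. Since $\bar{r},\bar{s}$ also split $\bar{e}$, Proposition \ref{prop:uniqsplit} yields an isomorphism $\mrm{H}^0(I)\cong\bar{I}$, completing the proof.
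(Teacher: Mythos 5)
Your proposal is correct and follows essentially the same route as the paper: split the inclusion $\bar{I}\hookrightarrow\bar{J}$ to get an idempotent $\bar{e}$ on $\bar{J}$, lift it to an idempotent $e$ on $J$ via the full faithfulness of $\mrm{H}^0$ (Proposition \ref{prop:HZff}), split $e$ in $\cat{D}(A)$ using idempotent completeness, invoke closure of $\INJ(A)$ under summands (Proposition \ref{prop:summands}), and conclude via uniqueness of splittings (Proposition \ref{prop:uniqsplit}). The paper cites B\"okstedt--Neeman \cite[Proposition 3.2]{BN} for the idempotent-splitting step, which is the same fact you invoke via "triangulated category with countable coproducts is idempotent-complete."
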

\begin{proof}
Since $\bar{I}$ is an injective module,
the inclusion map $\bar{I} \inj \bar{J}$ must split.
Hence, there is an $\mrm{H}^0(A)$-linear idempotent map $\bar{e}:\bar{J} \to \bar{J}$
and $\mrm{H}^0(A)$-linear maps $\bar{r}:\bar{J} \to \bar{I}$, $\bar{s}:\bar{I} \to \bar{J}$ such that
$\bar{s} \circ \bar{r} = \bar{e}$ and $\bar{r} \circ \bar{s} = 1_{\bar{I}}$.

By Proposition \ref{prop:HZff},
there is a morphism $e:J \to J$ in $\cat{D}(A)$ such that $\mrm{H}^0(e) = \bar{e}$.
Moreover, the fact that $\bar{e}$ is idempotent and that $\mrm{H}^0$ is fully faithful implies that $e$ is also idempotent.
Hence, by \cite[Proposition 3.2]{BN}, $e$ splits.
Explicitly, there is some $I \in \cat{D}(A)$ and maps $r:J \to I$ and $s:I \to J$ such that
\begin{equation}\label{eqn:esplits}
s \circ r = e, \quad r\circ s = 1_I.
\end{equation}
In particular, $I$ is a direct summand of $J$, so by Proposition \ref{prop:summands},
$I \in \INJ(A)$.
Applying the functor $\mrm{H}^0(-)$ to (\ref{eqn:esplits}), we see that
\[
\mrm{H}^0(s) \circ \mrm{H}^0(r) = \bar{e}, \quad \mrm{H}^0(r)\circ \mrm{H}^0(s) = 1_{\mrm{H}^0(I)}.
\]
Hence, $(\mrm{H}^0(r),\mrm{H}^0(s))$ and $(\bar{r},\bar{s})$ are both splittings of $\bar{e}$,
so by Proposition \ref{prop:uniqsplit}, there is an $\mrm{H}^0(A)$-linear isomorphism $\mrm{H}^0(I) \cong \bar{I}$,
proving the claim.
\end{proof}

If $A \to B$ is a ring map and $I$ is an injective $A$-module then $\opn{Hom}_A(B,I)$ is an injective $B$-module.
More generally:

\begin{prop}\label{prop:INJFunc}
Let $A \to B$ be a map of non-positive DG-rings,
and let $I \in \INJ(A)$.
Then
\[
\mrm{R}\opn{Hom}_A(B,I) \in \INJ(B).
\]
\end{prop}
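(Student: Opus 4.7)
The plan is to verify the characterization of $\INJ(B)$ supplied by Proposition \ref{prop:INJCHAR}. Writing $J := \mrm{R}\opn{Hom}_A(B,I)$, I need to establish two things: that $J \in \cat{D}^{+}(B)$, and that $\mrm{R}\opn{Hom}_B(\mrm{H}^0(B),J)$ is isomorphic in $\cat{D}(\mrm{H}^0(B))$ to an injective $\mrm{H}^0(B)$-module.

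For the first point, regard $B$ as a DG-$A$-module via the given map $A \to B$. Since both DG-rings are non-positive, $B \in \cat{D}^{\le 0}(A)$. By Proposition \ref{prop:INJisZeroFunc}, the contravariant functor $\mrm{R}\opn{Hom}_A(-,I)$ has cohomological dimension $0$, so $\mrm{H}^n(J) = 0$ for all $n < 0$, and in particular $J \in \cat{D}^{+}(B)$.

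For the second point, the derived tensor-Hom adjunction along the map $A \to B$ yields a natural isomorphism
\[
\mrm{R}\opn{Hom}_B(\mrm{H}^0(B),J) \cong \mrm{R}\opn{Hom}_A(\mrm{H}^0(B),I)
\]
in $\cat{D}(\mrm{H}^0(B))$. Because $\mrm{H}^0(B)$ is concentrated in degree $0$, the $A$-action on it factors through $A \to \mrm{H}^0(A)$, and a second application of adjunction along $A \to \mrm{H}^0(A)$ gives
\[
\mrm{R}\opn{Hom}_A(\mrm{H}^0(B),I) \cong \mrm{R}\opn{Hom}_{\mrm{H}^0(A)}\bigl(\mrm{H}^0(B),\mrm{R}\opn{Hom}_A(\mrm{H}^0(A),I)\bigr).
\]
By Proposition \ref{prop:INJCHAR} applied to $I \in \INJ(A)$, there is an injective $\mrm{H}^0(A)$-module $\bar{J}$ with $\mrm{R}\opn{Hom}_A(\mrm{H}^0(A),I) \cong \bar{J}$, so the right-hand side reduces to the ordinary $\opn{Hom}$-module $\opn{Hom}_{\mrm{H}^0(A)}(\mrm{H}^0(B),\bar{J})$. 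By the classical fact that $\opn{Hom}_R(S,-)$ sends injective $R$-modules to injective $S$-modules for any ring map $R \to S$, this is an injective $\mrm{H}^0(B)$-module. Applying Proposition \ref{prop:INJCHAR} in reverse, now for $B$, yields $J \in \INJ(B)$.

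The main delicate point is the very first step: verifying that $J \in \cat{D}^{+}(B)$. Indeed, $B$ itself typically has cohomology in arbitrarily negative degrees, so the bounded-below vanishing of $\Ext$ guaranteed by $\injdim_A(I) = 0$ cannot be applied to $B$ directly; one genuinely needs the sharper statement that $\mrm{R}\opn{Hom}_A(-,I)$ is of cohomological dimension $0$, i.e.\ Proposition \ref{prop:INJisZeroFunc}. The remaining steps are then essentially formal manipulations with adjunctions and Proposition \ref{prop:INJCHAR}.
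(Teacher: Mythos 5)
Your proposal is correct and follows essentially the same route as the paper: verify the criterion of Proposition \ref{prop:INJCHAR} for $\mrm{R}\opn{Hom}_A(B,I)$ by adjunction, reducing to the fact that $\opn{Hom}$ from a ring map sends injectives to injectives. (One small remark: for the step $\mrm{R}\opn{Hom}_A(B,I) \in \cat{D}^{+}(B)$ you do not actually need the full strength of Proposition \ref{prop:INJisZeroFunc} — the lower bound on cohomology follows directly from $\inf(I)=0$ and $\sup(B)=0$ together with the definition of $\opn{Hom}_A$, which is exactly the ``half'' of that proposition that does not involve $\injdim_A(I)=0$; the upper bound is what fails without further input, and it is not needed here since membership in $\cat{D}^{+}(B)$ only constrains cohomology from below.)
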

\begin{proof}
Since $\sup(B) = 0$ and $I \in \INJ(A)$,
we know that $\mrm{R}\opn{Hom}_A(B,I) \in \cat{D}^{+}(B)$.
Moreover, by adjunction
\[
\mrm{R}\opn{Hom}_B\left(\mrm{H}^0(B),\mrm{R}\opn{Hom}_A(B,I)\right) \cong \mrm{R}\opn{Hom}_{\mrm{H}^0(A)}\left(\mrm{H}^0(B),\mrm{R}\opn{Hom}_A(\mrm{H}^0(A),I)\right).
\]
By Proposition \ref{prop:INJCHAR},
we know that $\mrm{R}\opn{Hom}_A(\mrm{H}^0(A),I)$ is isomorphic to an injective $\mrm{H}^0(A)$-module,
so by the above isomorphism
\[
\mrm{R}\opn{Hom}_B\left(\mrm{H}^0(B),\mrm{R}\opn{Hom}_A(B,I)\right)
\]
is an injective $\mrm{H}^0(B)$-module.
Hence, by Proposition \ref{prop:INJCHAR}, $\mrm{R}\opn{Hom}_A(B,I) \in \INJ(B)$.

\end{proof}

Here is the main result of this section.

\begin{thm}\label{thm:eqv}
Let $A$ be a non-positive DG-ring.
Then the functor
\[
\mrm{H}^0(-) : \INJ(A) \to \INJ(\mrm{H}^0(A))
\]
is an equivalence of categories.
\end{thm}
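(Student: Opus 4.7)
The plan is to combine full faithfulness from Proposition \ref{prop:HZff} with a constructive proof of essential surjectivity. Fix an injective left $\mrm{H}^0(A)$-module $\bar{I}$; the goal is to exhibit some $I \in \INJ(A)$ with $\mrm{H}^0(I) \cong \bar{I}$.

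The key observation is that Proposition \ref{prop:INJFunc} applies to the map of non-positive DG-rings $A^0 \to A$, viewing the ring $A^0$ as a DG-ring concentrated in degree $0$. For any injective left $A^0$-module $\bar{X}$, which is the same datum as an element of $\INJ(A^0)$, the DG-module
\[
J_{\bar{X}} := \mrm{R}\opn{Hom}_{A^0}(A,\bar{X})
\]
therefore lies in $\INJ(A)$. Combining Proposition \ref{prop:RHomisHZ} with Hom–Hom adjunction, one has
\[
\mrm{H}^0(J_{\bar{X}}) \cong \mrm{R}\opn{Hom}_A\!\left(\mrm{H}^0(A),\mrm{R}\opn{Hom}_{A^0}(A,\bar{X})\right) \cong \mrm{R}\opn{Hom}_{A^0}(\mrm{H}^0(A),\bar{X}) \cong \opn{Hom}_{A^0}(\mrm{H}^0(A),\bar{X}),
\]
the last isomorphism because $\bar{X}$ is $A^0$-injective. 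The resulting module is injective over $\mrm{H}^0(A)$, since $\opn{Hom}_{A^0}(\mrm{H}^0(A),-)$ is the right adjoint of the exact restriction functor from $\mrm{H}^0(A)$-modules to $A^0$-modules.

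To finish, I would regard $\bar{I}$ as an $A^0$-module via $A^0 \to \mrm{H}^0(A)$ and embed it into an injective $A^0$-module $\bar{X}$. Applying the left exact functor $\opn{Hom}_{A^0}(\mrm{H}^0(A),-)$ to this inclusion, and using the canonical identification $\opn{Hom}_{A^0}(\mrm{H}^0(A),\bar{I}) = \bar{I}$ (valid because $\bar{I}$ is already an $\mrm{H}^0(A)$-module), one obtains an $\mrm{H}^0(A)$-linear embedding $\bar{I} \hookrightarrow \mrm{H}^0(J_{\bar{X}})$. Since $J_{\bar{X}} \in \INJ(A)$ lifts the target, Lemma \ref{lem:lift-inclusion} then produces the desired $I \in \INJ(A)$ with $\mrm{H}^0(I) \cong \bar{I}$, yielding essential surjectivity.

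The main obstacle is identifying a sufficiently large pool of liftable injectives in $\INJ(\mrm{H}^0(A))$. The trick is to exploit the DG-ring inclusion $A^0 \hookrightarrow A$, which is always available, rather than attempting to split the quasi-isomorphism $A \to \mrm{H}^0(A)$ (which generally admits no DG-section). Once Proposition \ref{prop:INJFunc} is harnessed in this direction, the remainder reduces to the standard fact that an injective $\mrm{H}^0(A)$-module embeds in some injective $A^0$-module, combined with the idempotent-splitting argument already codified in Lemma \ref{lem:lift-inclusion}.
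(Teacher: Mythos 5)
Your proof is correct and takes essentially the same route as the paper's first proof of this theorem: both establish essential surjectivity by coinducing an injective module along a ring map into $A$ (you use the inclusion $A^0 \hookrightarrow A$ where the paper uses $\mathbb{Z} \to A$ with $\prod_\mu \mathbb{Q}/\mathbb{Z}$), then invoking Proposition \ref{prop:INJFunc}, the Hom--Hom adjunction, and Lemma \ref{lem:lift-inclusion}. The choice of $A^0$ over $\mathbb{Z}$ is a cosmetic variant; the paper also records a second, genuinely different proof via Brown representability, which you do not touch.
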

\begin{proof}
By Proposition \ref{prop:HZff} the functor 
\[
\mrm{H}^0(-) : \INJ(A) \to \INJ(\mrm{H}^0(A))
\]
is fully faithful, so it remains to show that it is essentially surjective.
Let us give two proofs of this important fact.
First proof:
Let $\bar{I}$ be an injective $\mrm{H}^0(A)$-module.
Since $\opn{Hom}_Z(\mrm{H}^0(A),\mathbb{Q}/\mathbb{Z})$ is an injective cogenerator of $\opn{Mod}(\mrm{H}^0(A))$,
there is some cardinal number $\mu$ and an injective $\mrm{H}^0(A)$-linear map 
\begin{equation}\label{eqn:injImbed}
\bar{I} \inj \prod_{\mu} \left(\opn{Hom}_Z(\mrm{H}^0(A),\mathbb{Q}/\mathbb{Z})\right) = 
\opn{Hom}_Z\left(\mrm{H}^0(A),\left(\prod_{\mu} \mathbb{Q}/\mathbb{Z}\right)\right)
\end{equation}
Let 
\[
J := \opn{Hom}_Z\left(A,\left(\prod_{\mu} \mathbb{Q}/\mathbb{Z}\right)\right) \in \cat{D}(A).
\]
By Proposition \ref{prop:INJFunc},
$J \in \INJ(A)$.
Hence, by Proposition \ref{prop:RHomInf} there is an isomorphism
\begin{equation}\label{eqn:hzofJ}
\mrm{H}^0(J) \cong \mrm{H}^0\left(\mrm{R}\opn{Hom}_A(\mrm{H}^0(A),J)\right)
\end{equation}
By definition of $J$ and adjunction, we have
\begin{eqnarray}
\mrm{R}\opn{Hom}_A(\mrm{H}^0(A),J) = 
\mrm{R}\opn{Hom}_A\left(\mrm{H}^0(A),\opn{Hom}_{\mathbb{Z}}\left(A,\left(\prod_{\mu} \mathbb{Q}/\mathbb{Z}\right)\right)\right) \cong\nonumber\\
\opn{Hom}_Z\left(\mrm{H}^0(A),\left(\prod_{\mu} \mathbb{Q}/\mathbb{Z}\right)\right)\nonumber.
\end{eqnarray}
As the latter is an $\mrm{H}^0(A)$-module which is concentrated in degree zero, we deduce from (\ref{eqn:hzofJ}) that
\[
\mrm{H}^0(J) \cong \opn{Hom}_Z\left(\mrm{H}^0(A),\left(\prod_{\mu} \mathbb{Q}/\mathbb{Z}\right)\right).
\]
It follows by Lemma \ref{lem:lift-inclusion} and (\ref{eqn:injImbed})
that there exists $I \in \INJ(A)$ such that $\mrm{H}^0(I) \cong \bar{I}$.
Hence,
\[
\mrm{H}^0(-) : \INJ(A) \to \INJ(\mrm{H}^0(A))
\]
is essentially surjective, so it is an equivalence of categories.

Second proof:
Let $0 \ne \bar{I}$ be an injective $\mrm{H}^0(A)$-module,
and let $F:\cat{D}(A) \to \opn{Mod}(\mathbb{Z})$ be the functor
\[
F(M) := \opn{Hom}_{\mrm{H}^0(A)}(\mrm{H}^0(M),\bar{I}).
\]
Then $F$ is a homological functor.
Hence, by Neeman's Brown representability theorem (\cite[Theorem 3.1]{Ne}),
$F$ is representable. Thus, there is some $I \in \cat{D}(A)$ and an isomorphism
\[
\opn{Hom}_{\cat{D}(A)}(-,I) \cong F(-)
\]
of functors $\cat{D}(A) \to \opn{Mod}(\mathbb{Z})$.
In other words, we have an isomorphism
\[
\mrm{H}^0\left(\mrm{R}\opn{Hom}_A(M,I)\right) \cong \opn{Hom}_{\mrm{H}^0(A)}(\mrm{H}^0(M),\bar{I})
\]
for all $M \in \cat{D}(A)$. Using this isomorphism for $M = A[n]$ with $n \ge 0$, we see that $\mrm{H}^{-n}(I) = 0$, 
for $n > 0$, while $\mrm{H}^0(I) \ne 0$, so that $\inf(I) = 0$. 
On the other hand, this isomorphism clearly implies that $\injdim_A(I) = 0$,
so that $I \in \INJ(A)$. Combining the above isomorphism with Theorem \ref{thm:coho-of-rhom}, 
and using the fact that $\mrm{H}^0:\cat{D}(A)\to \opn{Mod}(\mrm{H}^0(A))$ is essentially surjective
we deduce that the functors $\opn{Mod}(\mrm{H}^0(A)) \to \opn{Mod}(\mathbb{Z})$ given by
$\opn{Hom}_{\mrm{H}^0(A)}(-,\bar{I})$ and $\opn{Hom}_{\mrm{H}^0(A)}(-,\mrm{H}^0(I))$ are isomorphic.
Hence, by the Yoneda lemma, $\mrm{H}^0(I) \cong \bar{I}$, proving the claim.
\end{proof}

\begin{rem}
In \cite[Corollary 7.2.2.19]{Lu2}, 
Lurie proved a result dual to the above theorem about projective modules over $E_1$ rings.
\end{rem}

\begin{cor}
Let $A$ be a non-positive DG-ring,
and let $I \in \INJ(A)$.
Then $I$ is an indecomposable object of $\cat{D}(A)$
if and only if its endomorphism ring
\[
\opn{Hom}_{\cat{D}(A)}(I,I)
\]
is a local ring.
\end{cor}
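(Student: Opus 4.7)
The plan is to reduce the statement to the classical Matlis fact that an injective module over an ordinary ring is indecomposable if and only if its endomorphism ring is local. The bridge is the equivalence of categories $\mrm{H}^0(-):\INJ(A)\to\INJ(\mrm{H}^0(A))$ established in Theorem \ref{thm:eqv}, together with Proposition \ref{prop:HZff}.

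First, I would observe that the isomorphism of abelian groups furnished by Proposition \ref{prop:HZff} is in fact a ring isomorphism
\[
\opn{Hom}_{\cat{D}(A)}(I,I)\cong \opn{Hom}_{\mrm{H}^0(A)}(\mrm{H}^0(I),\mrm{H}^0(I)),
\]
because the isomorphism is induced by the functor $\mrm{H}^0(-)$, which preserves composition. In particular, one ring is local if and only if the other is.

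Next, I would show that $I$ is indecomposable in $\cat{D}(A)$ if and only if $\mrm{H}^0(I)$ is indecomposable in $\opn{Mod}(\mrm{H}^0(A))$. For the forward direction, any decomposition $\mrm{H}^0(I)=\bar J_1\oplus \bar J_2$ with both summands nonzero and injective lifts, via Theorem \ref{thm:eqv}, to a decomposition $I\cong I_1\oplus I_2$ in $\INJ(A)$ with both summands nonzero, and hence in $\cat{D}(A)$. For the converse, any nontrivial decomposition $I\cong I_1\oplus I_2$ in $\cat{D}(A)$ yields, by Proposition \ref{prop:summands}, a decomposition inside $\INJ(A)$; applying $\mrm{H}^0(-)$ gives a decomposition of $\mrm{H}^0(I)$ as an $\mrm{H}^0(A)$-module, and Proposition \ref{prop:RHomInf} (or equivalently the essential surjectivity and faithfulness of the equivalence of Theorem \ref{thm:eqv}) guarantees that neither summand becomes zero.

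Finally, I would invoke the classical theorem of Matlis: an injective left module over a ring is indecomposable if and only if its endomorphism ring is local. Applying this to $\mrm{H}^0(I)\in\INJ(\mrm{H}^0(A))$ and combining with the two reductions above yields the result. The only point that requires any care is verifying that the hom-set isomorphism respects the ring structure, but this is a direct consequence of functoriality of $\mrm{H}^0(-)$; there is no serious obstacle beyond that.
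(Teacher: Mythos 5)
Your proposal is correct and takes essentially the same route as the paper: both reduce to the classical statement for injective modules over the ring $\mrm{H}^0(A)$ by transporting indecomposability and the endomorphism ring along the equivalence $\mrm{H}^0(-):\INJ(A)\to\INJ(\mrm{H}^0(A))$ of Theorem \ref{thm:eqv}, using the full faithfulness of Proposition \ref{prop:HZff} to see that the hom-set bijection is a ring isomorphism. The only cosmetic difference is that the paper dispatches the easy implication (local endomorphism ring implies indecomposable) by the general additive-category fact and invokes Matlis only for the converse, whereas you invoke the classical biconditional for both directions.
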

\begin{proof}
In any additive category an object whose endomorphism ring is local is indecomposable.
Conversely, suppose $I$ is an indecomposable object of $\cat{D}(A)$.
Then it is also an indecomposable object of $\INJ(A)$.
By Theorem \ref{thm:eqv}, $\mrm{H}^0(I)$ is an indecomposable object of $\INJ(\mrm{H}^0(A))$.
Thus, $\mrm{H}^0(I)$ is an indecomposable injective $\mrm{H}^0(A)$-module,
so by \cite[Proposition 2.6]{Ma} its endomorphism ring is a local ring.
Hence, the equality
\[
\opn{Hom}_{\cat{D}(A)}(I,I) = \opn{Hom}_{\INJ(A)}(I,I) = \opn{Hom}_{\mrm{H}^0(A)}(\mrm{H}^0(I),\mrm{H}^0(I))
\]
implies that $\opn{Hom}_{\cat{D}(A)}(I,I)$ is also a local ring.
\end{proof}

Here is another categorical characterization of $\INJ(A)$:
\begin{prop}\label{prop:catCharII}
Let $A$ be a non-positive DG-ring, 
and let $I \in \cat{D}(A)$.
Then $I \in \INJ(A)$ if and only if for any morphism
$f:I \to M$ in $\cat{D}(A)$ such that $\mrm{H}^0(f): \mrm{H}^0(I) \to \mrm{H}^0(M)$ is an injective map,
the morphism $f$ is a split monomorphism.
\end{prop}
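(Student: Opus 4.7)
\emph{Plan.} The forward direction is essentially a direct application of Corollary \ref{cor:catCharI}. Given $I \in \INJ(A)$ and a morphism $f : I \to M$ with $\mrm{H}^0(f)$ injective, Corollary \ref{cor:catCharI} tells us that
\[
\opn{Hom}_{\cat{D}(A)}(f,I) : \opn{Hom}_{\cat{D}(A)}(M,I) \to \opn{Hom}_{\cat{D}(A)}(I,I)
\]
is surjective, so some $h : M \to I$ maps to $1_I$, i.e.\ $h \circ f = 1_I$ and $f$ is a split monomorphism.

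For the converse, assume $I \in \cat{D}(A)$ satisfies the splitting property. First I would handle the degenerate case: apply the hypothesis to the zero morphism $I \to 0$, whose $\mrm{H}^0$ is automatically injective. A splitting $0 \to I$ of the zero map forces $1_I = 0$, so $I \cong 0$ whenever $\mrm{H}^0(I) = 0$, and $0 \in \INJ(A)$ by convention. So we may assume $\mrm{H}^0(I) \ne 0$. Embed the $\mrm{H}^0(A)$-module $\mrm{H}^0(I)$ into an injective module $\bar{J}$ (its injective hull, say), and invoke Theorem \ref{thm:eqv} to obtain $J \in \INJ(A)$ with $\mrm{H}^0(J) \cong \bar{J}$.

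The key step is to lift the inclusion $\mrm{H}^0(I) \hookrightarrow \mrm{H}^0(J)$ to a morphism $g : I \to J$ in $\cat{D}(A)$ satisfying $\mrm{H}^0(g) = $ this inclusion. By Theorem \ref{thm:coho-of-rhom}(1), whose first argument is permitted to be an arbitrary object of $\cat{D}(A)$, there is a functorial isomorphism
\[
\opn{Hom}_{\cat{D}(A)}(I,J) \;=\; \mrm{H}^0\!\left(\mrm{R}\opn{Hom}_A(I,J)\right) \;\cong\; \opn{Hom}_{\mrm{H}^0(A)}\!\left(\mrm{H}^0(I),\mrm{H}^0(J)\right),
\]
which, exactly as noted in the proof of Proposition \ref{prop:HZff}, is the map induced by the functor $\mrm{H}^0(-)$. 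Hence the given inclusion lifts to some $g : I \to J$ with $\mrm{H}^0(g)$ injective. Now the splitting hypothesis applied to $g$ realizes $I$ as a direct summand of $J$ in $\cat{D}(A)$, and since $\INJ(A)$ is closed under direct summands by Proposition \ref{prop:summands}, we conclude $I \in \INJ(A)$.

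\emph{Main obstacle.} The delicate point is the assertion that the isomorphism from Theorem \ref{thm:coho-of-rhom} is genuinely induced by $\mrm{H}^0(-)$ when only the target $J$ (not the source $I$) is assumed to lie in $\INJ(A)$; the cited proof of Proposition \ref{prop:HZff} treats the case with both variables in $\INJ(A)$, but inspection of Theorem \ref{thm:mainZero} shows the construction is natural in the source variable, so the claim extends. If one prefers to avoid chasing this naturality, a cleaner alternative is to use the Brown representability argument from the second proof of Theorem \ref{thm:eqv}: represent the homological functor $F(M) := \opn{Hom}_{\mrm{H}^0(A)}(\mrm{H}^0(M),\bar{J})$ by an object $J \in \INJ(A)$, and define $g : I \to J$ as the element of $\opn{Hom}_{\cat{D}(A)}(I,J) \cong F(I)$ corresponding to the chosen inclusion $\mrm{H}^0(I) \hookrightarrow \bar{J}$; the same summand argument then completes the proof.
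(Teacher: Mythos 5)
Your proof is correct, and the forward direction takes a genuinely different (and cleaner) route than the paper. The paper's forward argument unfolds the machinery: it uses Proposition \ref{prop:HzOfINJ} to see that $\mrm{H}^0(I)$ is injective, splits $\mrm{H}^0(f)$ at the module level, lifts the retraction $\bar g$ to $g:M\to I$ via Theorem \ref{thm:coho-of-rhom}, and then invokes full faithfulness (Proposition \ref{prop:HZff}) to conclude $g\circ f = 1_I$. You instead observe that the splitting property of $f$ is exactly what Corollary \ref{cor:catCharI}(2) delivers when applied to $\opn{Hom}_{\cat{D}(A)}(f,I)$, which is a shorter derivation from results already established. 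Your converse is essentially the paper's: embed $\mrm{H}^0(I)$ in an injective $\mrm{H}^0(A)$-module, lift along $\mrm{H}^0$ using Theorem \ref{thm:eqv} and Theorem \ref{thm:coho-of-rhom}, apply the splitting hypothesis, and close under summands via Proposition \ref{prop:summands}.

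Your ``main obstacle'' paragraph identifies a genuine subtlety that the paper's own exposition glides over: the assertion that the isomorphism $\opn{Hom}_{\cat{D}(A)}(I,J)\cong\opn{Hom}_{\mrm{H}^0(A)}(\mrm{H}^0(I),\mrm{H}^0(J))$ is induced by $\mrm{H}^0$ is verified in the proof of Proposition \ref{prop:HZff} only when \emph{both} objects lie in $\INJ(A)$, yet the paper's proof of the present proposition invokes it with $I$ an arbitrary object. Your resolution — that the functoriality in the source variable from Theorem \ref{thm:mainZero} propagates the identification to all of $\cat{D}(A)$ — is the right one (by Yoneda, a natural transformation of contravariant functors $\opn{Hom}_{\cat{D}(A)}(-,J)\Rightarrow\opn{Hom}_{\mrm{H}^0(A)}(\mrm{H}^0(-),\mrm{H}^0(J))$ is determined by its value on $1_J$, which is checked in the $\INJ(A)$ case). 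The Brown representability fallback is correct but not needed.

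Two small remarks. First, the phrase ``whose $\mrm{H}^0$ is automatically injective'' is misleading: the map $\mrm{H}^0(I)\to 0$ is injective only when $\mrm{H}^0(I)=0$, which is the case you are isolating; your conclusion is right but the wording should be tightened. Second, the degenerate case is unnecessary — if $\mrm{H}^0(I)=0$ one may take $\bar J = 0$ in the main argument, which yields $I\cong 0$ directly — and indeed the paper does not separate it out.
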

\begin{proof}
Suppose $I \in \INJ(A)$,
and let $f:I \to M$ be a morphism in $\cat{D}(A)$ such that $\mrm{H}^0(f): \mrm{H}^0(I) \to \mrm{H}^0(M)$ is an injective map.
By Proposition \ref{prop:HzOfINJ}, the $\mrm{H}^0(A)$-module $\mrm{H}^0(I)$ is injective.
Hence, the injective map
$\mrm{H}^0(f): \mrm{H}^0(I) \to \mrm{H}^0(M)$ is a split monomorphism,
so there is a map $\bar{g}: \mrm{H}^0(M) \to \mrm{H}^0(I)$ such that
\[
\bar{g} \circ \mrm{H}^0(f) = 1_{\mrm{H}^0(I)}.
\]
It follows by Theorem \ref{thm:coho-of-rhom} that there is a map $g:M \to I$ in $\cat{D}(A)$
such that $\mrm{H}^0(g) = \bar{g}$.
Hence, 
\[
\mrm{H}^0(g\circ f) = \mrm{H}^0(g) \circ \mrm{H}^0(f) = \bar{g} \circ \mrm{H}^0(f) = 1_{\mrm{H}^0(I)}.
\]
Hence, by Proposition \ref{prop:HZff} we deduce that $g\circ f = 1_I$,
so that $f$ is a split monomorphism.

Conversely, let $I \in \cat{D}(A)$,
and suppose that for any morphism
$f:I \to M$ in $\cat{D}(A)$ such that $\mrm{H}^0(f): \mrm{H}^0(I) \to \mrm{H}^0(M)$ is an injective map,
the morphism $f$ is a split monomorphism.

Let $\bar{J}$ be an injective $\mrm{H}^0(A)$-module such that there is an embedding
$\bar{f}: \mrm{H}^0(I) \inj \bar{J}$. 
By Theorem \ref{thm:eqv}, there exists $J \in \INJ(A)$ such that $\mrm{H}^0(J) = \bar{J}$.
Since $J \in \INJ(A)$ and
\[
\bar{f} \in \opn{Hom}_{\mrm{H}^0(A)}(\mrm{H}^0(I),\mrm{H}^0(J)),
\]
we deduce by Theorem \ref{thm:coho-of-rhom} that there exists some
\[
f \in \opn{Hom}_{\cat{D}(A)}(I,J)
\]
such that $\mrm{H}^0(f) = \bar{f}$. But $\bar{f}$ is an injective map, 
so by our assumption on $I$ we deduce that $f$ is a split monomorphism.
Hence, $I$ is a direct summand of $J$, 
so by Proposition \ref{prop:summands} we deduce that $I \in \INJ(A)$.
\end{proof}

\begin{rem}
The paper \cite{GP} makes a detailed study of injective objects in a triangulated category.
The definition of an injective object given there (\cite[Definition 3.1]{GP}) is an object such that any injective map (in a suitable sense) into it splits.
This is similar to the above proposition.
However, the family of injective maps in their setup has the property that it is closed under shifts.
Hence, the results of this paper do not fall under their framework.
\end{rem}

We are now ready to prove Theorem \ref{thmI} from the introduction:
\begin{proof}
(1) $\iff (2)$ follows from Theorem \ref{thm:buINJ}.
(2) $\iff (3)$ follows from Proposition \ref{prop:INJCHAR} and Proposition \ref{prop:INJFunc}.
(2) $\iff (4)$ follows from Theorem \ref{thm:coho-of-rhom} and Proposition \ref{prop:convOfHZ}.
(2) $\iff (5)$ is Corollary \ref{cor:catCharI},
and (2) $\iff$ (6) is Proposition \ref{prop:catCharII}.
\end{proof}

\section{The derived Bass-Papp Theorem}

Given a ring $A$, recall that the Bass-Papp Theorem says that $A$ is left noetherian 
if and only if the category of left injective $A$-modules is closed under arbitrary direct sums.
It was proved by Papp in \cite[Theorem 1]{Pa} and independently by Bass in \cite[Theorem 1.1]{Ba}.
The purpose of this section is to generalize this theorem to the category of non-positive DG-rings.

Thus, given a non-positive DG-ring $A$, we ask: when is $\INJ(A)$ closed under arbitrary direct sums?
At first glance it might seem that Theorem \ref{thm:eqv} already gives an answer, but this is not the case.
The equivalence of categories between $\INJ(A)$ and $\INJ(\mrm{H}^0(A))$ will only give us information about 
coproducts relative to $\INJ(A)$, and these might be different from the coproducts in $\opn{DGMod}(A)$.
Indeed, the main result of this section shows that it can happen that $\mrm{H}^0(A)$ is left noetherian (so that $\INJ(\mrm{H}^0(A))$ is closed under arbitrary direct sums), but $\INJ(A)$ is not closed under arbitrary direct sums.

Given a DG-ring $A$, a DG-module $M$
and a collection $\{N_{\alpha}\}_{\alpha \in J}$ of DG-modules over $A$,
the canonical inclusions $N_{\alpha} \inj \bigoplus_{\alpha \in J} N_{\alpha}$ induce for each $\alpha \in J$ maps
\[
\varphi_{\alpha}:\opn{Hom}_A(M,N_{\alpha}) \to \opn{Hom}_A\left(M,\bigoplus_{\alpha \in J} N_{\alpha}\right).
\]
By the universal property of direct sums, 
there is a unique $A$-linear map
\begin{equation}\label{eqn:compact}
\varphi:\bigoplus_{\alpha \in J} \opn{Hom}_A(M,N_{\alpha}) \to \opn{Hom}_A\left(M,\bigoplus_{\alpha \in J} N_{\alpha}\right)
\end{equation}
such that for each $\alpha \in J$, the diagram
\[
\xymatrix{
\bigoplus_{\alpha \in J} \opn{Hom}_A(M,N_{\alpha}) \ar[r]^{\varphi} & \opn{Hom}_A\left(M,\bigoplus_{\alpha \in J} N_{\alpha}\right)\\
\opn{Hom}_A(M,N_{\alpha})\ar@{^{(}->}[u] \ar[ur]_{\varphi_{\alpha}}
}
\]
is commutative. 
It is clear that (\ref{eqn:compact}) is functorial in $M$.
It is also functorial in $\{N_{\alpha}\}_{\alpha \in J}$ in the sense that if for each $\alpha \in J$
there is a map of DG-modules $N_{\alpha} \to K_{\alpha}$,
then the diagram
\begin{equation}\label{eqn:nat-in-alpha}
\xymatrix{
\bigoplus_{\alpha \in J} \opn{Hom}_A(M,N_{\alpha}) \ar[r]\ar[d] & \opn{Hom}_A\left(M,\bigoplus_{\alpha \in J} N_{\alpha}\right)\ar[d]\\
\bigoplus_{\alpha \in J} \opn{Hom}_A(M,K_{\alpha}) \ar[r] & \opn{Hom}_A\left(M,\bigoplus_{\alpha \in J} K_{\alpha}\right)
}
\end{equation}
is commutative. 
Recall that if $A$ is a ring and $M$ is a left $A$-module then $M$ is called compact if
for every collection $\{N_{\alpha}\}_{\alpha \in J}$ of left $A$-modules,
the map (\ref{eqn:compact}) is an isomorphism. 
If $A$ is left noetherian then $M$ is compact if and only if it is finitely generated.

One can derive (\ref{eqn:compact}) as follows:
there is a map
\begin{equation}\label{plus-map}
\bigoplus_{\alpha \in J} \mrm{R}\opn{Hom}_A\left(M,N_{\alpha}\right) \to \mrm{R}\opn{Hom}_A\left(M,\bigoplus_{\alpha \in J} N_{\alpha}\right)
\end{equation}
defined by choosing a K-projective resolution
$P \to M$, and letting (\ref{plus-map}) be the composition
\[
\bigoplus_{\alpha \in J} \mrm{R}\opn{Hom}_A\left(M,N_{\alpha}\right) \cong 
\bigoplus_{\alpha \in J} \opn{Hom}_A\left(P,N_{\alpha}\right) \xrightarrow{\alpha}
\opn{Hom}_A\left(P,\bigoplus_{\alpha \in J} N_{\alpha}\right) \cong
\mrm{R}\opn{Hom}_A\left(M,\bigoplus_{\alpha \in J} N_{\alpha}\right)
\]
where $\alpha$ is the map (\ref{eqn:compact}).
Functoriality of (\ref{eqn:compact}) implies that (\ref{plus-map}) is also natural both in $M$ and in $\{N_{\alpha}\}_{\alpha \in J}$.

\begin{lem}\label{lem:is-fg}
Let $A$ be a left noetherian ring,
and let $M$ be a left $A$-module.
Assume that for each collection $\{I_{\alpha}\}_{\alpha \in J}$ of injective left $A$-modules,
the canonical map
\[
\bigoplus_{\alpha \in J} \opn{Hom}_A(M,I_{\alpha}) \to \opn{Hom}_A(M,\bigoplus_{\alpha \in J} I_{\alpha})
\]
is an isomorphism. Then $M$ is finitely generated.
\end{lem}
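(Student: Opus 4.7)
The plan is to argue the contrapositive: assuming $M$ is not finitely generated, I will produce a collection $\{I_n\}_{n\ge 1}$ of injectives and a morphism $f : M \to \bigoplus_n I_n$ that does not lie in the image of $\varphi$, contradicting the hypothesis.

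First I would exploit the failure of finite generation to pick inductively a sequence $m_1, m_2, \dots \in M$ such that, setting $M_n := Am_1 + \cdots + Am_n$, one has $m_{n+1} \notin M_n$ for every $n \ge 0$. This produces a strictly ascending chain $M_1 \subsetneq M_2 \subsetneq \cdots$ of finitely generated submodules of $M$; write $N := \bigcup_n M_n \subseteq M$. For each $n$ I take $I_n$ to be an injective hull of the nonzero $A$-module $N/M_n$, and define $g_n : N \to N/M_n \hookrightarrow I_n$. Since every element of $N$ lies in some $M_k$, the tuple $(g_n)_n$ takes values in the direct sum and assembles into an honest $A$-linear map $g : N \to \bigoplus_n I_n$; moreover $g_n \ne 0$ for every $n$, as $m_{n+1}$ has nonzero image in $N/M_n$.

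At this stage the hypothesis that $A$ is left noetherian enters through the classical Bass--Papp theorem for ordinary rings, which guarantees that $\bigoplus_n I_n$ is injective as a left $A$-module. Consequently, $g$ extends along the inclusion $N \hookrightarrow M$ to a morphism $f : M \to \bigoplus_n I_n$. The hypothesis of the lemma says that $\varphi$ is an isomorphism, so $f$ arises from a \emph{finitely supported} element of $\bigoplus_n \opn{Hom}_A(M,I_n)$: letting $\pi_n : \bigoplus_n I_n \to I_n$ denote the $n$-th projection and setting $f_n := \pi_n \circ f$, there is some $N_0$ with $f_n = 0$ for all $n > N_0$. But $f$ restricts on $N$ to $g$, so $f_n|_N = g_n$, and no $g_n$ vanishes. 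This contradiction forces $M$ to be finitely generated.

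The main (and essentially only) obstacle is the step that invokes noetherianity to extend $g$ from $N$ to all of $M$. Without it, the construction yields only a morphism out of the countably generated submodule $N$, and when $M$ is not itself countably generated there is no a priori way to apply the hypothesis on $\varphi$; it is the injectivity of $\bigoplus_n I_n$, equivalent to noetherianity of $A$ by the very theorem being generalized in this section, that allows the argument to close.
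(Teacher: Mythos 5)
Your proof is correct, and it takes a genuinely different route from the paper's. The paper reduces to showing that $M$ is a compact object of $\opn{Mod}(A)$: given an arbitrary family $\{N_{\alpha}\}$, it picks an injective copresentation $0 \to N_{\alpha} \to I_{\alpha} \to J_{\alpha}$ of each $N_{\alpha}$, uses left-exactness of $\opn{Hom}$ and the hypothesis (applied to the $I_{\alpha}$ and the $J_{\alpha}$) to deduce the compactness condition for the $N_{\alpha}$, and then cites the standard fact that over a left noetherian ring compact modules coincide with finitely generated ones. You argue the contrapositive instead: from non-finite-generation you extract a strictly ascending chain $M_1 \subsetneq M_2 \subsetneq \cdots$ of finitely generated submodules, map the union $N$ into $\bigoplus_n I_n$ via the injective hulls $I_n$ of the quotients $N/M_n$, extend along the inclusion $N \hookrightarrow M$ using the easy direction of the classical Bass--Papp theorem (left noetherian $\Rightarrow$ direct sums of injectives are injective), and observe that the resulting map has infinite support, violating the hypothesis. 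Both arguments invoke the noetherian hypothesis, but in different guises: the paper through the compactness characterization, you through Bass--Papp. Your proof is more self-contained in that it does not outsource the ``compact $\Rightarrow$ finitely generated'' implication to a black box --- in fact the concrete construction you give is essentially what lies behind that implication --- whereas the paper's reduction to compactness is conceptually cleaner and keeps the hypothesis on injectives quarantined to a single step. There is no circularity in your appeal to Bass--Papp, since you use only the classical ring-theoretic statement (and only its easy direction), not the DG version being proved in this section.
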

\begin{proof}
Since over a left noetherian ring the finitely generated modules are exactly the compact modules,
it is enough to show that $M$ is a compact object of $\opn{Mod}(A)$.
Let $\{N_{\alpha}\}_{\alpha \in J}$ be a collection of left $A$-modules.
For each $\alpha \in J$, choose an exact sequence
\[
0 \to N_{\alpha} \to I_{\alpha} \to J_{\alpha}
\]
such that $I_{\alpha}$ and $J_{\alpha}$ are injective left $A$-modules.
We obtain a commutative diagram with exact rows
\[
\xymatrixcolsep{1pc}
\xymatrix{
0 \ar[r] & \bigoplus_{\alpha \in J} \opn{Hom}_A(M,N_{\alpha}) \ar[r]\ar[d] & \bigoplus_{\alpha \in J} \opn{Hom}_A(M,I_{\alpha}) \ar[r] \ar[d] & \bigoplus_{\alpha \in J} \opn{Hom}_A(M,J_{\alpha}) \ar[d]\\
0 \ar[r] & \opn{Hom}_A\left(M,\bigoplus_{\alpha \in J} N_{\alpha}\right) \ar[r]       & \opn{Hom}_A\left(M,\bigoplus_{\alpha \in J} I_{\alpha}\right) \ar[r]        & \opn{Hom}_A\left(M,\bigoplus_{\alpha \in J} J_{\alpha}\right)
}
\]
Since by assumption the two rightmost vertical maps are isomorphisms, we deduce that the natural map
\[
\bigoplus_{\alpha \in J} \opn{Hom}_A(M,N_{\alpha}) \to \opn{Hom}_A(M,\bigoplus_{\alpha \in J} N_{\alpha})
\]
is also an isomorphism. Hence, $M$ is a compact object of $\opn{Mod}(A)$, so it is finitely generated.
\end{proof}

\begin{lem}\label{lem:sum-of-cohoz}
Let $A,B$ be non-positive DG-rings,
and let $\{F_{\alpha}\}_{\alpha \in J}$ be a collection of contravariant triangulated functors of cohomological dimension $0$ from $\cat{D}(A)$ to $\cat{D}(B)$.
Then the contravariant triangulated functor 
\[
\bigoplus_{\alpha \in J} F_{\alpha}: \cat{D}(A) \to \cat{D}(B)
\]
is also of cohomological dimension $0$.
\end{lem}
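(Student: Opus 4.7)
The plan is to reduce the claim to the fact that cohomology in $\cat{D}(B)$ commutes with arbitrary direct sums. Since $\cat{D}(B)$ has arbitrary coproducts, the functor $F := \bigoplus_{\alpha \in J} F_\alpha$ is well defined; being a coproduct of contravariant triangulated functors it is itself a contravariant triangulated functor (a coproduct of distinguished triangles in $\cat{D}(B)$ is distinguished, and the coproduct of the maps $F_\alpha(u):F_\alpha(N) \to F_\alpha(M)$ gives the required structure morphisms).

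Next, let $M \in \cat{D}(A)$ satisfy $\mrm{H}(M) \subseteq [a,b]$ with $-\infty \le a \le b \le \infty$. By hypothesis, for every $\alpha \in J$ we have $\mrm{H}(F_\alpha(M)) \subseteq [-b,-a]$, i.e.
\[
\mrm{H}^n\left(F_\alpha(M)\right) = 0 \quad \text{for all } n \notin [-b,-a].
\]
Since for each fixed $n \in \mathbb{Z}$ the cohomology functor $\mrm{H}^n:\cat{D}(B) \to \opn{Mod}(\mrm{H}^0(B))$ preserves arbitrary direct sums, we obtain
\[
\mrm{H}^n\left(F(M)\right) = \mrm{H}^n\left(\bigoplus_{\alpha \in J} F_\alpha(M)\right) \cong \bigoplus_{\alpha \in J} \mrm{H}^n\left(F_\alpha(M)\right),
\]
and the right-hand side vanishes whenever $n \notin [-b,-a]$. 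Hence $\mrm{H}(F(M)) \subseteq [-b,-a]$, which is exactly the cohomological dimension $0$ condition for $F$.

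There is no serious obstacle here; the only point that requires a brief mention is that coproducts of triangulated functors remain triangulated (so that the notion of cohomological dimension even applies to $F$) and that $\mrm{H}^n$ preserves coproducts in the derived category of a DG-ring, both of which are standard.
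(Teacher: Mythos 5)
Your argument is correct and is exactly the paper's proof, merely written out: the paper's proof is the one-line observation that cohomology commutes with arbitrary direct sums, and you have spelled out the resulting bookkeeping. Nothing further is needed.
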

\begin{proof}
This is clear because cohomology commutes with arbitrary direct sums.
\end{proof}

\begin{thm}\label{thm:bass-papp}
Let $A$ be a non-positive DG-ring.
Then the category $\INJ(A)$ is closed under arbitrary direct sums if and only if
the ring $\mrm{H}^0(A)$ is a left noetherian ring and for each $i < 0$, 
the left $\mrm{H}^0(A)$-module $\mrm{H}^i(A)$ is finitely generated.
\end{thm}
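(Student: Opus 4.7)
The plan is to prove the two directions separately, reducing each to the classical Bass--Papp theorem via the equivalence of categories from Theorem \ref{thm:eqv} and the cohomology formula of Corollary \ref{cor:coho-of-INJ}, with the module-theoretic Lemma \ref{lem:is-fg} handling the finite generation side.

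For the ``only if'' direction, I would assume $\INJ(A)$ is closed under direct sums and first recover the noetherianness of $\mrm{H}^0(A)$. Given injective $\mrm{H}^0(A)$-modules $\{\bar{I}_\alpha\}$, Theorem \ref{thm:eqv} produces lifts $I_\alpha \in \INJ(A)$; by hypothesis $\bigoplus I_\alpha \in \INJ(A)$, so Proposition \ref{prop:HzOfINJ} gives that $\mrm{H}^0(\bigoplus I_\alpha) = \bigoplus \bar{I}_\alpha$ is injective over $\mrm{H}^0(A)$, and the classical Bass--Papp theorem forces $\mrm{H}^0(A)$ to be left noetherian. For the finite generation of $\mrm{H}^{-n}(A)$ with $n \ge 0$, I would then apply Corollary \ref{cor:coho-of-INJ} both to each $I_\alpha$ and to $\bigoplus I_\alpha$. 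Using that $\mrm{H}^n$ commutes with direct sums and the naturality of the isomorphism in Corollary \ref{cor:coho-of-INJ}, the canonical comparison map
\[
\bigoplus_\alpha \opn{Hom}_{\mrm{H}^0(A)}(\mrm{H}^{-n}(A), \bar{I}_\alpha) \to \opn{Hom}_{\mrm{H}^0(A)}\left(\mrm{H}^{-n}(A), \bigoplus_\alpha \bar{I}_\alpha\right)
\]
is then identified with an isomorphism, so Lemma \ref{lem:is-fg} gives the desired finite generation.

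For the ``if'' direction, let $\{I_\alpha\} \subset \INJ(A)$. By the classical Bass--Papp theorem, $\bigoplus \mrm{H}^0(I_\alpha)$ is an injective $\mrm{H}^0(A)$-module; by Theorem \ref{thm:eqv} it is the image of some $J \in \INJ(A)$, and by fullness of $\mrm{H}^0$ the canonical inclusions into the direct sum lift to morphisms $\iota_\alpha : I_\alpha \to J$ in $\cat{D}(A)$. The universal property of the coproduct in $\cat{D}(A)$ yields a canonical morphism $f : \bigoplus_\alpha I_\alpha \to J$. Both source and target lie in $\cat{D}^{+}(A)$, so it suffices to verify that $\mrm{H}^n(f)$ is an isomorphism for every $n$. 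In degree $0$ this is immediate from the construction, and for $n > 0$ I would use the naturality of Corollary \ref{cor:coho-of-INJ} (applied to the maps $\iota_\alpha$) together with the fact that $\mrm{H}^n$ commutes with direct sums to identify $\mrm{H}^n(f)$ with precisely the canonical comparison map
\[
\bigoplus_\alpha \opn{Hom}_{\mrm{H}^0(A)}(\mrm{H}^{-n}(A), \mrm{H}^0(I_\alpha)) \to \opn{Hom}_{\mrm{H}^0(A)}\left(\mrm{H}^{-n}(A), \bigoplus_\alpha \mrm{H}^0(I_\alpha)\right).
\]
Finite generation of $\mrm{H}^{-n}(A)$ over the noetherian ring $\mrm{H}^0(A)$ makes $\mrm{H}^{-n}(A)$ compact in $\opn{Mod}(\mrm{H}^0(A))$, so this comparison map is an isomorphism. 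Hence $f$ is a quasi-isomorphism, and $\bigoplus_\alpha I_\alpha \cong J \in \INJ(A)$.

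The main obstacle will be the careful bookkeeping in the naturality arguments: one must be certain that under the natural isomorphism of Corollary \ref{cor:coho-of-INJ} the cohomology of each $\iota_\alpha$ really is identified with $\opn{Hom}_{\mrm{H}^0(A)}(\mrm{H}^{-n}(A), \mrm{H}^0(\iota_\alpha))$, and that passing to the direct sum then yields the claimed canonical comparison map. This coherent identification is what lets the finite generation of $\mrm{H}^{-n}(A)$ enter symmetrically in the two directions --- as the compactness hypothesis that forces the comparison to be an isomorphism in the sufficient direction, and as the conclusion of Lemma \ref{lem:is-fg} drawn from that same isomorphism in the necessary direction.
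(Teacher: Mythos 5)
Your proof is correct and follows essentially the same strategy as the paper: lift injective $\mrm{H}^0(A)$-modules through the equivalence of Theorem \ref{thm:eqv}, invoke the classical Bass--Papp theorem for $\mrm{H}^0(A)$, and use the functorial identification $\mrm{H}^n(I)\cong\opn{Hom}_{\mrm{H}^0(A)}(\mrm{H}^{-n}(A),\mrm{H}^0(I))$ of Corollary \ref{cor:coho-of-INJ} to reduce the cohomological comparison to compactness of $\mrm{H}^{-n}(A)$, with Lemma \ref{lem:is-fg} closing the loop in the necessary direction. The one small deviation worth noting: in the ``only if'' direction the paper does not invoke Corollary \ref{cor:coho-of-INJ} directly but instead re-derives the needed comparison from Theorem \ref{thm:mainZero}(2) applied to the natural transformation $\bigoplus_\alpha \mrm{R}\opn{Hom}_A(-,I_\alpha)\to\mrm{R}\opn{Hom}_A(-,\bigoplus_\alpha I_\alpha)$, unwinding several adjunction diagrams; your shortcut of applying the already-packaged naturality of Corollary \ref{cor:coho-of-INJ} to the inclusion morphisms $I_\alpha\to\bigoplus I_\beta$ is legitimate (since $\bigoplus I_\alpha\in\INJ(A)$ by hypothesis), slightly shorter, and makes the two directions pleasantly symmetric.
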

\begin{proof}
Assume first that $\INJ(A)$ is closed under arbitrary direct sums.
Given an arbitrary collection $\{\bar{I}_{\alpha}\}_{\alpha \in J}$ of injective left $\mrm{H}^0(A)$-modules,
for each $\alpha \in J$ there exists by Theorem \ref{thm:eqv} an element $I_{\alpha} \in \INJ(A)$ such that 
\[
\mrm{H}^0(I_{\alpha}) = \bar{I}_{\alpha}.
\]
By assumption, $\bigoplus_{\alpha \in J} I_{\alpha} \in \INJ(A)$.
Hence, by Proposition \ref{prop:HzOfINJ}
\[
\mrm{H}^0\left(\bigoplus_{\alpha \in J} I_{\alpha}\right) = \bigoplus_{\alpha \in J} \mrm{H}^0(I_{\alpha}) = \bigoplus_{\alpha \in J} \bar{I}_{\alpha}
\]
is an injective left $\mrm{H}^0(A)$-module.
The Bass-Papp Theorem now implies that the ring $\mrm{H}^0(A)$ is left noetherian.
Let $n > 0$. 
Since $\bigoplus_{\alpha \in J} I_{\alpha} \in \INJ(A)$,
by Proposition \ref{prop:INJisZeroFunc} the functor $\mrm{R}\opn{Hom}_A(-,\bigoplus_{\alpha \in J} I_{\alpha})$ is of cohomological dimension $0$,
while by Lemma \ref{lem:sum-of-cohoz}, the functor $\bigoplus_{\alpha \in J} \mrm{R}\opn{Hom}_A(-,I_{\alpha})$ is also of cohomological dimension $0$.
Hence, by Theorem \ref{thm:mainZero}, the natural morphism 
\[
\bigoplus_{\alpha \in J} \mrm{R}\opn{Hom}_A(-,I_{\alpha}) \to \mrm{R}\opn{Hom}_A(-,\bigoplus_{\alpha \in J} I_{\alpha})
\]
from (\ref{plus-map}) induces a commutative diagram 
\[
\xymatrix{
\mrm{H}^0\left(\bigoplus_{\alpha \in J} \mrm{R}\opn{Hom}_A(M,I_{\alpha})\right) \ar[r] \ar[d] & \mrm{H}^0\left(\bigoplus_{\alpha \in J} \mrm{R}\opn{Hom}_A(\mrm{H}^0(M),I_{\alpha})\right)\ar[d]\\
\mrm{H}^0\left(\mrm{R}\opn{Hom}_A(M,\bigoplus_{\alpha \in J} I_{\alpha})\right) \ar[r]        & \mrm{H}^0\left(\mrm{R}\opn{Hom}_A(\mrm{H}^0(M),\bigoplus_{\alpha \in J} I_{\alpha})\right)
}
\]
for every $M \in \cat{D}(A)$, such that the horizontal maps are isomorphisms.
Using (\ref{plus-map}) twice and adjunction, there is a commutative diagram
\[
\xymatrixcolsep{1pc}
\xymatrix{
\bigoplus_{\alpha \in J} \mrm{R}\opn{Hom}_A(\mrm{H}^0(M),I_{\alpha})\ar[dd]\ar[r] & \bigoplus_{\alpha \in J} \mrm{R}\opn{Hom}_{\mrm{H}^0(A)}(\mrm{H}^0(M),\mrm{R}\opn{Hom}_A(\mrm{H}^0(A),I_{\alpha})) \ar[d] \\
& \mrm{R}\opn{Hom}_{\mrm{H}^0(A)}(\mrm{H}^0(M),\bigoplus_{\alpha \in J} \mrm{R}\opn{Hom}_A(\mrm{H}^0(A),I_{\alpha})) \ar[d] \\
\mrm{R}\opn{Hom}_A(\mrm{H}^0(M),\bigoplus_{\alpha \in J} I_{\alpha}) \ar[r] & \mrm{R}\opn{Hom}_{\mrm{H}^0(A)}(\mrm{H}^0(M), \mrm{R}\opn{Hom}_A(\mrm{H}^0(A),\bigoplus_{\alpha \in J} I_{\alpha}))
}
\]
in which the horizontal maps are isomorphisms.
By Proposition \ref{prop:RHomisHZ} and (\ref{eqn:nat-in-alpha}) there is a commutative diagram
\[
\xymatrixcolsep{1pc}
\xymatrix{
\bigoplus_{\alpha \in J} \mrm{R}\opn{Hom}_{\mrm{H}^0(A)}(\mrm{H}^0(M),\mrm{R}\opn{Hom}_A(\mrm{H}^0(A),I_{\alpha})) \ar[d] \ar[r] & \bigoplus_{\alpha \in J} \mrm{R}\opn{Hom}_{\mrm{H}^0(A)}(\mrm{H}^0(M),\mrm{H}^0(I_{\alpha}))\ar[d]\\
\mrm{R}\opn{Hom}_{\mrm{H}^0(A)}(\mrm{H}^0(M),\bigoplus_{\alpha \in J} \mrm{R}\opn{Hom}_A(\mrm{H}^0(A),I_{\alpha})) \ar[d] \ar[r] & \mrm{R}\opn{Hom}_{\mrm{H}^0(A)}(\mrm{H}^0(M),\bigoplus_{\alpha \in J} \mrm{H}^0(I_{\alpha}))\ar[d]\\
\mrm{R}\opn{Hom}_{\mrm{H}^0(A)}(\mrm{H}^0(M), \mrm{R}\opn{Hom}_A(\mrm{H}^0(A),\bigoplus_{\alpha \in J} I_{\alpha})) \ar[r] & \mrm{R}\opn{Hom}_{\mrm{H}^0(A)}(\mrm{H}^0(M), \mrm{H}^0(\bigoplus_{\alpha \in J} I_{\alpha}))
}
\]
in which the horizontal maps are isomorphisms.
Combining the last three commutative diagrams, 
we see that for any $M \in \cat{D}(A)$ there is a commutative diagram in which the horizontal maps are isomorphisms:
\[
\xymatrix{
\mrm{H}^0\left(\bigoplus_{\alpha \in J} \mrm{R}\opn{Hom}_A(M,I_{\alpha})\right) \ar[r] \ar[d] & \bigoplus_{\alpha \in J} \mrm{R}\opn{Hom}_{\mrm{H}^0(A)}(\mrm{H}^0(M),\mrm{H}^0(I_{\alpha}))\ar[d]\\
\mrm{H}^0\left(\mrm{R}\opn{Hom}_A(M,\bigoplus_{\alpha \in J} I_{\alpha})\right) \ar[r]        & \mrm{R}\opn{Hom}_{\mrm{H}^0(A)}(\mrm{H}^0(M), \mrm{H}^0(\bigoplus_{\alpha \in J} I_{\alpha}))
}
\]
Taking $M = A[-n]$, we see that the leftmost vertical map is an isomorphism.
Hence, the natural map
\[
\bigoplus_{\alpha \in J} \opn{Hom}_{\mrm{H}^0(A)}(\mrm{H}^{-n}(A),\bar{I}_{\alpha}) \to \opn{Hom}_{\mrm{H}^0(A)}(\mrm{H}^{-n}(A),\bigoplus_{\alpha \in J} \bar{I}_{\alpha})
\]
is an isomorphism. It follows from Lemma \ref{lem:is-fg} the left $\mrm{H}^0(A)$-module $\mrm{H}^{-n}(A)$ is finitely generated.

Conversely, assume that $\mrm{H}^0(A)$ is a left noetherian ring and for each $i < 0$, 
the left $\mrm{H}^0(A)$-module $\mrm{H}^i(A)$ is finitely generated.
Let $\{I_{\alpha}\}_{\alpha \in J}$ be a collection of elements of $\INJ(A)$.
Let $h_{\alpha}: I_{\alpha} \inj \bigoplus_{\alpha \in J} I_{\alpha}$ be the canonical inclusion.
By Proposition \ref{prop:HzOfINJ}, each $\mrm{H}^0(I_{\alpha})$ is an injective $\mrm{H}^0(A)$-module.
By the Bass-Papp Theorem 
\[
\bigoplus_{\alpha \in J} \mrm{H}^0(I_{\alpha})
\]
is also an injective $\mrm{H}^0(A)$-module.
Hence, by Theorem \ref{thm:eqv} there is a (unique up to isomorphism) DG-module $K \in \INJ(A)$ such that
\[
\mrm{H}^0(K) = \bigoplus_{\alpha \in J} \mrm{H}^0(I_{\alpha}).
\]
Since for each $\alpha \in J$ we have an inclusion map 
\[
\bar{f}_{\alpha}:\mrm{H}^0(I_{\alpha}) \to \mrm{H}^0(K),
\]
by Theorem \ref{thm:eqv} it lifts to a map $f_{\alpha}:I_{\alpha} \to K$ such that $\mrm{H}^0(f_{\alpha}) = \bar{f}_{\alpha}$.
It follows by the universal property of direct sums that there is a unique map 
\[
f:\bigoplus_{\alpha \in J} I_{\alpha} \to K
\]
such that for any $\alpha \in J$ the diagram
\begin{equation}\label{eqn:hfKdiag}
\xymatrixcolsep{4pc}
\xymatrix{
\bigoplus_{\alpha \in J} I_{\alpha} \ar[r]^f & K\\
I_{\alpha} \ar[u]^{h_{\alpha}} \ar[ur]_{f_{\alpha}}
}
\end{equation}
is commutative. 
Since $K \in \INJ(A)$ and $\INJ(A)$ is closed under isomorphisms,
it is thus enough to show that $f$ is an isomorphism,
and to show this it is enough to show that for every $n \in \mathbb{Z}$,
the map 
\[
\mrm{H}^n(f):\mrm{H}^n\left(\bigoplus_{\alpha \in J} I_{\alpha}\right) \to \mrm{H}^n(K)
\]
is an isomorphism.
For $n < 0$ both sides are $0$ so this map is an isomorphism,
while for $n = 0$ this map is an isomorphism by the definitions of $K$ and $f$.
Let $n > 0$. For every $\alpha \in J$,
by Corollary \ref{cor:coho-of-INJ} there is a commutative diagram
\[
\xymatrix{
\mrm{H}^n(I_{\alpha}) \ar[r]^{\mrm{H}^n(f_{\alpha})}\ar[d]^{\rotatebox{90}{$\cong$}} & \mrm{H}^n(K)\ar[d]^{\rotatebox{90}{$\cong$}}\\
\opn{Hom}_{\mrm{H}^0(A)}\left(\mrm{H}^{-n}(A),\mrm{H}^0(I_{\alpha})\right) \ar[r] & \opn{Hom}_{\mrm{H}^0(A)}\left(\mrm{H}^{-n}(A),\mrm{H}^0(K)\right)
}
\]
such that the vertical maps are isomorphisms. Combining all these maps we obtain a commutative diagram
\begin{equation}\label{eqn:combined-diag}
\xymatrix{
\bigoplus_{\alpha \in J} \mrm{H}^n(I_{\alpha}) \ar[r]^{\psi}\ar[d]^{\rotatebox{90}{$\cong$}} & \mrm{H}^n(K)\ar[d]^{\rotatebox{90}{$\cong$}}\\
\bigoplus_{\alpha \in J} \opn{Hom}_{\mrm{H}^0(A)}\left(\mrm{H}^{-n}(A),\mrm{H}^0(I_{\alpha})\right) \ar[r] & \opn{Hom}_{\mrm{H}^0(A)}\left(\mrm{H}^{-n}(A),\mrm{H}^0(K)\right)
}
\end{equation}
Since by definition the map $\mrm{H}^0(I_{\alpha}) \to \mrm{H}^0(K) = \bigoplus_{\alpha \in J} \mrm{H}^0(I_{\alpha})$
is the inclusion map $\bar{f}_{\alpha}$,
we see that the bottom horizontal map of (\ref{eqn:combined-diag}) is of the form of (\ref{eqn:compact}).
Since $\mrm{H}^0(A)$ is left noetherian and $\mrm{H}^{-n}(A)$ is finitely generated, 
we deduce that it is an isomorphism.
Hence, the top horizontal map of (\ref{eqn:combined-diag}) (which we denoted by $\psi$) is also an isomorphism.
The map $\psi$ fits into the following diagram
\begin{equation}\label{diag-with-eps}
\xymatrixcolsep{3pc}
\xymatrixrowsep{3pc}
\xymatrix{
\bigoplus_{\alpha \in J} \mrm{H}^n(I_{\alpha}) \ar[r]^{\psi}\ar[d]_{\epsilon} & \mrm{H}^n(K)\\
\mrm{H}^n(\bigoplus_{\alpha \in J} I_{\alpha}) \ar[ru]_{\mrm{H}^n(f)}
}
\end{equation}
Here, the map denoted by $\epsilon$ is the canonical isomorphism induced from the maps 
\[
\mrm{H}^n(h_{\alpha}):\mrm{H}^n(I_{\alpha}) \to \mrm{H}^n\left(\bigoplus_{\alpha \in J} I_{\alpha}\right).
\]
We claim that (\ref{diag-with-eps}) commutes. To see this, for each $\alpha \in J$,
let 
\[
g_{\alpha}: \mrm{H}^n(I_{\alpha}) \inj \bigoplus_{\alpha \in J} \mrm{H}^n(I_{\alpha})
\]
be the canonical inclusion.
Then by the universal property of direct sums, it is enough to show that 
\begin{equation}\label{eqn:last-eqn}
\mrm{H}^n(f) \circ \epsilon \circ g_{\alpha} = \psi \circ g_{\alpha}.
\end{equation}
On the one hand, by the definition of $\psi$ we have $\psi \circ g_{\alpha} = \mrm{H}^n(f_{\alpha})$.
On the other hand, by the definition of $\epsilon$ we have $\epsilon \circ g_{\alpha} = \mrm{H}^n(h_{\alpha})$.
By commutativity of (\ref{eqn:hfKdiag}) we see that 
\[
\mrm{H}^n(f) \circ \mrm{H}^n(h_{\alpha}) = \mrm{H}^n(f \circ h_{\alpha}) = \mrm{H}^n(f_{\alpha}),
\]
so that (\ref{eqn:last-eqn}) holds.
We have thus seen that (\ref{diag-with-eps}) commutes.
Since $\epsilon$ and $\psi$ are isomorphisms,
we deduce that $\mrm{H}^n(f)$ is an isomorphism, 
which implies that $f$ is an isomorphism, so that 
\[
\bigoplus_{\alpha \in J} I_{\alpha} \in \INJ(A),
\]
as claimed.
\end{proof}

In view of this result, it makes sense to define:

\begin{dfn}\label{def:noeth}
Let $A$ be a non-positive DG-ring.
We say that $A$ is left noetherian if
the ring $\mrm{H}^0(A)$ is a left noetherian ring and for each $i < 0$, 
the left $\mrm{H}^0(A)$-module $\mrm{H}^i(A)$ is finitely generated.
\end{dfn}

\begin{cor}\label{cor:sum-of-ind}
Let $A$ be a left noetherian non-positive DG-ring.
Then for any DG-module $I \in \INJ(A)$ there is an isomorphism
\[
I \cong \bigoplus_{\alpha \in J} I_{\alpha}
\]
such that for each $\alpha \in J$, we have that $I_{\alpha} \in \INJ(A)$ and that $I_{\alpha}$ is an indecomposable element of $\cat{D}(A)$.
\end{cor}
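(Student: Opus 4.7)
The plan is to transport the classical Matlis decomposition of injective modules over a noetherian ring across the equivalence $\mrm{H}^0(-):\INJ(A) \to \INJ(\mrm{H}^0(A))$ of Theorem \ref{thm:eqv}. By Theorem \ref{thm:bass-papp} the hypothesis that $A$ is left noetherian in the sense of Definition \ref{def:noeth} gives in particular that $\mrm{H}^0(A)$ is a left noetherian ring. Hence by Matlis' classical theorem, the injective $\mrm{H}^0(A)$-module $\mrm{H}^0(I)$ (which is injective by Proposition \ref{prop:HzOfINJ}) decomposes as
\[
\mrm{H}^0(I) \cong \bigoplus_{\alpha \in J} \bar{I}_{\alpha}
\]
with each $\bar{I}_\alpha$ an indecomposable injective $\mrm{H}^0(A)$-module.

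Next, I would lift this decomposition to $\INJ(A)$. For each $\alpha$, Theorem \ref{thm:eqv} provides some $I_\alpha \in \INJ(A)$ with $\mrm{H}^0(I_\alpha) \cong \bar{I}_\alpha$. By Theorem \ref{thm:bass-papp}, the category $\INJ(A)$ is now closed under arbitrary direct sums, so $\bigoplus_{\alpha \in J} I_\alpha \in \INJ(A)$. Since $\mrm{H}^0(-)$ commutes with direct sums in $\cat{D}(A)$, we obtain
\[
\mrm{H}^0\!\left(\bigoplus_{\alpha \in J} I_\alpha\right) = \bigoplus_{\alpha \in J} \mrm{H}^0(I_\alpha) \cong \bigoplus_{\alpha \in J} \bar{I}_\alpha \cong \mrm{H}^0(I)
\]
in $\INJ(\mrm{H}^0(A))$. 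Using that $\mrm{H}^0(-):\INJ(A) \to \INJ(\mrm{H}^0(A))$ is fully faithful (Proposition \ref{prop:HZff}), this isomorphism lifts to an isomorphism $I \cong \bigoplus_{\alpha \in J} I_\alpha$ in $\INJ(A)$, and hence in $\cat{D}(A)$.

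It remains to verify that each $I_\alpha$ is indecomposable as an object of $\cat{D}(A)$. Suppose $I_\alpha \cong J_1 \oplus J_2$ in $\cat{D}(A)$. By Proposition \ref{prop:summands}, both $J_1$ and $J_2$ belong to $\INJ(A)$, so applying $\mrm{H}^0(-)$ gives a decomposition $\bar{I}_\alpha \cong \mrm{H}^0(J_1) \oplus \mrm{H}^0(J_2)$ as $\mrm{H}^0(A)$-modules. Indecomposability of $\bar{I}_\alpha$ forces one of the two summands, say $\mrm{H}^0(J_2)$, to vanish; but then $J_2 \in \INJ(A)$ with $\mrm{H}^0(J_2) = 0$ must itself be zero in $\cat{D}(A)$, by Proposition \ref{prop:RHomInf} together with Proposition \ref{prop:RHomisHZ} (or directly from the defining condition $\inf(J_2)=0$ in the definition of $\INJ(A)$). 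Hence $I_\alpha$ is indecomposable.

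No genuine obstacle is expected: the work has all been done in Theorems \ref{thm:eqv} and \ref{thm:bass-papp}, which together reduce the statement to Matlis' classical decomposition. The only mildly delicate point is confirming that indecomposability in $\INJ(A)$ coincides with indecomposability in $\cat{D}(A)$, which is handled by Proposition \ref{prop:summands} as above.
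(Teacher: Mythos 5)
Your argument is correct and follows essentially the same route as the paper: decompose $\mrm{H}^0(I)$ via Matlis' theorem, lift each summand through the equivalence of Theorem \ref{thm:eqv}, and invoke Theorem \ref{thm:bass-papp} to conclude that the direct sum of the lifts stays in $\INJ(A)$. The only difference is that you spell out in detail why each $I_\alpha$ is indecomposable as an object of $\cat{D}(A)$ (via Proposition \ref{prop:summands} plus the $\inf = 0$ condition), which the paper leaves implicit in its citation of Theorem \ref{thm:eqv}.
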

\begin{proof}
By \cite[Theorem 2.5]{Ma}, we may write
\[
\mrm{H}^0(I) = \bigoplus_{\alpha \in J} \bar{I}_{\alpha}
\]
such that each $\bar{I}_{\alpha}$ is an indecomposable injective $\mrm{H}^0(A)$-module.
By Theorem \ref{thm:eqv}, for each $\alpha \in J$ there exists $I_{\alpha} \in \INJ(A)$ such that 
$\mrm{H}^0(I_{\alpha}) = \bar{I}_{\alpha}$, and moreover $I_{\alpha}$ is an indecomposable element of $\cat{D}(A)$.
Let 
\[
K := \bigoplus_{\alpha \in J} I_{\alpha} \in \cat{D}(A).
\]
By Theorem \ref{thm:bass-papp}, $K \in \INJ(A)$.
Since 
\[
\mrm{H}^0(K) = \mrm{H}^0\left(\bigoplus_{\alpha \in J} I_{\alpha}\right) = \bigoplus_{\alpha \in J} \left(\mrm{H}^0(I_{\alpha})\right) \cong \mrm{H}^0(I)
\]
we deduce from Theorem \ref{thm:eqv} that there is an isomorphism $I \cong K$, proving the claim.
\end{proof}

If $A$ is a left noetherian ring, 
then by \cite[Theorem 19.10]{La},
the left injective module 
\[
E := \bigoplus_{V_i \text{ is a simple left } A \text{-module}} E(V_i)
\]
is a cogenerator of $A$. Here $E(V_i)$ is the injective hull of $V_i$.
This means that for any non-zero left $A$-module $M$, 
one has $\opn{Hom}_A(M,E) \ne 0$.
The $A$-module $E$ is called the minimal injective cogenerator of $A$.

Recall that an element $X$ of a triangulated category $\mathcal{T}$ is called a cogenerator if for any $0 \ne M \in \mathcal{T}$ there is some $n \in \mathbb{Z}$,
such that 
\[
\opn{Hom}_{\mathcal{T}}(M,X[n]) \ne 0.
\]

\begin{cor}\label{cor:cogen}
Let $A$ be a left noetherian non-positive DG-ring.
Let $E \in \INJ(A)$ be the DG-module such that $\mrm{H}^0(E)$ is the minimal injective cogenerator of $\mrm{H}^0(A)$.
Then $E$ is a cogenerator of $\cat{D}(A)$.
\end{cor}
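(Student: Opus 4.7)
The plan is to derive the cogenerator property for $E$ in $\cat{D}(A)$ from the fact that $\bar{E} := \mrm{H}^0(E)$ is a cogenerator of $\opn{Mod}(\mrm{H}^0(A))$, by means of Theorem \ref{thm:coho-of-rhom}, which translates $\Ext$ computations against an object of $\INJ(A)$ into $\opn{Hom}$'s at the level of the base ring $\mrm{H}^0(A)$.

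Concretely, given $0 \ne M \in \cat{D}(A)$, I would first pick an integer $n \in \mathbb{Z}$ such that $\mrm{H}^{-n}(M) \ne 0$; such an $n$ exists precisely because $M \not\cong 0$. Next, since $E \in \INJ(A)$, I would invoke Theorem \ref{thm:coho-of-rhom}(1) with $I = E$ to obtain an isomorphism
\[
\opn{Hom}_{\cat{D}(A)}(M, E[n]) \;=\; \mrm{H}^n\!\left(\RHom_A(M,E)\right) \;\cong\; \opn{Hom}_{\mrm{H}^0(A)}\!\left(\mrm{H}^{-n}(M),\bar{E}\right).
\]
Finally, since $\mrm{H}^0(A)$ is left noetherian (by Definition \ref{def:noeth}) and $\bar{E}$ is by construction its minimal injective cogenerator, the classical fact recalled just before the corollary (namely \cite[Theorem 19.10]{La}) guarantees that the right-hand side is nonzero. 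Hence $\opn{Hom}_{\cat{D}(A)}(M,E[n]) \ne 0$, which is exactly the cogenerator condition.

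There is really no significant obstacle: all of the derived content has been absorbed into Theorem \ref{thm:coho-of-rhom}, which is precisely what allows us to replace the triangulated $\opn{Hom}$ groups by ordinary $\opn{Hom}$'s over $\mrm{H}^0(A)$. The rest is the classical Matlis-type cogeneration fact for noetherian rings, together with the existence of $E$ itself, which is provided by Theorem \ref{thm:eqv} applied to $\bar{E} \in \INJ(\mrm{H}^0(A))$.
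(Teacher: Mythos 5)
Your argument is correct and is essentially identical to the paper's proof: both pick a degree in which $M$ has nonvanishing cohomology, invoke Theorem \ref{thm:coho-of-rhom} to rewrite the relevant $\opn{Hom}_{\cat{D}(A)}(M,E[n])$ as $\opn{Hom}_{\mrm{H}^0(A)}(\mrm{H}^{-n}(M),\mrm{H}^0(E))$, and conclude by the cogenerator property of the minimal injective cogenerator of $\mrm{H}^0(A)$. The only differences are cosmetic (a sign convention on $n$ and explicitly citing Theorem \ref{thm:eqv} for the existence of $E$).
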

\begin{proof}
Let $0 \ne M \in \cat{D}(A)$.
Then there is some $n \in \mathbb{Z}$ such that $\mrm{H}^n(M) \ne 0$.
By Theorem \ref{thm:coho-of-rhom} we have that
\[
\mrm{H}^{-n} \left(\mrm{R}\opn{Hom}_A(M,E)\right) \cong \opn{Hom}_{\mrm{H}^0(A)}(\mrm{H}^n(M),\mrm{H}^0(E)),
\]
and the latter is non-zero because $\mrm{H}^0(E)$ is a cogenerator.
\end{proof}

\section{Injectives over commutative noetherian DG-rings}\label{sec:comm}

\subsection{Some preliminaries about commutative noetherian DG-rings}\label{sec:prelcomm}

A DG-ring $A$ is called commutative if for all $a \in A^i$ and $b \in A^j$,
one has $b\cdot a = (-1)^{i\cdot j} \cdot a \cdot b$, and $a^2 = 0$ if $i$ is odd.
If $A$ is a commutative non-positive DG-ring which is left noetherian in the sense of Definition \ref{def:noeth} we will say that $A$ is a \textit{commutative noetherian DG-ring}.
Thus, by definition, a commutative noetherian DG-ring will be assumed non-positive.
In that case, $\mrm{H}^0(A)$ is a commutative noetherian ring.
Under this assumption, we denote by $\cat{D}_{\mrm{f}}(A)$ the full triangulated subcategory of $\cat{D}(A)$
consisting of DG-modules $M$ such that $\mrm{H}^n(M)$ is a finitely generated over $\mrm{H}^0(A)$ for all $n \in \mathbb{Z}$.
Similarly, we set $\cat{D}^{-}_{\mrm{f}}(A) = \cat{D}^{-}(A) \cap \cat{D}_{\mrm{f}}(A)$.
Note that the noetherian condition implies that $A \in \cat{D}^{-}_{\mrm{f}}(A)$.

\subsubsection{Localization of commutative DG-rings}

Let $A$ be a commutative non-positive DG-ring.
Given a multiplicatively closed set $\bar{S} \subseteq \mrm{H}^0(A)$,
recall that the localization $\bar{S}^{-1} A$ is defined as follows:
let $S \subseteq A^0$ be the inverse image of $\bar{S}$ along the surjection $A^0 \to \mrm{H}^0(A)$,
and define 
\[
\bar{S}^{-1} A := A\otimes_{A^0} S^{-1}A^0.
\]
The localization map $A^0 \to S^{-1}A^0$ induces a localization map $A \to \bar{S}^{-1}A$.
For $\p \in \opn{Spec}(\mrm{H}^0(A))$, we will write as usual $A_{\p}$ instead of $(\mrm{H}^0(A)-\p)^{-1}A$.
If $M$ is a DG-module over $A$, then we define
\[
\bar{S}^{-1}(M) := M\otimes_{A^0} S^{-1}A^0 \in \cat{D}(\bar{S}^{-1}A).
\]
Again, if $\p \in \opn{Spec}(\mrm{H}^0(A))$, we will write $M_{\p}$ instead of $(\mrm{H}^0(A)-\p)^{-1}M$.

A commutative noetherian DG-ring $A$ is called a local noetherian DG-ring if the commutative noetherian ring $\mrm{H}^0(A)$ is a local ring.
If $\m \subseteq \mrm{H}^0(A)$ is its maximal ideal, one says that $(A,\m)$ is a commutative noetherian local DG-ring.
Letting $\k = (\mrm{H}^0(A)/\m)$ be the residue field, we will sometimes say that $(A,\m,\k)$ is a commutative noetherian local DG-ring.

\subsubsection{Local cohomology and local homology over commutative DG-rings}\label{sec:localcoh}

We now review the notions of local cohomology and local homology over commutative DG-rings.
A reference for this is material is \cite[Section 2]{Sh2}.
See also \cite{BIK}.

Let $A$ be a commutative non-positive DG-ring,
and let $\a \subseteq \mrm{H}^0(A)$ be a finitely generated ideal.
The category of derived $\a$-torsion DG-modules over $A$,
denoted by $\cat{D}_{\a-\opn{tor}}(A)$,
is the full triangulated subcategory of $D(A)$ consisting of DG-modules $M$
such that the $\mrm{H}^0(A)$-module $\mrm{H}^n(M)$ is $\a$-torsion for all $n\in \mathbb{Z}$.
The inclusion functor
\[
F:\cat{D}_{\a-\opn{tor}}(A) \inj \cat{D}(A)
\]
has a right adjoint
\[
G: \cat{D}(A) \to \cat{D}_{\a-\opn{tor}}(A).
\]
One defined the local cohomology functor of $A$ with respect to $\a$ to be the composition 
\[
\mrm{R}\Gamma_{\a}(-) := F \circ G(-): \cat{D}(A) \to \cat{D}(A).
\]
This coincides with the usual local cohomology functor if $A$ is a commutative noetherian ring.

For any $M \in \cat{D}(A)$,
there is a natural map 
\begin{equation}
\sigma_M:\mrm{R}\Gamma_{\a}(M) \to M,
\end{equation}
and it holds that $M \in \cat{D}_{\a-\opn{tor}}(A)$ if and only if $\sigma_M$ is an isomorphism.

The functor $\mrm{R}\Gamma_{\a}$ has a right adjoint which we denote by 
\[
\mrm{L}\Lambda_{\a}(-) : \cat{D}(A) \to \cat{D}(A).
\]
This functor is called the local homology or derived completion functor with respect to $\a$.
If $A$ is a commutative noetherian ring it coincides with the left derived functor of the $\a$-adic completion functor.

The Greenlees-May duality, 
which originated in \cite{GM},
states that for any commutative DG-ring $A$,
and any finitely generated ideal $\a\subseteq \mrm{H}^0(A)$, 
there are bifunctorial isomorphisms
\begin{eqnarray}
\mrm{R}\opn{Hom}_A(\mrm{R}\Gamma_{\a}(M),N) \cong
\mrm{R}\opn{Hom}_A(\mrm{R}\Gamma_{\a}(M),\mrm{R}\Gamma_{\a}(N)) \cong\nonumber\\
\mrm{R}\opn{Hom}_A(\mrm{L}\Lambda_{\a}(M),\mrm{L}\Lambda_{\a}(N)) \cong
\mrm{R}\opn{Hom}_A(M,\mrm{L}\Lambda_{\a}(N))\nonumber
\end{eqnarray}
for any $M,N \in \cat{D}(A)$.
The Matlis-Greenlees-May (MGM) equivalence states that there are isomorphisms
\[
\mrm{R}\Gamma_{\a} \circ \mrm{L}\Lambda_{\a}(-) \cong \mrm{R}\Gamma_{\a}(-)
\]
and
\[
\mrm{L}\Lambda_{\a} \circ \mrm{R}\Gamma_{\a} (-) \cong \mrm{L}\Lambda_{\a}(-)
\]
of functors $\cat{D}(A) \to \cat{D}(A)$.

\begin{rem}
If $A$ is a commutative non-positive DG-ring,
and $\a\subseteq \mrm{H}^0(A)$ is a finitely generated ideal,
the one can consider the local cohomology functor of $\mrm{H}^0(A)$ with respect to $\a$,
and the local cohomology functor of $A$ with respect to $\a$.
The former is a functor $\cat{D}(\mrm{H}^0(A)) \to \cat{D}(\mrm{H}^0(A))$,
and the latter is a functor $\cat{D}(A) \to \cat{D}(A)$.
According to the notation introduced above,
both of these functors are denoted by $\mrm{R}\Gamma_{\a}$.
In some cases we will need to use both functors.
To resolve the ambiguity in notation in these cases,
we will sometimes denote the former by
\[
\mrm{R}\Gamma_{\a}^{\mrm{H}^0(A)}:\cat{D}(\mrm{H}^0(A)) \to \cat{D}(\mrm{H}^0(A)),
\]
and the latter by
\[
\mrm{R}\Gamma_{\a}^{A}:\cat{D}(A) \to \cat{D}(A).
\]
A similar remark applies to the functor $\mrm{L}\Lambda_{\a}$.
\end{rem}

\subsubsection{The telescope complex and the telescope DG-module}

A reference for the facts of this subsection is \cite[Section 5]{PSY1}.
Given a commutative ring $A$ and an element $a \in A$,
we define the telescope complex $\opn{Tel}(A;a) \in \cat{D}(A)$ to be the complex
\[
0 \to \bigoplus_{i=0}^\infty A \xrightarrow{d} \bigoplus_{i=0}^\infty A \to 0
\]
with non-zero components in degrees $0,1$. Its differential $d$ is defined as follows:
Let $\{\delta_i\mid i\ge 0\}$ be the basis of the countably generated free A-module 
$\bigoplus_{i=0}^\infty A$. Then
\[
d(\delta_i) = \begin{cases}
               \delta_0 & \text{if $i=0$}\\
               \delta_{i-1}-a\cdot \delta_i & \text{if $i\ge 1$}
              \end{cases}
\]

For a finite sequence $\mathbf{a} = (a_1,\dots,a_n)$ of elements of $A$,
one defines:
\[
\opn{Tel}(A;\mathbf{a}) := \opn{Tel}(A;a_1) \otimes_A \opn{Tel}(A;a_2) \otimes_A \opn{Tel}(A;a_3) \otimes_A \dots \otimes_A \opn{Tel}(A;a_n).
\]
This is a bounded complex of countably generated free $A$-modules.
In particular, it is K-projective.

Let $A$ be a commutative ring, let $\a\subseteq A$ be a finitely generated ideal,
and let $\mathbf{a}$ be a finite sequence of elements of $A$ that generates $\a$.
According to \cite[Section 5]{PSY1}, there is a map 
\begin{equation}\label{eqn:toTele}
u_{\mathbf{a}}:\opn{Tel}(A;\mathbf{a}) \to A
\end{equation}
and a morphism of functors
\[
\mrm{R}\Gamma_{\a}(M) \to \opn{Tel}(A;\mathbf{a}) \otimes_A M
\]
If $A$ is noetherian, then there is a commutative diagram
\begin{equation}\label{eqn:diag-of-rgamma}
\xymatrix{
\mrm{R}\Gamma_{\a}(M) \ar[r]^{\sigma_M}\ar[d] & M\\
\opn{Tel}(A;\mathbf{a})\otimes_A M \ar[ru]_{u_{\mathbf{a}}\otimes_A 1_M} &
}
\end{equation}
in $\cat{D}(A)$, such that the vertical map is an isomorphism.

Let $A$ be a commutative ring, 
let $\a\subseteq A$ be a finitely generated ideal,
and let $\mathbf{a}$ be a finite sequence of elements of $A$ that generates $\a$.
Let $B$ be another commutative ring,
let $f:A \to B$ be a ring homomorphism,
and let $\mathbf{b} = f(\mathbf{a})$.
Then the telescope complex satisfies the base change property:
there is an isomorphism of complexes of $B$-modules:
\[
\opn{Tel}(A;\mathbf{a})\otimes_A B \cong \opn{Tel}(B;\mathbf{b}).
\]

In view of this fact, given a commutative non-positive DG-ring $A$,
and a finite sequence of elements $\mathbf{a}$ of $A^0$,
it makes sense to set
\[
\opn{Tel}(A;\mathbf{a}) := \opn{Tel}(A^0;\mathbf{a})\otimes_{A^0} A.
\]
Tensoring (\ref{eqn:toTele}) with $A$ over $A^0$, we obtain a morphism
\begin{equation}\label{eqn:toTeleA}
\opn{Tel}(A;\mathbf{a}) \to A
\end{equation}

Given a commutative non-positive DG-ring $A$, a finitely generated ideal $\a\subseteq \mrm{H}^0(A)$,
and a finite sequence $\mathbf{a}$ of elements of $A^0$ whose image in $\mrm{H}^0(A)$ generate $\a$,
it is shown in \cite[Section 2]{Sh2} that there is an isomorphism
\begin{equation}\label{eqn:RGammaIso}
\mrm{R}\Gamma_{\a}(-) \cong \opn{Tel}(A;\mathbf{a})\otimes_A -
\end{equation}
of functors $\cat{D}(A)\to \cat{D}(A)$.

\subsubsection{Derived completion of commutative DG-rings}

Given a commutative non-positive DG-ring $A$
and a finitely generated ideal $\a\subseteq \mrm{H}^0(A)$,
according to \cite[Section 3]{Sh2}, 
there is a commutative non-positive DG-ring denoted by $\mrm{L}\Lambda(A,\a)$ which is called the derived $\a$-adic completion of $A$.
The construction is functorial in a suitable homotopy category.
There is a natural map $A \to \mrm{L}\Lambda(A,\a)$, 
but it is only defined in that homotopy category.
It induces a forgetful functor
\[
Q:\cat{D}(\mrm{L}\Lambda(A,\a)) \to \cat{D}(A).
\]
It can be described more explicitly as follows:
there is a commutative non-positive DG-ring $P$,
and a diagram of DG-ring homomorphisms
\begin{equation}\label{eqn:completion-diag}
A \xleftarrow{\phi} P \xrightarrow{\psi} \mrm{L}\Lambda(A,\a)
\end{equation}
such that the map $\phi$ is a quasi-isomorphism.
Hence, there is an equivalence of triangulated categories $\cat{D}(A) \cong \cat{D}(P)$,
and $Q$ is the composition
\[
\cat{D}(\mrm{L}\Lambda(A,\a)) \to \cat{D}(P) \cong \cat{D}(A)
\]
where the first functor is the ordinary forgetful functor along the DG-ring homomorphism $\psi$.
By \cite[Proposition 3.58]{Sh2}, one has $Q(\widehat{A}) = \mrm{L}\Lambda_{\a}(A)$.
If $\psi$ is also quasi-isomorphisms, 
one says that $A$ is cohomologically $\a$-adically complete.
If $A$ is noetherian then $\mrm{L}\Lambda(A,\a)$ is also noetherian.
In that case 
\[
\mrm{H}^0(\mrm{L}\Lambda(A,\a)) = \Lambda_{\a}(\mrm{H}^0(A)),
\]
where we have denoted by $\Lambda_{\a}(-)$ the classical $\a$-adic completion functor.
In particular, if $(A,\m)$ is a commutative noetherian local DG-ring 
then $\mrm{L}\Lambda(A,\m)$ is also a commutative noetherian local DG-ring.

According to \cite[Section 5]{Sh2}, the functors $\mrm{R}\Gamma_{\a}, \mrm{L}\Lambda_{\a}$ lift to $\cat{D}(\mrm{L}\Lambda(A,\a))$.
Precisely, there are triangulated functors
\[
\mrm{R}\widehat{\Gamma}_{\a}(-), \mrm{L}\widehat{\Lambda}_{\a}(-):\cat{D}(A) \to \cat{D}(\mrm{L}\Lambda(A,\a))
\]
such that there are isomorphisms
\begin{equation}
Q \circ \mrm{R}\widehat{\Gamma}_{\a}(-) \cong \mrm{R}\Gamma_{\a}(-)
\end{equation}
and
\begin{equation}\label{eqn:whlambda}
Q \circ \mrm{L}\widehat{\Lambda}_{\a}(-) \cong \mrm{L}\Lambda_{\a}(-)
\end{equation}
of functors $\cat{D}(A) \to \cat{D}(A)$.
See \cite{SW} and \cite[Section 3]{ShHC} for a study of these functors over commutative rings.

\subsubsection{The tensor-evaluation isomorphism}

We now wish to discuss a DG version of the tensor evaluation morphism. 
Before that, we must discuss the notion of flat dimension of a DG-module.
Similarly to the discussion in Section \ref{sec:injdims}, given a commutative non-positive DG-ring $A$ and $M \in \cat{D}(A)$,
let us define the bounded flat dimension of $M$ over $A$ to be the number
\[
\inf \{ n \in \mathbb{Z} \mid \Tor^i_A(N,M) = 0 \text{ for any } N\in \cat{D}^{\mrm{b}}(A) \text{ and any } i>n-\inf N\}
\] 
where $\Tor^i_A(N,M) := \mrm{H}^{-i}(N\otimes^{\mrm{L}}_A M)$.
Let us denote it by $\bflatdim_A(M)$.
Note that in \cite{Sh3}, we simply called it the flat dimension of $M$.
The unbounded flat dimension of $M$ over $A$ is the number
\[
\inf \{ n \in \mathbb{Z} \mid \Tor^i_A(N,M) = 0 \text{ for any } N\in \cat{D}^{+}(A) \text{ and any } i>n-\inf N\}
\] 
which we denote by $\uflatdim_A(M)$. 
Similarly to Theorem \ref{thm:buINJ}, we expect that the equality $\bflatdim_A(M) = \uflatdim_A(M)$ will always hold.
Since we will not need this equality in this paper, we will not study this question here.

Here is a version of the tensor-evaluation morphism:
\begin{prop}\label{prop:eval}
Let $A$ be a commutative noetherian DG-ring.
Then for any $M \in \cat{D}^{-}_{\mrm{f}}(A)$, $N \in \cat{D}^{+}(A)$ and $K \in \cat{D}(A)$ such that $\uflatdim_A(K) < \infty$,
there is an isomorphism
\begin{equation}\label{eqn:tev}
\mrm{R}\opn{Hom}_A(M,N)\otimes^{\mrm{L}}_A K \to \mrm{R}\opn{Hom}_A(M,N\otimes^{\mrm{L}}_A K) 
\end{equation}
in $\cat{D}(A)$ which is functorial in $M,N,K$.
\end{prop}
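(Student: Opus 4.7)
The plan is to build the evaluation morphism, reduce to $M=A$ by triangulated induction, and then pass from the bounded case to $M \in \cat{D}^{-}_{\mrm{f}}(A)$ by a brutal truncation whose convergence is controlled by $\uflatdim_A(K)$.

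First I would construct the natural morphism
\[
\eta_{M,N,K}:\mrm{R}\opn{Hom}_A(M,N)\otimes^{\mrm{L}}_A K \to \mrm{R}\opn{Hom}_A(M,N\otimes^{\mrm{L}}_A K)
\]
in the standard way: choose a K-projective resolution $P\to M$, a K-flat resolution $Q\to K$, and a semi-injective resolution $N\to I$, and let $\eta$ be induced by the obvious chain-level map $\opn{Hom}_A(P,I)\otimes_A Q\to \opn{Hom}_A(P,I\otimes_A Q)$, $f\otimes q\mapsto (p\mapsto f(p)\otimes q)$. Naturality in $M$, $N$, $K$ is immediate. Fix $N$ and $K$ and let $\mathcal{T}\subseteq\cat{D}(A)$ be the full subcategory of those $M$ for which $\eta_{M,N,K}$ is an isomorphism. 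Since both sides of $\eta$ are contravariant triangulated functors of $M$, $\mathcal{T}$ is a thick triangulated subcategory. It contains $A$ trivially (both sides reduce to $N\otimes^{\mrm{L}}_A K$), hence it contains every bounded semi-free DG-module with finitely many generators, by induction on amplitude.

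Now take $M\in \cat{D}^{-}_{\mrm{f}}(A)$. Since $A$ is commutative noetherian, $M$ admits a semi-free resolution $P\to M$ with each $P^i$ a finitely generated free $A^0$-module and $P^i=0$ for $i>\sup M$. For each integer $n\ge 0$ set $P_n := \bigoplus_{i\ge -n} P^i$. Non-positivity of $A$ forces the differential of $P$ to restrict to $P_n$, so $P_n$ is a bounded semi-free sub DG-module of $P$ with finitely many generators, and therefore $\eta_{P_n,N,K}$ is an isomorphism by the previous paragraph. The short exact sequence $0\to P_n\to P\to P/P_n\to 0$, in which the quotient satisfies $\sup(P/P_n)\le -n-1$, induces a distinguished triangle in $\cat{D}(A)$, and applying both functors appearing in $\eta$ produces a morphism of distinguished triangles comparing $\eta_M$ with $\eta_{P_n}$, the remainder on either end involving $P/P_n$.

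The final step is the cohomological-dimension estimate. For any $Y\in \cat{D}^{+}(A)$, a K-projective resolution of $P/P_n$ concentrated in degrees $\le -n-1$ gives $\inf\mrm{R}\opn{Hom}_A(P/P_n,Y)\ge \inf Y + n + 1$. Applying this with $Y=N$ and then tensoring with $K$, which lowers $\inf$ by at most $\uflatdim_A(K)$ by the very definition of $\uflatdim$, and separately with $Y=N\otimes^{\mrm{L}}_A K$ (which lies in $\cat{D}^{+}(A)$ for the same reason), gives on both sides of the diagram the uniform bound
\[
\inf \ge \inf N + n + 1 - \uflatdim_A(K).
\]
Hence for any fixed $m\in\mathbb{Z}$ and $n$ large, $\mrm{H}^m$ of both remainder terms vanishes, so $\mrm{H}^m(\eta_M)\cong \mrm{H}^m(\eta_{P_n})$, which is an isomorphism. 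As $m$ is arbitrary, $\eta_M$ is a quasi-isomorphism, hence an isomorphism in $\cat{D}(A)$. The main obstacle is precisely this last step: assembling the two cohomological bounds (one for $\mrm{R}\opn{Hom}$ from a bounded-above source into a bounded-below target, and one for tensoring with $K$ coming from the finite unbounded flat dimension hypothesis) into a single uniform estimate that allows every fixed cohomological degree to be resolved by the comparison with $\eta_{P_n}$ once $n$ is chosen large enough.
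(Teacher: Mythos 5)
Your overall strategy (construct the evaluation morphism at the chain level, observe that both sides are triangulated functors of $M$ so the class of $M$ for which it is an isomorphism is thick and contains $A$, then pass from finitely built $M$ to general $M\in\cat{D}^{-}_{\mrm{f}}(A)$ by a truncation argument whose convergence is governed by $\uflatdim_A(K)$) is reasonable and is essentially the way-out-functor argument that the paper outsources to \cite[Proposition 2.2(1)]{Sh3}. However, the implementation contains a genuine error.

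The problem is your definition $P_n := \bigoplus_{i\ge -n} P^i$. This is a subcomplex of $P$ regarded as a complex of abelian groups (the differential has degree $+1$, so it does restrict), but it is \emph{not} an $A$-sub-DG-module: the $A$-action has components $A^j\otimes P^i\to P^{i+j}$ with $j\le 0$, and for $j$ sufficiently negative one lands in degrees $< -n$. (Concretely, over $A=\Bbbk[x]$ with $\deg x=-1$ and $d=0$, and $P=A$, the piece $\bigoplus_{i\ge -n}A^i$ is not an $A$-submodule.) Consequently the short exact sequence $0\to P_n\to P\to P/P_n\to 0$ does not exist in $\opn{DGMod}(A)$, $\mrm{R}\opn{Hom}_A(P/P_n,-)$ is not defined, and the distinguished triangle you apply $\eta$ to is not available. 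Note that this is precisely the opposite truncation to the one the paper itself uses in the proof of Theorem \ref{thm:buINJ}: the \emph{smart} truncation $\opn{smt}^{\le n}$, which keeps low degrees, is closed under both the differential and the $A$-action when $A$ is non-positive; your hard truncation keeping high degrees is not.

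The fix is to truncate along the semi-free filtration rather than by cohomological degree: take $P_n$ to be the $A$-submodule of $P$ generated by the free basis elements of cohomological degree $\ge -n$. This is a genuine sub-DG-module with a finite semi-free filtration, hence finitely built from $A$ by shifts and cones, hence in your subcategory $\mathcal{T}$ (although when $A$ has unbounded cohomology this $P_n$ is not a \emph{bounded} complex, so your phrase ``bounded semi-free with finitely many generators'' should be replaced by ``finitely built from $A$''). The quotient $P/P_n$ is then semi-free with all generators in degrees $< -n$, so non-positivity of $A$ gives $\sup(P/P_n)\le -n-1$, and your cohomological estimate and limiting argument go through verbatim from that point on. With that correction the argument is sound; it is more explicit than the paper's treatment, which is a one-line reference to \cite{Sh3} with a remark that the bounded hypothesis on $N$ is removable once $\uflatdim$ replaces $\bflatdim$.
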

\begin{proof}
This was shown in \cite[Proposition 2.2(1)]{Sh3}
(see also \cite[Proposition 6.7]{Sh1}).
There, we assumed the stronger condition $N \in \cat{D}^{\mrm{b}}(A)$ and the (possible) weaker condition that $\bflatdim_A(K) < \infty$.
Under the stronger assumption that we make here that $\uflatdim_A(K) < \infty$, the same proof works for $N \in \cat{D}^{+}(A)$.
\end{proof}

\subsection{Injectives over commutative noetherian rings}

Let $A$ be a commutative noetherian DG-ring.
In this case, $\mrm{H}^0(A)$ is a commutative noetherian ring.
Hence, by \cite[Proposition 3.1]{Ma}, 
there is a bijection between indecomposable injective $\mrm{H}^0(A)$-modules and elements of $\opn{Spec}(\mrm{H}^0(A))$.
It follows by Theorem \ref{thm:eqv} that there is a similar bijection between indecomposable elements of $\INJ(A)$ and elements of $\opn{Spec}(\mrm{H}^0(A))$.
Given $\p \in \opn{Spec}(\mrm{H}^0(A))$, we will denote by $E(A,\p)$ the element of $\INJ(A)$ corresponding to $\p$.
Thus, $E(A,\p)$ is defined to be the unique (up to isomorphism) element of $\INJ(A)$ such that 
\[
\mrm{H}^0\left(E(A,\p)\right) = E(\mrm{H}^0(A),\p)
\]
is the injective hull (over $\mrm{H}^0(A)$) of the residue field 
\[
k(\p) = (\mrm{H}^0(A)_{\p})/(\p\cdot (\mrm{H}^0(A)_{\p})).
\]
By Corollary \ref{cor:sum-of-ind}, every element of $\INJ(A)$ is isomorphic to a direct sum of DG-modules of the form $E(A,\p)$ where $\p \in \opn{Spec}(\mrm{H}^0(A))$.
The aim of this section is to make a study of the DG-module $E(A,\p)$.

\begin{prop}\label{prop:localz}
Let $A$ be a commutative noetherian DG-ring,
and let $\p \in \opn{Spec}(\mrm{H}^0(A))$.
Then the localization map
\[
E(A,\p) \to \left(E(A,\p)\right)_{\p}
\]
is an isomorphism.
Hence, denoting by $Q_{\p}:\cat{D}(A_{\p}) \to \cat{D}(A)$ the forgetful functor,
one has $Q_{\p}(E(A_{\p},\p)) \cong E(A,\p)$.
\end{prop}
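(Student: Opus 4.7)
The plan is to split the statement into its two claims and handle them in turn: first I would prove the localization map is an isomorphism in $\cat{D}(A)$ using the detection criterion of Proposition \ref{prop:RHomReflect} combined with tensor-evaluation, and then I would identify the resulting $A_{\p}$-DG-module lift of $E(A,\p)$ with $E(A_{\p},\p)$ by appealing to Theorem \ref{thm:eqv}.

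For the first claim, both $E(A,\p)$ and $E(A,\p)_{\p}$ lie in $\cat{D}^{+}(A)$ (the latter because $(-)_{\p}$ is exact on $\cat{D}(A)$, being flat base change along $A^0 \to A^0_S$). By Proposition \ref{prop:RHomReflect} it suffices to check that the localization map becomes an isomorphism after applying $\mrm{R}\opn{Hom}_A(\mrm{H}^0(A),-)$. By Proposition \ref{prop:RHomisHZ} the source is $\mrm{H}^0(E(A,\p)) = E(\mrm{H}^0(A),\p)$. For the target, I would observe that $A_{\p} = A\otimes_{A^0} A^0_S$ satisfies $\uflatdim_A(A_{\p}) = 0$ and that $(-)_{\p}$ coincides with $-\otimes^{\mrm{L}}_A A_{\p}$; then tensor-evaluation (Proposition \ref{prop:eval}) applied with $M=\mrm{H}^0(A)$, $N=E(A,\p)$, $K=A_{\p}$ yields
\[
\mrm{R}\opn{Hom}_A(\mrm{H}^0(A), E(A,\p)_{\p}) \cong E(\mrm{H}^0(A),\p) \otimes^{\mrm{L}}_A A_{\p}.
\]
Since $E(\mrm{H}^0(A),\p)$ is already an $\mrm{H}^0(A)_{\p}$-module (elements of $\mrm{H}^0(A)\setminus\p$ act invertibly on it by classical Matlis theory), the right side reduces, via the factorization of the $A$-action through $\mrm{H}^0(A)$ and the identity $\mrm{H}^0(A)\otimes_{A^0} A^0_S = \mrm{H}^0(A)_{\p}$, to $E(\mrm{H}^0(A),\p)$ itself, and naturality forces the localization map to correspond to the identity.

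For the second claim, the first claim exhibits $E(A,\p)$ as the restriction of an $A_{\p}$-DG-module $\wtil{E} := E(A,\p)_{\p} \in \cat{D}(A_{\p})$, with $Q_{\p}(\wtil{E}) \cong E(A,\p)$. To identify $\wtil{E}$ with $E(A_{\p},\p)$, by Theorem \ref{thm:eqv} applied to $A_{\p}$ it suffices to verify (a) $\wtil{E}\in\INJ(A_{\p})$ and (b) $\mrm{H}^0(\wtil{E}) \cong E(\mrm{H}^0(A)_{\p},\p\mrm{H}^0(A)_{\p})$. Part (b) is immediate: $\mrm{H}^0(\wtil{E}) = \mrm{H}^0(E(A,\p)) = E(\mrm{H}^0(A),\p)$, and classically $E(\mrm{H}^0(A),\p)$ is the injective hull of the residue field over $\mrm{H}^0(A)_{\p}$. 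For (a), I would use that any DG $A$-linear map between DG $A_{\p}$-modules is automatically $A_{\p}$-linear (since $S\subseteq A^0$ acts invertibly), whence $\mrm{R}\opn{Hom}_{A_{\p}}(M,N) \cong \mrm{R}\opn{Hom}_A(M,N)$ for $M,N\in\cat{D}(A_{\p})$. Combining this with Proposition \ref{prop:INJCHAR} and Theorem \ref{thm:coho-of-rhom}, the injectivity reduces to checking that
\[
\mrm{R}\opn{Hom}_A(\mrm{H}^0(A)_{\p}, E(A,\p)) \cong \opn{Hom}_{\mrm{H}^0(A)}(\mrm{H}^0(A)_{\p}, E(\mrm{H}^0(A),\p)) = E(\mrm{H}^0(A),\p)
\]
is concentrated in degree $0$ and is an injective $\mrm{H}^0(A)_{\p}$-module, both of which are immediate.

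The main obstacle I expect is the bookkeeping around tensor-evaluation and around the identification of Hom-complexes across the localization $A\to A_{\p}$. Tensor-evaluation requires finite \emph{unbounded} (not merely bounded) flat dimension, which is why Proposition \ref{prop:eval} must be invoked; and the compatibility $\mrm{R}\opn{Hom}_{A_{\p}} \cong \mrm{R}\opn{Hom}_A$ on $A_{\p}$-modules, though classical for rings, needs to be transported carefully into the DG framework (for instance, via a K-projective resolution of a DG $A_{\p}$-module which remains K-projective over $A$). Modulo these technicalities, the argument is purely formal, assembling the detection criterion, tensor-evaluation, and the categorical equivalence of Theorem \ref{thm:eqv}.
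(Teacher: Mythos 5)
Your proposal is correct and follows essentially the same route as the paper: reduce via Proposition \ref{prop:RHomReflect} to checking the map after applying $\mrm{R}\opn{Hom}_A(\mrm{H}^0(A),-)$, use tensor-evaluation (Proposition \ref{prop:eval}) together with the fact that $E(\mrm{H}^0(A),\p)$ is already $\p$-local, and then identify $E(A,\p)_{\p}$ with $E(A_{\p},\p)$ via adjunction and Proposition \ref{prop:INJCHAR}. The paper organizes the tensor-evaluation step slightly more carefully as a commutative square (so the isomorphism is manifestly induced by the localization morphism rather than by appeal to ``naturality''), but the underlying argument is identical.
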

\begin{proof}
The localization morphism $A \to A_{\p}$ and naturality of (\ref{eqn:tev})
imply that there is a commutative diagram
\[
\xymatrix{
\mrm{R}\opn{Hom}_A\left(\mrm{H}^0(A),E(A,\p)\right) \otimes_A A \ar[r]\ar[d] & \mrm{R}\opn{Hom}_A\left(\mrm{H}^0(A),E(A,\p)\otimes_A A\right) \ar[d]\\
\mrm{R}\opn{Hom}_A\left(\mrm{H}^0(A),E(A,\p)\right) \otimes_A A_{\p}  \ar[r] & \mrm{R}\opn{Hom}_A\left(\mrm{H}^0(A),E(A,\p)\otimes_A A_{\p}\right)
}
\]
The top horizontal map is clearly an isomorphism.
Since $\uflatdim_A(A_{\p}) < \infty$,
by Proposition \ref{prop:eval} the bottom horizontal map is also an isomorphism.
By definition of $E(A,\p)$, we have that 
\[
\mrm{R}\opn{Hom}_A\left(\mrm{H}^0(A),E(A,\p)\right) \cong E\left(\mrm{H}^0(A),\p\right),
\]
and as is known, the latter is an $\mrm{H}^0(A)_{\p}$-module.
Hence, the leftmost vertical map is an isomorphism.
We deduce that the map
\[
\mrm{R}\opn{Hom}_A\left(\mrm{H}^0(A),E(A,\p)\otimes_A A\right) \to \mrm{R}\opn{Hom}_A\left(\mrm{H}^0(A),E(A,\p)\otimes_A A_{\p}\right)
\]
is an isomorphism,
so by Proposition \ref{prop:RHomReflect}, the localization map
\[
E(A,\p) \to \left(E(A,\p)\right)_{\p}
\]
is an isomorphism.
Letting $E = \left(E(A,\p)\right)_{\p} \in \cat{D}^{+}(A_{\p})$, 
it is thus enough to show that $E \in \INJ(A_{\p})$,
and this follows from the adjunction
\[
\mrm{R}\opn{Hom}_{A_{\p}}(\mrm{H}^0(A_{\p}),E) \cong \mrm{R}\opn{Hom}_A(\mrm{H}^0(A),E) \cong \mrm{R}\opn{Hom}_A(\mrm{H}^0(A),E(A,\p))
\]
and Proposition \ref{prop:INJCHAR}.
\end{proof}

\begin{prop}\label{prop:lcofinj}
Let $A$ be a commutative noetherian DG-ring,
let $\p \in \opn{Spec}(\mrm{H}^0(A))$,
and let $\a \subseteq \mrm{H}^0(A)$ be an ideal.
Then one has
\[
\mrm{R}\Gamma_{\a}\left(E(A,\p)\right) = \begin{cases}
                                          E(A,\p) & \text{ if } \a \subseteq \p\\
                                          0       & \text{ otherwise.}
                                         \end{cases}
\]
\end{prop}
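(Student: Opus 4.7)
The plan is to reduce the DG-statement to the classical Matlis-theoretic computation of $\mrm{R}\Gamma_{\a}$ applied to indecomposable injective modules over the noetherian ring $\mrm{H}^0(A)$, and then import the conclusion back to $\cat{D}(A)$ using the fact that $\mrm{H}^0(A)$ cogenerates (really: generates) $\cat{D}^{+}(A)$ via $\mrm{R}\opn{Hom}_A(\mrm{H}^0(A),-)$.

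First, choose a finite sequence $\mathbf{a}$ in $A^0$ whose image in $\mrm{H}^0(A)$ generates $\a$. By the telescope formula (\ref{eqn:RGammaIso}), $\mrm{R}\Gamma_{\a}^A(E(A,\p)) \cong \opn{Tel}(A;\mathbf{a})\otimes_A E(A,\p)$, and the natural map $\sigma_{E(A,\p)}$ is induced by $u_{\mathbf{a}}\otimes 1$. Since $\opn{Tel}(A;\mathbf{a})$ is a bounded complex of free $A$-modules, $\mrm{R}\Gamma_{\a}^A(E(A,\p)) \in \cat{D}^{+}(A)$. Next I apply the tensor-evaluation isomorphism of Proposition \ref{prop:eval} with $M = \mrm{H}^0(A) \in \cat{D}^{-}_{\mrm{f}}(A)$, $N = E(A,\p) \in \cat{D}^{+}(A)$ and $K = \opn{Tel}(A;\mathbf{a})$ (which has finite flat dimension), obtaining a natural isomorphism
\[
\mrm{R}\opn{Hom}_A\bigl(\mrm{H}^0(A),\mrm{R}\Gamma_{\a}^A(E(A,\p))\bigr) \cong \mrm{R}\opn{Hom}_A\bigl(\mrm{H}^0(A),E(A,\p)\bigr) \otimes_A^{\mrm{L}} \opn{Tel}(A;\mathbf{a}).
\]
The right side is, by base change for the telescope and Proposition \ref{prop:RHomisHZ}, naturally isomorphic to $E(\mrm{H}^0(A),\p) \otimes_{\mrm{H}^0(A)}^{\mrm{L}} \opn{Tel}(\mrm{H}^0(A);\bar{\mathbf{a}}) \cong \mrm{R}\Gamma_{\a}^{\mrm{H}^0(A)}(E(\mrm{H}^0(A),\p))$, and under this identification the map $\mrm{R}\opn{Hom}_A(\mrm{H}^0(A),\sigma_{E(A,\p)})$ corresponds to the classical map $\sigma_{E(\mrm{H}^0(A),\p)}$.

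Now I invoke classical Matlis theory over the noetherian ring $\mrm{H}^0(A)$. If $\a \subseteq \p$, then $E(\mrm{H}^0(A),\p)$ is $\a$-torsion and injective, hence $\mrm{R}\Gamma_{\a}^{\mrm{H}^0(A)}(E(\mrm{H}^0(A),\p)) \cong E(\mrm{H}^0(A),\p)$ with $\sigma$ an isomorphism. Conversely, if $\a \not\subseteq \p$, pick $s \in \a \setminus \p$; then $s$ acts invertibly on $E(\mrm{H}^0(A),\p)$ (as it is an $\mrm{H}^0(A)_{\p}$-module), and since $s \in \a$ this forces $\mrm{R}\Gamma_{\a}^{\mrm{H}^0(A)}(E(\mrm{H}^0(A),\p)) = 0$.

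Conclusion in the two cases. If $\a \subseteq \p$, both $\mrm{R}\Gamma_{\a}^A(E(A,\p))$ and $E(A,\p)$ lie in $\cat{D}^{+}(A)$ and $\mrm{R}\opn{Hom}_A(\mrm{H}^0(A),\sigma_{E(A,\p)})$ is an isomorphism, so $\sigma_{E(A,\p)}$ itself is an isomorphism by Proposition \ref{prop:RHomReflect}. If $\a \not\subseteq \p$, then $\mrm{R}\opn{Hom}_A(\mrm{H}^0(A),\mrm{R}\Gamma_{\a}^A(E(A,\p))) = 0$; since $\mrm{R}\Gamma_{\a}^A(E(A,\p)) \in \cat{D}^{+}(A)$, Corollary \ref{cor:HZGEN} (equivalently Proposition \ref{prop:RHomInf}) forces $\mrm{R}\Gamma_{\a}^A(E(A,\p)) \cong 0$.

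The main obstacle, and the reason this is not formal, is ensuring that the identification of $\mrm{R}\opn{Hom}_A(\mrm{H}^0(A),\mrm{R}\Gamma_{\a}^A(E(A,\p)))$ with $\mrm{R}\Gamma_{\a}^{\mrm{H}^0(A)}(E(\mrm{H}^0(A),\p))$ is natural in the map $\sigma$, so that Proposition \ref{prop:RHomReflect} can be applied to the correct morphism rather than to an abstract isomorphism; this rests on the compatibility between the telescope description of $\mrm{R}\Gamma_{\a}$, the tensor-evaluation morphism, and the identification $\mrm{R}\opn{Hom}_A(\mrm{H}^0(A),E(A,\p)) \cong \mrm{H}^0(E(A,\p))$ coming from Proposition \ref{prop:RHomisHZ}.
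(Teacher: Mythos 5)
Your proposal is correct and follows the same route as the paper's own proof: reduce via the telescope description and the tensor-evaluation isomorphism (Proposition \ref{prop:eval}) to the computation of $\mrm{R}\Gamma_{\a}^{\mrm{H}^0(A)}$ on the indecomposable injective $E(\mrm{H}^0(A),\p)$, then conclude using Proposition \ref{prop:RHomReflect} when $\a \subseteq \p$ and Proposition \ref{prop:RHomInf} when $\a \not\subseteq \p$. The naturality issue you flag at the end is precisely what the paper handles by phrasing the argument as a commutative diagram built from the map $\opn{Tel}(A;\mathbf{a}) \to A$ and the naturality of \eqref{eqn:tev}.
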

\begin{proof}
To shorten notation, let us set $E := E(A,\p)$.
Let $\mathbf{a}$ be a finite sequence of elements of $A^0$ such that their images in $\mrm{H}^0(A)$ generate the ideal $\a$.
Denote this image by $\bar{\mathbf{a}}$.
The map $\opn{Tel}(A;\mathbf{a}) \to A$ of (\ref{eqn:toTeleA}) and naturality of (\ref{eqn:tev}) imply that there is a commutative diagram
\begin{equation}\label{eqn:telescope-diag}
 \xymatrix{
\mrm{R}\opn{Hom}_A(\mrm{H}^0(A),E) \otimes_A \opn{Tel}(A;\mathbf{a}) \ar[r]\ar[d] & \mrm{R}\opn{Hom}_A(\mrm{H}^0(A),E)\ar[d]\\
\mrm{R}\opn{Hom}_A(\mrm{H}^0(A),E \otimes_A \opn{Tel}(A;\mathbf{a})) \ar[r]       & \mrm{R}\opn{Hom}_A(\mrm{H}^0(A),E)
}
\end{equation}
in $\cat{D}(A)$. 
Since $\uflatdim_A(\opn{Tel}(A;\mathbf{a}) < \infty$, 
by Proposition \ref{prop:eval} the leftmost vertical map is an isomorphism.
The rightmost vertical map is the identity so it is also an isomorphism.
By adjunction and the base change property of the telescope DG-module,
the top horizontal map of (\ref{eqn:telescope-diag}) is the same as the map
\[
\mrm{R}\opn{Hom}_A(\mrm{H}^0(A),E) \otimes_{\mrm{H}^0(A)} \opn{Tel}(\mrm{H}^0(A);\bar{\mathbf{a}}) \to \mrm{R}\opn{Hom}_A(\mrm{H}^0(A),E)
\]
induced from the map $\opn{Tel}(\mrm{H}^0(A);\bar{\mathbf{a}}) \to \mrm{H}^0(A)$ of (\ref{eqn:toTele}).
Hence, by (\ref{eqn:diag-of-rgamma}), if the map 
\[
\mrm{R}\Gamma_{\a}\left(\mrm{R}\opn{Hom}_A(\mrm{H}^0(A),E)\right) \to \mrm{R}\opn{Hom}_A(\mrm{H}^0(A),E)
\]
is an isomorphism then all maps in (\ref{eqn:telescope-diag}) are isomorphisms.
In particular, if $\a \subseteq \p$, this is the case since by definition $\mrm{R}\opn{Hom}_A(\mrm{H}^0(A),E) = E(\mrm{H}^0(A),\p)$.
We deduce by Proposition \ref{prop:RHomReflect} that if $\a \subseteq \p$ then the map $E \otimes_A \opn{Tel}(A;\mathbf{a}) \to E$ is an isomorphism,
and hence by (\ref{eqn:RGammaIso}), the map $\mrm{R}\Gamma_{\a}(E) \to E$ is an isomorphism.

On the other hand, if $\a \nsubseteq \p$, then since $\mrm{R}\Gamma_{\a}(E(\mrm{H}^0(A),\p)) = 0$, 
we see that 
\[
\mrm{R}\opn{Hom}_A(\mrm{H}^0(A),E \otimes_A \opn{Tel}(A;\mathbf{a})) \cong 0.
\]
Hence, by Proposition \ref{prop:RHomInf} we deduce that 
\[
0 \cong E \otimes_A \opn{Tel}(A;\mathbf{a}) \cong \mrm{R}\Gamma_{\a}(E),
\]
proving the result.
\end{proof}

\begin{prop}\label{prop:rgammacol}
Let $A$ be a commutative DG-ring,
and let $\a\subseteq \mrm{H}^0(A)$ be a finitely generated ideal.
Then the local cohomology functor 
\[
\mrm{R}\Gamma_{\a}:\cat{D}(A) \to \cat{D}(A)
\]
commutes with arbitrary direct sums.
\end{prop}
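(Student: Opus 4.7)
The plan is to derive everything from the telescope presentation (\ref{eqn:RGammaIso}) of $\mrm{R}\Gamma_{\a}$. Choose a finite sequence $\mathbf{a} = (a_1,\dots,a_n)$ of elements of $A^0$ whose images in $\mrm{H}^0(A)$ generate $\a$. By (\ref{eqn:RGammaIso}), there is a functorial isomorphism
\[
\mrm{R}\Gamma_{\a}(M) \cong \opn{Tel}(A;\mathbf{a})\otimes_A M
\]
for every $M \in \cat{D}(A)$, so it is enough to show that the functor $\opn{Tel}(A;\mathbf{a})\otimes_A (-) : \cat{D}(A)\to \cat{D}(A)$ commutes with arbitrary direct sums.

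Recall from the preceding subsection that $\opn{Tel}(A;\mathbf{a}) = \opn{Tel}(A^0;\mathbf{a})\otimes_{A^0}A$ is obtained from a bounded complex of countably generated free $A^0$-modules, and is in particular K-flat as a DG-module over $A$. Hence the underived tensor product $\opn{Tel}(A;\mathbf{a})\otimes_A (-)$ on the level of $\opn{DGMod}(A)$ already computes the derived functor, and the claim reduces to showing that this underived tensor product commutes with direct sums in $\opn{DGMod}(A)$, which follows from the standard fact that tensor product commutes with arbitrary direct sums in one variable.

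The only point that requires mild care is to observe that arbitrary direct sums in $\cat{D}(A)$ are computed by the naive direct sum in $\opn{DGMod}(A)$. This holds because direct sums are exact in $\opn{DGMod}(A)$ and therefore preserve quasi-isomorphisms, so the underlying DG-module direct sum satisfies the universal property of the coproduct in $\cat{D}(A)$. Putting these ingredients together, for any family $\{M_\alpha\}_{\alpha\in J}$ in $\cat{D}(A)$ we obtain
\[
\mrm{R}\Gamma_{\a}\!\left(\bigoplus_{\alpha\in J} M_\alpha\right) \cong \opn{Tel}(A;\mathbf{a})\otimes_A \bigoplus_{\alpha\in J} M_\alpha \cong \bigoplus_{\alpha\in J}\bigl(\opn{Tel}(A;\mathbf{a})\otimes_A M_\alpha\bigr) \cong \bigoplus_{\alpha\in J} \mrm{R}\Gamma_{\a}(M_\alpha),
\]
as required.

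There is essentially no hard obstacle here once (\ref{eqn:RGammaIso}) is in hand; the mildly delicate point is just the verification that direct sums in $\cat{D}(A)$ agree with direct sums in $\opn{DGMod}(A)$, which is where a careless argument could fail since $\mrm{R}\Gamma_{\a}$ is defined via an adjunction that does not obviously commute with coproducts.
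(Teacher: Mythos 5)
Your proof is correct, but it takes a genuinely different route from the paper. The paper's proof is a one-liner: $\mrm{R}\Gamma_{\a}$ was constructed in Section \ref{sec:localcoh} as a left adjoint (it admits $\mrm{L}\Lambda_{\a}$ as a right adjoint), and left adjoints automatically preserve all colimits, in particular arbitrary coproducts. That argument is purely formal and requires no choice of generators and no telescope. Your argument instead appeals to the representation (\ref{eqn:RGammaIso}) of $\mrm{R}\Gamma_{\a}$ as $\opn{Tel}(A;\mathbf{a})\otimes_A(-)$, uses K-flatness of the telescope DG-module (which follows since $\opn{Tel}(A;\mathbf{a})\otimes_A M \cong \opn{Tel}(A^0;\mathbf{a})\otimes_{A^0} M$ and $\opn{Tel}(A^0;\mathbf{a})$ is a bounded complex of free $A^0$-modules, hence K-flat over $A^0$), and then uses that tensoring commutes with naive direct sums and that these compute coproducts in $\cat{D}(A)$. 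Each ingredient is available from the preliminaries, and the argument is sound. The trade-off is: the paper's adjunction argument is shorter, coordinate-free, and does not even use that $\a$ is finitely generated beyond what is needed for $\mrm{L}\Lambda_{\a}$ to exist; your argument is more explicit and constructive, making visible exactly which functor commutes with sums, at the cost of choosing a generating sequence and invoking the nontrivial comparison (\ref{eqn:RGammaIso}). One small remark: your justification that coproducts in $\cat{D}(A)$ agree with DG-module direct sums (``direct sums are exact and preserve quasi-isomorphisms'') only explains why the direct sum descends to a functor on $\cat{D}(A)$; verifying the universal property is usually done by resolving the target K-injectively and using that cohomology commutes with products of complexes of abelian groups. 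This is a standard fact, so it does not affect correctness, but the phrasing slightly undersells what has to be checked.
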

\begin{proof}
This is because $\mrm{R}\Gamma_{\a}$ is left adjoint to $\mrm{L}\Lambda_{\a}$,
so it preserves all colimits, and in particular, direct sums.
\end{proof}

\begin{cor}
Let $A$ be a commutative noetherian DG-ring,
and let $\a\subseteq \mrm{H}^0(A)$ be an ideal.
Given $I \in \INJ(A)$,
one has $\mrm{R}\Gamma_{\a}(I) \in \INJ(A)$.
\end{cor}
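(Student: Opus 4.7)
The plan is to reduce to the indecomposable case using the structure theorem for $\INJ(A)$, apply the explicit computation of $\mrm{R}\Gamma_{\a}$ on indecomposables from Proposition \ref{prop:lcofinj}, and then use the derived Bass-Papp theorem to reassemble.

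First, since $A$ is a commutative noetherian DG-ring, Corollary \ref{cor:sum-of-ind} gives a decomposition $I \cong \bigoplus_{\alpha \in J} I_{\alpha}$ in $\cat{D}(A)$ with each $I_{\alpha}$ an indecomposable object of $\INJ(A)$. By the bijection between indecomposables of $\INJ(A)$ and $\opn{Spec}(\mrm{H}^0(A))$, we may write $I_{\alpha} \cong E(A,\p_{\alpha})$ for some $\p_{\alpha} \in \opn{Spec}(\mrm{H}^0(A))$.

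Next, Proposition \ref{prop:rgammacol} ensures that $\mrm{R}\Gamma_{\a}$ commutes with arbitrary direct sums, so
\[
\mrm{R}\Gamma_{\a}(I) \cong \bigoplus_{\alpha \in J} \mrm{R}\Gamma_{\a}(E(A,\p_{\alpha})).
\]
Proposition \ref{prop:lcofinj} computes each summand explicitly: it equals $E(A,\p_{\alpha})$ when $\a \subseteq \p_{\alpha}$ and vanishes otherwise. Letting $J' := \{\alpha \in J \mid \a \subseteq \p_{\alpha}\}$, we therefore obtain
\[
\mrm{R}\Gamma_{\a}(I) \cong \bigoplus_{\alpha \in J'} E(A,\p_{\alpha}).
\]

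Finally, each $E(A,\p_{\alpha}) \in \INJ(A)$ by construction, and since $A$ is noetherian in the sense of Definition \ref{def:noeth}, Theorem \ref{thm:bass-papp} (the derived Bass-Papp theorem) tells us that $\INJ(A)$ is closed under arbitrary direct sums. Hence the direct sum on the right lies in $\INJ(A)$, giving $\mrm{R}\Gamma_{\a}(I) \in \INJ(A)$. There is no substantial obstacle here; the only subtlety is that the conclusion genuinely requires the full force of Theorem \ref{thm:bass-papp}, since otherwise one only knows the summands individually belong to $\INJ(A)$ rather than their direct sum.
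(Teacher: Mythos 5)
Your proof is correct and follows essentially the same route as the paper: decompose $I$ into indecomposables via Corollary \ref{cor:sum-of-ind}, commute $\mrm{R}\Gamma_{\a}$ with the direct sum via Proposition \ref{prop:rgammacol}, compute each summand via Proposition \ref{prop:lcofinj}, and reassemble via Theorem \ref{thm:bass-papp}. The only cosmetic difference is that you make the surviving index set $J'$ explicit, which the paper leaves implicit.
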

\begin{proof}
Given $I \in \INJ(A)$,
by Corollary \ref{cor:sum-of-ind} there is an isomorphism
\[
I \cong \bigoplus_{\alpha \in J} I_{\alpha},
\]
such that each $I_{\alpha}$ is an indecomposable element of $\INJ(A)$.
Hence, by Proposition \ref{prop:rgammacol}, we have
\[
\mrm{R}\Gamma_{\a}(I) \cong \mrm{R}\Gamma_{\a}\left(\bigoplus_{\alpha \in J} I_{\alpha}\right) = \bigoplus_{\alpha \in J} \left( \mrm{R}\Gamma_{\a}( I_{\alpha})\right)
\]
By Theorem \ref{thm:bass-papp}, it is thus enough to show that 
\[
\mrm{R}\Gamma_{\a}( I_{\alpha}) \in \INJ(A)
\]
for each $\alpha \in J$, and this follows from Proposition \ref{prop:lcofinj}.
\end{proof}

\begin{rem}
This corollary is essentially a particular case of \cite[Theorem 3.5]{Sh3}.
That theorem states more generally that the inequality
\[
\injdim_A(\mrm{R}\Gamma_{\a}(M)) \le \injdim_A(M)
\]
always holds. 
The proof given here is much more conceptual. 
A search for such a conceptual proof lead to the results of this paper.
\end{rem}

Recall that the homotopy category of non-positive DG-rings,
which we will denote by $\cat{Ho}(\cat{DGR})$ is obtained from the category of non-positive DG-rings by formally inverting quasi-isomorphisms.
This is the homotopy category of a Quillen model category, 
but we will not need the model structure in this paper.

Given a DG-ring $A$ and $M \in \cat{D}(A)$,
the derived endomorphism DG-ring of $M$, 
denoted by $\mrm{R}\opn{Hom}_A(M,M)$ is the DG-ring defined as follows:
let $P \to M$ be a K-projective resolution of $M$,
and define
\[
\mrm{R}\opn{Hom}_A(M,M) := \opn{Hom}_A(P,P).
\]
The latter has naturally the structure of a DG-ring, 
where multiplication is given by composition.
According to \cite[Proposition 3.3]{PSY3},
this DG-ring is independent of the chosen resolution up to isomorphism in $\cat{Ho}(\cat{DGR})$.

\begin{prop}\label{prop:EisOverHat}
Let $(A,\m)$ be a commutative noetherian local DG-ring,
and let $\widehat{A} := \mrm{L}\Lambda(A,\m)$ be its derived $\m$-adic completion.
Denote by $\widehat{\m}$ the maximal ideal of $\mrm{H}^0(\widehat{A})$,
and by $Q:\cat{D}(\widehat{A}) \to \cat{D}(A)$ the forgetful functor.
\begin{enumerate}
\item There is an isomorphism
\[
Q(E(\widehat{A},\widehat{\m})) \cong E(A,\m).
\]
in $\cat{D}(A)$.
\item There is an isomorphism in $\cat{Ho}(\cat{DGR})$ between the following derived endomorphism DG-rings:
\[
\mrm{R}\opn{Hom}_A(E(A,\m),E(A,\m)) \cong \mrm{R}\opn{Hom}_{\widehat{A}}(E(\widehat{A},\widehat{\m}),E(\widehat{A},\widehat{\m})).
\]
\end{enumerate}
\end{prop}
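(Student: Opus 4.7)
My plan is to prove (1) by verifying that $Q(\widehat{E})$, where $\widehat{E} := E(\widehat{A}, \widehat{\m})$, satisfies the defining properties of $E(A,\m)$ and invoking Theorem~\ref{thm:eqv}; I then deduce (2) by showing the natural restriction-of-scalars DG-ring map is a quasi-isomorphism via the cohomology computation of Theorem~\ref{thm:coho-of-rhom}. Write $R = \mrm{H}^0(A)$ and $\widehat{R} = \mrm{H}^0(\widehat{A}) = \Lambda_\m(R)$.

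For (1), the key $\mrm{H}^0$ calculation is
\[
\mrm{H}^0(Q(\widehat{E})) = \mrm{H}^0(\widehat{E}) = E(\widehat{R}, \widehat{\m}),
\]
and classical Matlis theory identifies $E(\widehat{R}, \widehat{\m})$ with $E(R, \m)$ as $R$-modules, since the $\m$-torsion structure forces the $\widehat{R}$-action to be determined by the $R$-action. For the injective-dimension clause, I would apply the tensor-hom adjunction $(-\otimes_A^L \widehat{A}) \dashv Q$: for each $N \in \cat{D}^+(A)$,
\[
\RHom_A(N, Q(\widehat{E})) \cong \RHom_{\widehat{A}}(N \otimes_A^L \widehat{A}, \widehat{E}).
\]
Since $\widehat{A}$ is non-positive with $\inf(\widehat{A}) = 0$, the standard estimate $\inf(N \otimes_A^L \widehat{A}) \ge \inf(N)$ holds, and since $\widehat{E} \in \INJ(\widehat{A})$ the right-hand side has no cohomology above degree $-\inf(N)$; hence $\injdim_A(Q(\widehat{E})) = 0$. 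Combined with $\inf(Q(\widehat{E})) = 0$ and $\mrm{H}^0(Q(\widehat{E})) \ne 0$, this gives $Q(\widehat{E}) \in \INJ(A)$, and Theorem~\ref{thm:eqv} then forces $Q(\widehat{E}) \cong E(A, \m)$.

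For (2), the restriction-of-scalars morphism
\[
\Phi \colon \RHom_{\widehat{A}}(\widehat{E}, \widehat{E}) \to \RHom_A(Q(\widehat{E}), Q(\widehat{E}))
\]
is manifestly a DG-ring homomorphism (it preserves composition), and by (1) its target represents $\RHom_A(E(A,\m), E(A,\m))$. It therefore suffices to show $\Phi$ is a quasi-isomorphism. I would apply Theorem~\ref{thm:coho-of-rhom}(2) on both sides; by the naturality clause (2) of Theorem~\ref{thm:mainZero}, the induced map on $\mrm{H}^n$ corresponds, via the resulting commutative diagram, to the canonical restriction map
\[
\opn{Hom}_{\widehat{R}}\bigl(\mrm{H}^{-n}(\widehat{E}), E(\widehat{R}, \widehat{\m})\bigr) \to \opn{Hom}_R\bigl(\mrm{H}^{-n}(Q(\widehat{E})), E(R, \m)\bigr),
\]
whose two sides have equal underlying abelian groups by (1).

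It thus remains to check this restriction map is bijective. By Corollary~\ref{cor:coho-of-INJ}, $\mrm{H}^{-n}(\widehat{E}) \cong \opn{Hom}_R(\mrm{H}^n(A), E(R,\m))$ is the Matlis dual of a finitely generated $R$-module (using the noetherianity of $A$), hence an Artinian $\widehat{R}$-module, in particular $\m$-torsion. A standard torsion argument then shows that for any Artinian $\widehat{R}$-module $M$ and any $\m$-torsion target $N$ (such as $E(R,\m) = E(\widehat{R}, \widehat{\m})$) the natural map $\opn{Hom}_{\widehat{R}}(M, N) \to \opn{Hom}_R(M, N)$ is bijective, via the identification $R/\m^n = \widehat{R}/\m^n\widehat{R}$ applied to elements killed by $\m^n$. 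I expect the main obstacle to be the careful verification that $\Phi$ itself corresponds to this canonical restriction map at the cohomology level---that is, tracking the functoriality of the isomorphisms in Theorem~\ref{thm:coho-of-rhom} with respect to the change-of-rings map $A \to \widehat{A}$---rather than the bijectivity itself, which is routine once the identification is established.
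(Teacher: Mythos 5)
Your route to (1) is genuinely different from the paper's: you start from $E(\widehat{A},\widehat{\m})$ and verify directly that it lands in $\INJ(A)$, whereas the paper goes the other way, constructing $\mrm{R}\widehat{\Gamma}_\m(E(A,\m))$ (resp.\ a more elaborate object in the appendix) inside $\INJ(\widehat{A})$ and pushing it down. However, your justification for the key estimate $\inf(N \otimes^{\mrm{L}}_A \widehat{A}) \ge \inf(N)$ is wrong: the claim $\inf(\widehat{A}) = 0$ is false in general, since $\widehat{A}$ is a non-positive DG-ring with (typically unbounded) cohomology in negative degrees. Moreover, even if one only has $\sup(\widehat{A}) = 0$, that gives a bound on $\sup(N \otimes^{\mrm{L}}_A \widehat{A})$, not on $\inf$; tensoring with a non-positive DG-ring can push cohomology arbitrarily far down. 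The estimate you need is really $\bflatdim_A(\widehat{A}) = 0$, which the paper cites from \cite[Corollary 4.6]{Sh3} \emph{under the additional hypothesis that $A$ has bounded cohomology}, and combined with Theorem~\ref{thm:buINJ} it suffices to test $\injdim$ on bounded $N$. In the unbounded case the estimate is not available and the paper has to give a completely different argument in Appendix~\ref{theAppendix} via the MGM equivalence and Greenlees--May duality; your argument does not cover that case.

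For (2), the restriction-of-scalars map $\Phi$ is not ``manifestly'' a DG-ring homomorphism, and in fact the direction you chose is the problematic one. To present $\RHom_{\widehat{A}}(\widehat{E},\widehat{E})$ as an actual DG-ring you must pick a resolution $I$ (semi-injective or K-projective) of $\widehat{E}$ over $\widehat{A}$ and take $\opn{Hom}_{\widehat{A}}(I,I)$; restriction of scalars gives a DG-ring map $\opn{Hom}_{\widehat{A}}(I,I) \to \opn{Hom}_A(Q(I),Q(I))$, but $Q(I)$ need not be K-injective or K-projective over $A$, so the target does not compute $\RHom_A(Q(\widehat{E}),Q(\widehat{E}))$ unless one knows, e.g., that $\widehat{A}$ is K-flat over $A$. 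The paper circumvents this by building the map in the \emph{opposite} direction, $f \mapsto f\otimes^{\mrm{L}}_A \widehat{A}$, on a K-projective resolution $P \to E(A,\m)$: then $P \otimes_A \widehat{A}$ is again K-projective over $\widehat{A}$, so both endomorphism DG-rings are genuinely modeled and the map is multiplicative. Your cohomology computation via Theorem~\ref{thm:coho-of-rhom} and the torsion argument would be a fine way to check the resulting map is a quasi-isomorphism (though note that Theorem~\ref{thm:mainZero}(2) compares functors with a common source $\cat{D}(A)$, so identifying the induced map on $\mrm{H}^n$ with the Hom-restriction requires an extra change-of-rings compatibility, as you anticipated); the real gap is the construction of $\Phi$ itself, together with the boundedness issue from (1).
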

We will prove this proposition here under the additional assumption that $A$ has bounded cohomology.
A proof without this assumption will be given in Appendix \ref{theAppendix}. 
The reason for this is that only in the bounded case we are able to give the morally correct proof of this fact,
and the proof of the more general result is quite technical.
\begin{proof}
(1): Replacing $A$ by a quasi-isomorphic DG-ring if necessary as in (\ref{eqn:completion-diag}),
we may assume that $A \to \widehat{A}$ is a map of DG-rings (and not only in the homotopy category of DG-rings).
Let $E:=\mrm{R}\widehat{\Gamma}_{\m}(E(A,\m)) \in \cat{D}^{+}(\widehat{A})$.
Note that
\[
Q(E) = \mrm{R}\Gamma_{\m}(E(A,\m)) \cong E(A,\m)
\]
where the isomorphism is by Proposition \ref{prop:lcofinj}.
It remains to show that $E \cong E(\widehat{A},\widehat{\m})$.

Under the assumption that $A$ has bounded cohomology, 
it follows by \cite[Corollary 4.6]{Sh3} that $\bflatdim_A(\widehat{A}) = 0$.
Hence, by \cite[Lemma 6.3]{Sh1}, we have that
\[
\mrm{H}^0(\widehat{A}) \cong \mrm{H}^0(A)\otimes^{\mrm{L}}_A \widehat{A}.
\]
Hence, by adjunction:
\begin{equation}\label{eqn:rhomovercomp}
\mrm{R}\opn{Hom}_{\widehat{A}}(\mrm{H}^0(\widehat{A}),E) \cong
\mrm{R}\opn{Hom}_A(\mrm{H}^0(A),E) = \mrm{R}\opn{Hom}_A(\mrm{H}^0(A),E(A,\m)).
\end{equation}
The latter is isomorphic by definition to $E(\mrm{H}^0(A),\m)$, so in particular it is concentrated in degree $0$.
Hence, $\mrm{R}\opn{Hom}_{\widehat{A}}(\mrm{H}^0(\widehat{A}),E)$ is also concentrated in degree $0$.
Applying $\mrm{H}^0$ to (\ref{eqn:rhomovercomp}) and using the fact that 
\[
E(\mrm{H}^0(A),\m) = E(\mrm{H}^0(\widehat{A}),\widehat{\m})
\]
is a $\mrm{H}^0(\widehat{A})$-module,
we obtain a sequence of $\mrm{H}^0(\widehat{A})$-linear isomorphisms,
which imply that
\[
\mrm{H}^0\left(\mrm{R}\opn{Hom}_{\widehat{A}}(\mrm{H}^0(\widehat{A}),E)\right) \cong E(\mrm{H}^0(\widehat{A}),\widehat{\m}).
\]
Since this is an injective $\mrm{H}^0(\widehat{A})$-module,
we deduce from Proposition \ref{prop:INJCHAR} that $E \in \INJ(\widehat{A})$,
and hence $E \cong E(\widehat{A},\widehat{\m})$.

(2): By Proposition \ref{prop:lcofinj}
we know that $E(A,\m) \cong \mrm{R}\Gamma_{\m}(E(A,\m))$.
Hence, by \cite[Lemma 5.3]{Sh2}, we have an isomorphism
\[
E(A,\m) \otimes^{\mrm{L}}_A \widehat{A} \cong E(\widehat{A},\widehat{\m})
\]
in $\cat{D}(\widehat{A})$.
Hence, the DG-ring map
\[
\mrm{R}\opn{Hom}_A(E(A,\m),E(A,\m)) \to \mrm{R}\opn{Hom}_{\widehat{A}}(E(A,\m)\otimes^{\mrm{L}}_A \widehat{A},E(A,\m)\otimes^{\mrm{L}}_A \widehat{A}) 
\]
given by $f \mapsto f\otimes^{\mrm{L}}_A \widehat{A}$ is a quasi-isomorphism,
and as the latter is quasi-isomorphic to the DG-ring
\[
\mrm{R}\opn{Hom}_{\widehat{A}}(E(\widehat{A},\widehat{\m}),E(\widehat{A},\widehat{\m})),
\]
we are done.
\end{proof}

Recall that given a commutative noetherian DG-ring $A$,
a dualizing DG-module $R$ over $A$ is an element $R \in \cat{D}^{+}(A)$ such that $\injdim_A(R) < \infty$,
and such that the natural map $A \to \mrm{R}\opn{Hom}_A(R,R)$ is an isomorphism in $\cat{D}(A)$.
If $A$ is a ring, this coincides with Grothendieck's definition of a dualizing complex.
If $R$ is a dualizing DG-module over $A$ then the natural map
\[
M \to \mrm{R}\opn{Hom}_A(\mrm{R}\opn{Hom}_A(M,R),R)
\]
is an isomorphism for all $M \in \cat{D}_{\mrm{f}}(A)$.

The next result is a DG-version of \cite[Proposition 4.3.8]{Lu}:
\begin{prop}\label{prop:liftdual}
Let $A$ be a commutative noetherian DG-ring,
and let $R \in \cat{D}^{+}(A)$.
If $\mrm{R}\opn{Hom}_A(\mrm{H}^0(A),R)$ is a dualizing complex over $\mrm{H}^0(A)$,
then $R$ is a dualizing DG-module over $A$.
\end{prop}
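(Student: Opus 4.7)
The plan is to verify the two defining conditions of a dualizing DG-module: finite injective dimension over $A$, and the biduality isomorphism $A \iso \mrm{R}\opn{Hom}_A(R,R)$ in $\cat{D}(A)$. Write $\bar R := \mrm{R}\opn{Hom}_A(\mrm{H}^0(A),R)$. The injective dimension condition is immediate from Theorem \ref{thm:injDG}, since the hypothesis that $\bar R$ is a dualizing complex over $\mrm{H}^0(A)$ forces $\injdim_{\mrm{H}^0(A)}(\bar R) < \infty$, and this number equals $\injdim_A(R)$.

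For the biduality condition, I would argue by d\'evissage, transferring biduality from $\bar R$ over $\mrm{H}^0(A)$ to $R$ over $A$. The starting point is that for every $N \in \cat{D}^{\mrm{b}}_{\mrm{f}}(\mrm{H}^0(A))$, regarded in $\cat{D}(A)$ by restriction of scalars along $A \to \mrm{H}^0(A)$, the biduality map $N \to \mrm{R}\opn{Hom}_A(\mrm{R}\opn{Hom}_A(N,R),R)$ is an isomorphism. This follows from the adjunction $\mrm{R}\opn{Hom}_A(N,R) \cong \mrm{R}\opn{Hom}_{\mrm{H}^0(A)}(N,\bar R)$ applied twice, together with the biduality of $\bar R$ over $\mrm{H}^0(A)$. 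Specializing, each cohomology module $\mrm{H}^i(A)$, which is finitely generated over $\mrm{H}^0(A)$ by the noetherian hypothesis on $A$, satisfies biduality in $\cat{D}(A)$. Since the class of $M \in \cat{D}(A)$ for which the biduality map is an isomorphism is triangulated (by 2-out-of-3), and each bounded smart truncation $A_n := \opn{smt}^{\ge n}(A)$ is built from the $\mrm{H}^i(A)[-i]$ with $n \le i \le 0$ by a finite sequence of extensions, biduality holds for every $A_n$.

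The final and most delicate step is to propagate biduality from the $A_n$ to $A$. For a fixed $k \in \mathbb{Z}$, I would use the triangle $\opn{smt}^{<n}(A)\to A \to A_n$ together with the estimate $\sup \mrm{R}\opn{Hom}_A(-,R) \le \injdim_A(R) - \inf(-)$, applied twice, to show that $\mrm{R}\opn{Hom}_A(\mrm{R}\opn{Hom}_A(\opn{smt}^{<n}(A),R),R)$ has cohomology concentrated in a range that drifts to $-\infty$ as $n \to -\infty$; once that range lies strictly below degree $k$, the biduality isomorphism for $A_n$ together with $\mrm{H}^k(A) = \mrm{H}^k(A_n)$ yields the biduality isomorphism on $\mrm{H}^k$ for $A$. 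Doing this for all $k$ gives the desired isomorphism.

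The main obstacle is this last limiting step in the case where $A$ has unbounded cohomology. In particular, one must check that the cohomological isomorphisms produced degree-by-degree are induced by the canonical biduality natural transformation (via the functoriality of $\opn{smt}$ and $\mrm{R}\opn{Hom}_A(-,R)$), rather than being merely abstract isomorphisms of $\mrm{H}^0(A)$-modules; this is what is required to conclude that the single morphism $A \to \mrm{R}\opn{Hom}_A(R,R)$ is a quasi-isomorphism in $\cat{D}(A)$.
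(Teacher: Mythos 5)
Your proof is correct and rests on the same key mechanism as the paper's: by adjunction, biduality for any $N \in \cat{D}^{\mrm{b}}_{\mrm{f}}(\mrm{H}^0(A))$ with respect to $R$ over $A$ reduces to biduality with respect to $\bar R := \mrm{R}\opn{Hom}_A(\mrm{H}^0(A),R)$ over $\mrm{H}^0(A)$, and the injective dimension bound comes from Theorem \ref{thm:injDG}. The difference is in what happens next. The paper invokes the lemma on way-out functors, which passes directly from the amplitude-zero case to biduality for all of $\cat{D}_{\mrm{f}}(A)$, and hence for $A$ itself. You unwind that lemma by hand for the object $A$: first propagating biduality from the $\mrm{H}^i(A)[-i]$ to the truncations $\opn{smt}^{\ge n}(A)$ by finitely many extensions, then passing to the limit degree by degree. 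This makes the argument self-contained and more elementary, at the cost of length. Two small remarks. First, in the limiting step the first estimate you should invoke is $\inf \mrm{R}\opn{Hom}_A(\opn{smt}^{<n}(A),R) \ge \inf(R) - \sup(\opn{smt}^{<n}(A)) \ge \inf(R) - n + 1$, which uses only $R \in \cat{D}^{+}(A)$ and is the relevant bound because $\inf(\opn{smt}^{<n}(A))$ may be $-\infty$; applying the $\injdim_A(R)$ bound to the result then gives the drift of $\sup$ of the double dual to $-\infty$ as you claim — so you need the two complementary estimates, not the same one twice. Second, the worry in your last paragraph dissolves immediately: the biduality map is a natural transformation and $\mrm{R}\opn{Hom}_A(\mrm{R}\opn{Hom}_A(-,R),R)$ is a triangulated functor, so the degree-$k$ squares coming from the truncation triangle commute automatically, and what you prove is precisely that $\mrm{H}^k$ of the canonical morphism $A \to \mrm{R}\opn{Hom}_A(R,R)$ is an isomorphism for every $k$.
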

\begin{proof}
Theorem \ref{thm:injDG} implies that $\injdim_A(R) < \infty$.
Given $M \in \cat{D}_{\mrm{f}}(A)$ such that $\amp(M) = 0$,
there is a finitely generated $\mrm{H}^0(A)$-module $N$ and an isomorphism
$M \cong N$ in $\cat{D}(A)$.
But then, by adjunction, the morphism
\[
N \to \mrm{R}\opn{Hom}_A(\mrm{R}\opn{Hom}_A(N,R),R)
\]
is an isomorphism if and only if the morphism
\[
N \to \mrm{R}\opn{Hom}_{\mrm{H}^0(A)}(\mrm{R}\opn{Hom}_{\mrm{H}^0(A)}(N,\mrm{R}\opn{Hom}_A(\mrm{H}^0(A),R)),\mrm{R}\opn{Hom}_A(\mrm{H}^0(A),R))
\]
is an isomorphism, and that is true because by assumption $\mrm{R}\opn{Hom}_A(\mrm{H}^0(A),R)$ is a dualizing complex over $\mrm{H}^0(A)$.
Hence, the morphism
\[
M \to \mrm{R}\opn{Hom}_A(\mrm{R}\opn{Hom}_A(M,R),R)
\]
is also an isomorphism.
Since both of the functors $1_{\cat{D}(A)}$ and
\[
\mrm{R}\opn{Hom}_A(\mrm{R}\opn{Hom}_A(-,R),R)
\]
have finite cohomological dimension, 
we deduce by the lemma on way-out functors that the map
\[
M \to \mrm{R}\opn{Hom}_A(\mrm{R}\opn{Hom}_A(M,R),R)
\]
is an isomorphism for all $M \in \cat{D}_{\mrm{f}}(A)$,
so that $R$ is a dualizing DG-module over $A$.
\end{proof}

Before the next result, we must recall the theory of t-dualizing complexes.
Let $(A,\mfrak{m})$ be a complete noetherian local ring.
A complex $M \in \cat{D}(A)$ is called cohomologically $\mfrak{m}$-adically cofinite if $M \cong \mrm{R}\Gamma_{\mfrak{m}}(N)$
for some $N \in \cat{D}^{\mrm{b}}_{\mrm{f}}(A)$. See \cite[Section 3]{PSY2} for details about this concept. 
Given a cohomologically $\mfrak{m}$-adically cofinite complex $M$ over $A$,
one says that $M$ is a t-dualizing complex if $\injdim_A(M) < \infty$,
and the canonical map $A \to \mrm{R}\opn{Hom}_A(M,M)$ is an isomorphism in $\cat{D}(A)$.
This terminology is from \cite[Section 2.5]{AJL}.
This concept was first defined and studied in \cite[Section 5]{Ye3}. 
If $M$ is a t-dualizing complex over $A$,
then $\mrm{L}\Lambda_{\mfrak{m}}(M)$ is a dualizing complex over $A$.
Conversely, if $N$ is a dualizing complex over $A$ then $\mrm{R}\Gamma_{\mfrak{m}}(N)$ is a t-dualizing complex over $A$.
With this terminology in hand, 
it is an immediate corollary of Grothendieck's local duality theorem (\cite[Theorem V.6.2]{RD})
that if $(A,\mfrak{m})$ is a complete noetherian local ring then $E(A,\mfrak{m})$ is a t-dualizing complex over $A$.

It is a corollary of the Cohen Structure theorem that every complete noetherian local ring has dualizing complexes.
Similarly, we have:

\begin{prop}\label{prop:hasDual}
Let $(A,\m)$ be a commutative noetherian local DG-ring,
and assume that $A$ is cohomologically $\m$-adically complete.
Let $E = E(A,\m) \in \INJ(A)$.
Then the DG-module $R := \mrm{L}\Lambda_{\m}(E)$ is a dualizing DG-module over $A$.
\end{prop}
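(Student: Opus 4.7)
\medskip
\noindent\textbf{Proof plan for Proposition \ref{prop:hasDual}.}
The plan is to verify the hypotheses of Proposition \ref{prop:liftdual}: I will show that $R \in \cat{D}^{+}(A)$ and that the complex $\mrm{R}\opn{Hom}_A(\mrm{H}^0(A),R)$ is an honest dualizing complex over the noetherian ring $\bar A := \mrm{H}^0(A)$; then Proposition \ref{prop:liftdual} applies to give that $R$ is dualizing over $A$. Note that because $A$ is cohomologically $\m$-adically complete, $\bar A$ is a complete noetherian local ring with residue field $\k$, so it is exactly the setting in which the theory of t-dualizing complexes (recalled just before the statement) applies. Boundedness below of $R = \mrm{L}\Lambda_{\m}(E)$ is routine: $E \in \INJ(A) \subseteq \cat{D}^{+}(A)$ and $\mrm{L}\Lambda_{\m}$ has finite cohomological amplitude (since $\m$ is finitely generated), so $R \in \cat{D}^{+}(A)$.

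The core of the proof is a three-step calculation of $\mrm{R}\opn{Hom}_A(\bar A,R)$. First, apply Greenlees--May duality over $A$ to obtain
\[
\mrm{R}\opn{Hom}_A\!\left(\bar A, \mrm{L}\Lambda_{\m}(E)\right) \;\cong\; \mrm{R}\opn{Hom}_A\!\left(\mrm{R}\Gamma_{\m}^{A}(\bar A), E\right).
\]
Second, use the telescope description (\ref{eqn:RGammaIso}) together with its base-change property to identify $\mrm{R}\Gamma_{\m}^{A}(\bar A)$ with $\mrm{R}\Gamma_{\m}^{\bar A}(\bar A)$ (as an $A$-DG-module via the canonical map $A \to \bar A$). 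Third, invoke the Hom--tensor adjunction for the DG-ring map $A \to \bar A$ together with Proposition \ref{prop:RHomisHZ}, which says $\mrm{R}\opn{Hom}_A(\bar A, E) \cong \bar E$ where $\bar E := E(\bar A,\m)$, to rewrite the above as
\[
\mrm{R}\opn{Hom}_{\bar A}\!\left(\mrm{R}\Gamma_{\m}^{\bar A}(\bar A),\, \bar E\right).
\]
A final application of Greenlees--May duality, this time over the ring $\bar A$, collapses this to $\mrm{L}\Lambda_{\m}^{\bar A}(\bar E)$.

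Now I would appeal to the recollection preceding the statement: since $\bar A$ is a complete noetherian local ring, Grothendieck's local duality theorem tells us that $\bar E$ is a t-dualizing complex over $\bar A$, and hence its $\m$-adic completion $\mrm{L}\Lambda_{\m}^{\bar A}(\bar E)$ is a (classical) dualizing complex over $\bar A$. Combining with the chain of isomorphisms above,
\[
\mrm{R}\opn{Hom}_A(\bar A, R) \;\cong\; \mrm{L}\Lambda_{\m}^{\bar A}(\bar E)
\]
is dualizing over $\bar A$, so Proposition \ref{prop:liftdual} furnishes the conclusion.

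The expected main obstacle is making the middle step rigorous: verifying that under the map $A \to \bar A$, the object $\mrm{R}\Gamma_{\m}^{A}(\bar A)$ really does agree with $\mrm{R}\Gamma_{\m}^{\bar A}(\bar A)$, and then safely applying the adjunction $\mrm{R}\opn{Hom}_A(X,E) \cong \mrm{R}\opn{Hom}_{\bar A}(X, \mrm{R}\opn{Hom}_A(\bar A, E))$ for $X \in \cat{D}(\bar A)$. The first of these is handled by the base-change property of the telescope DG-module together with the isomorphism (\ref{eqn:RGammaIso}); the second is the standard change-of-DG-rings adjunction. Everything else --- the boundedness of $R$, the two appeals to Greenlees--May duality, and the reduction to t-dualizing complexes --- is a direct application of tools already developed in the excerpt.
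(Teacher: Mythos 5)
Your proof is correct and lands on exactly the same key isomorphism as the paper, but it gets there by a different (more self-contained) route. The paper dispatches the computation of $\mrm{R}\opn{Hom}_A(\mrm{H}^0(A),R)$ in one stroke by citing a commutation lemma, \cite[Proposition 2.17]{Sh2}, which asserts
\[
\mrm{R}\opn{Hom}_A\bigl(\mrm{H}^0(A),\mrm{L}\Lambda^A_{\m}(E)\bigr) \cong \mrm{L}\Lambda^{\mrm{H}^0(A)}_{\m}\bigl(\mrm{R}\opn{Hom}_A(\mrm{H}^0(A),E)\bigr),
\]
and then concludes via Proposition \ref{prop:liftdual} exactly as you do. What you do differently is rederive this commutation from the tools already on the table: one application of Greenlees--May duality over $A$ to convert $\mrm{L}\Lambda_\m$ on the target into $\mrm{R}\Gamma_\m$ on the source, the telescope base-change to identify $\mrm{R}\Gamma^A_{\m}(\bar A)$ with $\mrm{R}\Gamma^{\bar A}_{\m}(\bar A)$, the change-of-DG-rings Hom adjunction combined with Proposition \ref{prop:RHomisHZ} to replace $E$ by $\bar E = E(\bar A,\m)$, and then a second application of Greenlees--May duality over $\bar A$ to collapse back to $\mrm{L}\Lambda^{\bar A}_{\m}(\bar E)$. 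This is a perfectly valid alternative: it trades a citation for a four-step calculation, making the proof self-contained at the cost of a bit more length. You are also slightly more careful than the paper in explicitly checking $R \in \cat{D}^{+}(A)$ (which Proposition \ref{prop:liftdual} requires but the paper leaves implicit); your argument via the finite cohomological amplitude of $\mrm{L}\Lambda_\m$ is the right one. The remaining ingredients---the observation that cohomological $\m$-adic completeness makes $\bar A = \mrm{H}^0(A)$ a complete noetherian local ring, the appeal to the t-dualizing complex discussion, and Proposition \ref{prop:liftdual}---agree with the paper.
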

The point is that $E$ is a t-dualizing DG-module over $A$, and hence $R$ is a dualizing DG-module over $A$.
As the concept of a t-dualizing DG-module over a commutative adic DG-ring was not yet defined and studied, 
let us prove this directly.
\begin{proof}
By \cite[Proposition 2.17]{Sh2}, we have that
\[
\mrm{R}\opn{Hom}_A(\mrm{H}^0(A),R) = \mrm{R}\opn{Hom}_A\left(\mrm{H}^0(A),\mrm{L}\Lambda^A_{\m}(E)\right) \cong
\mrm{L}\Lambda^{\mrm{H}^0(A)}_{\m}\left(\mrm{R}\opn{Hom}_A(\mrm{H}^0(A),E)\right)
\]
Since $\mrm{R}\opn{Hom}_A(\mrm{H}^0(A),E) \cong E(\mrm{H}^0(A),\m)$,
by the discussion preceding this proof 
we see that $\mrm{R}\opn{Hom}_A(\mrm{H}^0(A),R)$ is a dualizing complex over $\mrm{H}^0(A)$.
Hence, the result follows from Proposition \ref{prop:liftdual}.
\end{proof}

\begin{thm}
Let $A$ be a commutative noetherian DG-ring,
and let $\p \in \opn{Spec}(\mrm{H}^0(A))$.
Then there is an isomorphism in $\cat{Ho}(\cat{DGR})$:
\[
\mrm{R}\opn{Hom}_A(E(A,\p),E(A,\p)) \cong \widehat{A_{\p}},
\]
where the right hand side is defined to be the derived $\p$-adic completion of the local DG-ring $A_{\p}$.
\end{thm}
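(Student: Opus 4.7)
The plan is to reduce to the complete local case in two standard steps and then identify the endomorphism DG-ring as the derived endomorphisms of a dualizing DG-module, using Greenlees-May duality to transport from torsion to completion. Throughout, one must carry DG-ring structures, not merely derived module structures.

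First I would reduce to the local case. By Proposition \ref{prop:localz}, $E(A,\p)$ is naturally an $A_\p$-DG-module. For any $A_\p$-DG-modules $M$ and $N$, the restriction-of-scalars map is a quasi-isomorphism
\[
\mrm{R}\opn{Hom}_{A_\p}(M, N) \iso \mrm{R}\opn{Hom}_A(M, N),
\]
because by the extension/restriction adjunction along $A \to A_\p$ and idempotence of localization, $\mrm{R}\opn{Hom}_A(M,N) \cong \mrm{R}\opn{Hom}_{A_\p}(M\otimes^{\mrm{L}}_A A_\p, N) \cong \mrm{R}\opn{Hom}_{A_\p}(M,N)$; when $M = N$ this identification is multiplicative, since it is induced by an honest DG-ring homomorphism and preserves composition. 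Applied with $M = N = E(A,\p)$ this gives
\[
\mrm{R}\opn{Hom}_A(E(A,\p), E(A,\p)) \cong \mrm{R}\opn{Hom}_{A_\p}(E(A_\p,\p), E(A_\p,\p))
\]
in $\cat{Ho}(\cat{DGR})$. Proposition \ref{prop:EisOverHat}(2) applied to the commutative noetherian local DG-ring $(A_\p,\p)$ then identifies this further with $\mrm{R}\opn{Hom}_{\widehat{A_\p}}\bigl(E(\widehat{A_\p}, \widehat{\p}), E(\widehat{A_\p}, \widehat{\p})\bigr)$ in $\cat{Ho}(\cat{DGR})$, reducing the problem to the complete local case.

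Second, I work in the complete local setting. Set $(B,\mfrak{n}) := (\widehat{A_\p}, \widehat{\p})$ and $E := E(B, \mfrak{n})$. By Proposition \ref{prop:lcofinj} we have $\mrm{R}\Gamma_{\mfrak{n}}(E) \cong E$, and by Proposition \ref{prop:hasDual} the DG-module $R := \mrm{L}\Lambda_{\mfrak{n}}(E)$ is a dualizing DG-module over $B$. The Greenlees-May chain of isomorphisms
\[
\mrm{R}\opn{Hom}_B(E, E) \cong \mrm{R}\opn{Hom}_B(\mrm{R}\Gamma_{\mfrak{n}}(E), \mrm{R}\Gamma_{\mfrak{n}}(E)) \cong \mrm{R}\opn{Hom}_B(\mrm{L}\Lambda_{\mfrak{n}}(E), \mrm{L}\Lambda_{\mfrak{n}}(E)) = \mrm{R}\opn{Hom}_B(R, R)
\]
is multiplicative, since each arrow is induced by a unit or counit of the $(\mrm{R}\Gamma_{\mfrak{n}},\mrm{L}\Lambda_{\mfrak{n}})$ adjunction and composition is preserved. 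Finally, since $R$ is dualizing, the canonical homothety morphism $B \to \mrm{R}\opn{Hom}_B(R, R)$, $b \mapsto (r \mapsto b\cdot r)$, is a quasi-isomorphism of DG-rings. Chaining all of these identifications yields $\mrm{R}\opn{Hom}_A(E(A,\p), E(A,\p)) \cong \widehat{A_\p}$ in $\cat{Ho}(\cat{DGR})$.

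The main obstacle is ensuring multiplicativity throughout, i.e.\ lifting each derived-category isomorphism to $\cat{Ho}(\cat{DGR})$. The two reductions in the first step cause no trouble, being induced by honest DG-ring homomorphisms ($A \to A_\p$ and the completion map in the sense of (\ref{eqn:completion-diag}), combined with the analysis of Proposition \ref{prop:EisOverHat}(2) whose isomorphism is given by the multiplicative assignment $f \mapsto f\otimes^{\mrm{L}}_A \widehat{A}$). The delicate step is the Greenlees-May identification: its multiplicativity follows from the fact that $\mrm{L}\Lambda_{\mfrak{n}}$ is a triangulated functor compatible with composition and that the natural transformation $1 \to \mrm{L}\Lambda_{\mfrak{n}}$ respects composition, so the induced map on derived endomorphism DG-rings is a quasi-isomorphism of DG-rings. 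The homothety map at the final step is manifestly a DG-ring homomorphism from the very definition of a dualizing DG-module.
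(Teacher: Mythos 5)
Your proof is correct and follows essentially the same route as the paper: reduce to the local case via Proposition \ref{prop:localz}, pass to the completion via Proposition \ref{prop:EisOverHat}(2), then in the complete local setting invoke Proposition \ref{prop:lcofinj}, Greenlees--May duality, and Proposition \ref{prop:hasDual} to identify the endomorphism DG-ring with the derived endomorphisms of a dualizing DG-module, hence with the DG-ring itself. The only cosmetic difference is in the first reduction, where the paper directly uses the quasi-isomorphism of DG-rings induced on derived endomorphisms by the isomorphism $E(A,\p) \to E(A,\p)_\p$, while you phrase it through the extension/restriction adjunction along $A\to A_\p$ and idempotence of localization; these amount to the same thing.
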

\begin{proof}
By Proposition \ref{prop:localz},
the map
\[
\mrm{R}\opn{Hom}_A(E(A,\p),E(A,\p)) \to \mrm{R}\opn{Hom}_{A_{\p}}(E(A,\p)_{\p},E(A,\p)_{\p})
\]
induced by the localization morphism $E(A,\p) \to E(A,\p)_{\p}$ is a quasi-isomorphism.
Let us denote by $\widehat{\p}$ the ideal of definition of $\widehat{A_{\p}}$.
By Proposition \ref{prop:EisOverHat}(2),
there is an isomorphism in $\cat{Ho}(\cat{DGR})$:
\[
\mrm{R}\opn{Hom}_{A_{\p}}(E(A,\p)_{\p},E(A,\p)_{\p}) \cong
\mrm{R}\opn{Hom}_{\widehat{A_{\p}}}(E(\widehat{A_{\p}},\widehat{\p}),E(\widehat{A_{\p}},\widehat{\p})).
\]
By Proposition \ref{prop:lcofinj}, we know that
\[
E(\widehat{A_{\p}},\widehat{\p}) \cong \mrm{R}\Gamma_{\widehat{\p}}(E(\widehat{A_{\p}},\widehat{\p})),
\]
so by the Greenlees-May duality, there is an isomorphism in $\cat{Ho}(\cat{DGR})$:
\[
\mrm{R}\opn{Hom}_{\widehat{A_{\p}}}(E(\widehat{A_{\p}},\widehat{\p}),E(\widehat{A_{\p}},\widehat{\p})) \cong
\mrm{R}\opn{Hom}_{\widehat{A_{\p}}}(\mrm{L}\Lambda_{\widehat{\p}}(E(\widehat{A_{\p}},\widehat{\p})),\mrm{L}\Lambda_{\widehat{\p}}(E(\widehat{A_{\p}},\widehat{\p})))
\]
But by Proposition \ref{prop:hasDual},
the DG-module $\mrm{L}\Lambda_{\widehat{\p}}(E(\widehat{A_{\p}},\widehat{\p}))$ is a dualizing DG-module over $\widehat{A_{\p}}$,
which implies that the canonical map
\[
\widehat{A_{\p}} \to \mrm{R}\opn{Hom}_{\widehat{A_{\p}}}(\mrm{L}\Lambda_{\widehat{\p}}(E(\widehat{A_{\p}},\widehat{\p})),\mrm{L}\Lambda_{\widehat{\p}}(E(\widehat{A_{\p}},\widehat{\p})))
\]
is an isomorphism. 
Combining all these isomorphisms (which do not all go in the same direction, hence the need for the homotopy category),
we obtain the required isomorphism in $\cat{Ho}(\cat{DGR})$, proving the result.
\end{proof}

Let $(A,\m,\k)$ be a noetherian local DG-ring,
and assume that $A$ has a dualizing DG-module $R$.
By \cite[Proposition 7.5]{Ye2},
this implies that $\mrm{R}\opn{Hom}_A(\mrm{H}^0(A),R)$ is a dualizing complex over the local noetherian ring $\mrm{H}^0(A)$.
Hence, by \cite[Proposition V.3.4]{RD},
there is exactly one integer $d$ such that 
\[
\Ext^d_{\mrm{H}^0(A)}(\k,\mrm{R}\opn{Hom}_A(\mrm{H}^0(A),R)) \cong \k.
\]
Following \cite[Section V.6]{RD}, we say that $R$ is a \textbf{normalized dualizing DG-module} if $d=0$.
Of course even if $R$ is not normalized, there is always some shift of $R$ which is normalized.

The above discussion also essentially follows from \cite[Theorem 3.2]{FIJ},
but the authors of that paper make the additional assumption that $A^0$ is a noetherian ring.

\begin{prop}\label{lem:rgammaofrhom}
Let $A$ be a commutative noetherian DG-ring,
and let $\a\subseteq \mrm{H}^0(A)$ be an ideal.
Then for any $M \in \cat{D}^{+}(A)$,
there is a natural isomorphism
\[
\mrm{R}\Gamma_{\a}^{\mrm{H}^0(A)} \left(\RHom_A(\mrm{H}^0(A),M)\right) \cong \RHom_A\left(\mrm{H}^0(A),\mrm{R}\Gamma_{\a}^{A}(M)\right)
\]
in $\cat{D}(\mrm{H}^0(A))$.
\end{prop}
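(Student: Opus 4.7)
The plan is to reduce the statement to the telescope presentation of local cohomology and then invoke the tensor-evaluation isomorphism, followed by a base change. Fix a finite sequence $\mathbf{a}$ in $A^0$ whose image $\bar{\mathbf{a}}$ in $\mrm{H}^0(A)$ generates $\a$. By the isomorphism (\ref{eqn:RGammaIso}) applied both over $A$ and over $\mrm{H}^0(A)$, there are natural isomorphisms
\[
\mrm{R}\Gamma^{A}_{\a}(M) \cong \opn{Tel}(A;\mathbf{a}) \otimes_A M, \qquad \mrm{R}\Gamma^{\mrm{H}^0(A)}_{\a}(N) \cong \opn{Tel}(\mrm{H}^0(A);\bar{\mathbf{a}}) \otimes_{\mrm{H}^0(A)} N
\]
for $N \in \cat{D}(\mrm{H}^0(A))$. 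The telescope DG-modules are bounded complexes of free modules, so $\uflatdim_A(\opn{Tel}(A;\mathbf{a})) < \infty$, and we may apply Proposition \ref{prop:eval}.

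First, I apply $\mrm{R}\opn{Hom}_A(\mrm{H}^0(A),-)$ to the telescope presentation of $\mrm{R}\Gamma^{A}_{\a}(M)$. Since $\mrm{H}^0(A) \in \cat{D}^{-}_{\mrm{f}}(A)$ (by the noetherian hypothesis on $A$), $M \in \cat{D}^{+}(A)$ by hypothesis, and $\opn{Tel}(A;\mathbf{a})$ has finite unbounded flat dimension, Proposition \ref{prop:eval} yields a natural isomorphism
\[
\mrm{R}\opn{Hom}_A\bigl(\mrm{H}^0(A),\,\opn{Tel}(A;\mathbf{a}) \otimes^{\mrm{L}}_A M\bigr) \cong \mrm{R}\opn{Hom}_A(\mrm{H}^0(A),M) \otimes^{\mrm{L}}_A \opn{Tel}(A;\mathbf{a}).
\]

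Next, I transport the $A$-tensor on the right-hand side to an $\mrm{H}^0(A)$-tensor. The complex $N := \mrm{R}\opn{Hom}_A(\mrm{H}^0(A),M)$ lies in $\cat{D}(\mrm{H}^0(A))$. Using the K-flatness of the telescope and the base change property
\[
\opn{Tel}(A;\mathbf{a}) \otimes^{\mrm{L}}_A \mrm{H}^0(A) \cong \opn{Tel}(\mrm{H}^0(A);\bar{\mathbf{a}}),
\]
a standard change-of-rings computation gives
\[
N \otimes^{\mrm{L}}_A \opn{Tel}(A;\mathbf{a}) \cong N \otimes^{\mrm{L}}_{\mrm{H}^0(A)} \bigl(\mrm{H}^0(A) \otimes^{\mrm{L}}_A \opn{Tel}(A;\mathbf{a})\bigr) \cong N \otimes^{\mrm{L}}_{\mrm{H}^0(A)} \opn{Tel}(\mrm{H}^0(A);\bar{\mathbf{a}}),
\]
and the last term is $\mrm{R}\Gamma^{\mrm{H}^0(A)}_{\a}(N)$ by (\ref{eqn:RGammaIso}) applied over $\mrm{H}^0(A)$. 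Concatenating the two isomorphisms yields the required natural identification.

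The only genuinely nontrivial ingredient is the tensor-evaluation step, which is precisely where the hypotheses $M \in \cat{D}^{+}(A)$ and $\mrm{H}^0(A) \in \cat{D}^{-}_{\mrm{f}}(A)$ enter; the remaining manipulations are formal properties of the telescope complex and base change of rings, so I expect no further obstacle. Naturality of all isomorphisms in $M$ is automatic from naturality of (\ref{eqn:RGammaIso}), (\ref{eqn:tev}), and the base change isomorphism.
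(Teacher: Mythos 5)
Your argument is correct, and it is essentially the paper's proof made explicit: the paper disposes of this proposition by citing \cite[Lemma 3.4]{Sh3} and remarking that Proposition \ref{prop:eval} extends the bounded case to $M \in \cat{D}^{+}(A)$, while you have simply unfolded that argument in full --- telescope presentation of $\mrm{R}\Gamma_{\a}$ via (\ref{eqn:RGammaIso}), the tensor-evaluation isomorphism of Proposition \ref{prop:eval} applied with $\mrm{H}^0(A) \in \cat{D}^{-}_{\mrm{f}}(A)$, and base change of the K-flat telescope along $A \to \mrm{H}^0(A)$.
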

\begin{proof}
This was shown in \cite[Lemma 3.4]{Sh3}.
There we also assumed that $M \in \cat{D}^{\mrm{b}}(A)$,
but using Proposition \ref{prop:eval} the same proof works under this weaker assumption.
\end{proof}

\begin{prop}\label{prop:rgamhomfinite}
Let $A$ be a commutative noetherian DG-ring,
and let $\a\subseteq \mrm{H}^0(A)$ be an ideal.
Then for any $M \in \cat{D}^{-}_{\mrm{f}}(A)$,
and any $N \in \cat{D}^{+}(A)$,
there is a bifunctorial isomorphism
\[
\mrm{R}\opn{Hom}_A(M,\mrm{R}\Gamma_{\a}(N)) \cong \mrm{R}\Gamma_{\a}\left(\mrm{R}\opn{Hom}_A(M,N)\right)
\]
in $\cat{D}(A)$.
\end{prop}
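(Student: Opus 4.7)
The plan is to reduce both sides to the telescope model of $\mrm{R}\Gamma_{\a}$ and then apply the tensor-evaluation isomorphism. By the noetherian assumption we may choose a finite sequence $\mathbf{a}$ of elements of $A^0$ whose images in $\mrm{H}^0(A)$ generate $\a$, and (\ref{eqn:RGammaIso}) gives a functorial isomorphism $\mrm{R}\Gamma_{\a}(-) \cong \opn{Tel}(A;\mathbf{a}) \otimes_A -$. The telescope $\opn{Tel}(A;\mathbf{a})$ is a bounded complex of free $A$-modules, so in particular $\uflatdim_A(\opn{Tel}(A;\mathbf{a})) < \infty$, which is precisely the hypothesis needed to feed it into Proposition \ref{prop:eval} as the third argument $K$.

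With this setup in hand, I would build the isomorphism as a short chain. First, rewrite the left-hand side using (\ref{eqn:RGammaIso}):
\[
\mrm{R}\opn{Hom}_A(M, \mrm{R}\Gamma_{\a}(N)) \cong \mrm{R}\opn{Hom}_A\bigl(M, N \otimes^{\mrm{L}}_A \opn{Tel}(A;\mathbf{a})\bigr).
\]
Second, apply the tensor-evaluation isomorphism of Proposition \ref{prop:eval} with $K := \opn{Tel}(A;\mathbf{a})$, legitimate because $M \in \cat{D}^{-}_{\mrm{f}}(A)$, $N \in \cat{D}^{+}(A)$ and $\uflatdim_A(K) = 0$:
\[
\mrm{R}\opn{Hom}_A(M,N) \otimes^{\mrm{L}}_A \opn{Tel}(A;\mathbf{a}) \iso \mrm{R}\opn{Hom}_A\bigl(M, N \otimes^{\mrm{L}}_A \opn{Tel}(A;\mathbf{a})\bigr).
\]
Third, rewrite the left-hand side of this display using (\ref{eqn:RGammaIso}) again:
\[
\mrm{R}\opn{Hom}_A(M,N) \otimes^{\mrm{L}}_A \opn{Tel}(A;\mathbf{a}) \cong \mrm{R}\Gamma_{\a}\bigl(\mrm{R}\opn{Hom}_A(M,N)\bigr).
\]
Composing the three isomorphisms gives the required one, and since each step is bifunctorial in $M$ and $N$, so is the composition.

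I do not expect any step to be a serious obstacle: the finiteness hypothesis on $M$ and the boundedness below of $N$ are exactly what makes tensor-evaluation available, and the telescope model supplies a finitely-flat representative for $\mrm{R}\Gamma_{\a}$ so the same sequence $\mathbf{a}$ can be used simultaneously in both occurrences of $\mrm{R}\Gamma_{\a}$. The only point worth double-checking is that the resulting isomorphism does not depend on the choice of $\mathbf{a}$; this follows because (\ref{eqn:RGammaIso}) is itself a natural isomorphism of functors and the tensor-evaluation morphism of Proposition \ref{prop:eval} is functorial in all three arguments, so any two choices of $\mathbf{a}$ yield canonically isomorphic composites.
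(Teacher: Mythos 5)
Your proposal is correct and follows essentially the same route as the paper's own proof: replace both occurrences of $\mrm{R}\Gamma_{\a}$ by $\opn{Tel}(A;\mathbf{a})\otimes^{\mrm{L}}_A(-)$ via (\ref{eqn:RGammaIso}), then apply the tensor-evaluation isomorphism of Proposition \ref{prop:eval} with $K = \opn{Tel}(A;\mathbf{a})$. Your added remarks on why $\uflatdim_A(\opn{Tel}(A;\mathbf{a}))<\infty$ and why the composite is independent of the chosen generating sequence $\mathbf{a}$ are correct and make the argument slightly more explicit than the paper's.
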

\begin{proof}
Let $\mathbf{a}$ be a finite sequence of elements of $A^0$ whose image in $\mrm{H}^0(A)$ generates $\a$.
Then by (\ref{eqn:diag-of-rgamma}), we have that
\[
\mrm{R}\opn{Hom}_A(M,\mrm{R}\Gamma_{\a}(N)) \cong \mrm{R}\opn{Hom}_A(M,N\otimes_A \opn{Tel}(A;\mathbf{a})).
\]
By Proposition \ref{prop:eval},
we have a natural isomorphism
\[
\mrm{R}\opn{Hom}_A(M,N\otimes_A \opn{Tel}(A;\mathbf{a})) \cong \mrm{R}\opn{Hom}_A(M,N)\otimes_A \opn{Tel}(A;\mathbf{a}),
\]
so the result follows from applying (\ref{eqn:diag-of-rgamma}) again.
\end{proof}

\begin{prop}\label{prop:rgammaofdual}
Let $(A,\m,\k)$ be a noetherian local DG-ring,
and let $R$ be a normalized dualizing DG-module over $A$.
Then
\[
\mrm{R}\Gamma_{\m}(R) \cong E(A,\m).
\]
\end{prop}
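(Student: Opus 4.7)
The plan is to reduce the statement to classical Grothendieck local duality on the noetherian local ring $\mrm{H}^0(A)$, using the compatibility of $\mrm{R}\Gamma_{\m}$ with the functor $\RHom_A(\mrm{H}^0(A),-)$ recorded in Proposition \ref{lem:rgammaofrhom} together with the recognition principle for objects of $\INJ(A)$ given by Proposition \ref{prop:INJCHAR}. Set $M := \mrm{R}\Gamma_{\m}^A(R)$; I will show that $M$ lies in $\INJ(A)$ and that $\mrm{H}^0(M) \cong E(\mrm{H}^0(A),\m)$, which identifies $M$ with $E(A,\m)$.

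First I would verify that $M \in \cat{D}^{+}(A)$. Using the telescope formula (\ref{eqn:RGammaIso}), choose a finite sequence $\mathbf{a}$ of elements of $A^0$ whose images in $\mrm{H}^0(A)$ generate $\m$; then $M \cong R \otimes_A \opn{Tel}(A;\mathbf{a})$. Since $\opn{Tel}(A;\mathbf{a})$ is a bounded K-projective complex of flat $A$-modules concentrated in degrees $0$ and $1$, and $R \in \cat{D}^{+}(A)$, this tensor product remains in $\cat{D}^{+}(A)$. Next, Proposition \ref{lem:rgammaofrhom} supplies a natural isomorphism
\[
\RHom_A\bigl(\mrm{H}^0(A),M\bigr) \;\cong\; \mrm{R}\Gamma_{\m}^{\mrm{H}^0(A)}\bigl(\RHom_A(\mrm{H}^0(A),R)\bigr).
\]
As recalled in the paragraph preceding the statement, $\bar R := \RHom_A(\mrm{H}^0(A),R)$ is a dualizing complex over the local noetherian ring $\mrm{H}^0(A)$, and the hypothesis that $R$ is \emph{normalized} is by definition the assertion that $\bar R$ is a normalized dualizing complex over $\mrm{H}^0(A)$. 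Classical Grothendieck local duality for $(\mrm{H}^0(A),\m,\k)$ then gives $\mrm{R}\Gamma_{\m}^{\mrm{H}^0(A)}(\bar R) \cong E(\mrm{H}^0(A),\m)$.

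Combining these isomorphisms yields $\RHom_A(\mrm{H}^0(A),M) \cong E(\mrm{H}^0(A),\m)$, an injective $\mrm{H}^0(A)$-module concentrated in degree $0$. By Proposition \ref{prop:INJCHAR}, $M \in \INJ(A)$; and then by Proposition \ref{prop:RHomisHZ} we obtain $\mrm{H}^0(M) \cong \RHom_A(\mrm{H}^0(A),M) \cong E(\mrm{H}^0(A),\m)$. By the defining property of $E(A,\m)$ together with the equivalence of categories of Theorem \ref{thm:eqv}, we conclude that $M \cong E(A,\m)$, as required. No step presents a genuine obstacle: the entire argument is a routine assembly of tools already developed in the paper, with the only substantive input being classical local duality applied at the $\mrm{H}^0(A)$ level.
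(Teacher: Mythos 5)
Your argument is correct and follows essentially the same route as the paper's own proof: apply Proposition \ref{lem:rgammaofrhom} to move $\mrm{R}\Gamma_{\m}$ across $\RHom_A(\mrm{H}^0(A),-)$, invoke classical local duality over $\mrm{H}^0(A)$ (the paper cites \cite[Proposition V.6.1]{RD}) together with the normalization hypothesis, and then use Proposition \ref{prop:INJCHAR} to recognize the result as $E(A,\m)$. You have merely spelled out the boundedness check and the final identification via Proposition \ref{prop:RHomisHZ} and Theorem \ref{thm:eqv}, which the paper leaves implicit; the only tiny imprecision is that $\opn{Tel}(A;\mathbf{a})$ for a sequence of length $n$ is concentrated in degrees $0,\dots,n$, not just $0,1$, but since it is bounded either way the conclusion $M\in\cat{D}^{+}(A)$ stands.
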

\begin{proof}
By Proposition \ref{lem:rgammaofrhom},
\[
\mrm{R}\opn{Hom}_A(\mrm{H}^0(A),\mrm{R}\Gamma^A_{\m}(R)) \cong \mrm{R}\Gamma^{\mrm{H}^0(A)}_{\m}(\mrm{R}\opn{Hom}_A(\mrm{H}^0(A),R)),
\]
and by the normalization assumption and \cite[Proposition V.6.1]{RD},
the latter is isomorphic to $E(\mrm{H}^0(A),\m)$,
so Proposition \ref{prop:INJCHAR} implies the result.
\end{proof}

The next result is a DG-generalization of Grothendieck's local duality:

\begin{thm}\label{thm:localdual}
Let $(A,\m,\k)$ be a local noetherian DG-ring,
and let $R$ be a normalized dualizing DG-module over $A$.
Let $E := E(A,\m) \in \INJ(A)$.
\begin{enumerate}
\item For every $M \in \cat{D}^{+}_{\mrm{f}}(A)$,
there is an isomorphism
\[
\mrm{R}\Gamma_{\m}(M) \cong \mrm{R}\opn{Hom}_A(\mrm{R}\opn{Hom}_A(M,R),E)
\]
in $\cat{D}(A)$ which is functorial in $M$.
\item
For every $M \in \cat{D}^{+}_{\mrm{f}}(A)$, and any $n \in \mathbb{Z}$, there is an isomorphism
\[
\mrm{H}_{\m}^n(M) := \mrm{H}^n\left(\mrm{R}\Gamma_{\m}(M)\right) \cong \opn{Hom}_{\mrm{H}^0(A)}\left(\Ext^{-n}_A(M,R),\mrm{H}^0(E)\right)
\]
in $\opn{Mod}(\mrm{H}^0(A))$ which is functorial in $M$.
\end{enumerate}
\end{thm}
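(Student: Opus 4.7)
The plan is to follow the classical strategy for Grothendieck's local duality, now pushed through the DG machinery developed earlier in the paper. The three key ingredients are biduality for the dualizing DG-module $R$, the compatibility of $\mrm{R}\Gamma_{\m}$ with $\mrm{R}\opn{Hom}_A$ recorded in Proposition \ref{prop:rgamhomfinite}, and the identification $\mrm{R}\Gamma_{\m}(R) \cong E$ from Proposition \ref{prop:rgammaofdual}. Part (2) will then follow by taking cohomology and applying Theorem \ref{thm:coho-of-rhom}(1).

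To prove (1), fix $M \in \cat{D}^{+}_{\mrm{f}}(A)$. The biduality property of the dualizing DG-module gives
\[
M \cong \mrm{R}\opn{Hom}_A(\mrm{R}\opn{Hom}_A(M,R),R).
\]
Apply $\mrm{R}\Gamma_{\m}$ to both sides. On the right, I want to move $\mrm{R}\Gamma_{\m}$ past the outer $\mrm{R}\opn{Hom}_A$ using Proposition \ref{prop:rgamhomfinite}, which requires $\mrm{R}\opn{Hom}_A(M,R) \in \cat{D}^{-}_{\mrm{f}}(A)$; boundedness above of this object is immediate from $\injdim_A(R) < \infty$, and finite generation of its cohomologies is standard for dualizing DG-modules (cf.~\cite{FIJ}). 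Since $R \in \cat{D}^{+}(A)$, Proposition \ref{prop:rgamhomfinite} applies and yields
\[
\mrm{R}\Gamma_{\m}(M) \cong \mrm{R}\opn{Hom}_A(\mrm{R}\opn{Hom}_A(M,R),\mrm{R}\Gamma_{\m}(R)).
\]
Proposition \ref{prop:rgammaofdual} replaces $\mrm{R}\Gamma_{\m}(R)$ by $E$, producing the desired isomorphism. Functoriality in $M$ is inherited from the functoriality of each of biduality, Proposition \ref{prop:rgamhomfinite}, and Proposition \ref{prop:rgammaofdual}.

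For part (2), apply $\mrm{H}^n$ to the isomorphism of (1). Because $E \in \INJ(A)$, Theorem \ref{thm:coho-of-rhom}(1) applied with the DG-module $\mrm{R}\opn{Hom}_A(M,R)$ in the first argument gives a functorial isomorphism
\[
\mrm{H}^n\!\left(\mrm{R}\opn{Hom}_A(\mrm{R}\opn{Hom}_A(M,R),E)\right) \cong \opn{Hom}_{\mrm{H}^0(A)}\!\left(\mrm{H}^{-n}(\mrm{R}\opn{Hom}_A(M,R)),\mrm{H}^0(E)\right),
\]
and the right hand side is $\opn{Hom}_{\mrm{H}^0(A)}(\Ext^{-n}_A(M,R),\mrm{H}^0(E))$ by the very definition of $\Ext$.

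The main obstacle I expect is the verification that $\mrm{R}\opn{Hom}_A(M,R) \in \cat{D}^{-}_{\mrm{f}}(A)$, which is needed in order to legitimately invoke Proposition \ref{prop:rgamhomfinite}. Over ordinary noetherian rings this is classical, but over noetherian DG-rings it requires a way-out argument reducing to the case $M \in \cat{D}^{\mrm{b}}_{\mrm{f}}(A)$, together with the preservation of finite generation by the duality functor associated to $R$. Once this technical ingredient is in hand, the rest of the proof reduces to a short chain of previously established isomorphisms.
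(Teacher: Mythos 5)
Your proof is correct and follows essentially the same route as the paper: biduality for $R$, Proposition \ref{prop:rgamhomfinite} to commute $\mrm{R}\Gamma_{\m}$ past the outer $\mrm{R}\opn{Hom}_A$, Proposition \ref{prop:rgammaofdual} to identify $\mrm{R}\Gamma_{\m}(R)$ with $E$, and Theorem \ref{thm:coho-of-rhom} for part (2). The one step you flag as a potential gap, that $\mrm{R}\opn{Hom}_A(M,R) \in \cat{D}^{-}_{\mrm{f}}(A)$, is handled in the paper simply by citing \cite[Proposition 7.2]{Ye2}, so no way-out argument needs to be reconstructed.
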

\begin{proof}
By Proposition \ref{prop:rgammaofdual}, we have that
\[
\mrm{R}\opn{Hom}_A(\mrm{R}\opn{Hom}_A(M,R),E) \cong \mrm{R}\opn{Hom}_A(\mrm{R}\opn{Hom}_A(M,R),\mrm{R}\Gamma_{\m}(R)).
\]  
Since $M \in \cat{D}^{+}_{\mrm{f}}(A)$, 
by \cite[Proposition 7.2]{Ye2},
we have that $\mrm{R}\opn{Hom}_A(M,R) \in \cat{D}^{-}_{\mrm{f}}(A)$.
Hence, by Proposition \ref{prop:rgamhomfinite}, we obtain:
\[
\mrm{R}\opn{Hom}_A(\mrm{R}\opn{Hom}_A(M,R),\mrm{R}\Gamma_{\m}(R)) \cong
\mrm{R}\Gamma_{\m}\left(\mrm{R}\opn{Hom}_A(\mrm{R}\opn{Hom}_A(M,R),R)\right).
\]
Since $R$ is a dualizing DG-module,
the latter is naturally isomorphic to $\mrm{R}\Gamma_{\m}(M)$, proving (1).
To obtain (2), simply apply the functor $\mrm{H}^n$ to (1),
and use Theorem \ref{thm:coho-of-rhom} which holds because $E \in \INJ(A)$.
\end{proof}

\begin{rem}
The recent paper \cite{BHV} made a detailed study of local duality in an abstract framework.
It is not clear to us if the above result which is a concrete result in a rather abstract setup can be deduced from the results of that paper.
\end{rem}

\begin{cor}
Let $(A,\m,\k)$ be a local noetherian DG-ring which has a dualizing DG-module.
Then for every $M \in \cat{D}^{+}_{\mrm{f}}(A)$ and any $n \in \mathbb{Z}$,
the $\mrm{H}^0(A)$-module $\mrm{H}_{\m}^n(M)$ is artinian.
\end{cor}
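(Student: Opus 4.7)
The plan is to deduce the corollary directly from the local duality theorem (Theorem \ref{thm:localdual}) together with classical Matlis duality over the noetherian local ring $\mrm{H}^0(A)$. First, since $A$ admits some dualizing DG-module by hypothesis, by shifting it appropriately we may assume without loss of generality that $A$ has a normalized dualizing DG-module $R$, because shifting does not affect whether a DG-module is dualizing and local cohomology with a shift on one side translates into a shift on the other side of the duality.

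Next, I would apply Theorem \ref{thm:localdual}(2) with this $R$ and with $E := E(A,\m)$. This yields, for every $M \in \cat{D}^{+}_{\mrm{f}}(A)$ and every $n \in \mathbb{Z}$, an isomorphism
\[
\mrm{H}_{\m}^n(M) \cong \opn{Hom}_{\mrm{H}^0(A)}\!\left(\Ext^{-n}_A(M,R),\mrm{H}^0(E)\right)
\]
of $\mrm{H}^0(A)$-modules. By the very definition of $E(A,\m)$, we have $\mrm{H}^0(E) = E(\mrm{H}^0(A),\m)$, which is the Matlis injective of the noetherian local ring $\mrm{H}^0(A)$.

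It remains to argue that the module being dualized, namely $\Ext^{-n}_A(M,R) = \mrm{H}^{-n}\!\left(\mrm{R}\opn{Hom}_A(M,R)\right)$, is finitely generated over $\mrm{H}^0(A)$. This is precisely the content of \cite[Proposition 7.2]{Ye2}, quoted in the proof of Theorem \ref{thm:localdual}: for $M \in \cat{D}^{+}_{\mrm{f}}(A)$ and $R$ a dualizing DG-module, one has $\mrm{R}\opn{Hom}_A(M,R) \in \cat{D}^{-}_{\mrm{f}}(A)$, so each of its cohomologies is finitely generated over $\mrm{H}^0(A)$.

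Finally, I invoke classical Matlis duality over the noetherian local ring $\mrm{H}^0(A)$: the functor $\opn{Hom}_{\mrm{H}^0(A)}(-,E(\mrm{H}^0(A),\m))$ sends any finitely generated $\mrm{H}^0(A)$-module to an artinian $\mrm{H}^0(A)$-module (see e.g.\ \cite[Theorem 18.6]{Ma} or the corresponding standard reference). Applying this to the finitely generated module $\Ext^{-n}_A(M,R)$ and using the displayed isomorphism above, we conclude that $\mrm{H}_{\m}^n(M)$ is artinian as an $\mrm{H}^0(A)$-module. The main conceptual step is the local duality theorem itself, which has already been established; the present corollary is a clean application, with the only genuinely non-trivial input being the finiteness statement for $\mrm{R}\opn{Hom}_A(M,R)$ quoted from \cite{Ye2}.
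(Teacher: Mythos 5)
Your proof is correct, and it is exactly the argument the paper intends: the corollary is stated in the paper without proof as an immediate consequence of Theorem \ref{thm:localdual}, and your chain of reasoning (normalize $R$ by shifting, apply local duality, use \cite[Proposition 7.2]{Ye2} to get finiteness of $\Ext^{-n}_A(M,R)$, then apply Matlis duality over $\mrm{H}^0(A)$) is precisely the obvious route. One small slip: your citation \cite[Theorem 18.6]{Ma} points to the wrong source --- \cite{Ma} in this paper is Matlis's 1958 article, which has no Theorem 18.6; you likely mean Theorem 18.6 of Matsumura's \emph{Commutative Ring Theory}. The mathematical fact you invoke (over a noetherian local ring, $\opn{Hom}(-, E(\k))$ takes finitely generated modules to artinian ones, since a finitely generated module embeds its dual into a finite power of the artinian module $E(\k)$) is correct and standard.
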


\begin{cor}
Let $(A,\m,\k)$ be a local noetherian DG-ring with bounded cohomology,
and let $R$ be a dualizing DG-module over $A$.
Then 
\[
\amp\left(\mrm{R}\Gamma_{\m}(A)\right) = \amp(R).
\]
\end{cor}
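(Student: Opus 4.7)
The plan is to apply local duality directly to $M = A$ and then translate the resulting Hom-cohomology into a statement about $R$. Since $\amp(-)$ is invariant under shifts and every dualizing DG-module is a shift of a normalized one, I will assume without loss of generality that $R$ is normalized. Because $A$ has bounded cohomology and is commutative noetherian, one has $A \in \cat{D}^{\mrm{b}}_{\mrm{f}}(A) \subseteq \cat{D}^{+}_{\mrm{f}}(A)$, so Theorem \ref{thm:localdual}(1) specializes at $M = A$ to
\[
\mrm{R}\Gamma_{\m}(A) \cong \mrm{R}\opn{Hom}_A\bigl(\mrm{R}\opn{Hom}_A(A,R),\, E\bigr) = \mrm{R}\opn{Hom}_A(R,E),
\]
where $E = E(A,\m) \in \INJ(A)$. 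Since amplitude depends only on which cohomologies vanish, it will suffice to determine the set of $n$ for which $\mrm{H}^n(\mrm{R}\opn{Hom}_A(R,E)) \ne 0$.

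Next, because $E \in \INJ(A)$, Theorem \ref{thm:coho-of-rhom}(1) gives a natural isomorphism
\[
\mrm{H}^n\bigl(\mrm{R}\opn{Hom}_A(R,E)\bigr) \cong \opn{Hom}_{\mrm{H}^0(A)}\bigl(\mrm{H}^{-n}(R),\,\mrm{H}^0(E)\bigr),
\]
and by definition $\mrm{H}^0(E) = E(\mrm{H}^0(A),\m)$ is the injective hull of the residue field $\k$ over the local ring $(\mrm{H}^0(A),\m,\k)$. So the desired amplitude equality will follow once I verify that the right-hand side is nonzero precisely when $\mrm{H}^{-n}(R) \ne 0$.

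For that last step I use a standard Matlis/Nakayama argument. Since $R$ is a dualizing DG-module over a noetherian DG-ring with bounded cohomology, $R \in \cat{D}^{\mrm{b}}_{\mrm{f}}(A)$, so each $\mrm{H}^{-n}(R)$ is a finitely generated $\mrm{H}^0(A)$-module. For a nonzero finitely generated $N$ over the local ring $(\mrm{H}^0(A),\m,\k)$, Nakayama forces $N/\m N \ne 0$, so the composition
\[
N \surj N/\m N \to \k \inj E(\mrm{H}^0(A),\m)
\]
produces a nonzero element of $\opn{Hom}_{\mrm{H}^0(A)}(N,\mrm{H}^0(E))$; the converse direction is trivial. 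Applying this with $N = \mrm{H}^{-n}(R)$ yields $\sup(\mrm{R}\Gamma_{\m}(A)) = -\inf(R)$ and $\inf(\mrm{R}\Gamma_{\m}(A)) = -\sup(R)$, hence $\amp(\mrm{R}\Gamma_{\m}(A)) = \sup(R)-\inf(R) = \amp(R)$. The step most likely to require care is confirming the finiteness $R \in \cat{D}^{\mrm{b}}_{\mrm{f}}(A)$ in the paper's conventions, since the Nakayama step really needs $\mrm{H}^{-n}(R)$ to be finitely generated; beyond that, the proof is a direct splicing of Theorem \ref{thm:localdual} with Theorem \ref{thm:coho-of-rhom}.
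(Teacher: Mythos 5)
Your proof is correct and reaches the same key isomorphism the paper relies on, namely
\[
\mrm{H}^n\bigl(\mrm{R}\Gamma_{\m}(A)\bigr) \cong \opn{Hom}_{\mrm{H}^0(A)}\bigl(\mrm{H}^{-n}(R),\mrm{H}^0(E)\bigr),
\]
though you reach it through Theorem~\ref{thm:localdual}(1) together with Theorem~\ref{thm:coho-of-rhom}(1) rather than quoting Theorem~\ref{thm:localdual}(2) directly; that is only a presentational difference, since part~(2) is obtained from part~(1) precisely by Theorem~\ref{thm:coho-of-rhom}. Where you genuinely diverge from the paper is the final nonvanishing step. You invoke Nakayama, which forces you to establish that $\mrm{H}^{-n}(R)$ is finitely generated, and you rightly flag this as a point needing care (it does follow: $A \in \cat{D}^{\mrm{b}}_{\mrm{f}}(A)$ by the bounded noetherian hypothesis, so $R \cong \mrm{R}\opn{Hom}_A(A,R) \in \cat{D}^{-}_{\mrm{f}}(A)$ by \cite[Proposition 7.2]{Ye2}, and $R \in \cat{D}^{+}(A)$ by definition of a dualizing DG-module). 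The paper avoids this entirely by observing that $\mrm{H}^0(E) = E(\mrm{H}^0(A),\m)$ is the minimal injective cogenerator of $\opn{Mod}(\mrm{H}^0(A))$, so $\opn{Hom}_{\mrm{H}^0(A)}(N,\mrm{H}^0(E)) \ne 0$ for \emph{any} nonzero $N$, finitely generated or not. The cogenerator argument is both shorter and more robust; your Nakayama route buys nothing extra here and introduces an unnecessary finiteness dependency, but it is sound once the finiteness of $R$ is pinned down.
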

\begin{proof}
Since the amplitude of a DG-module does not change under the translation functor, 
we may assume without loss of generality that $R$ is a normalized dualizing DG-module.
The assumption that $A$ has bounded cohomology implies that $A \in \cat{D}^{+}_{\mrm{f}}(A)$.
Hence, by the local duality theorem,
for each $n \in \mathbb{Z}$ we have that
\[
\mrm{H}^n\left(\mrm{R}\Gamma_{\m}(A)\right) \cong \opn{Hom}_{\mrm{H}^0(A)}(\mrm{H}^{-n}(R),\mrm{H}^0(E)).
\]
Since $\mrm{H}^0(E)$ is a cogenerator of $\opn{Mod}(\mrm{H}^0(A))$,
we deduce that $\mrm{H}^n\left(\mrm{R}\Gamma_{\m}(A)\right) = 0$ if and only if $\mrm{H}^{-n}(R) = 0$,
hence the equality.
\end{proof}

\appendix

\section{Proposition \ref{prop:EisOverHat} in the unbounded case}\label{theAppendix}

The purpose of this appendix is to prove Proposition \ref{prop:EisOverHat} without the assumption that $\mrm{H}(A)$ is bounded.
Let us recall the statement:

\begin{prop}
Let $(A,\m)$ be a commutative noetherian local DG-ring,
and let $\widehat{A} := \mrm{L}\Lambda(A,\m)$ be its derived $\m$-adic completion.
Denote by $\widehat{\m}$ the maximal ideal of $\mrm{H}^0(\widehat{A})$,
and by $Q:\cat{D}(\widehat{A}) \to \cat{D}(A)$ the forgetful functor.
\begin{enumerate}
\item There is an isomorphism
\[
Q(E(\widehat{A},\widehat{\m})) \cong E(A,\m).
\]
in $\cat{D}(A)$.
\item There is an isomorphism in $\cat{Ho}(\cat{DGR})$ between the following derived endomorphism DG-rings:
\[
\mrm{R}\opn{Hom}_A(E(A,\m),E(A,\m)) \cong \mrm{R}\opn{Hom}_{\widehat{A}}(E(\widehat{A},\widehat{\m}),E(\widehat{A},\widehat{\m})).
\]
\end{enumerate}
\end{prop}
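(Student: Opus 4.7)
The strategy is to mirror the bounded-case argument, with the key substitution that the identity $\mrm{H}^0(A) \otimes^{\mrm{L}}_A \widehat{A} \cong \mrm{H}^0(\widehat{A})$ used there (which was only available via $\bflatdim_A(\widehat{A}) = 0$ in the bounded setting) is replaced by the always-valid identity $\mrm{L}\Lambda^A_{\m}(\mrm{H}^0(A)) \cong \mrm{H}^0(\widehat{A})$ in $\cat{D}(A)$. I first prove the latter via the telescope complex: picking a finite sequence $\mathbf{a} \subseteq A^0$ whose image $\bar{\mathbf{a}}$ generates $\m$, one has $\mrm{L}\Lambda^A_{\m}(\mrm{H}^0(A)) \cong \mrm{R}\opn{Hom}_A(\opn{Tel}(A;\mathbf{a}), \mrm{H}^0(A))$, and tensor-Hom adjunction together with the base-change isomorphism $\opn{Tel}(A;\mathbf{a}) \otimes^{\mrm{L}}_A \mrm{H}^0(A) \cong \opn{Tel}(\mrm{H}^0(A);\bar{\mathbf{a}})$ reduces this to $\mrm{L}\Lambda^{\mrm{H}^0(A)}_{\m}(\mrm{H}^0(A)) = \widehat{\mrm{H}^0(A)} = \mrm{H}^0(\widehat{A})$.

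For (1), set $E := \mrm{R}\widehat{\Gamma}_{\m}(E(A,\m))$ as in the bounded case; Proposition \ref{prop:lcofinj} combined with the identity $Q \circ \mrm{R}\widehat{\Gamma}_{\m} \cong \mrm{R}\Gamma_{\m}$ immediately gives $Q(E) \cong E(A,\m)$, and in particular $\mrm{H}^0(E) = E(\mrm{H}^0(A),\m)$, which viewed as an $\mrm{H}^0(\widehat{A})$-module is exactly $E(\mrm{H}^0(\widehat{A}), \widehat{\m})$. It remains to verify $E \in \INJ(\widehat{A})$; by Proposition \ref{prop:INJCHAR} this reduces to showing $\mrm{R}\opn{Hom}_{\widehat{A}}(\mrm{H}^0(\widehat{A}), E)$ is an injective $\mrm{H}^0(\widehat{A})$-module concentrated in degree $0$. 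Since $E$ is $\widehat{\m}$-torsion over $\widehat{A}$ (Proposition \ref{prop:lcofinj} applied to $\widehat{A}$), and the lifted MGM machinery of \cite[Section 5]{Sh2} makes $Q$ a fully faithful embedding of $\cat{D}_{\widehat{\m}-\opn{tor}}(\widehat{A})$ into $\cat{D}_{\m-\opn{tor}}(A)$, the Greenlees--May chain of isomorphisms together with the Step~1 identification of $\mrm{H}^0(\widehat{A})$ as $\mrm{L}\Lambda^A_{\m}(\mrm{H}^0(A))$ transfers this computation to $\mrm{R}\opn{Hom}_A(\mrm{H}^0(A), E(A,\m))$, which equals $E(\mrm{H}^0(A),\m)$ in degree zero because $E(A,\m) \in \INJ(A)$.

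For (2), the natural DG-ring homomorphism $\mrm{R}\opn{Hom}_{\widehat{A}}(E(\widehat{A}, \widehat{\m}), E(\widehat{A}, \widehat{\m})) \to \mrm{R}\opn{Hom}_A(E(A,\m), E(A,\m))$ induced by $Q$ is a quasi-isomorphism by the same full faithfulness of $Q$ on torsion subcategories, applied across all shifts of $E(\widehat{A}, \widehat{\m})$. The main obstacle---and the reason this argument is relegated to the appendix---is the careful bookkeeping in Step~3 of (1): one must organize the Greenlees--May manipulations around $\mrm{L}\Lambda^A_{\m}$ rather than around an unavailable flatness fact, and verify that all the natural identifications commute with the $\mrm{H}^0(\widehat{A})$-module structure (not merely the $\mrm{H}^0(A)$-module structure), which is where the proof becomes significantly more technical than the clean bounded-case argument given in the main text.
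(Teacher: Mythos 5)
Your Step 1 identity $\mrm{L}\Lambda^A_{\m}(\mrm{H}^0(A)) \cong \mrm{H}^0(\widehat{A})$ is correct and is indeed exactly what the paper uses in place of the flatness fact from the bounded-case proof (it appears in the appendix as "Since there is an isomorphism $\mrm{L}\Lambda_{\m}(\mrm{H}^0(A)) \cong \mrm{H}^0(\widehat{A})$, we deduce by the Greenlees-May duality\dots"). However, the remainder of the argument has a genuine gap and also departs from the paper in a way that is not clearly justified.

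First, the central load-bearing claim of your proposal --- that the "lifted MGM machinery of [Sh2, Section 5] makes $Q$ a fully faithful embedding of $\cat{D}_{\widehat{\m}-\opn{tor}}(\widehat{A})$ into $\cat{D}_{\m-\opn{tor}}(A)$" --- is not a statement that appears in [Sh2] in this generality for DG-rings, and the paper's appendix proof is structured precisely to \emph{avoid} needing it. Instead of a global full-faithfulness statement, the paper assembles the transfer from $\widehat{A}$ to $A$ out of specific results: [Sh2, Proposition 5.8] (about $\mrm{L}\widehat{\Lambda}$), [Sh2, Lemma 5.3], [Sh2, Proposition 2.17], together with the two new Lemmas \ref{lem:rgforget} and \ref{lem:llambdad} proved in the appendix. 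If you want to use torsion full faithfulness you would have to prove it; it is not a citation you can make here. Moreover, even granting it, $\mrm{H}^0(\widehat{A})$ is not torsion, so an extra Greenlees--May move is needed before it would even apply, and you neither write out this move nor the subsequent chain that would land on $\mrm{R}\opn{Hom}_A(\mrm{H}^0(A),E(A,\m))$.

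Second, your definition $E := \mrm{R}\widehat{\Gamma}_{\m}(E(A,\m))$ is the one from the bounded-case proof, whereas the appendix proof deliberately uses $E := \mrm{R}\Gamma^{\widehat{A}}_{\widehat{\m}}\bigl(\mrm{L}\widehat{\Lambda}_{\m}(E(A,\m))\bigr)$. The concluding remark of the appendix explains exactly why: the identity $\mrm{R}\Gamma_{\widehat{\a}}\mrm{L}\widehat{\Lambda}_{\a}(-) \cong \mrm{R}\widehat{\Gamma}_{\a}(-)$ that would make the two definitions agree is known over noetherian rings ([ShAdic, Theorem 2.4]) but has not been established over DG-rings. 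Your choice of $E$ therefore either tacitly invokes this open generalization or requires a computation of $\mrm{R}\opn{Hom}_{\widehat{A}}(\mrm{H}^0(\widehat{A}),\mrm{R}\widehat{\Gamma}_{\m}(E(A,\m)))$ for which the paper's toolkit (notably Proposition \ref{lem:rgammaofrhom} and [Sh2, Proposition 5.8]) does not directly apply, since those results peel off $\mrm{R}\Gamma^{\widehat{A}}_{\widehat{\m}}$ and $\mrm{L}\widehat{\Lambda}_{\m}$ but not $\mrm{R}\widehat{\Gamma}_{\m}$. Finally, for part (2) the paper's argument is not a one-line consequence of full faithfulness: it requires Lemma \ref{lem:llambdad} (an idempotence statement for the lifted completion functor) and a careful adjunction back-and-forth; and for part (1) you acknowledge but do not resolve the $\mrm{H}^0(\widehat{A})$-linearity issue, which the paper handles by observing that the modules in question are $\m$-torsion. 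As written, the proposal is a plausible outline of the strategy but does not constitute a proof.
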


We shall need the following lemma.
\begin{lem}\label{lem:rgforget}
Let $A$ be a commutative noetherian DG-ring,
and let $\a\subseteq \mrm{H}^0(A)$ be an ideal.
Let $\widehat{A} := \mrm{L}\Lambda(A,\a)$, and let
$\widehat{\a} := \a \cdot \mrm{H}^0(\widehat{A})$.
Denoting by $Q: \cat{D}(\widehat{A}) \to \cat{D}(A)$ the forgetful functor,
there is an isomorphism
\[
Q\circ \mrm{R}\Gamma_{\widehat{\a}}(-) \cong \mrm{R}\Gamma_{\a}\circ Q(-)
\]
of functors $\cat{D}(\widehat{A}) \to \cat{D}(A)$.
\end{lem}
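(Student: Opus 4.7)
The plan is to use the telescope description of local cohomology (equation~\ref{eqn:RGammaIso}), exploiting the fact that the telescope complex is K-flat (being a bounded complex of free modules in each component) and enjoys a base-change property.

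First, using the diagram (\ref{eqn:completion-diag}), I will replace $A$ by the quasi-isomorphic DG-ring $P$ so as to have an actual map of DG-rings $\psi: A \to \widehat{A}$; this does not change any of the relevant derived categories. Let $\mathbf{a}$ be a finite sequence of elements of $A^0$ whose image in $\mrm{H}^0(A)$ generates $\a$, and denote by $\psi(\mathbf{a})$ the corresponding sequence in $\widehat{A}^0$. Since $\mrm{H}^0(\widehat{A}) = \Lambda_{\a}(\mrm{H}^0(A))$ in the noetherian case, the image of $\psi(\mathbf{a})$ in $\mrm{H}^0(\widehat{A})$ generates $\widehat{\a}$.

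Next, for any $M \in \cat{D}(\widehat{A})$, the isomorphism (\ref{eqn:RGammaIso}) applied over $\widehat{A}$ gives
\[
\mrm{R}\Gamma_{\widehat{\a}}(M) \cong \opn{Tel}(\widehat{A};\psi(\mathbf{a})) \otimes_{\widehat{A}} M.
\]
By the base-change property of the telescope complex recalled in Section~\ref{sec:prelcomm}, there is a natural isomorphism
\[
\opn{Tel}(\widehat{A};\psi(\mathbf{a})) \cong \opn{Tel}(A;\mathbf{a}) \otimes_A \widehat{A}
\]
of DG-modules over $\widehat{A}$. Plugging this in and using associativity of the tensor product:
\[
Q\left(\mrm{R}\Gamma_{\widehat{\a}}(M)\right) \cong Q\left(\opn{Tel}(A;\mathbf{a}) \otimes_A \widehat{A} \otimes_{\widehat{A}} M\right) \cong \opn{Tel}(A;\mathbf{a}) \otimes_A Q(M).
\]
In the last step I use that the forgetful functor $Q$ commutes with tensor products over $A$ on the nose, since on underlying DG-$A$-modules the expression $\widehat{A} \otimes_{\widehat{A}} M$ is just $Q(M)$. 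Because $\opn{Tel}(A;\mathbf{a})$ is K-flat, the underived tensor product computes the derived one, so the right-hand side is naturally isomorphic to $\mrm{R}\Gamma_{\a}(Q(M))$ by another application of (\ref{eqn:RGammaIso}).

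Finally, naturality in $M$ follows from naturality of each step (the telescope model of $\mrm{R}\Gamma$ is functorial in $M$, the base-change isomorphism is natural, and associativity of tensor products is natural), so the constructed isomorphism is one of functors $\cat{D}(\widehat{A}) \to \cat{D}(A)$. The main technical subtlety is the first step, keeping track of the fact that $A \to \widehat{A}$ is only a morphism in $\cat{Ho}(\cat{DGR})$; once one passes to a strict DG-ring model via (\ref{eqn:completion-diag}) the remaining manipulations are purely formal consequences of K-flatness and base change for the telescope complex.
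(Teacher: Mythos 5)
Your proof is correct and follows essentially the same strategy as the paper's: pass to a strict DG-ring model so that $A \to \widehat{A}$ is an actual DG-ring map, then use the telescope model of local cohomology together with the base-change property of the telescope complex and associativity of the tensor product. You supply slightly more detail on why the generators lift and on the role of K-flatness, but the substance of the argument is the same.
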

\begin{proof}
Replacing $A$ by a quasi-isomorphic DG-ring if needed, 
we may assume that $A \to \widehat{A}$ is a DG-ring map.
Let us denote it by $\tau$.
Let $\mathbf{a}$ be a finite sequence of elements of $A^0$ whose image in $\mrm{H}^0(A)$ generates $\a$.
Then the commutative diagram
\[
\xymatrix{
A^0 \ar[r]^{\tau^0}\ar[d] & \widehat{A}^0\ar[d]\\
\mrm{H}^0(A) \ar[r]& \mrm{H}^0(\widehat{A})
}
\]
implies that the image of $\tau^0(\mathbf{a})$ in $\mrm{H}^0(\widehat{A})$ generates $\widehat{\a}$.
Hence, given $M \in \cat{D}(\widehat{A})$, by (\ref{eqn:RGammaIso}) we have:
\[
Q(\mrm{R}\Gamma_{\widehat{\a}}(M)) \cong
Q(\opn{Tel}(A;\mathbf{a})\otimes_A \widehat{A} \otimes_{\widehat{A}} M) \cong
\opn{Tel}(A;\mathbf{a})\otimes_A Q(M),
\]
so using (\ref{eqn:RGammaIso}) we obtain the result.
\end{proof}

We are now ready to prove Proposition \ref{prop:EisOverHat}(1):

\begin{proof}
As in our first proof of Proposition \ref{prop:EisOverHat}, we may assume that $A \to \widehat{A}$ is a map of DG-rings.
Let 
\[
E:= \mrm{R}\Gamma^{\widehat{A}}_{\widehat{\m}} \left(\mrm{L}\widehat{\Lambda}_{\m}(E(A,\m))\right) \in \cat{D}^{+}(\widehat{A}).
\]
By Lemma \ref{lem:rgforget}, we have:
\[
Q(E) \cong \mrm{R}\Gamma^A_{\m}(Q((\mrm{L}\widehat{\Lambda}_{\m}(E(A,\m))))).
\]
From this and (\ref{eqn:whlambda}) we obtain an isomorphism:
\[
Q(E) \cong \mrm{R}\Gamma^A_{\m}\mrm{L}\Lambda^A_{\m}(E(A,\m)),
\]
and by the MGM equivalence,
we have $Q(E) \cong \mrm{R}\Gamma^A_{\m}(E(A,\m))$,
so Proposition \ref{prop:lcofinj} implies that $Q(E) \cong E(A,\m)$.
It remains to show that $E \cong E(\widehat{A},\widehat{\m})$.
To do this, we wish to calculate $\mrm{R}\opn{Hom}_{\widehat{A}}(\mrm{H}^0(\widehat{A}),E)$.
By Proposition \ref{lem:rgammaofrhom} we have:
\[
\mrm{R}\opn{Hom}_{\widehat{A}}(\mrm{H}^0(\widehat{A}),E) \cong
\mrm{R}\Gamma^{\mrm{H}^0(\widehat{A})}_{\widehat{\m}}\left( \mrm{R}\opn{Hom}_{\widehat{A}}(\mrm{H}^0(\widehat{A}),\mrm{L}\widehat{\Lambda}_{\m}(E(A,\m)))\right).
\]
By \cite[Proposition 5.8]{Sh2}, 
there is an isomorphism
\[
\mrm{R}\opn{Hom}_{\widehat{A}}(\mrm{H}^0(\widehat{A}),\mrm{L}\widehat{\Lambda}_{\m}(E(A,\m))) \cong
\mrm{R}\opn{Hom}_A(\mrm{H}^0(\widehat{A}),\mrm{L}\Lambda_{\m}(E(A,\m)))
\]
in $\cat{D}(\mrm{H}^0(\widehat{A}))$.
Let $Q^0:\cat{D}(\mrm{H}^0(\widehat{A})) \to \cat{D}(\mrm{H}^0(A))$ be the forgetful functor.
Then using Lemma \ref{lem:rgforget} and the fact that because $A$ is noetherian it holds that $\mrm{H}^0(\widehat{A})$ is the $\m$-adic completion of $\mrm{H}^0(A)$, we have:
\[
Q^0\left(\mrm{R}\opn{Hom}_{\widehat{A}}(\mrm{H}^0(\widehat{A}),E)\right) \cong
\mrm{R}\Gamma^{\mrm{H}^0(A)}_{\m}\left( 
\mrm{R}\opn{Hom}_A(\mrm{H}^0(\widehat{A}),\mrm{L}\Lambda_{\m}(E(A,\m)))
\right)
\]
Since there is an isomorphism $\mrm{L}\Lambda_{\m}(\mrm{H}^0(A)) \cong \mrm{H}^0(\widehat{A})$,
we deduce by the Greenlees-May duality that
\begin{eqnarray}
\mrm{R}\Gamma^{\mrm{H}^0(A)}_{\m}\left( 
\mrm{R}\opn{Hom}_A(\mrm{H}^0(\widehat{A}),\mrm{L}\Lambda_{\m}(E(A,\m)))
\right)
\cong\nonumber\\
\mrm{R}\Gamma^{\mrm{H}^0(A)}_{\m}\left( 
\mrm{R}\opn{Hom}_A(\mrm{H}^0(A),\mrm{L}\Lambda_{\m}(E(A,\m)))
\right)\nonumber
\end{eqnarray}
By \cite[Proposition 2.17]{Sh2},
there is an isomorphism
\begin{eqnarray}
\mrm{R}\Gamma^{\mrm{H}^0(A)}_{\m}\left( 
\mrm{R}\opn{Hom}_A(\mrm{H}^0(A),\mrm{L}\Lambda_{\m}(E(A,\m)))
\right) \cong\nonumber\\
\mrm{R}\Gamma^{\mrm{H}^0(A)}_{\m} \mrm{L}\Lambda^{\mrm{H}^0(A)}_{\m} \left( 
\mrm{R}\opn{Hom}_A(\mrm{H}^0(A),E(A,\m))
\right),\nonumber
\end{eqnarray}
and by the MGM equivalence, 
the latter is isomorphic to 
\[
\mrm{R}\Gamma^{\mrm{H}^0(A)}_{\m} \left( 
\mrm{R}\opn{Hom}_A(\mrm{H}^0(A),E(A,\m))\right) = E(\mrm{H}^0(A),\m).
\]

We have thus shown that
\[
Q^0\left(\mrm{R}\opn{Hom}_{\widehat{A}}(\mrm{H}^0(\widehat{A}),E)\right) \cong E(\mrm{H}^0(A),\m).
\]
Hence, for all $i \ne 0$, 
we have that
\[
\mrm{H}^i\left(\mrm{R}\opn{Hom}_{\widehat{A}}(\mrm{H}^0(\widehat{A}),E)\right) = 0,
\]
while for $i=0$ we have
\[
Q^0(\mrm{H}^0\left(\mrm{R}\opn{Hom}_{\widehat{A}}(\mrm{H}^0(\widehat{A}),E)\right)) = E(\mrm{H}^0(A),\m) = E(\mrm{H}^0(\widehat{A}),\widehat{\m}).
\]
A-priori, we only know that the isomorphism we have between the $\mrm{H}^0(\widehat{A})$-modules
\[
\mrm{H}^0\left(\mrm{R}\opn{Hom}_{\widehat{A}}(\mrm{H}^0(\widehat{A}),E)\right)
\]
and $E(\mrm{H}^0(\widehat{A}),\widehat{\m})$ is $\mrm{H}^0(A)$-linear,
but since these are $\m$-torsion modules, the isomorphism is automatically $\mrm{H}^0(\widehat{A})$-linear.
Hence, we see that
\[
\mrm{R}\opn{Hom}_{\widehat{A}}(\mrm{H}^0(\widehat{A}),E) \cong E(\mrm{H}^0(\widehat{A}),\widehat{\m}),
\]
which implies that $E \cong E(A,\widehat{\m})$, as claimed.
\end{proof}	

\begin{rem}
Let $A$ be a commutative noetherian ring,
let $\mfrak{a}\subseteq A$ be an ideal,
and let $\widehat{A}:=\Lambda_{\mfrak{a}}(A)$.
Denote by $\widehat{\mfrak{a}} := \mfrak{a}\cdot \widehat{A}$ the ideal of definition of $\widehat{A}$.
By \cite[Theorem 2.4]{ShAdic},
there is an isomorphism 
\[
\mrm{R}\Gamma_{\widehat{\mfrak{a}}}\mrm{L}\widehat{\Lambda}_{\mfrak{a}}(-) \cong \mrm{R}\widehat{\Gamma}_{\mfrak{a}}(-)
\]
of functors $\cat{D}(A) \to \cat{D}(\widehat{A})$.

If this result generalizes to commutative noetherian DG-rings,
then in the above proof we will simply get
\[
E \cong \mrm{R}\widehat{\Gamma}_{\m}(E(A,\m)),
\]
as in our original proof of Proposition \ref{prop:EisOverHat}.
\end{rem}

The next lemma is a DG-version of \cite[Theorem 4.3]{SW}.

\begin{lem}\label{lem:llambdad}
Let $A$ be a commutative noetherian DG-ring,
and let $\a\subseteq \mrm{H}^0(A)$ be an ideal.
Let $\widehat{\a}$ be the ideal of definition of $\widehat{A}$.
Then there is an isomorphism
\[
\mrm{L}\Lambda_{\widehat{\a}} \circ \mrm{L}\widehat{\Lambda}_{\a}(-) \cong \mrm{L}\widehat{\Lambda}_{\a}(-).
\]
of functors $\cat{D}(A) \to \cat{D}(\widehat{A})$.
\end{lem}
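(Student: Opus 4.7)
The plan is to prove $\mrm{L}\Lambda_{\widehat{\a}}\circ \mrm{L}\widehat{\Lambda}_{\a}(-) \cong \mrm{L}\widehat{\Lambda}_{\a}(-)$ as functors $\cat{D}(A)\to \cat{D}(\widehat{A})$ by showing that, for every $M\in \cat{D}(A)$, the canonical completion unit
\[
\eta_{\mrm{L}\widehat{\Lambda}_{\a}(M)}\colon \mrm{L}\widehat{\Lambda}_{\a}(M)\longrightarrow \mrm{L}\Lambda_{\widehat{\a}}\bigl(\mrm{L}\widehat{\Lambda}_{\a}(M)\bigr)
\]
is an isomorphism in $\cat{D}(\widehat{A})$. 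Replacing $A$ by a quasi-isomorphic DG-ring as in diagram (\ref{eqn:completion-diag}), we may assume $\tau\colon A\to \widehat{A}$ is an honest DG-ring homomorphism, so that $Q\colon \cat{D}(\widehat{A})\to \cat{D}(A)$ is the ordinary restriction of scalars. In particular $Q$ is conservative: it is the identity on underlying complexes up to canonical isomorphism, so it detects quasi-isomorphisms. It thus suffices to show that $Q(\eta_{\mrm{L}\widehat{\Lambda}_{\a}(M)})$ is an isomorphism in $\cat{D}(A)$.

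The first main step is a $\mrm{L}\Lambda$-analogue of Lemma \ref{lem:rgforget}, namely a natural isomorphism
\[
Q\circ \mrm{L}\Lambda_{\widehat{\a}}(-)\;\cong\; \mrm{L}\Lambda_{\a}\circ Q(-)
\]
of functors $\cat{D}(\widehat{A})\to \cat{D}(A)$, compatible with the completion units. Pick a finite sequence $\mathbf{a}\subseteq A^0$ whose image in $\mrm{H}^0(A)$ generates $\a$; then $\tau^0(\mathbf{a})$ generates $\widehat{\a}$ in $\mrm{H}^0(\widehat{A})$, and the base-change property of the telescope gives an isomorphism $\opn{Tel}(\widehat{A};\tau(\mathbf{a}))\cong \opn{Tel}(A;\mathbf{a})\otimes_A \widehat{A}$. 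Greenlees-May duality, applied with $M=A$ (resp.\ $M=\widehat{A}$) in the first slot, yields
\[
\mrm{L}\Lambda_{\a}(K) \cong \mrm{R}\opn{Hom}_A(\opn{Tel}(A;\mathbf{a}),K), \qquad \mrm{L}\Lambda_{\widehat{\a}}(N) \cong \mrm{R}\opn{Hom}_{\widehat{A}}(\opn{Tel}(\widehat{A};\tau(\mathbf{a})),N);
\]
combining these with the tensor-hom adjunction $\mrm{R}\opn{Hom}_{\widehat{A}}(L\otimes_A \widehat{A}, N)\cong \mrm{R}\opn{Hom}_A(L, Q(N))$ produces the required identification, and naturality of this adjunction in $L$ applied to the augmentation $\opn{Tel}(A;\mathbf{a})\to A$ ensures that the identification intertwines the two completion units.

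Applying $Q$ to $\eta_{\mrm{L}\widehat{\Lambda}_{\a}(M)}$ and using the isomorphism (\ref{eqn:whlambda}) together with the above compatibility, we identify $Q(\eta_{\mrm{L}\widehat{\Lambda}_{\a}(M)})$ with the canonical completion map
\[
\mrm{L}\Lambda_{\a}(M)\longrightarrow \mrm{L}\Lambda_{\a}(\mrm{L}\Lambda_{\a}(M))
\]
in $\cat{D}(A)$. The two MGM relations $\mrm{R}\Gamma_{\a}\mrm{L}\Lambda_{\a}\cong \mrm{R}\Gamma_{\a}$ and $\mrm{L}\Lambda_{\a}\mrm{R}\Gamma_{\a}\cong \mrm{L}\Lambda_{\a}$ combine formally to give the idempotency $\mrm{L}\Lambda_{\a}\mrm{L}\Lambda_{\a}\cong \mrm{L}\Lambda_{\a}$ with the canonical unit being an isomorphism; conservativity of $Q$ then completes the argument.

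The main obstacle is the naturality assertion embedded in the second step: one needs not merely that $Q\circ \mrm{L}\Lambda_{\widehat{\a}}\cong \mrm{L}\Lambda_{\a}\circ Q$, but that the isomorphism intertwines the two completion units. This is a diagram chase tracing how the augmentations $\opn{Tel}(A;\mathbf{a})\to A$ and $\opn{Tel}(\widehat{A};\tau(\mathbf{a}))\to \widehat{A}$ are related under base change through $\tau$; the chase is routine in spirit but requires care in matching the various adjunction isomorphisms with the naturality of $\mrm{R}\opn{Hom}$ in its first argument.
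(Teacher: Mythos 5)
Your argument is essentially correct but follows a genuinely different route from the paper. The paper proves the lemma by a direct computation: starting from \cite[Lemma 5.3]{Sh2}, which identifies $\mrm{L}\widehat{\Lambda}_{\a}(M)$ with $\mrm{R}\opn{Hom}_A(\widehat{A},\mrm{L}\Lambda_{\a}(M))$, it unwinds $\mrm{L}\Lambda_{\widehat{\a}}\circ \mrm{L}\widehat{\Lambda}_{\a}(M)$ through the telescope-$\mrm{R}\opn{Hom}$ description of completion, applies the base-change property of the telescope and tensor-hom adjunction, and concludes from the idempotency $\opn{Tel}(A;\mathbf{a}) \cong \opn{Tel}(A;\mathbf{a})\otimes_A \opn{Tel}(A;\mathbf{a})$ followed by a second application of \cite[Lemma 5.3]{Sh2}. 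You instead single out the canonical completion unit, use conservativity of $Q$ to reduce to an assertion in $\cat{D}(A)$, establish a $\mrm{L}\Lambda$-analogue of Lemma \ref{lem:rgforget} compatibly with units, and then appeal to the MGM idempotency of $\mrm{L}\Lambda_{\a}$. The underlying facts used (telescope base change, adjunction, idempotency) are the same, but you lift the idempotency of the telescope to the categorical level, which lets you name a specific natural transformation as the witness for the isomorphism of functors — something the paper's proof does not explicitly do. The trade-off is that your argument needs the \emph{strong} form of MGM: not merely that $\mrm{L}\Lambda_{\a}\mrm{L}\Lambda_{\a}\cong\mrm{L}\Lambda_{\a}$, but that the canonical unit $\mrm{L}\Lambda_{\a}\to\mrm{L}\Lambda_{\a}\mrm{L}\Lambda_{\a}$ is an isomorphism. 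The paper's preliminaries state MGM only as the existence of isomorphisms, so you should cite the precise statement (e.g.\ from \cite{PSY1} or \cite{Sh2}) where the canonical morphisms are shown to be isomorphisms on cohomologically complete objects; without that, the step "combine formally" is a genuine gap. You also correctly flag that the compatibility of your identification $Q\circ\mrm{L}\Lambda_{\widehat{\a}}\cong\mrm{L}\Lambda_{\a}\circ Q$ with the completion units is the diagram chase that needs to be carried out; the paper sidesteps this by never introducing the unit and instead computing the composite functor directly, which is arguably the shorter path even if less structured.
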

\begin{proof}
Replacing $A$ by a quasi-isomorphic DG-ring if necessary, 
we may assume that $A \to \widehat{A}$ is a map of DG-rings.
Proceeding as in the proof of Lemma \ref{lem:rgforget},
given $M \in \cat{D}(A)$, 
by \cite[Lemma 5.3]{Sh2},
we have:
\[
\mrm{L}\Lambda_{\widehat{\a}} \circ \mrm{L}\widehat{\Lambda}_{\a}(M) \cong
\mrm{R}\opn{Hom}_{\widehat{A}}(\opn{Tel}(\widehat{A};\tau^0(\mathbf{a})),\mrm{R}\opn{Hom}_A(\widehat{A},\mrm{L}\Lambda_{\a}(M))).
\]
By adjunction and the base change property of the telescope DG-module,
this is isomorphic to
\[
\mrm{R}\opn{Hom}_A(\widehat{A},\mrm{R}\opn{Hom}_A(\opn{Tel}(A;\mathbf{a}),\mrm{L}\Lambda_{\a}(M))),
\]
so the result follows from using \cite[Lemma 5.3]{Sh2} again and the fact that 
\[
\opn{Tel}(A;\mathbf{a}) \cong \opn{Tel}(A;\mathbf{a})\otimes_A \opn{Tel}(A;\mathbf{a}).
\]
\end{proof}
We now prove Proposition \ref{prop:EisOverHat}(2).
\begin{proof}
By our proof of (1), we have that
\begin{eqnarray}
\mrm{R}\opn{Hom}_{\widehat{A}}(E(\widehat{A},\widehat{\m}),E(\widehat{A},\widehat{\m})) = \nonumber\\
\mrm{R}\opn{Hom}_{\widehat{A}}(\mrm{R}\Gamma^{\widehat{A}}_{\widehat{\m}} \left(\mrm{L}\widehat{\Lambda}_{\m}(E(A,\m))\right),\mrm{R}\Gamma^{\widehat{A}}_{\widehat{\m}} \left(\mrm{L}\widehat{\Lambda}_{\m}(E(A,\m))\right)).\nonumber
\end{eqnarray}
By the Greenlees-May duality, 
this is isomorphic to
\[
\mrm{R}\opn{Hom}_{\widehat{A}}(\mrm{L}\Lambda^{\widehat{A}}_{\widehat{\m}} \left(\mrm{L}\widehat{\Lambda}_{\m}(E(A,\m))\right),\mrm{L}\Lambda^{\widehat{A}}_{\widehat{\m}} \left(\mrm{L}\widehat{\Lambda}_{\m}(E(A,\m))\right)).
\]
Lemma \ref{lem:llambdad} implies that the latter is isomorphic to
\[
\mrm{R}\opn{Hom}_{\widehat{A}}(\mrm{L}\widehat{\Lambda}_{\m}(E(A,\m)),\mrm{L}\widehat{\Lambda}_{\m}(E(A,\m))).
\]
According to \cite[Lemma 5.3]{Sh2}, there is an $\widehat{A}$-linear isomorphism
\begin{eqnarray}
\mrm{R}\opn{Hom}_{\widehat{A}}(\mrm{L}\widehat{\Lambda}_{\m}(E(A,\m)),\mrm{L}\widehat{\Lambda}_{\m}(E(A,\m))) \cong\nonumber\\
\mrm{R}\opn{Hom}_{\widehat{A}}(\mrm{L}\widehat{\Lambda}_{\m}(E(A,\m)),\mrm{R}\opn{Hom}_A(\widehat{A},\mrm{L}\Lambda_{\m}(E(A,\m)))).\nonumber
\end{eqnarray}
Hence, by adjunction, we obtain
\begin{eqnarray}
\mrm{R}\opn{Hom}_A(Q(\mrm{L}\widehat{\Lambda}_{\m}(E(A,\m))),\mrm{L}\Lambda_{\m}(E(A,\m))) \cong\nonumber\\ 
\mrm{R}\opn{Hom}_A(\mrm{L}\Lambda_{\m}(E(A,\m)),\mrm{L}\Lambda_{\m}(E(A,\m))),\nonumber
\end{eqnarray}
so the result follows from the Greenlees-May duality and the fact that by Proposition \ref{prop:lcofinj} we have that
\[
E(A,\m) \cong \mrm{R}\Gamma_{\a}(E(A,\m)).
\]
\end{proof}

\textbf{Acknowledgments.}
Work on this paper started as a result of discussions in the 2017 European Talbot Workshop.
The author is thankful to the workshop participants for these discussions.
The author would like to thank Henning Krause for suggesting an alternative proof of Theorem \ref{thm:eqv}.



\end{document}